\documentclass{amsart}

%%%%% Packages %%%%%

\usepackage{amsmath}
\usepackage{amssymb}
\usepackage{amsthm}
\usepackage{amscd}
%\usepackage[all]{xy}
%\usepackage{color}

%%%%% Theorem environments %%%%%

\newtheorem{thm}{Theorem}[section]
\newtheorem{prop}[thm]{Proposition}
\newtheorem{lem}[thm]{Lemma}
\newtheorem{cor}[thm]{Corollary}

\theoremstyle{definition}
\newtheorem{dfn}[thm]{Definition}

\theoremstyle{remark}
\newtheorem{rem}[thm]{Remark}
\newtheorem*{acknowledgment}{Acknowledgements}

%%%%% Black Board Bold %%%%%

\newcommand{\C}{\mathbb{C}}
\newcommand{\R}{\mathbb{R}}
\newcommand{\Z}{\mathbb{Z}}

%%%%% Calligraph %%%%%

%\newcommand{\G}{\mathcal{G}}
%\renewcommand{\H}{\mathcal{H}}
\newcommand{\K}{\mathcal{K}}

\newcommand{\X}{\mathcal{X}}
\newcommand{\Y}{\mathcal{Y}}

%%%%% Euler Fraktur %%%%%

\newcommand{\Twist}{\mathfrak{Twist}}

%%%%% Roman %%%%%

\newcommand{\id}{\mathrm{id}}
\newcommand{\pt}{\mathrm{pt}}

\newcommand{\Fred}{\mathrm{Fred}}
\newcommand{\Gr}{\mathrm{Gr}}
\newcommand{\Vect}{\mathrm{Vect}}
\newcommand{\Pin}{\mathrm{Pin}}

%%%%% Abbreviations and Commands %%%%%

%\newcommand{\KO}{K\!O}
%\newcommand{\KR}{K\!R}

%%%%% Redefinition %%%%%

%\renewcommand{\theenumi}{(\roman{enumi})}

%%%%%%%%%%%%%%%%%%%%%%%%%%%%%%%%%%%%%%%%%%%%%%%%

\title%[Freed-Moore $K$-theory]
{Freed-Moore $K$-theory}

\author[K. Gomi]{Kiyonori Gomi}

\address{
Department of Mathematics, 
Tokyo Institute of Technology,
2-12-1 Ookayama, Meguro-ku, Tokyo, 152-8551, 
Japan.}

\email{kgomi@math.titech.ac.jp}

\subjclass[2010]{Primary 19L50; Secondary 19L47, 55R70, 47A53}

\keywords{Twisted equivariant $K$-theory, 
twisted vector bundle, gradation}

\date{}

%\def\baselinestretch{1.3}

%\begin{document}

\begin{document}

\begin{abstract}
The twisted equivariant $K$-theory given by Freed and Moore is a $K$-theory which unifies twisted equivariant complex $K$-theory, Atiyah's `Real' $K$-theory, and their variants. In a general setting, we formulate this $K$-theory by using Fredholm operators, and establish basic properties such as the Bott periodicity and the Thom isomorphism. We also provide formulations of the $K$-theory based on Karoubi's gradations in both infinite and finite dimensions, clarifying their relationship with the Fredholm formulation.
\end{abstract}

\maketitle

\tableofcontents

%%%%%%%%%%%%%%%%%%%%%%%%%%%%%%%%%%%%%%%%%%%%%%%%
%%%%%%%%%%%%%%%%%%%%%%%%%%%%%%%%%%%%%%%%%%%%%%%%

\section{Introduction}
\label{sec:introduction}

%%%%%%%%%%%%%%%%%%%%%%%%%%%%%%%%%%%%%%%%%%%%%%%%

\subsection{Freed-Moore $K$-theory}

The conventional complex $K$-theory $K(X)$ of a topological space $X$, introduced by Atiyah and Hirzebruch \cite{A1}, can be constructed from complex vector bundles on $X$. Since its introduction, it admits various generalizations such as:
\begin{itemize}
\item
Equivariant $K$-theory \cite{Se2}. For this theory to be defined, we consider a space $X$ with an action of a compact Lie group $G$. Then the equivariant $K$-theory $K_G(X)$ can be constructed from $G$-equivariant complex vector bundles on $X$, namely, vector bundles that admit $G$-actions covering the $G$-action on the base space $X$ which induce complex linear transformations on fibers.

\item
Atiyah's `Real' $K$-theory \cite{A2}. For this to be defined, we consider a space $X$ with an action of the cyclic group $\Z_2$ of order $2$ (i.e.\ an involution). The `Real' $K$-theory $KR(X)$ can be constructed from `Real' vector bundles on $X$, namely, complex vector bundles that admit $\Z_2$-actions covering the $\Z_2$-action on the base which induce complex anti-linear transformations on fibers. If one takes up the trivial $\Z_2$-action on $X$, then the $KR$-theory recovers $KO$-theory $KO(X)$, which can be constructed from real vector bundles in the usual sense. A variant of `Real' $K$-theory is Dupont's `Symplectic' (or `Quaternionic') $K$-theory \cite{Dup}, which recovers the $K$-theory of quaternionic vector bundles if the $\Z_2$-action on $X$ is trivial.

\item
Twisted $K$-theory \cite{D-K,R} and its equivariant version \cite{FHT1}. For this to be defined, we need additional data called `twists' $\tau$ and $c$ on $X$ which respectively define cohomology classes in the Borel equivariant cohomology $H^3_G(X; \Z)$ and $H^1_G(X; \Z_2)$. Though is not the case in general \cite{A-Se}, the twisted $K$-theory $K^{(\tau, c)}_G(X)$ can be constructed from $(\tau, c)$-twisted vector bundles. If $G = \Z_2$, then the twisted equivariant $K$-theory recovers a variant of $K$-theory $K_\pm(X)$ introduced in \cite{A-H,W}, by taking $[\tau] \in H^3_{\Z_2}(X; \Z)$ to be trivial and $[c] \in H^1_{\Z_2}(X; \Z_2)$ to be the class given by the identity homomorphism $c : \Z_2 \to \Z_2$.
\end{itemize}

The twisted equivariant $K$-theory of Freed and Moore \cite{F-M} unifies these generalizations. Though is not the most general setting, let us consider a space $X$ with an action of a compact Lie group $G$, a homomorphism $\phi : G \to \Z_2$ and a twist represented by a group $2$-cocycle $\tau \in Z^2_{\mathrm{group}}(G; C(X, U(1))_\phi)$ with local coefficients in the group $C(X, U(1))$ of $U(1)$-valued continuous functions on $X$ regarded as a $G$-module by the $G$-action on $X$ and $\phi$. Then the Freed-Moore $K$-theory\footnote{The terminology is due to the recognition of \cite{F-M} in the community of condensed matter physics. The ideas of the $K$-theory were ``largely developed in collaboration with Hopkins and Teleman'', according to Freed.} ${}^\phi K_G^{\tau}(X)$ is defined by using finite rank twisted equivariant complex vector bundles \cite{F-M}. The key datum is the homomorphism $\phi$ that morally indicates which element of $G$ acts on the fibers of complex vector bundles complex anti-linearly. Thus, if $\phi$ is trivial, then ${}^\phi K_G^{\tau}(X)$ recovers the $G$-equivariant twisted $K$-theory $K_G^{\tau}(X)$. Atiyah's `Real' $K$-theory can be recovered by taking the cyclic group $G = \Z_2$, the identity homomorphism $\phi : G \to \Z_2$ and the trivial twist $\tau$. If we turn on a non-trivial twist $\tau_\phi$ associated to $\phi$, then the Freed-Moore $K$-theory recovers Dupont's `Symplectic' $K$-theory as a twisted $KR$-theory. 

\medskip

The introduction of the Freed-Moore $K$-theory is motivated by recent applications of $K$-theories to the classification of certain quantum systems such as \textit{topological insulators}. A remarkable discovery of Kitaev \cite{Kit} is that the Bott periodicities of $K$-theories explain the so-called periodic table of topological insulators. Some classes of topological insulators involve a symmetry called the time-reversal symmetry, and this serves as the source of the appearance of $KR$-theory. From the viewpoint of condensed matter physics, it is natural to incorporate other symmetries which stem from the symmetries of crystals. This leads one to consider generalizations of $KR$-theory replacing the $\Z_2$-action by an action of a larger group $G$. Some nature of the action of a symmetry on quantum systems naturally produces twisting. Then, as an application of the $K$-theoretic classification scheme of topological insulators, a calculation of equivariant twisted $K$-theory results in a `new' $\Z_2$-phase of topological crystalline insulators (see \cite{SSG1} for example). Therefore one can anticipate calculations of Freed-Moore $K$-theory lead to further discovery of interesting topological insulators, and a calculation results in a novel $\Z_4$-phase \cite{SSG2}.

%%%%%%%%%%%%%%%%%%%%%%%%%%%%%%%%%%%%%%%%%%%%%%%%

\subsection{The purposes of this paper}

This paper has two purposes.

\subsubsection{Fredholm formulation}

The conventional $K$-theory can be constructed from vector bundles. However, an analogous construction of twisted $K$-theory based on twisted vector bundles \cite{BCMMS,Kar2} of finite rank fails generally. Instead, an infinite-dimensional formulation is required \cite{A-Se,FHT1,R,TXL}. Thanks to the work of Atiyah and Singer \cite{A-Si}, the infinite-dimensional Fredholm formulation \cite{A-Se,FHT1,R} is useful to define $K$-theory with degree $K^n(X)$ and to prove the Bott periodicity. The $K$-theory of Freed-Moore in \cite{F-M} is formulated by using finite rank (twisted) vector bundles. Its Fredholm formulation is sketched, but seems not fully developed in the literature. One purpose of this paper is therefore to give the Fredholm formulation to lay the foundation of this $K$-theory. 

\medskip

We carry out this formulation under a general setting: Let $\X$ be a local quotient groupoid \cite{FHT1}. Then there is a category $\Phi(\X)$ whose objects are classified by the cohomology $H^1(\X; \Z_2)$. A typical object in $\Phi(\X)$ is a map of groupoids $\X \to \pt//\Z_2$, where $\pt//\Z_2$ is the quotient groupoid associated to the trivial action of $\Z_2$ on a point. Under a choice of $\phi \in \Phi(\X)$, we can introduce a notion of $\phi$-twists. This is a generalization of the notion of twists \cite{FHT1} based on twisted extensions \cite{F-M}. The $\phi$-twists form a category ${}^\phi\Twist(\X)$, and its objects are classified by $H^3(\X; \Z_\phi) \times H^1(\X; \Z_2)$. Then, in a way parallel to the formulation of twisted equivariant complex $K$-theory in \cite{FHT1}, we use skew-adjoint Fredholm families on a twisted $\Z_2$-graded Hilbert bundle to formulate the $K$-theory ${}^\phi K^{(\tau, c) + n}(\X)$, where $(\tau, c)$ represents the data of a $\phi$-twist and $n \in \Z$ is the grading. We can then prove that the $K$-theory enjoys the Bott periodicity
$$
{}^\phi K^{(\tau, c) + n}(\X) \cong {}^\phi K^{(\tau, c) + n + 8}(\X).
$$
A consequence of the periodicity is that ${}^\phi K^{(\tau, c) + n}(\X)$ satisfies the axioms of generalized cohomology theory, formulated suitably in the context of groupoids (Theorem \ref{thm:axiom}). Another consequence is the existence of particular twists $c_{\phi}$ and $\tau_{\phi}$ which have the effects of the degree shift (Theorem \ref{thm:degree_shift})
\begin{align*}
{}^\phi K^{(\tau, c) + c_\phi + n}(\X) 
&\cong 
{}^\phi K^{(\tau, c) + n + 2}(\X), \\
%%%%%
{}^\phi K^{(\tau, c) + \tau_\phi + n}(\X) 
&\cong 
{}^\phi K^{(\tau, c) + n + 4}(\X), \\
%%%%%
{}^\phi K^{(\tau, c) + (\tau_\phi, c_\phi) + n}(\X) 
&\cong 
{}^\phi K^{(\tau, c) + n + 6}(\X).
\end{align*}
The effect of the degree shift by $\tau_\phi$ generalizes the fact \cite{Dup} that `Symplectic' $K$-theory is isomorphic to $KR$-theory with its degree shifted by $4$. 

\medskip

As is mentioned, the Freed-Moore $K$-theory recovers various $K$-theories under specializations. Because of the generality of our formulation, we can introduce a twisted $KR$-theory, which would reproduce the twisted $KR$-theory in \cite{Mou1,Mou2,Mou3}. We anticipate that the Freed-Moore $K$-theory would also reproduces the twisted equivariant $KR$-theory in \cite{Fok1,Fok2}. The generality of our formulation further yields twisted $K$-theories beyond \cite{F-M}: A simple example is a twisted $K$-theory ${}^\phi K^n(X)$ of a space $X$ whose twisting datum $\phi \in \Phi(X)$ is classified by $H^1(X; \Z_2)$. This is different from the twisted $K$-theory $K^{c + n}(X)$ whose twisting datum $c$ is also classified by $H^1(X; \Z_2)$.

\medskip

The proof of the Bott periodicity is based on the idea in \cite{FHT1}: By nature of local quotient groupoid, we reduce the problem to the case of the quotient groupoid $\pt//G$, where $G$ is a compact Lie group. Then, based on the so-called ``Mackey decomposition'', we further reduce the problem to the case that $G$ is trivial. At this point, the periodicity essentially follows from \cite{A-Si}, which is the reason that we use skew-adjoint Fredholm operators to formulate ${}^\phi K^{(\tau, c) + n}(\X)$. It should be noticed that we topologize the space of Fredholm operators by using the compact open topology in the sense of \cite{A-Se}, as opposed to the operator norm topology as in \cite{A-Si}. Accordingly, some analytical details about the space of Fredholm operators are also supplied in this paper.

Also, based on the idea in \cite{FHT1}, the Thom isomorphism theorem for real vector bundles can be shown in the context of the Freed-Moore $K$-theory (see \S\S\ref{subsec:thom_isomorphism}). As in the other cases \cite{D-K,FHT1}, the isomorphism involves a twist associated to the real vector bundle. Geometrically, this twist is explained as the obstruction to the orientability and to the existence of \textit{$\phi$-twisted $\Pin^c$-structure} introduced in Definition \ref{dfn:twisted_pinc}.

\subsubsection{Karoubi formulation}
%In general, a $K$-theory of a space can be formulated in various ways. 
%The formulation by using finite rank vector bundles 
%(and the so-called Grothendieck construction) is a standard one. 
%But this formulation cannot be applied to twisted $K$-theory in general, 
%as is pointed out already. 
%An infinite-dimensional formulation is therefore necessary 
%for twisted $K$-theory including the Freed-Moore $K$-theory, 
%which motivates the Fredholm formulation. 
%It should be noticed that this formulation is in some sense 
%an infinite-dimensional generalization of the formulation 
%by using finite rank vector bundles and the Grothendieck construction. 

In view of the classifications of gapped quantum systems like topological insulators, Karoubi's formulation of $K$-theory by using the notion of triples \cite{Kar} is very useful, as is seen in \cite{Kit}. Concretely, in this formulation of the standard complex $K$-theory $K(X)$ of a space $X$, its representative is a triple $(E, \eta_0, \eta_1)$ consisting of a finite rank Hermitian vector bundle $E$ on $X$ and two \textit{gradations} (or \textit{$\Z_2$-gradings}), namely, self-adjoint involutions $\eta_0$ and $\eta_1$ acting on $E$. These self-adjoint involutions define subbundles $\mathrm{Ker}(1 - \eta_i) \subset E$, and the pair of these vector bundles is nothing but a representative of the standard formulation of $K(X)$. In the context of the classification of gapped quantum systems,  the Hamiltonians of such systems lead to self-adjoint involutions $\eta$ (see for instance \cite{SSG3}). Hence the $K$-theory in Karoubi's formulation naturally works as a framework to measure the relative topological phases of two gapped quantum systems.

One can generalize Karoubi's triples to formulate Freed-Moore $K$-theory. However, its relationship with the finite rank formulation as in \cite{F-M} and the Fredholm formulation seems to be not fully studied in the literature. It should be noticed also that the relationship between Karoubi's formulation and the standard formulation of $K$-theory cannot be generalized in the presence of a certain twist. The other purpose of this paper is thus to clarify the relationship among the formulations.

\medskip

For this purpose, the key is an infinite-dimensional version of Karoubi's formulation above: Based on the infinite-dimensional Grassmannian in \cite{P-S,Qui}, we introduce a group ${}^\phi \K^{(\tau, c) + n}(\X)$ under the same setting as in the Fredholm formulation of the Freed-Moore $K$-theory. We then prove (Theorem \ref{thm:Fredholm_vs_Karoubi}) that there is a natural isomorphism of groups
$$
\vartheta: \ {}^\phi K^{(\tau, c) + n}(\X) 
\overset{\cong}{\longrightarrow} {}^\phi \K^{(\acute{\tau}, c) + n}(\X).
$$
Here one should notice the change of the $\phi$-twists $\tau \mapsto \acute{\tau}$. It will be shown in \S\S\ref{subsec:Fredholm_vs_Karoubi} that $\tau \cong \acute{\tau}$ if $\phi$ or $c$ are trivial. Hence the essential effect of the twist change is observed only when non-trivial $\phi$ and $c$ are present. The appearance of the twist change is due to the use of skew-adjoint operators in the Fredholm formulation. Using self-adjoint operators instead, one can avoid the twist change (Remark \ref{rem:twist_change}).

\medskip

To see the relationship between the infinite-dimensional and finite-dimensional Karoubi formulations, we suppose that the groupoid $\X$ is the quotient groupoid $X//G$ associated to an action of a finite group $G$ on a compact Hausdorff space $X$, $\phi \in \Phi(X//G)$ is associated to a homomorphism $\phi : G \to \Z_2$, and the $\phi$-twist is realized as a twisted extension of $X//G$. In this setting, we define a group ${}^\phi \K^{(\tau, c) + n}_G(X)_{\mathrm{fin}}$ by using Karoubi triples of finite rank, and show (Theorem \ref{thm:finite_and_infinite_dimensional_Karoubi_formulations}) that there is an isomorphism
$$
\jmath : \ {}^\phi \K^{(\tau, c) + n}_G(X)_{\mathrm{fin}}
\overset{\cong}{\longrightarrow} {}^\phi \K^{(\tau, c) + n}_G(X)
= {}^\phi \K^{(\tau, c) + n}(X//G).
$$

To summarize, we denote by ${}^\phi K^{(\tau, c) + n}_G(X)_{\mathrm{fin}}$ the Freed-Moore $K$-theory formulated by finite rank bundles as in \cite{F-M}, and put ${}^\phi K^{(\tau, c) + n}_G(X) = {}^\phi K^{(\tau, c) + n}(X//G)$. Then we have a diagram
$$
\begin{CD}
{}^\phi K^{(\tau, c) + n}_G(X)_{\mathrm{fin}}
@>{\imath}>>
{}^\phi K^{(\tau, c) + n}_G(X) \\
@. @V{\cong}V{\vartheta}V \\
{}^\phi \K^{(\acute{\tau}, c) + n}_G(X)_{\mathrm{fin}}
@>{\jmath}>{\cong}>
{}^\phi \K^{(\acute{\tau}, c) + n}_G(X),
\end{CD}
$$
in which $\imath$ is a homomorphism, and $\vartheta$ and $\jmath$ are isomorphisms. It is stated in \cite{F-M} that $\imath$ is bijective if $n = 0$ (Remark 7.37), but its proof (Appendix E) seems to work only when the twist $c$ is trivial (see \S\S\ref{subsec:finite_rank}). In this case, we reprove the bijectivity of $\imath$ by constructing the inverse of $\jmath^{-1} \circ \vartheta \circ \imath : {}^\phi K^{(\tau, c) + n}_G(X)_{\mathrm{fin}} \to {}^\phi \K^{(\acute{\tau}, c) + n}_G(X)_{\mathrm{fin}}$. Actually, the inverse is induced from the construction $(E, \eta_0, \eta_1) \mapsto (\mathrm{Ker}(1 - \eta_0), \mathrm{Ker}(1 - \eta_1))$ as mentioned above (see \S\S\ref{subsec:finite_dimensional_formulations}).

\medskip

Besides the formulations above are $C^*$-algebraic formulations. Such a formulation of the Freed-Moore $K$-theory can be found for example in \cite{Kel,Kub,Th}. Notice that, in \cite{Th}, Karoubi's triple formulation is also presented in a context of a $C^*$-algebra. These formulations should produce the same $K$-theory as formulated in this paper.

%%%%%%%%%%%%%%%%%%%%%%%%%%%%%%%%%%%%%%%%%%%%%%%%

\subsection{Outline of the paper}

In \S\ref{sec:twisted_bundle}, we introduce notions of twists and twisted vector bundles needed for the Freed-Moore $K$-theory. We start with a brief review of groupoids and their cohomology. We then recall the notion of twisted extension in \cite{F-M}, and use it to define $\phi$-twists and twisted vector bundles along the idea of \cite{FHT1}. We also introduce the notion of locally universal bundles following \cite{FHT1}. 

In \S\ref{sec:Fredholm_formulation}, we formulate the Freed-Moore $K$-theory ${}^\phi K^{(\tau, c) + n}(\X)$ by using Fredholm operators. As an intermediate step, we introduce a $K$-theory ${}^\phi K^{(\tau, c) + (p, q)}(\X)$ with bigrading as in \cite{Kar}, by using the Clifford algebra. We then prove the Bott periodicity. As explained, the proof consists of reductions to easier cases following \cite{FHT1} and the periodicity on the point \cite{A-Si}. The reduction argument based on the Mackey decomposition and the periodicity on the point are separated to Appendix. Then, we derive the relation between twists and degree shifts from the Bott periodicity. After that, we review how the Freed-Moore $K$-theory reproduces known $K$-theories. We also treat the finite rank realizability here, introducing ${}^\phi K^{(\tau, c) + 0}_G(X)_{\mathrm{fin}}$. At the end of this section, a notion of $\phi$-twisted $\Pin^c$-structures and the Thom isomorphism in the Freed-Moore $K$-theory are given.

\S\ref{sec:Karoubi_formulation} is devoted to Karoubi's formulations. We first introduce ${}^\phi\K^{(\tau, c) + (p, q)}(\X)$ in the infinite-dimensional Karoubi formulation, and relate it with the Fredholm formulation ${}^\phi K^{(\tau, c) + (p, q)}(\X)$. We then relate the infinite-dimensional Karoubi formulation ${}^\phi\K^{(\tau, c) + (p, q)}_G(X)$ with its finite-dimensional counterpart ${}^\phi\K^{(\tau, c) + (p, q)}_G(X)_{\mathrm{fin}}$. Finally, two finite-dimensional formulations ${}^\phi K^{(\tau, c) + 0}_G(X)_{\mathrm{fin}}$ and ${}^\phi \K^{(\tau, c) + 0}_G(X)_{\mathrm{fin}}$ are compared. 

In Appendix \ref{sec:classification_of_twists}, we summarize the classification of twists in some simple cases needed. In Appendix \ref{sec:mackey_decomposition}, we provide the Mackey decomposition needed for our reduction argument, and supply some technical details of the Bott periodicity on a point. Finally, in Appendix \ref{sec:quotient_monoid}, the quotient monoid is reviewed, which is used to give ${}^\phi K^{(\tau, c) + 0}_G(X)_{\mathrm{fin}}$ and ${}^\phi\K^{(\tau, c) + (p, q)}_G(X)_{\mathrm{fin}}$.

\medskip

As a convention, a space is always assumed to be locally contractible, paracompact and completely regular, as in \cite{FHT1}. Vector bundles are always $\Z_2$-graded, and infinite-dimensional cases are allowed. In the infinite-dimensional case, the fibers are assumed to be separable Hilbert spaces, and operators are assumed to be bounded (continuous).

\medskip

\begin{acknowledgment}
I would like to thank I.~Sasaki for discussion about some analytic aspects in this work. I would also like to thank J.~Rosenberg and anonymous reviewers for useful and helpful comments which improved this paper significantly. The author's research is supported by JSPS KAKENHI Grant Number JP15K04871.
\end{acknowledgment}

%%%%%%%%%%%%%%%%%%%%%%%%%%%%%%%%%%%%%%%%%%%%%%%%
%%%%%%%%%%%%%%%%%%%%%%%%%%%%%%%%%%%%%%%%%%%%%%%%

\section{Twisted vector bundle on groupoid}
\label{sec:twisted_bundle}

In this section, we prepare for the setting for the formulation of the Freed-Moore $K$-theory. We start with a brief review of local quotient groupoids \cite{FHT1} and their cohomology groups. We then recall the notion of twisted extension \cite{F-M}, and introduce $\phi$-twists and twisted vector bundles.

%%%%%%%%%%%%%%%%%%%%%%%%%%%%%%%%%%%%%%%%%%%%%%%%
\subsection{Groupoid}

A \textit{groupoid} $\X$ in this paper means a small category in which all the arrows (morphisms) are invertible, and the set of objects $\X_0$ as well as that of invertible morphisms (isomorphisms) $\X_1$ are topological spaces subject to our convention. We also assume the continuity of the maps $\X_1 \to \X_0$ that associates the source objects $s$ and the target objects $t$ to morphisms $s \overset{f}{\to} t$, the map $\X_1 \to \X_1$ of taking the inverse of arrows, and the map $\X_0 \to \X_1$ that associates the identity arrows to objects. 

We will write $\partial_0 : \X_1 \to \X_0$ and $\partial_1 : \X_1 \to \X_0$ for the associations of the source and the target of a morphism, respectively,
\begin{align*}
\partial_0(s \overset{f}{\to} t) &= s, &
\partial_1(s \overset{f}{\to} t) &= t.
\end{align*}
For $n > 1$, we denote by $\X_n$ the space of $n$ composable morphisms, and define $\partial_i : \X_n \to \X_{n-1}$, ($i = 0, \ldots, n$) by
$$
\partial_i(f_1, \ldots, f_n)
= 
\left\{
\begin{array}{ll}
(f_2, \ldots, f_n), & (i = 0) \\
(f_1, \ldots, f_if_{i+1}, \ldots, f_n), & (1 \le i \le n-1) \\
(f_1, \ldots, f_{n-1}), & (i = n)
\end{array}
\right.
$$
which satisfy
$$
\partial_i \circ \partial_j = \partial_{j-1} \circ \partial_i. 
\quad
(i < j)
$$
The spaces $\X_0, \X_1, \X_2, \ldots$ and the maps $\partial_i$ above, called the face maps, are part of the data of the simplicial space associated to the groupoid $\X$. The remaining data called the degeneracy maps will play no essential role in this paper, so we omit their definitions here.

A well-known example of a groupoid is the \textit{quotient groupoid} $\X = X//G$, which is associated to an action of a compact Lie group $G$ on a compact Hausdorff space $X$. In this groupoid, the set of objects is identified with $(X//G)_0 = X$, and that of arrows with $(X//G) = G \times X$.

A map of groupoids $\X \to \Y$ is given by a functor. Taking the topological setting into account, we assume the induced map of objects $\X_0 \to \Y_0$ and that of arrows $\X_1 \to \Y_1$ are continuous. For example, let us consider a map of groupoids $\phi : \X \to \pt//\Z_2$. Since the map of objects is trivial, this $\phi$ amounts to a continuous map $\phi : \X_1 \to \Z_2$ such that $\phi(f_1 \circ f_2) = \phi(f_1)\phi(f_2)$ for all the composable morphisms $f_1, f_2 \in \X_1$. In particular, a continuous homomorphism $\phi : G \to \Z_2$ gives a map from the quotient groupoid $X//G$ to $\pt//\Z_2$, although not all maps $X//G \to \pt//\Z_2$ come from continuous homomorphisms $G \to \Z_2$.

\medskip

As equivalences of groupoids, we consider \textit{local equivalences} \cite{FHT1}. Then a \textit{local quotient groupoid} is defined as a groupoid which is covered by full subgroupoids which are locally equivalent to the groupoids associated to actions of compact Lie groups on Hausdorff spaces (see \cite{FHT1} for details).

%%%%%%%%%%%%%%%%%%%%%%%%%%%%%%%%%%%%%%%%%%%%%%%%
\subsection{Cohomology of groupoid}
\label{subsec:cohomology_of_groupoid}

For any abelian group $A$ (or more generally any ring), the cohomology $H^n(\X; A)$ of a groupoid $\X$ can be defined as the cohomology of the simplicial space associated to $\X$. A convenient way to realize $H^n(\X; A)$ is to use a \v{C}ech cohomology (cf.\ \cite{G2}). 

Any abelian group $A$ admits the automorphism $\iota: A \to A$ of taking the inverse. Then, combining $\iota$ with a map of groupoids $\phi : \X \to \pt//\Z_2$, we can define the cohomology $H^n(\X; A_\phi)$ of $\X$ with local coefficients. A definition of $H^n(\X; A_\phi)$ in terms of \v{C}ech cohomology uses the notion of a twisting function \cite{May2} of the simplicial space associated to $\X$. The twisting function in the present case is the sequence of maps $\phi_n : \X_n \to \Z_2$, ($n \ge 1$) defined by $\phi_1 = \phi$ and $\phi_n = \phi \circ \partial_2 \circ \cdots \circ \partial_n$ for $n \ge 2$, which are subject to 
\begin{align*}
\phi_n \cdot \partial_0^*\phi_{n-1} &= \partial_1^*\phi_{n-1}, &
\partial_i^*\phi_{n-1} &= \phi_n. \quad (i > 1)
\end{align*}
One can ``twist'' a differential of a double complex which computes $H^n(\X; A)$, by using the twisting function (cf.~\cite{G1}). This construction produces another double complex, and its cohomology gives $H^n(\X; A_\phi)$.

Notice that if $\phi' : \X \to \pt//\Z_2$ is another map and there is $\psi_0 : \X_0 \to \Z_2$ such that $\partial_1^*\psi_0 \cdot \phi_1 = \phi'_1 \cdot \partial_0^*\psi_0$, then $\psi_0$ defines an isomorphism $H^n(\X; A_\phi) \to H^n(\X; A_{\phi'})$. If $\psi_0' : \X_0 \to \Z_2$ is another map such that $\partial_1^*\psi'_0 \cdot \phi_1 = \phi'_1 \cdot \partial_0^*\psi'_0$, we get the same isomorphism in cohomology. In view of this fact, we regard that maps $\phi : \X \to \pt//\Z_2$ constitute objects of a category in which the set of morphisms $\mathrm{Mor}(\phi, \phi')$ consists of maps $\psi_0$ as above modulo those satisfying $\partial_0^*\psi_0 = \partial_1^*\psi_0$.

More generally, let us consider the category $\Phi(\X)$ such that its object is a pair $(F : \tilde{\X} \to \X, \phi)$ consisting of a local equivalence $F : \tilde{\X} \to \X$ and a map of groupoids $\phi : \tilde{\X} \to \pt//\Z_2$. We define the set of morphisms from $(F_1 : \tilde{\X}_1 \to \X, \phi_1)$ to $(F_2 : \tilde{\X}_2 \to \X, \phi_2)$ to be the direct limit (colimit)
$$
\varinjlim_{\Y} \mathrm{Mor}(\pi_1^*\phi_1, \pi_2^*\phi_2),
$$
where $\Y$ runs over groupoids which fill the diagram of local equivalences
$$
\begin{CD}
\Y @>{\pi_2}>> \tilde{\X}_2 \\
@V{\pi_1}VV @VV{F_2}V \\
\tilde{\X}_1 @>{F_1}>> \X.
\end{CD}
$$
By definition, we can associate an object in $\Phi(\X)$ to each map of groupoids $\phi : \X \to \pt//\Z_2$ by considering the identity local equivalence $\X \to \X$. In general, $\Phi(\X)$ contains objects which are not associated to maps of groupoids $\phi : \X \to \pt//\Z_2$ as above. However, for the quotient groupoid $\pt//G$, any object in $\Phi(\pt//G)$ is isomorphic to the object associated to a homomorphism $\phi : G \to \Z_2$.

For each object $(F : \tilde{\X} \to \X, \phi)$ in $\Phi(\X)$, we have the cohomology $H^n(\tilde{\X}; A_\phi)$. If there is a morphism between two objects in $\Phi(\X)$, then it is unique and induces a unique isomorphism in cohomology. Therefore we take the colimit to define the cohomology twisted by an isomorphism class $[F, \phi]$ of $(F, \phi) \in \Phi(\X)$ as
$$
H^n(\X; A_{[F, \phi]})
= \varinjlim_{(F, \phi) \in \Phi(\X)}
H^n(\tilde{\X}; A_\phi).
$$
By abuse of notation, we may write $\phi$ to mean an object in $\Phi(\X)$, and $H^n(\X; A_\phi)$ for the above cohomology associated to the isomorphism class of $\phi$.

It should be noticed that the objects in $\Phi(\X)$ admit the classification
$$
\pi_0(\Phi(\X)) \cong H^1(\X; \Z_2),
$$
where $\pi_0(\Phi(\X))$ denotes the set of isomorphism classes. The identification above is actually an isomorphism of groups, where the group structure on $\pi_0(\Phi(\X))$ is induced from the obvious product of morphism of groupoids $\phi: \X \to \pt//\Z_2$.

\medskip

If $\X$ is a quotient groupoid $\X = X//G$, then $H^n(\X; A)$ can be identified with the Borel equivariant cohomology $H^n_G(X; A)$, which is the cohomology of the Borel construction $EG \times_G X$ with its coefficients in $A$. By definition, the Borel construction is the quotient space $EG \times_G X = (EG \times X)/G$, where $EG$ is the total space of the universal $G$-bundle $EG \to BG$ and the action of $g \in G$ on the direct product is $(\xi, x) \mapsto (\xi g^{-1}, gx)$. For a map of groupoids $\phi : X//G \to \pt//\Z_2$, one may identify the cohomology $H^n(\X; A_\phi)$ with the Borel equivariant cohomology $H^n_G(X; A_\phi)$, where the local system on $EG \times_G X$ is the map $EG \times_G X \to B\Z_2$ associated to $\phi : X//G \to \pt//\Z_2$. In the case where $X = \pt$, the cohomology can be identified with a group cohomology. The cochain complex producing $H^n_G(\pt; A_\phi)$ is explicitly given in Appendix \ref{sec:classification_of_twists}.

%In particular, if $\phi : X//G \to \pt//\Z_2$ is given by a homomorphism of 
%groups $\phi : G \to \Z_2$, then $\phi$ induces a map $BG \to B\Z_2$. 
%Combining this with the projection in the fibration 
%$X \to EG \times_G X \to BG$, we get $EG \times_G X \to B\Z_2$.

%%%%%%%%%%%%%%%%%%%%%%%%%%%%%%%%%%%%%%%%%%%%%%%%
\subsection{Twisted extension}

We introduce some notations following \cite{F-M}: Given a complex number $z \in \C$ and a sign $\phi \in \Z_2 = \{ \pm 1 \}$, we write
$$
{}^\phi z = \left\{
\begin{array}{ll}
z, & (\phi = 1) \\
\bar{z}. & (\phi = -1).
\end{array}
\right.
$$
Similarly, for a complex vector bundle $E \to X$ on a space $X$, we write
$$
{}^\phi E 
= \left\{
\begin{array}{ll}
E, & (\phi = 1) \\
\overline{E}, & (\phi = -1).
\end{array}
\right.
$$
where $\overline{E}$ is the complex conjugate of $E$. As a generalization, for a continuous map $\phi : X \to \Z_2$, we define a vector bundle ${}^\phi E \to X$ by
$$
{}^\phi E = E|_{\phi^{-1}(1)} \sqcup \overline{E}|_{\phi^{-1}(-1)},
$$
noting that we can express $X$ as the disjoint union $X = \phi^{-1}(1) \sqcup \phi^{-1}(-1)$.

A Hermitian vector bundle $E$ over a space $X$ is called a $\Z_2$-graded Hermitian vector bundle if $E$ admits a direct sum decomposition $E = E^0 \oplus E^1$ into Hermitian vector bundles $E^i$. We call $E^0$ the even part (or degree $0$ part), and $E^1$ the odd part (or degree $1$ part). It can happen that $E^0 = 0$ or $E^1 = 0$. Thus, a $\Z_2$-graded Hermitian line bundle amounts to a Hermitian line bundle $L \to X$ with a $\Z_2$-grading (or parity) specified: $L = L^0$ or $L = L^1$. Generalizing this, for a continuous map $c : X \to \Z_2$, we define a $c$-graded Hermitian line bundle $L \to X$ to be a Hermitian line bundle such that the restriction to $c^{-1}((-1)^i) \subset X$ has degree $i$. If $L$ is $c$-graded and $L'$ is $c'$-graded, then their tensor product $L \otimes L'$ is $cc'$-graded as a convention. To the exchange of factors, we apply the Koszul sign rule as in \cite{F-M}, so that a negative sign appears only in the exchange of odd homogeneous elements.

\begin{dfn}[\cite{F-M}] \label{dfn:twisted_extension}
Let $\X$ be a groupoid, and $\phi : \X \to \pt//\Z_2$ a map of groupoids. A \textit{$\phi$-twisted $\Z_2$-graded extension $(L, \tau, c)$ of $\X$} consists of 
\begin{itemize}
\item
a map of groupoids $c : \X \to \pt//\Z_2$, 

\item
a $c$-graded Hermitian line bundle $L \to \X_1$, and 

\item
a unitary isomorphism $\tau : \ \partial_2^* L \otimes {}^{\phi_2} \partial_0^* L \to \partial_1^*L$ on $\X_2$ which preserves the $\Z_2$-grading and makes the following diagram commutative on $\X_3$,
$$
\begin{CD}
\partial_2^*\partial_2^* L \otimes \!
{}^{\phi_3}\!\partial_0^*(\partial_2^* L \otimes \!{}^{\phi_2}\!\partial_0^* L)
@>{\id \otimes {}^{\phi_3}\!\partial_0^*\tau}>>
\partial_2^*\partial_2^*L \otimes \!
{}^{\phi_3}\!\partial_0^*\partial_1^* L \\
%%%%%
@| @| \\
%%%%%
\partial_3^*\partial_2^* L \otimes
\partial_3^*\!{}^{\phi_2}\!\partial_0^* L \otimes
\partial_1^*\!{}^{\phi_2}\!\partial_0^*L
@.
\partial_2^*\partial_2^*L \otimes
\partial_2^*\!{}^{\phi_2}\!\partial_0^*L \\
%%%%%
@V{\partial_3^*\tau \otimes \id}VV @VV{\partial_2^*\tau}V \\
%%%%%
\partial_3^*\partial_1^*L \otimes \partial_1^*{}^{\phi_2}\partial_0^* L
@.
\partial_2^*\partial_1^*L \\
%%%%%
@| @| \\
%%%%%
\partial_1^*\partial_2^* L \otimes
\partial_1^*{}^{\phi_2}\partial_0^* L
@>{\partial_1^*\tau}>>
\partial_1^*\partial_1^*L,
\end{CD}
$$
where $\phi_n = \phi \circ \partial_2 \circ \cdots \circ \partial_n$ for $n \ge 2$ as defined in \S\S\ref{subsec:cohomology_of_groupoid}. 
\end{itemize}
The trivial $\phi$-twisted $\Z_2$-graded extension $(L, \tau, c)$ consists of the trivial map $c : \X \to \pt//\Z_2$ (i.e.\ $\X_1 \to \Z_2$ is the constant map at $1 \in \Z_2)$, the product bundle $L = \X_1 \times \C$ and the trivial isomorphism $\tau$.
\end{dfn}

It is helpful to express the commutative diagram for $\tau$ in Definition \ref{dfn:twisted_extension} as follows: Let $L_f$ denote the fiber of $L \to \X_1$ at $f \in \X_1$. Then, the map $\tau$ at $(f_1, f_2) \in \X_2$, consisting of composable morphisms $f_1, f_2 \in \X_1$, amounts to $\tau_{(f_1, f_2)} : L_{f_1} \otimes {}^{\phi(f_1)} L_{f_2} \to L_{f_1f_2}$, and the diagram at $(f_1, f_2, f_3) \in \X_3$ to 
$$
\begin{CD}
L_{f_1} \otimes {}^{\phi(f_1)}(L_{f_2} \otimes {}^{\phi(f_2)} L_{f_3})
@>{\mathrm{id} \otimes \tau_{(f_2, f_3)}^{\phi(f_1)}}>> 
L_{f_1} \otimes {}^{\phi(f_1)}L_{f_2f_3} \\
@V{\tau_{(f_1, f_2) \otimes \mathrm{id}}}VV 
@VV{\tau_{(f_1, f_2f_3)}}V \\
L_{f_1f_2} \otimes {}^{\phi(f_1f_2)} L_{f_3}
@>>{\tau_{(f_1f_2, f_3)}}> L_{f_1f_2f_3}.
\end{CD}
$$
This ``fiberwise expression'' is employed in \cite{F-M}, and a similar expression is possible for twisted vector bundles to be defined in \S\S\ref{subsec:twisted_vector_bundle}

We remark that we apply a convention different from the one in \cite{F-M}. We also remark that a $\phi$-twisted ungraded extension of a groupoid $\X$ is defined by forgetting about the information on the $\Z_2$-grading specified by $c$. Every $\phi$-twisted ungraded extension of $\X$ can be thought of as a $\phi$-twisted $\Z_2$-graded extension by taking $c : \X \to \pt//\Z_2$ to be trivial.

\begin{dfn}
Let $\X$ be a groupoid, and $\phi : \X \to \pt//\Z_2$ a map of groupoids. An isomorphism $[K, \beta, b] : (L', \tau', c') \to (L, \tau, c)$ of $\phi$-twisted $\Z_2$-graded extensions of $\X$ is the equivalence class of data $(K, \beta, b)$ consisting of
\begin{itemize}
\item
a map $b : \X_0 \to \Z_2$, 

\item
a $b$-graded Hermitian line bundle $K \to \X_0$, and 

\item
a unitary isomorphism $\beta : L' \otimes {}^\phi\partial_0^*K \to \partial_1^*K \otimes L$ on $\X_1$ which preserves the $\Z_2$-grading and makes the following diagram commutative on $\X_2$,
$$
\begin{CD}
\partial^*_2L' \otimes 
{}^{\phi_2} \partial^*_0
(L' \otimes {}^\phi \partial_0^*K)
@>{\id \otimes {}^{\phi_2}\partial^*_0\beta}>>
\partial^*_2L' \otimes 
{}^{\phi_2} \partial^*_0
(\partial^*_1K \otimes  L) \\
%%%%%
@V{\tau \otimes 1}VV @| \\
%%%%%
\partial^*_1L' \otimes 
{}^{\phi_2}\partial^*_0 {}^\phi \partial^*_0K 
@.
\partial^*_2L' \otimes 
\partial^*_2{}^\phi \partial^*_0K \otimes 
{}^{\phi_2} \partial^*_0 L \\
%%%%%
@| @VV{\partial^*_2\beta \otimes \id}V \\
%%%%%
\partial^*_1L' \otimes \partial^*_1{}^\phi\partial^*_0 K 
@.
\partial^*_2\partial^*_1K \otimes 
\partial^*_2L \otimes 
{}^{\phi_2} \partial^*_0 L \\
%%%%%
@V{\partial^*_1\beta}VV @VV{\id \otimes \tau'}V \\
%%%%%
\partial^*_1\partial^*_1K \otimes \partial^*_1L
@=
\partial^*_2\partial^*_1 K \otimes \partial^*_1 L.
\end{CD}
$$ 
\end{itemize}
The data $(K, \beta, b)$ and $(K', \beta', b')$ are equivalent if we have
\begin{itemize}
\item
$b' = ba$ for a map $a : \X_0 \to \Z_2$ such that $\partial_0^*a = \partial_1^*a$, and

\item
a unitary isomorphism $\alpha : K \to K'$ on $\X_0$ which preserves the $\Z_2$-grading and makes the following diagram commutative on $\X_1$,
$$
\begin{CD}
L' \otimes {}^\phi \partial^*_0 K @>\beta>> \partial^*_1K \otimes L \\
@V{\id \otimes {}^\phi \partial^*_0\alpha}VV 
@VV{\partial^*_1 \alpha \otimes \id}V \\
L' \otimes {}^\phi \partial^*_0 K' @>{\beta'}>> \partial^*_1 K' \otimes L.
\end{CD}
$$
\end{itemize}
\end{dfn}

With the morphisms above, we get a category ${}^\phi\mathfrak{Ext}(\X)$ whose objects are $\phi$-twisted $\Z_2$-graded extensions of $\X$.

We here examine a special type of a $\phi$-twisted $\Z_2$-graded extension $(L, \tau, c)$ of $\X$ such that $L \to \X_1$ is the product bundle. In this case, the unitary isomorphism $\tau$ of Hermitian line bundles amounts to a function $\tau : \X_2 \to U(1)$ satisfying
$$
\partial_3^* \tau \cdot \partial_1^*\tau
=
{}^{\phi_3}\partial_0^*\tau \cdot
\partial_2^* \tau.
$$
Moreover, if $\X$ is the quotient groupoid $X = X//G$, then $\tau$ is a function $\tau : G \times G \times X \to U(1)$ satisfying
$$
\tau(g, h; kx) \cdot
\tau(gh, k; x)
=
{}^{\phi(g)}\tau(h, k; x) \cdot
\tau(g, hk; x).
$$
Thus, in terms of group cohomology, we have $\tau \in Z^2_{\mathrm{group}}(G; C(X, U(1))_\phi)$, namely, $\tau$ is a $2$-cocycle of $G$ with values in the group $C(X, U(1))$ of $U(1)$-valued functions regarded as a two-sided $G$-module by the homomorphism $\phi : G \to \Z_2$ and the pull-back action of $G$ (see Appendix \ref{sec:classification_of_twists} for details). Under the same assumption, the unitary isomorphism $\beta$ in an isomorphism $[K, \beta, b] : (L, \tau, c) \to (L', \tau', c')$ of $\phi$-twisted $\Z_2$-graded central extensions amounts to a $1$-cochain $\beta$ of $G$ such that
$$
\tau(g, h; x) \beta(gh; x) =
{}^{\phi(g)}\beta(h; x) \beta(g; hx) \tau'(g, h; x),
$$
modulo the coboundary of a $0$-cochain $a$ of $G$.

%%%%%%%%%%%%%%%%%%%%%%%%%%%%%%%%%%%%%%%%%%%%%%%%
\subsection{Twist}
\label{subsec:twist}

Generalizing \cite{FHT1}, we define twists involving $\phi$ as follows.

\begin{dfn}
Let $\X$ be a groupoid, and $\phi = (F : \tilde{\X} \to \X, \phi)$ an object of $\Phi(\X)$.
\begin{enumerate}
\item[(a)]
A \textit{graded $\phi$-twist} (or a \textit{$\phi$-twist}, a \textit{twist} for short) on $\X$ consists of:
\begin{itemize}
\item
a local equivalence $\tilde{F} : \tilde{\tilde{\X}} \to \tilde{\X}$,

\item
$\tilde{F}^*\phi$-twisted $\Z_2$-graded extension $(L, \tau, c)$ of $\tilde{\tilde{\X}}$.
\end{itemize}
We may write $(\tau, c)$ for a $\phi$-twist $(\tilde{F} : \tilde{\tilde{\X}} \to \tilde{\X}, L, \tau, c)$.

\item[(b)]
For graded $\phi$-twists $(\tilde{F}_1 : \tilde{\tilde{\X}}_1 \to \tilde{\X}, L_1, \tau_1, c_1)$ and $(\tilde{F}_2 : \tilde{\tilde{\X}}_2 \to \tilde{\X}, L_2, \tau_2, c_2)$ on $\X$, the set of isomorphisms is defined as
$$
\varinjlim_{\tilde{\Y}}
\mathrm{Mor}_{{}^\phi\mathfrak{Ext}(\tilde{\Y})}
(\tilde{\pi}^*_1(c_1, L_1, \tau_1), 
\tilde{\pi}_2^*(c_2, L_2, \tau_2)),
$$
where $\tilde{\Y}$ runs over groupoids which fill the diagram of local equivalences
$$
\begin{CD}
\tilde{\Y} @>{\tilde{\pi}_2}>> \tilde{\tilde{\X}}_2 \\
@V{\tilde{\pi}_1}VV @VV{\tilde{F}_2}V \\
\tilde{\tilde{\X}}_1 @>{\tilde{F}_1}>> \tilde{\X}.
\end{CD}
$$
\end{enumerate}
We write ${}^\phi\Twist(\X)$ for the category of graded $\phi$-twists on $\X$.
\end{dfn}

As in the case of extensions of groupoids, ungraded twists are defined by forgetting the information on the $\Z_2$-grading $c$ in the definition above. Any ungraded twist can be thought of as a graded twist by the trivial $\Z_2$-grading. Hence the category ${}^\phi\Twist(\X)$ of graded $\phi$-twists on $\X$ contains the category ${}^\phi\Twist^+(\X)$ of ungraded $\phi$-twists as its full subcategory. By the tensor product of line bundles, these categories give rise to monoidal categories. Considering the isomorphism classes of these monoidal categories, we get the groups $\pi_0({}^\phi\Twist(\X))$ and $\pi_0({}^\phi\Twist^+(\X))$. By means of \v{C}ech cohomology groups (cf.\ \cite{G2, Kub}), we can show the following classification of twists
\begin{align*}
\pi_0({}^\phi\Twist(\X)) &\cong H^3(\X; \Z_\phi) \times H^1(\X; \Z_2), &
&(\mbox{set bijection}) \\
\pi_0({}^\phi\Twist^+(\X)) &\cong H^3(\X; \Z_\phi). &
&(\mbox{group isomorphism})
\end{align*}
The group structure on $\pi_0({}^\phi\Twist(\X))$ leads to the exact sequence of groups
$$
1 \to 
H^3(\X; \Z_\phi) \to 
H^3(\X; \Z_\phi) \times 
H^1(\X; \Z_2) \to 
H^1(\X; \Z_2) \to
1.
$$
With some calculations, we can identify the extension class of $\pi_0({}^\phi\Twist(\X))$ with the cup product $\cup : H^1(\X; \Z_2) \times H^1(\X; \Z_2) \to H^2(\X; \Z_2)$ followed by the Bockstein homomorphism $\tilde{\beta} : H^2(\X; \Z_2) \to H^3(\X; \Z_\phi)$ associated to the short exact sequence of coefficients $\Z_\phi \overset{2}{\to} \Z_\phi \to (\Z_2)_\phi = \Z_2$.

\medskip

For maps $f_i : \X' \to \X$ of groupoids ($i = 0, 1$), a homotopy is defined as a map $\tilde{f} : \X' \times [0, 1] \to \X$ such that $\tilde{f}|_{\X \times \{ i \}} = \tilde{f}_i$, where $\X' \times [0, 1]$ is the groupoid such that $(\X' \times [0, 1])_j = \X'_j \times [0, 1]$. 
\begin{lem} \label{lem:homotopy_property}
Let $\X$ be a local quotient groupoid, and $\phi : \X \to \pt//\Z_2$ a map of groupoids. Suppose that there is a homotopy $\tilde{f} : \X' \times [0, 1] \to \X$ of maps $\tilde{f}_0$ and $\tilde{f}_1$ from another groupoid $\X'$ to $\X$. For any $\phi$-twisted $\Z_2$-graded extension $(L, \tau, c)$ of $\X$, there is an isomorphism $\beta : f_0^*(L, \tau, c) \to f_1^*(L, \tau, c)$.
\end{lem}
\begin{proof}
This lemma is essentially a consequence of the homotopy invariance of the cohomology that classifies $\phi$-twisted $\Z_2$-graded extensions: We have a homotopy $\tilde{f} : \X'_1 \times [0, 1] \to \X_1$ between maps $f_i : \X'_1 \to \X_1$ on the space of morphisms. This homotopy induces an isomorphism $\beta : f_0^*L \to f_1^*L'$ of $c$-graded line bundles. Together with the product bundle $K$ and the trivial map $b$, the isomorphism $\beta$ gives rise to an isomorphism of the $\phi$-twisted extensions. 
\end{proof}

%%%%%%%%%%%%%%%%%%%%%%%%%%%%%%%%%%%%%%%%%%%%%%%%
\subsection{Twisted vector bundle}
\label{subsec:twisted_vector_bundle}

As is mentioned, a $\Z_2$-graded vector bundle $E$ on a space $X$ is a vector bundle with a decomposition $E = E^0 \oplus E^1$. Such a decomposition is in one to one correspondence with an involution $\epsilon : E \to E$ covering the identity of $X$. A fiber preserving map $f : E \to E$ is said to be degree $k$ if $f \circ \epsilon = (-1)^k \epsilon \circ f$. Notice that, for $i = 1, 2$, if $f_i$ is a map of degree $\lvert f_i \rvert$ and $x_i$ is an element of degree $\lvert x_i \rvert$, then $(f_1 \otimes f_2)(x_1 \otimes x_2) = (-1)^{\lvert f_2 \rvert \lvert x_1 \rvert} f_1(x_1) \otimes f_2(x_2)$ under our sign rule.

For $p, q \ge 0$, we write $Cl_{p, q}$ for the Clifford algebra \cite{A-B-S,L-M} associated to the quadratic form $Q(x) = x_1^2 + \cdots + x_p^2 - x_{p+1}^2 - \cdots - x_{p+q}^2$on $\R^{p + q}$. Concretely, $Cl_{p, q}$ is the algebra over $\R$ generated by $e_1, \ldots, e_{p+q}$ subject to the relations
\begin{align*}
e_ie_j &= - e_je_i, \quad (i \neq j), &
e_i^2 &=
\left\{
\begin{array}{ll}
-1, & (i = 1, \ldots, p) \\
1. & (i = p+1, \ldots, p+q)
\end{array}
\right.
\end{align*}
As is known, $Cl_{p, q}$ has a natural $\Z_2$-grading. A representation of $Cl_{p, q}$ or a $Cl_{p, q}$-module on a $\Z_2$-graded Hermitian vector space $E$ will mean an algebra homomorphism $\gamma : Cl_{p, q} \to \mathrm{End}_{\C}(E)$ of degree $0$ such that $\gamma(e)$ is unitary for each vector $e \in \R^{p+q}$ of unit norm. An equivalent definition of a $Cl_{p, q}$-module is a $\Z_2$-graded Hermitian vector space equipped with odd unitary maps $\gamma_i = \gamma(e_i)$, ($i = 1, \ldots, p+q$) subject to
\begin{align*}
\gamma_i\gamma_j &= - \gamma_j\gamma_i, \quad (i \neq j), &
\gamma_i^2 &=
\left\{
\begin{array}{ll}
-1, & (i = 1, \ldots, p) \\
1. & (i = p+1, \ldots, p+q)
\end{array}
\right.
\end{align*}

Now, we introduce the notion of twisted bundles \cite{F-M} in our convention.

\begin{dfn}[\cite{F-M}] \label{dfn:twisted_bundle}
Let $\X$ be a groupoid, $\phi : \X \to \pt//\Z_2$ a map of groupoids, and $(L, \tau, c)$ a $\phi$-twisted $\Z_2$-graded extension of $\X$. For $p, q \ge 0$, a \textit{$(\phi, \tau, c)$-twisted vector bundle $E$ over $\X$ with $Cl_{p, q}$-action} (or a \textit{twisted bundle} for short) is a vector bundle $E \to \X_0$ such that its fiber is a separable Hilbert space and is equipped with the following data:
\begin{itemize}
\item
($\Z_2$-grading)
a self-adjoint involution $\epsilon : E \to E$ which specifies a $\Z_2$-grading $E = E^0 \oplus E^1$ by $E^k = \mathrm{Ker}(\epsilon - (-1)^k)$.

\item
($(\phi, \tau, c)$-twisted action)
an isometric map 
$$
\rho : L \otimes {}^\phi \partial_0^*E \to \partial_1^*E
$$ 
on $\X_1$ which preserves the $\Z_2$-grading and makes the following diagram commutative on $\X_2$,
\begin{equation*}
\begin{CD}
\partial_2^*L \otimes
{}^{\phi_2}\partial_0^*L \otimes
{}^{(\phi_2 \partial_0^*\phi)}\partial_0^*\partial_0^* E
@=
\partial_2^*L \otimes 
{}^{\phi_2}\partial_0^*(L \otimes {}^\phi \partial_0^*E) \\
%%%%%
@V{\tau \otimes \id}VV 
@VV{\id \otimes {}^{\phi_2}\partial_0^*\rho}V \\
%%%%%
\partial_1^*L \otimes {}^{\partial_1^*\phi}\partial_0^*\partial_0^* E
@. 
\partial_2^*L \otimes 
{}^{\phi_2} \partial_0^*\partial_1^* E \\
%%%%%
@| @| \\
%%%%%
\partial_1^*L \otimes
\partial_1^*{}^\phi\partial_0^*E
@.
\partial_2^*L \otimes 
\partial_2^*{}^\phi \partial_0^* E \\
%%%%%
@V{\partial_1^*\rho}VV @VV{\partial_2^*\rho}V \\
%%%%%
\partial_1^*\partial_1^* E
@=
\partial_2^*\partial_1^* E.
\end{CD}
%\tag{$*$}
%\label{formula:twisted_action}
\end{equation*}

\item
($Cl_{p, q}$-action)
Unitary maps $\gamma(e) : E \to E$ for unit norm elements $e \in Cl_{p, q}$ which make each fiber of $E$ into a representation of $Cl_{p, q}$ and the following diagram into a commutative one on $\X_1$,
$$
\begin{CD}
L \otimes {}^\phi \partial_0^* E @>{\rho}>> \partial_1^*E \\
@V{1 \otimes {}^\phi\partial_0^*\gamma(e)}VV 
@VV{\partial_1^*\gamma(e)}V \\
L \otimes {}^\phi \partial_0^* E @>{\rho}>> \partial_1^*E.
\end{CD}
$$
\end{itemize}
\end{dfn}

In the ``fiberwise expression'', $\rho$ amounts to $\rho_f : L_f \otimes {}^{\phi(f)}E_{x} \to E_y$ at a morphism $f : x \to y$ in $\X_1$, and the commutative diagram on $\X_2$ amounts to
$$
\begin{CD}
L_{f_1} \otimes {}^{\phi(f_1)}L_{f_2} \otimes {}^{\phi(f_1f_2)} E_x
@>{\id \otimes {}^{\phi(f_1)}\rho_{f_2}}>>
L_{f_1} \otimes {}^{\phi(f_1)}E_y \\
@V{\tau_{(f_1, f_2)} \otimes \id}VV 
@VV{\rho_{f_1}}V \\
L_{f_1f_2} \otimes {}^{\phi(f_1f_2)} E_x 
@>{\rho_{f_1f_2}}>> E_z
\end{CD}
$$
for composable morphisms $f_2 : x \to y$ and $f_1 : y \to z$.

In Definition \ref{dfn:twisted_bundle}, the fiber of a twisted vector bundle can be both infinite-dimensional and finite-dimensional. In the infinite-dimensional case, we assume that the structure group of $E$ is topologized by the compact open topology in the sense of Atiyah and Segal \cite{A-Se,F-M}, and the maps $\epsilon$, $\rho$ and $\gamma(e)$ are continuous with respect to the topology.

In the case that some of the data $\phi$, $c$ and $\tau$ are trivial, we often omit it from the modifier ``$(\phi, \tau, c)$-twisted''. For example, when $\phi$ is trivial, we say $(\tau, c)$-twisted bundles instead of $(\phi, \tau, c)$-twisted bundles. The same omission will be applied to the action of $Cl_{0, 0}$.

\begin{dfn}
Let $\X$ be a groupoid, $\phi : \X \to \pt//\Z_2$ a map of groupoids, and $(L, \tau, c)$ a $\phi$-twisted $\Z_2$-graded extension of $\X$. A degree $k$ map 
$$
f :\ (E, \epsilon, \rho, \gamma) \longrightarrow
(E', \epsilon', \rho', \gamma')
$$
of $(\phi, \tau, c)$-twisted vector bundles on $\X$ with $Cl_{p, q}$-action is a vector bundle map $f : E \to E'$ which covers the identity of $\X_0$ and satisfies
\begin{align*}
f \circ \epsilon &= (-1)^k \epsilon' \circ f, &
\partial_1^*f \circ \rho &= \rho' \circ (\id_L \otimes {}^\phi\partial_0^*f), &
f \circ \gamma(e) &= (-1)^k \gamma(e) \circ f,
\end{align*}
where $e \in \R^{p+q}$ is of unit norm.
\end{dfn}

As before, the continuity of $f$ is understood in the compact open topology.

\medskip

It would be helpful to describe the data of a twisted bundle explicitly under a simplifying assumption. Let us consider a quotient groupoid $\X = X//G$ and $\phi$ associated to a homomorphism $\phi : G \to \Z_2$. We further assume that a $\phi$-twisted $\Z_2$-graded extension $(L, \tau, c)$ is such that $c$ is associated to a homomorphism $c : G \to \Z_2$ and $L$ is the product bundle. Under these assumptions, a $(\phi, \tau, c)$-twisted bundle is a Hilbert space bundle $E \to X$ equipped with: 
\begin{itemize}
\item
a self-adjoint involution $\epsilon : E \to E$ defining the $\Z_2$-grading,

\item
real orthogonal maps $\rho(g) : E \to E$ which cover the actions of $g \in G$ and satisfy
\begin{align*}
\sqrt{-1} \rho(g) &= \phi(g) \rho(g)\sqrt{-1}, &
\epsilon \rho(g) &= c(g) \rho(g) \epsilon, &
\rho(g)\rho(h) &= \tau(g, h)\rho(gh).
\end{align*}

\item
unitary maps $\gamma_j : E \to E$ which cover the identity of $X$ and satisfy
\begin{align*}
\gamma_i\gamma_j &= - \gamma_j\gamma_i, \ (i \neq j), &
\gamma_i^2 &=
\left\{
\begin{array}{ll}
-1, & (i = 1, \ldots, p) \\
1, & (i = p+1, \ldots, p+q)
\end{array}
\right. \\
\gamma_i \epsilon &= - \epsilon \gamma_i, &
\gamma_i \rho(g) &= c(g) \rho(g) \gamma_i.
\end{align*}
\end{itemize}
A degree $k$ map $f$ from this twisted bundle to another $(\phi, \tau, c)$-twisted bundle $E' \to X$ with the data $\epsilon'$, $\rho'$ and $\gamma_i'$ as above is a vector bundle map $f : E \to E'$ on $X$ satisfying
\begin{align*}
f \circ \epsilon &= (-1)^k \epsilon' \circ f, &
f \circ \rho(g) &= c(g)^k \rho'(g) \circ f, &
f \circ \gamma_i &= (-1)^k \gamma'_i \circ f.
\end{align*}

\medskip

As usual, a map of vector bundles can be regarded as a section of a vector bundle. 

\begin{lem} \label{lem:hom_bundle}
Let $\X$ be a groupoid, $\phi : \X \to \pt//\Z_2$ a map of groupoids, and $(L, \tau, c)$ a $\phi$-twisted $\Z_2$-graded extension of $\X$. For $(\phi, \tau, c)$-twisted vector bundles $(E, \epsilon, \rho, \gamma)$ and $(E', \epsilon', \rho', \gamma')$ on $\X$ with $Cl_{p, q}$-action, there is a $\phi$-twisted vector bundle $\mathrm{Hom}_{Cl_{p, q}}(E, E')$ on $\X$ such that the sections of its degree $k$ part are in one to one correspondence with degree $k$ maps $f : (E, \epsilon, \rho, \gamma) \to (E', \epsilon', \rho', \gamma')$.
\end{lem}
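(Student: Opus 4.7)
The plan is to construct $\mathrm{Hom}_{Cl_{p,q}}(E, E')$ fiberwise as a graded Clifford commutant, equip it with a natural $\Z_2$-grading and a $\phi$-twisted action by conjugation through $\rho, \rho'$, and then interpret sections. At each $x \in \X_0$, I take the space of bounded $\C$-linear maps $\varphi : E_x \to E'_x$ obeying the graded $Cl_{p,q}$-commutation $\varphi \circ \gamma(e) = (-1)^{|\varphi|}\gamma'(e) \circ \varphi$ for unit $e \in \R^{p+q}$, where the parity $|\varphi|$ is detected by the involution $\varphi \mapsto \epsilon' \varphi \epsilon$. These fibers assemble into a $\Z_2$-graded vector bundle on $\X_0$; in the infinite-dimensional case the compact open topology of \cite{A-Se} furnishes local trivializations because the defining conditions are linear in $\varphi$ with continuously varying coefficients.

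To install the $\phi$-twisted action, conjugate. For $g \in \X_1$ with $s = \partial_0 g$ and $t = \partial_1 g$, the isometries $\rho, \rho'$ give identifications $E_t \cong L_g \otimes {}^\phi E_s$ and $E'_t \cong L_g \otimes {}^\phi E'_s$, so the prescription
\[
\tilde{\rho}(g)(\varphi) \;=\; \rho'(g) \circ (\id_{L_g} \otimes {}^\phi \varphi) \circ \rho(g)^{-1}
\]
defines a map ${}^\phi \partial_0^* \mathrm{Hom}_{Cl_{p,q}}(E, E') \to \partial_1^* \mathrm{Hom}_{Cl_{p,q}}(E, E')$, in which the $L_g$ produced by $\rho'$ and the $L_g^{-1}$ produced by $\rho^{-1}$ cancel through the Hermitian pairing $L_g \otimes \bar{L}_g \cong \C$, so the line bundle $L$ does not appear in the target. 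For the same reason the cocycle scalars $\tau(g,h)$ and ${}^{\phi(gh)}\tau(g,h)^{-1}$ cancel in $\tilde{\rho}(g)\tilde{\rho}(h)$, so $\tilde{\rho}$ satisfies the untwisted associativity law; a direct check using $\epsilon \rho(g) = c(g) \rho(g) \epsilon$ shows that the $c(g)$ factors from the two sides also cancel in pairs, so $\tilde{\rho}$ preserves the intrinsic $\Z_2$-grading of $\mathrm{Hom}_{Cl_{p,q}}(E, E')$; and the graded Clifford commutant is invariant under conjugation because $\rho, \rho'$ intertwine $\gamma, \gamma'$. This realizes $\mathrm{Hom}_{Cl_{p,q}}(E, E')$ as a $\phi$-twisted vector bundle with trivial twisted extension.

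A section is then a continuous family $x \mapsto \varphi_x$ in the graded Clifford commutant, equivariant for $\tilde{\rho}$. Unwrapping the equivariance $\tilde{\rho}(g)(\varphi_s) = \varphi_t$ as $\partial_1^*\varphi \circ \rho = \rho' \circ (\id_L \otimes {}^\phi \partial_0^* \varphi)$, and combining it with the parity condition $\varphi\epsilon = (-1)^k\epsilon'\varphi$ and the Clifford condition $\varphi\gamma(e) = (-1)^k\gamma'(e)\varphi$ that select the degree $k$ part, reproduces exactly the three conditions in the definition of a degree $k$ map of $(\phi, \tau, c)$-twisted bundles with $Cl_{p,q}$-action. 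The one point needing care is topological: one must verify that the graded Clifford commutant is a locally trivial subbundle of the bundle of bounded operators and that $\tilde{\rho}$ is continuous in the compact open topology. Both are standard once composition and conjugation are known to be continuous in the Atiyah-Segal topology \cite{A-Se}, since the Clifford commutation and the parity decomposition are cut out by linear equations with continuously varying coefficients.
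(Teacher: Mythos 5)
Your proof is correct and takes essentially the same approach as the paper: build the $\mathrm{Hom}$-bundle on $\X_0$ with its $\Z_2$-grading given by parity (conjugation by $\epsilon, \epsilon'$), install the $\phi$-twisted action by conjugation through $\rho, \rho'$, and identify degree-$k$ equivariant sections with degree-$k$ maps. The only cosmetic difference is that the paper first handles $p=q=0$ and then passes to the Clifford-respecting subbundle, whereas you construct the graded Clifford commutant directly; you also make the $\tau$- and $c$-cancellations explicit where the paper merely asserts the coherence condition holds.
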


\begin{proof}
We first consider the case without the Clifford actions ($p = q = 0$). The $\phi$-twisted vector bundle $\mathrm{Hom}(E, E')$ is constructed as follows. Its underlying vector bundle is $\mathrm{Hom}(E, E')$ on $\X_0$. This vector bundle has the $\Z_2$-grading $\varepsilon$ by the degree of maps, and its fiber is a Hilbert space since $E$ and $E'$ are. In the compact open topology, continuous sections of $\mathrm{Hom}(E, E') \to \X_0$ are in one to one correspondence with continuous maps $E \to E'$. On $\X_1$, we define a $\phi$-twisted action $\varrho$ to be the composition of the degree $0$ maps
$$
\begin{array}{c@{}c@{}c@{}c@{}c@{}}
{}^\phi \partial_0^*\mathrm{Hom}(E, E')
& = &
\mathrm{Hom}(L \otimes {}^\phi \partial_0^*E,
L \otimes {}^\phi \partial_0^*E')
& \to &
%\mathrm{Hom}(\partial_1^*E, \partial_1^*E')
%=
\partial_1^*\mathrm{Hom}(E, E'). \\
%%%%%
f & \mapsto & \id \otimes f & \mapsto &
\rho' \circ (\id \otimes f) \circ \rho^{-1}
\end{array}
$$
This map $\varrho$ satisfies the coherence condition on $\X_1$, making $(\mathrm{Hom}(E, E'), \varepsilon, \varrho)$ into a $\phi$-twisted vector bundle on $\X$. By construction, a section of the degree $k$ part $\mathrm{Hom}^k(E, E')$ is a section $s : \X_0 \to \mathrm{Hom}^k(E, E')$ such that $\varrho \circ {}^\phi \partial^*_0s = \partial_1^*s$. Such sections $s$ are clearly one to one correspondence with degree $k$ maps $E \to E'$ of $(\phi, \tau, c)$-twisted bundles. If the $Cl_{p, q}$-actions are present, then there clearly exists a subbundle $\mathrm{Hom}_{Cl_{p, q}}(E, E')$ of $\mathrm{Hom}(E, E')$ respecting the Clifford actions.
\end{proof}

\medskip

For a groupoid $\X$, a map of groupoids $\phi : \X \to \Z_2$, and a $\phi$-twisted $\Z_2$-graded extension $(L, \tau, c)$ of $\X$, we denote the category of $(\phi, \tau, c)$-twisted vector bundles on $\X$ with $Cl_{p, q}$-action by
$$
{}^\phi\Vect^{(\tau, c) + (p, q)}(\X).
$$
In the case that $\X$ is a quotient groupoid $X//G$, we may write
$$
{}^\phi\Vect^{(\tau, c) + (p, q)}(X//G)
= {}^\phi\Vect^{(\tau, c) + (p, q)}_G(X).
$$
The tensor product of twisted bundles induces a functor
$$
\otimes : 
{}^\phi\Vect^{(\tau, c) + (p, q)}(\X) \times
{}^\phi\Vect^{(\tau', c') + (p', q')}(\X) \to
{}^\phi\Vect^{((\tau, c) + (\tau', c'))+ (p + p', q+q')}(\X).
$$
A map of groupoids $f : \X' \to \X$ also induces by pull-back a functor
$$
f^* : \
{}^\phi\Vect^{(\tau, c) + (p, q)}(\X) \longrightarrow
{}^{f^*\phi}\Vect^{(f^*\tau, f^*c) + (p, q)}(\X'),
$$
and a representative $(K, \beta, b)$ of an isomorphism $[K, \beta, b] : (L', \tau', c') \to (L, \tau, c)$ of $\phi$-twisted $\Z_2$-graded extensions induces
$$
(K, \beta, b)^* : 
{}^\phi\Vect^{(\tau, c) + (p, q)}(\X) \longrightarrow
{}^\phi\Vect^{(\tau', c') + (p, q)}(\X)
$$
by the assignment of twisted bundles $E \mapsto K \otimes E$. We remark that, in general, an automorphism of $(L, \tau, c)$ acts non-trivially on ${}^\phi\Vect^{(\tau, c) + (p, q)}(\X)$.

%%%%%%%%%%%%%%%%%%%%%%%%%%%%%%%%%%%%%%%%%%%%%%%%
\subsection{Locally universal bundle}

We introduce here an extension of the notion of locally universal twisted Hilbert bundles given in \cite{FHT1}.

\begin{dfn}
Let $\X$ be a groupoid, $\phi : \X \to \pt//\Z_2$ a map of groupoids, and $(L, \tau, c)$ a $\phi$-twisted $\Z_2$-graded extension of $\X$. A $(\phi, \tau, c)$-twisted vector bundle $E$ on $\X$ with $Cl_{p, q}$-action is called \textit{locally universal} if there is an isometric embedding $E' \to E|_{\X'}$ for any open full subgroupoid $\X' \subset \X$ and any $(\phi|_{\X'}, \tau|_{\X'}, c|_{\X'})$-twisted vector bundle $E'$ on $\X'$ with $Cl_{p, q}$-action.
\end{dfn}

Extending argument in \cite{FHT1}, one can show that an embedding $E' \to E|_{\X'}$ as above is unique up to homotopy, and hence $E$ is unique up to unitary isomorphisms. Also, if $E$ is locally universal, then so is $E \oplus E$.

\begin{lem} \label{lem:locally_unversal_bundle}
Let $\X$ be a local quotient groupoid, $\phi : \X \to \pt//\Z_2$ a map of groupoids, and $(L, \tau, c)$ a $\phi$-twisted $\Z_2$-graded extension of $\X$. There is a $(\phi, \tau, c)$-twisted locally universal twisted vector bundle $E$ on $\X$ with $Cl_{p, q}$-action.
\end{lem}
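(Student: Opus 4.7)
The strategy is to construct the locally universal bundle first on a quotient groupoid $X//G$ with $G$ a compact Lie group, then to glue such local models via a partition of unity subordinate to the cover that defines the local quotient structure of $\X$.

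For the first step, by Lemma \ref{lem:homotopy_property} together with the local equivalences provided by the definition of a $\phi$-twist, we may assume $\X = X//G$, that $\phi$ and $c$ arise from homomorphisms $G \to \Z_2$, and that $(L,\tau,c)$ is realized by a group $2$-cocycle $\tau$ on the trivial line bundle. Choose a separable $\Z_2$-graded Hilbert space $S_{p,q}$ containing every finite-dimensional graded $Cl_{p,q}$-module with countably infinite multiplicity, and a separable Hilbert space $\mathcal{H}_G^{(\tau,c)}$ containing every finite-dimensional $(\phi,\tau,c)$-twisted unitary representation of $G$ with countably infinite multiplicity; such a space exists because, up to unitary equivalence, the $(\phi,\tau,c)$-twisted irreducible representations of the compact Lie group $G$ form at most a countable set. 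Set $E_G = X \times (\mathcal{H}_G^{(\tau,c)} \otimes S_{p,q})$ with diagonal $(\phi,\tau,c)$-twisted $G$-action, induced $\Z_2$-grading, and $Cl_{p,q}$-action from $S_{p,q}$. A Peter--Weyl argument combined with a partition of unity on $X$ shows $E_G$ is locally universal: any $(\phi,\tau,c)$-twisted bundle $E'$ with $Cl_{p,q}$-action locally decomposes into finitely many irreducible twisted $G$-representations tensored with finite-dimensional Clifford modules, each of which embeds into the fiber of $E_G$ with room to spare, and these pointwise embeddings patch together globally by the Tietze-type extension property for sections of the $\mathrm{Hom}_{Cl_{p,q}}$-bundle (Lemma \ref{lem:hom_bundle}) established in the proof of Lemma \ref{lem:homotopy_property}.

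For the second step, since $\X$ is a local quotient groupoid and the base is paracompact, choose a countable open cover $\{\X_j\}_{j\ge 1}$ by full subgroupoids each locally equivalent to a quotient groupoid $X_j//G_j$ with $G_j$ a compact Lie group, together with a subordinate partition of unity $\{\psi_j\}$ on $\X_0$ which we may take to be $\X_1$-invariant after averaging against each $G_j$. On each $\X_j$, the first step produces a locally universal twisted bundle $E_j$ with $Cl_{p,q}$-action. Rescale the Hermitian metric on $E_j$ by $\psi_j$ and extend by zero outside $\X_j$ to obtain a continuous twisted Hilbert bundle $\widetilde{E}_j$ on $\X$, and define
\[
E \;=\; \bigoplus_{j \ge 1} \widetilde{E}_j
\]
as an $\ell^2$-direct sum, which is a separable $(\phi,\tau,c)$-twisted Hilbert bundle on $\X$ with $Cl_{p,q}$-action. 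Given any open full subgroupoid $\X' \subset \X$ and any twisted bundle $E'$ on $\X'$ with $Cl_{p,q}$-action, local universality of $E_j$ yields isometric embeddings $\iota_j \colon E'|_{\X' \cap \X_j} \to E_j|_{\X' \cap \X_j}$, and these assemble via $\sum_j \sqrt{\psi_j}\,\iota_j$ into an isometric embedding $E'|_{\X'} \hookrightarrow E|_{\X'}$ because $\sum_j \psi_j = 1$.

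The main obstacle is the passage from the quotient groupoid case to the local quotient groupoid case: one must verify that the $\ell^2$-direct sum of the zero-extended bundles $\widetilde{E}_j$ is a genuine continuous Hilbert bundle on $\X$ compatible with the twisted action, and that the $\sqrt{\psi_j}$-weighted sum of local embeddings converges uniformly on compacta in the compact-open topology on operators. Both points are handled by the standard $\sqrt{\psi_j}$-rescaling technique used in the construction of universal Hilbert bundles on local quotient groupoids in \cite{FHT1}, which is also what underlies Lemma \ref{lem:homotopy_property}; beyond this, the verification that $E$ is indeed $(\phi,\tau,c)$-twisted with $Cl_{p,q}$-action is a routine functorial check.
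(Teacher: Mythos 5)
Your proposal diverges from the paper's proof in the structure of the reduction, and this leads to two genuine gaps. The paper first eliminates the line bundle $L$ and the cocycle $\tau$ entirely by passing to the groupoid $\mathcal{L}$ with $\mathcal{L}_0 = \X_0$ and $\mathcal{L}_1 = S(L)$, converting $(\phi,\tau,c)$-twisted bundles into $U(1)$-equivariant $(\pi^*\phi,\pi^*c)$-twisted bundles. It then uses slices and $G_\alpha$-equivariant homotopy equivalences to $G_\alpha/H$ to reduce further to the point case $\pt//G$, at which stage the twisting data consist only of homomorphisms $\phi, c : G \to \Z_2$, and the actual existence is proved via the Mackey decomposition. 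You instead try to build a universal bundle directly on $X//G$ as a product $X \times (\mathcal{H}_G^{(\tau,c)} \otimes S_{p,q})$. This step is not well-posed: the cocycle $\tau$ is in general a continuous function $\tau : G \times G \times X \to U(1)$, so there is no single fixed space $\mathcal{H}_G^{(\tau,c)}$ carrying a ``diagonal $(\phi,\tau,c)$-twisted $G$-action''; the projective multiplier varies over $X$, and the isomorphism classes of $\tau$-twisted irreducible representations can change along $X$. Your Peter--Weyl and partition-of-unity remark would only give a local decomposition near each point of $X$, with transition problems exactly of the kind the paper's sphere-bundle reduction is designed to eliminate.

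The second gap is in the gluing. Rescaling the Hermitian metric of $E_j$ by $\psi_j$ and extending by zero outside $\X_j$ produces an object $\widetilde{E}_j$ whose fibers degenerate to $\{0\}$ outside the support of $\psi_j$; it is not a locally trivial Hilbert bundle, and neither is the $\ell^2$-direct sum $E = \bigoplus_j \widetilde{E}_j$. In fact $E$ could have zero fibers over points in $\X_0$ where all but finitely many $\psi_j$ vanish, so it cannot be locally universal. The $\sqrt{\psi_j}$-weighting belongs not in the construction of the ambient bundle but in the construction of the isometric embedding: one should fix a genuine $(\phi,\tau,c)$-twisted Hilbert bundle $E$ on all of $\X$ (e.g.\ a countably infinite direct sum of a single locally universal model, pulled back compatibly across charts as in \cite{FHT1}) and then assemble the embeddings $\sum_j \sqrt{\psi_j}\,\iota_j$ into $E$, which remains a bona fide bundle. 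Both obstacles are resolved in the paper by the two-step reduction ($S(L)$ then slices) together with the point-case Lemma \ref{lem:locally_universal_bundle_point_case} and the gluing as in \cite{FHT1}.
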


\begin{proof}
The idea of the proof is basically the same as that given in \cite{FHT1}. 

First of all, it is enough to consider the case where $(L, \tau)$ is trivial. The key to this reduction is the groupoid $\mathcal{L}$ such that its space of objects is $\mathcal{L}_0 = \X_0$ and its space of isomorphisms $\mathcal{L}_1 = S(L)$ is the unit sphere bundle of $L \to \X_1$. The pull-back under the projection $\pi : S(L) \to \X_1$ induces a one to one correspondence between $(\phi, \tau, c)$-twisted bundles on $\X$ with $Cl_{p, q}$-action and $(\pi^*\phi, \pi^*c)$-twisted bundles on $\mathcal{L}$ with $Cl_{p, q}$-action which are equivariant under the right $U(1)$-action on $S(L)$. 

Then, we can further reduce the problem, and it suffices to consider the case where the base groupoid $\X$ is the quotient groupoid $\pt//G$ with $G$ a compact Lie group. The key to this reduction is that we can glue locally universal twisted bundles together to form a locally universal twisted bundles. By design, a local quotient groupoid is covered by open full subgroupoids $\X_\alpha$ which are weak equivalent to the quotient groupoids $X_\alpha//G_\alpha$, with the cardinality of indices $\alpha$ countable. Here $G_\alpha$ are compact Lie groups and $X_\alpha$ are Hausdorff spaces which are locally contractible, paracompact and completely regular. Each $X_\alpha$ admits locally contractible slices, and each slice is $G_\alpha$-equivariantly homotopy equivalent to the space of the form $G_\alpha/H$ with $H \subset G_\alpha$ a closed subgroup. The inclusion induces a local equivalence $\pt//H \to (G_\alpha/H)//G_\alpha$. Thus, there is a locally universal bundle on $\X$, if there is a locally universal $(\phi, c)$-twisted bundle with $Cl_{p, q}$-action on the quotient groupoid of the form $\pt//G$ with $G$ any compact Lie group. 

Now, the remaining thing to show is the existence of a $(\phi, c)$-twisted (locally) universal vector bundle on $\pt//G$ with $Cl_{p, q}$-action, where $G$ is any compact Lie group, and $\phi : G \to \Z_2$ and $c : G \to \Z_2$ are any continuous homomorphisms. This existence is shown in Appendix \ref{sec:mackey_decomposition} (Lemma \ref{lem:locally_universal_bundle_point_case}), by using the so-called Mackey decomposition, which reduces the consideration of a representation of a group to that of projective representations of smaller groups.
\end{proof}

%%%%%%%%%%%%%%%%%%%%%%%%%%%%%%%%%%%%%%%%%%%%%%%%
%%%%%%%%%%%%%%%%%%%%%%%%%%%%%%%%%%%%%%%%%%%%%%%%

\section{Fredholm formulation of Freed-Moore $K$-theory}
\label{sec:Fredholm_formulation}

In this section, we provide the Fredholm formulation of the Freed-Moore $K$-theory, and prove its periodicity and the degree shift effects of some twists. The reproductions of known $K$-theories, a relationship to the finite rank formulation in \cite{F-M}, and the Thom isomorphism theorem are also provided.

%%%%%%%%%%%%%%%%%%%%%%%%%%%%%%%%%%%%%%%%%%%%%%%%
\subsection{Fredholm family}
\label{subsec:fredholm_family}

Let $\X$ be a groupoid, $\phi : \X \to \pt//\Z_2$ a map of groupoids, $(L, \tau, c)$ a $\phi$-twisted $\Z_2$-graded extension of $\X$, and $(E, \epsilon, \rho, \gamma)$ a $(\phi, \tau, c)$-twisted vector bundle  on $\X$ with $Cl_{p, q}$-action. As in Lemma \ref{lem:hom_bundle}, we let $\mathrm{End}(E) = \mathrm{Hom}(E, E)$ be the $\phi$-twisted vector bundle on $\X$ whose sections are in one to one correspondence with continuous maps $(E, \epsilon, \rho) \to (E, \epsilon, \rho)$, where the Clifford action is ignored. The fiber and the structure group of $\mathrm{End}(E)$ are given the compact open topology \cite{A-Se}. We also let $\mathrm{K}(E) \to \X$ be a fiber bundle defined as follows.
\begin{itemize}
\item
The fiber of the underlying fiber bundle $\mathrm{K}(E) \to \X_0$ at $x \in \X_0$ consists of compact operators $K : E_x \to E_x$.

\item
The bundle isomorphism $\varrho : {}^\phi \partial_0^*\mathrm{K}(E) \to \partial_1^*\mathrm{K}(E)$ on $\X_1$ is given by $\varrho(K) = \rho \circ (\id_L \otimes K) \circ \rho^{-1}$, where $\id_L : L \to L$ is the identity map.
\end{itemize}
We topologize the fiber of $\mathrm{K}(E)$ by using the operator norm, while its structure group by the compact open topology in \cite{A-Se}.

\begin{dfn}[Fredholm family] \label{dfn:fredholm_family}
Let $\X$ be a groupoid, $\phi : \X \to \pt//\Z_2$ a map of groupoids, and $(L, \tau, c)$ a $\phi$-twisted $\Z_2$-graded extension of $\X$. For a $(\phi, \tau, c)$-twisted vector bundle $(E, \epsilon, \rho, \gamma)$ on $\X$ with $Cl_{p, q}$-action, we define a fiber bundle $\Fred(E) \to \X$ as follows:
\begin{itemize}
\item
The fiber of the underlying fiber bundle $\Fred(E) \to \X_0$ at $x \in \X_0$ consists bounded operators $A : E_x \to E_x$ such that
\begin{itemize}
\item[(i)]
$A$ are skew-adjoint: $A^* = -A$.

\item[(ii)]
$A^2 + \id$ are compact.

\item[(iii)]
$\mathrm{Spec}(A) \subset [-i, i]$.

\item[(iv)]
$A$ are degree $1$, and anti-commute with the $Cl_{p, q}$-action, that is,
\begin{align*}
A \epsilon &= - \epsilon A, &
A \gamma( e ) &= - \gamma( e ) A,
\end{align*}
for any unit norm element $e \in \R^{p+q}$.
\end{itemize}

\item
The bundle isomorphism $\varrho : {}^\phi \partial_0^*\Fred(E) \to \partial_1^*\Fred(E)$ on $\X_1$ is given by $\varrho(A) = \rho \circ (\id_L \otimes A) \circ \rho^{-1}$, where $\id_L : L \to L$ is the identity map.
\end{itemize}
The fiber bundle $\Fred(E)$ is topologized by the following map
\begin{align*}
\Fred(E) &\to \mathrm{End}(E) \times \mathrm{K}(E), &
A &\mapsto (A, A^2 + \id),
\end{align*}
where the fibers of $\mathrm{End}(E) = \mathrm{Hom}(E, E)$ and $\mathrm{K}(E)$ are respectively given the compact open topology and the operator norm topology, and the structure groups of $\mathrm{End}(E)$ and $\mathrm{K}(E)$ are topologized by the compact open topology in the sense of \cite{A-Se}. The space of sections is defined by
$$
\Gamma(\X, \Fred(E)) =
\{ A \in \Gamma(\X_0, \Fred(E)) |\ 
\varrho \circ {}^\phi \partial_0^*A = \partial_1^*A \}.
$$
\end{dfn}

\smallskip

We write $\Fred(E)^* \subset \Fred(E)$ for the subbundle such that the fiber of the underlying fiber bundle $\Fred(E)^* \to \X_0$ consists of invertible operators. We also write $\Fred(E)^\dagger \subset \Fred(E)^*$ for the subbundle such that the fiber of the underlying bundle $\Fred(E)^\dagger \to \X_0$ consists of operators squaring to $-\id$. Therefore we have
\begin{align*}
\Gamma(\X, \Fred(E)^*)
&= \{ A \in \Gamma(\X, \Fred(E)) |\ 
\mbox{$A_x$ is invertible for each $x \in \X_0$} 
\}, \\
\Gamma(\X, \Fred(E)^\dagger)
&= \{ A \in \Gamma(\X, \Fred(E)) |\ 
\mbox{$A_x^2 = - \id$ for each $x \in \X_0$} 
\}.
\end{align*}
By functional calculus, $\Gamma(\X, \Fred(E)^\dagger) \subset \Gamma(\X, \Fred(E)^*)$ is a deformation retract, where the compact open topology are considered in the space of sections.

\begin{lem} \label{lem:extend_Clifford_action}
Let $\X$ be a local quotient groupoid, $\phi : \X \to \pt//\Z_2$ a map of groupoids, and $(L, \tau, c)$ a $\phi$-twisted $\Z_2$-graded extension of $\X$. Suppose that $E$ is a $(\phi, \tau, c)$-twisted locally universal vector bundle on $\X$ with $Cl_{p, q}$-action. Then $\Gamma(\X, \Fred(E)^\dagger)$ is non-empty and weakly contractible.
\end{lem}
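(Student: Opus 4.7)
The plan is to identify $\Gamma(\X, \Fred(E)^\dagger)$ with the space of $(\phi, \tau, c)$-twisted $Cl_{p+1, q}$-enhancements of $E$ and then to exploit the absorption property of locally universal bundles in two parallel ways: to produce a section, and to transport any family of sections to a standard one. The identification comes directly from Definition \ref{dfn:fredholm_family}: the conditions that $A$ be skew-adjoint, degree one, anticommuting with $\gamma$, and squaring to $-\id$ (the compactness and spectral conditions being automatic in that case), together with the section condition $\varrho \circ {}^\phi\partial_0^* A = \partial_1^* A$, are precisely what is needed for $\gamma'(e_{p+1}) := A$ to extend $\gamma : Cl_{p, q} \to \mathrm{End}(E)$ to a $(\phi, \tau, c)$-twisted $Cl_{p+1, q}$-action. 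Under this reinterpretation both claims of the lemma become statements about the enhancement space.

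For non-emptiness, I would invoke Lemma \ref{lem:locally_unversal_bundle} to produce a locally universal $(\phi, \tau, c)$-twisted bundle $E'$ on $\X$ with $Cl_{p+1, q}$-action $\gamma_{E'}$. Its restriction to a $Cl_{p, q}$-action is itself locally universal by an absorption argument: given any $(\phi, \tau, c)$-twisted $Cl_{p, q}$-bundle $F$ on an open full subgroupoid $\X' \subset \X$, the graded tensor product $F \hat{\otimes} V$ with an irreducible $Cl_{1, 0}$-module $V$ inherits a natural $Cl_{p+1, q}$-structure through the Koszul sign rule and therefore embeds isometrically into $E'|_{\X'}$, while $F$ embeds into $F \hat{\otimes} V$ as a $Cl_{p, q}$-subbundle via $x \mapsto x \hat{\otimes} v_0$ for a unit vector $v_0 \in V^0$. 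By uniqueness of locally universal bundles up to unitary isomorphism, choose a structure-preserving $U : E' \to E$ of $Cl_{p, q}$-bundles; then $A_0 := U \gamma_{E'}(e_{p+1}) U^{-1}$ lies in $\Gamma(\X, \Fred(E)^\dagger)$.

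For weak contractibility, I would show that every continuous map $f : S^n \to \Gamma(\X, \Fred(E)^\dagger)$ extends over $D^{n+1}$. Regarded as a section over the product groupoid $\X \times S^n$, the map $f$ equips $\pi^*E$ with a $Cl_{p+1, q}$-enhancement, locally universal by the same absorption argument, where $\pi : \X \times S^n \to \X$ is the projection. Pulling $A_0$ back along $\tilde{\pi} : \X \times D^{n+1} \to \X$ yields a comparison locally universal $Cl_{p+1, q}$-enhancement $\tilde{A}_0$ on $\tilde{\pi}^*E$. By uniqueness of locally universal $Cl_{p+1, q}$-bundles and homotopy-uniqueness of intertwining isomorphisms, there is a section $U$ over $\X \times S^n$ of the bundle $\mathcal{U}^{p, q}(\pi^*E)$ of $(\phi, \tau, c)$-preserving, $Cl_{p, q}$-commuting unitary automorphisms that conjugates $\pi^*A_0$ to $f$. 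If such $U$ extends to a section $\tilde{U}$ over $\X \times D^{n+1}$, then $\tilde{U}\tilde{A}_0\tilde{U}^{-1}$ restricts to $f$ on $S^n$, giving the desired null-homotopy. The main obstacle is therefore the Kuiper-type statement that $\Gamma(\X, \mathcal{U}^{p, q}(E))$ is weakly contractible, so that its sections extend from $S^n$ to $D^{n+1}$; I would establish this by the reduction strategy used in the proof of Lemma \ref{lem:locally_unversal_bundle}, passing first to $\X = \pt//G$ via the local quotient structure and then to the point case by Mackey decomposition, where the classical contractibility of the unitary group of a separable Hilbert space in the compact-open topology, together with its Clifford-linear and twisted variants, applies.
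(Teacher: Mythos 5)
Your translation of $\Gamma(\X, \Fred(E)^\dagger)$ into the space of $Cl_{p+1,q}$-enhancements of $E$ is correct (once one matches the Koszul signs in the section condition $\varrho \circ {}^\phi\partial_0^* A = \partial_1^*A$ against the relation $\gamma_i\rho(g) = c(g)\rho(g)\gamma_i$), and the non-emptiness argument --- produce a locally universal $Cl_{p+1,q}$-bundle $E'$, argue its $Cl_{p,q}$-restriction is locally universal, and conjugate the extra generator across a $Cl_{p,q}$-isomorphism $E' \cong E$ --- is a sound alternative to the paper's construction, which instead uses $E \cong E \oplus \Pi E \cong E \otimes \C^2$ and the operator $1 \otimes \gamma_*$.

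The weak contractibility step, however, has a genuine gap at the claim that the $Cl_{p+1,q}$-enhancement of $\pi^*E$ over $\X \times S^n$ determined by $f$ is locally universal ``by the same absorption argument.'' That argument runs the other way: it shows that the $Cl_{p,q}$-restriction of a locally universal $Cl_{p+1,q}$-bundle is locally universal, not that a $Cl_{p+1,q}$-enhancement of a locally universal $Cl_{p,q}$-bundle is. The latter is in fact false. Take $\X = \pt//\Z_2$ with $\phi = \id$ and $\tau$, $c$ trivial, so that twisted bundles are real Clifford modules. Over $\R$, the graded algebra $Cl_{3,0}$ has a unique graded irreducible $\Delta_{3,0}$, while $Cl_{4,0}$ has two inequivalent ones $\Delta^{\pm}_{4,0}$, each of which restricts to $\Delta_{3,0}$. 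Thus $\Delta^{+}_{4,0} \otimes \mathcal{H}$, for an ungraded separable infinite-dimensional real Hilbert space $\mathcal{H}$, restricts to the universal $Cl_{3,0}$-module but is not a universal $Cl_{4,0}$-module, since $\Delta^{-}_{4,0}$ never appears; the extra Clifford generator on this module is a point of $\Gamma(\X, \Fred(E)^\dagger)$ for which your uniqueness-of-locally-universal-bundles step does not produce the required conjugating unitary $U$. Even setting this aside, the Kuiper-type contractibility of $\Gamma(\X, \mathcal{U}^{p,q}(E))$ that your argument finally rests on would itself require the full reduction machinery (local quotient to $\pt//G$ to a point via Mackey decomposition) plus an explicit contraction; the paper instead runs exactly that reduction directly on $\Gamma(\X, \Fred(E)^\dagger)$ and contracts the point-case space $\Fred^{(p,q)}_k(E)^\dagger$ by the Atiyah--Segal shrinking argument (Lemma \ref{lem:contractible_basic_case}), which avoids the locally-universal-enhancement issue altogether.
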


\begin{proof}
Let $\Pi E$ be $E$ with its $\Z_2$-grading reversed. By the local universality, we have $E \cong E \oplus \Pi E$. It is easy to see that $E \oplus \Pi E \cong E \otimes \C^2$, where $\C^2$ is a $\Z_2$-graded vector space such that its even part and odd part are $1$-dimensional. On this $\C^2$, we can let $Cl_{1, 0}$ act by
$$
\gamma_* =
\left(
\begin{array}{cc}
0 & -1 \\
1 & 0
\end{array}
\right).
$$
Then $1 \otimes \gamma_* \in \Gamma(\X, \Fred(E \otimes \C^2)^\dagger)$. 
To see that $\Gamma(\X, \Fred(E)^\dagger)$ is weakly contractible (i.e.\ weakly homotopy equivalent to the point), we apply the reduction argument as in the proof of Lemma \ref{lem:locally_unversal_bundle} and Proposition A.19 in \cite{FHT1} (which is based on \cite{Se1}). Then, it suffices to show that $\Gamma(\X, \Fred(E)^\dagger)$ is weakly contractible when $\X$ is the quotient groupoid $\pt//G$ with $G$ a compact Lie group, $\phi : \X \to \pt//\Z_2$ and $c : \X \to \pt//\Z_2$ are associated to homomorphisms $\phi : G \to \Z_2$ and $c : G \to \Z_2$, and $E$ is a $(\phi, c)$-twisted locally universal bundle on $\pt//G$ with $Cl_{p, q}$-action. In this case, $\Gamma(\X, \Fred(E)^\dagger)$ is contractible, as shown in Appendix \ref{sec:mackey_decomposition} (Lemma \ref{lem:contractible_point_case}).
\end{proof}

From a groupoid $\X$, we can construct a groupoid $\X \times [0, 1]$ so as to be $(\X \times [0, 1])_i = \X_i \times [0, 1]$. If $E$ is a twisted bundle on $\X$, then the pull-back of $E$ under the projection $\X \times [0, 1] \to \X$ is identified with $E \times [0, 1]$. A \textit{homotopy} between $A_0, A_1 \in \Gamma(\X, \Fred(E))$ is defined to be a section $\tilde{A} \in \Gamma(\X \times [0, 1], \Fred(E \times [0, 1]))$ such that $\tilde{A}|_{\X \times \{ i \}} = A_i$ for $i = 0, 1$. In this case, $A_0$ and $A_1$ are said to be homotopic, and we write $A_0 \sim A_1$.

\begin{lem} \label{lem:abelian_group_structure}
Let $\X$ be a local quotient groupoid, $\phi : \X \to \pt//\Z_2$ a map of groupoids, $(L, \tau, c)$ a $\phi$-twisted $\Z_2$-graded extension of $\X$, and $E$ a $(\phi, \tau, c)$-twisted locally universal vector bundle $E$ on $\X$ with $Cl_{p, q}$-action. Then the set of homotopy classes of sections 
$$
\Gamma(\X, \Fred(E))/\sim
$$
is an abelian group.
\end{lem}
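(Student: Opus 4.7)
The plan is to install the abelian group structure via the direct-sum operation, made unambiguous by the local universality of $E$, following the skew-adjoint Fredholm model of \cite{A-Si,FHT1}. Since $E$ is locally universal, so is $E \oplus E$, hence there exists a unitary isomorphism $u : E \oplus E \to E$ of $(\phi,\tau,c)$-twisted $Cl_{p,q}$-bundles, unique up to homotopy through such isomorphisms. I define
\[
[A_0] + [A_1] := \bigl[\, u \circ (A_0 \oplus A_1) \circ u^{-1} \,\bigr],
\]
where $A_0 \oplus A_1$ clearly lies in $\Gamma(\X, \Fred(E \oplus E))$. Homotopies of each $A_i$ direct-sum to a homotopy of the pair, and two choices of $u$ differ by a path of twisted isomorphisms which conjugates the two candidate sums into one another continuously, so the operation descends to a well-defined map on homotopy classes.

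Commutativity follows because the swap $\sigma : E \oplus E \to E \oplus E$ is connected to the identity through the unitary automorphisms $\cos\theta \cdot \id_{E\oplus E} + \sin\theta \cdot \tilde\sigma$ of the twisted $Cl_{p,q}$-bundle $E \oplus E$, with $\tilde\sigma(v,w) = (-w, v)$; associativity follows similarly from uniqueness up to homotopy of an isomorphism $E^{\oplus 3} \to E$. For the identity I invoke Lemma \ref{lem:extend_Clifford_action}, which supplies $J_0 \in \Gamma(\X, \Fred(E)^\dagger)$, unique up to homotopy by weak contractibility of that space; the class $[J_0]$ is the candidate neutral element, and $[A] + [J_0] = [A]$ is verified by absorbing the invertible $J_0$-summand back into $E$ through the local universality isomorphism. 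For the inverse of $[A]$ I take $[-A]$, which lies in the group because $-A$ satisfies all four defining conditions of $\Fred(E)$, and I establish $[A] + [-A] = [J_0]$ via a rotation homotopy on $E \oplus E$ from $A \oplus (-A)$ to an element of $\Fred(E \oplus E)^\dagger$, using the auxiliary Clifford generator $\gamma_*$ supplied in the proof of Lemma \ref{lem:extend_Clifford_action} as the off-diagonal blending term.

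The main technical obstacle is arranging this rotation so that every intermediate operator actually lies in $\Gamma(\X, \Fred(E \oplus E))$: it must remain skew-adjoint of degree $1$, anti-commute with the $Cl_{p,q}$-action, be compatible with the twisted action $\rho$, and satisfy the spectral and compactness conditions of Definition \ref{dfn:fredholm_family}. A naive block-matrix blending built directly out of $J_0$ leaves a residual commutator $[A, J_0]$ in the cross term of $B_t^2 + \id$, which generally does not vanish. The resolution is to exploit the decomposition $E \oplus E \cong E \otimes \C^2$ that is implicit in Lemma \ref{lem:extend_Clifford_action}, taking $B_t = \cos t \cdot (A \otimes \sigma_3) + \sin t \cdot (\id \otimes \gamma_*)$: the Pauli-type relation $\sigma_3 \gamma_* + \gamma_* \sigma_3 = 0$ in the auxiliary $\C^2$ kills the cross term identically, yielding $B_t^2 + \id = \cos^2 t \cdot (A^2 + \id) \otimes \id$ compact and $\mathrm{Spec}(B_t) \subset [-i, i]$, so the whole path lies in $\Fred(E \oplus E)$ and connects $A \oplus (-A)$ at $t = 0$ to a representative of $[J_0]$ at $t = \pi/2$.
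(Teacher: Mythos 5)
There is a genuine gap in the inverse step: the class $[-A]$ is \emph{not} the additive inverse of $[A]$. Indeed, in the unt­wisted case on a point with $p=q=0$, Lemma~\ref{lem:bijection_graded_vs_ungraded} identifies $A$ with the off-diagonal block $\hat A$, and the homotopy class of $A$ is the Fredholm index $\mathrm{ind}(\hat A)$. Replacing $A$ by $-A$ replaces $\hat A$ by $-\hat A$, and $\mathrm{ind}(-\hat A)=\mathrm{ind}(\hat A)$, so in fact $[-A]=[A]$ and $[A]+[-A]=2[A]\neq 0$ in general. The correct inverse is obtained by reversing the $\Z_2$-grading: one forms $\Pi E$ (the bundle $E$ with $\epsilon$ replaced by $-\epsilon$) and takes the class of $A$ regarded as an operator on $\Pi E$; in the point case this replaces $\hat A$ by $\hat A^*$, whose index is $-\mathrm{ind}(\hat A)$.

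The source of the confusion is the identification $E\oplus E\cong E\otimes\C^2$ that you attribute to Lemma~\ref{lem:extend_Clifford_action}. What that lemma actually provides is $E\oplus\Pi E\cong E\otimes\Delta_{1,0}^{\C}$: the $\Z_2$-grading on $E\otimes\Delta_{1,0}^{\C}$ is $\epsilon\otimes\sigma_3$, i.e.\ $\epsilon\oplus(-\epsilon)$, which is the grading of $E\oplus\Pi E$, not of $E\oplus E$ (whose grading is $\epsilon\oplus\epsilon$). While $E\oplus E\cong E\otimes\Delta_{1,0}^{\C}$ does hold by local universality, no such isomorphism is compatible with the direct-sum decomposition, so $A\otimes\sigma_3$ does not correspond to $A\oplus(-A)$ on $E\oplus E$; under the \emph{natural} identification it corresponds to $A$ on $E$ plus $-A$ on $\Pi E$. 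Your rotation homotopy $B_t=\cos t\,(A\otimes\sigma_3)+\sin t\,(\id\otimes\gamma_*)$ thus proves $[A]+[\Pi(-A)]=0$, which is correct, but $[\Pi(-A)]=[\Pi A]=-[A]$ is not $[-A]$. (One further subtlety: with the Koszul-sign convention for graded tensor products used in this paper, $A\otimes\sigma_3$ and $\id\otimes\gamma_*$ commute rather than anticommute, so the cross term does not vanish; the sign bookkeeping works out cleanly only when the homotopy is written as in the paper, $(A\oplus\Pi A)\cos\theta+\gamma\sin\theta$, using $A\otimes\id$ rather than $A\otimes\sigma_3$.) The rest of your argument — sum via local universality, commutativity by the $2\times 2$ rotation, zero element via $\Fred(E)^\dagger$ — matches the paper's proof.
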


\begin{proof}
We can prove the lemma in a standard manner: The addition is induced from the direct sum $(A, A') \mapsto A \oplus A'$. The zero element is represented by invertible sections $A \in \Gamma(\X, \Fred(E)^*)$. The inverse is realized by reversing the $\Z_2$-grading of the underlying twisted vector bundle. To show the axiom about the inversion, let $\Pi E$ denote the twisted bundle $E$ with its $\Z_2$-grading reversed. The direct sum $E \oplus \Pi E$ is isomorphic to the tensor product $E \otimes \Delta_{1, 0}^{\C}$ of $E$ and an irreducible $\Z_2$-graded complex $Cl_{1, 0}$-module $\Delta_{1, 0}^{\C}$. If we denote by $\gamma \in \Gamma(\X, \Fred(E \otimes \Delta_{1, 0}^{\C})^\dagger)$ the action of the generator of $Cl_{1, 0}$ on $\Delta_{1,0}^{\C}$, then $A \oplus \Pi A$ is homotopic to $\gamma$ by the homotopy $(A \oplus \Pi A) \cos \theta + \gamma\sin\theta$ for any $A \in \Gamma(\X, \Fred(E))$. 
\end{proof}

Now, suppose that, for a groupoid $\X$ and $\phi = (F : \tilde{\X} \to \X, \phi) \in \Phi(\X)$, we are given a $\phi$-twist on $\X$ consisting of a local equivalence $\tilde{F} : \tilde{\tilde{\X}} \to \tilde{\X}$ and a $\tilde{F}^*\phi$-twisted $\Z_2$-graded extension $(L, \tau, c)$ of $\tilde{\tilde{\X}}$. Suppose also that $E$ is a $(\tilde{F}^*\phi, \tau, c)$-twisted vector bundle on $\tilde{\tilde{X}}$ with $Cl_{p, q}$-action. By the nature of local equivalences \cite{FHT1}, fiber bundles on $\tilde{\tilde{X}}$ are in bijective correspondence with those on $\tilde{\X}$ under the pull-back, and the pull-back also induces a homeomorphism of the spaces of sections. This can be generalized to $\phi$-twisted bundles, so that the $\tilde{F}^*\phi$-twisted vector bundle $\mathrm{End}(E) \to \tilde{\tilde{\X}}$ is isomorphic the pull-back of a $\phi$-twisted vector bundle on $\tilde{\X}$ under $\tilde{F}$. As a result, the fiber bundle $\Fred(E) \to \tilde{\tilde{\X}}$ is isomorphic to the pull-back under $\tilde{F}$ of a $\phi$-twisted fiber bundle $\Fred(\tau) \to \tilde{X}$, and $\Gamma(\tilde{\X}, \Fred(\tau)) \cong \Gamma(\tilde{\tilde{\X}}, \Fred(E))$.

\begin{dfn} \label{dfn:bigraded_K}
Let $\X$ be a local quotient groupoid, $\phi = (F : \tilde{\X} \to \X, \phi) \in \Phi(\X)$ an object, and $(\tau, c) = (\tilde{F} : \tilde{\tilde{\X}} \to \tilde{\X}, L, \tau, c)$ a $\phi$-twist on $\X$. We define a group by
$$
{}^\phi K^{(\tau, c) + (p, q)}(\X)
= \Gamma(\tilde{\X}, \Fred(\tau))/\sim
\cong \Gamma(\tilde{\tilde{\X}}, \Fred(E))/\sim
$$
where $\Fred(\tau) \to \tilde{\X}$ is the $\phi$-twisted bundle such that $\tilde{F}^*\Fred(\tau) \cong \Fred(E)$ for a $(\tilde{F}^*\phi, \tau, c)$-twisted locally universal vector bundle $E \to \tilde{\tilde{\X}}$ with $Cl_{p, q}$-action. In the case that $\X$ is a quotient groupoid $X//G$, we may write
$$
{}^\phi K^{(\tau, c) + (p, q)}(X//G)
= {}^\phi K^{(\tau, c) + (p, q)}_G(X).
$$
\end{dfn}

The group ${}^\phi K^{(\tau, c) + (p, q)}(\X)$ is independent of the choice of $E$, because of the uniqueness of locally universal bundles up to unitary isomorphisms. As in the case of twisted complex $K$-theory \cite{FHT1}, a local equivalence of groupoids induces an isomorphism by pull-back, and hence ${}^\phi K^{(\tau, c) + (p, q)}(\X)$ is an invariant of the weak equivalence class of the groupoid $\X$ equipped with the twisting data $\phi$ and $(\tau, c)$.

\begin{lem}[weak periodicity] \label{lem:weak_periodicity}
Let $\X$ be a local quotient groupoid, $\phi \in \Phi(\X)$ an object, and $(\tau, c)$ a $\phi$-twist on $\X$. There are natural isomorphisms
\begin{align*}
{}^\phi K^{(\tau, c) + (p, q)}(\X) 
&\cong {}^\phi K^{(\tau, c) + (p+1, q+1)}(\X) \\
&\cong {}^\phi K^{(\tau, c) + (p+8, q)}(\X) 
\cong {}^\phi K^{(\tau, c) + (p, q+8)}(\X).
\end{align*}
In the case that $\phi$ is trivial, there are natural isomorphisms
\begin{align*}
K^{(\tau, c) + (p, q)}(\X) 
&\cong K^{(\tau, c) + (p+1, q+1)}(\X) \\
&\cong K^{(\tau, c) + (p+2, q)}(\X) 
\cong K^{(\tau, c) + (p, q+2)}(\X).
\end{align*}
\end{lem}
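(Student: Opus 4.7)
The plan is to prove each of the four isomorphisms by exhibiting an explicit tensoring construction with a fixed irreducible graded Clifford module, in the spirit of the classical Morita-equivalence arguments underlying $KO$- and $K$-theoretic Bott periodicity (compare \cite{A-B-S}). Throughout, the key input from the preceding subsection is the uniqueness of locally universal twisted bundles up to unitary isomorphism (Lemma~\ref{lem:locally_unversal_bundle}) and the abelian group structure of Lemma~\ref{lem:abelian_group_structure}, which together mean that it suffices to produce, for a chosen locally universal $(\phi,\tau,c)$-twisted bundle $E$ with $Cl_{p,q}$-action, a continuous bijection from $\Gamma(\tilde\X, \Fred(\tau)_{p,q})$ to $\Gamma(\tilde\X, \Fred(\tau)_{p',q'})$, compatible with homotopy and direct sum.

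For the shift $(p,q) \mapsto (p+1,q+1)$, I would fix an irreducible $\mathbb{Z}_2$-graded real $Cl_{1,1}$-module $\Delta_{1,1}$ with grading involution $\eta'$ and Clifford generators $\gamma'_+, \gamma'_-$ of square $+1, -1$ respectively, viewed as a trivially twisted bundle on $\X$. Given a locally universal $(\phi,\tau,c)$-twisted bundle $E$ with $Cl_{p,q}$-action, gradation $\epsilon$, and generators $\gamma_1, \dots, \gamma_{p+q}$, I form $E \otimes \Delta_{1,1}$ with gradation $\epsilon \otimes \eta'$, twisted action $\rho \otimes \mathrm{id}$, and new Clifford generators
$$
\tilde\gamma_i := \gamma_i \otimes \eta' \quad (1 \le i \le p+q), \qquad
\tilde\gamma_{p+q+1} := \mathrm{id} \otimes \gamma'_+, \qquad
\tilde\gamma_{p+q+2} := \mathrm{id} \otimes \gamma'_-.
$$
A direct check gives the defining relations of $Cl_{p+1,q+1}$, and using the Morita equivalence $Cl_{1,1} \simeq \mathbb{R}(2)$ one verifies that $E \otimes \Delta_{1,1}$ is locally universal as a $Cl_{p+1,q+1}$-bundle, since every graded $Cl_{p+1,q+1}$-module is canonically of the form $W \otimes \Delta_{1,1}$ for a graded $Cl_{p,q}$-module $W$. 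On Fredholm families I would take $A \mapsto A \otimes \eta'$; the key point is that $\eta'$ anticommutes with $\gamma'_\pm$, so $A \otimes \eta'$ anticommutes with both $\tilde\gamma_i$ and $\tilde\gamma_{p+q+1}, \tilde\gamma_{p+q+2}$, while skew-adjointness, compactness of $(A \otimes \eta')^2 + \mathrm{id}$, spectral confinement, and the degree condition are immediate. The inverse is induced by $F \mapsto \mathrm{Hom}_{Cl_{1,1}}(\Delta_{1,1}, F)$; compatibility with direct sums makes this a group isomorphism.

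The remaining three isomorphisms are obtained by the same recipe, replacing $\Delta_{1,1}$ by an irreducible graded real $Cl_{8,0}$-module $\Delta_{8,0}$ and an irreducible graded real $Cl_{0,8}$-module $\Delta_{0,8}$ (invoking $Cl_{8,0} \simeq Cl_{0,8} \simeq \mathbb{R}(16)$) for the $(p,q) \mapsto (p+8,q)$ and $(p,q) \mapsto (p,q+8)$ shifts. In the case of trivial $\phi$, the twisted bundles carry a complex linear structure, so one can complexify the Clifford algebras; then the period-$2$ isomorphisms follow from the same construction applied to an irreducible graded complex $Cl_{2,0}$-module $\Delta_{2,0}^{\mathbb{C}}$ and $\Delta_{0,2}^{\mathbb{C}}$, using $Cl_{2,0} \otimes \mathbb{C} \simeq Cl_{0,2} \otimes \mathbb{C} \simeq M_2(\mathbb{C})$.

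The main obstacle will be the bookkeeping of $\mathbb{Z}_2$-gradings and Koszul signs: one must twist the Fredholm by the grading involution of the auxiliary module (not by the identity) in order that the Fredholm operator anticommute with the \emph{new} Clifford generators $\mathrm{id} \otimes \gamma'_\pm$, and at the same time one must verify that this twisting is compatible with the $(\phi,\tau,c)$-twisted action $\rho$. Once that sign structure is fixed, the compatibility is automatic because $\Delta_{r,s}$ and its grading are untwisted, and the proof reduces to the standard Morita argument; all Fredholm-analytic conditions of Definition~\ref{dfn:fredholm_family} are preserved tautologically under tensoring with a finite-rank graded module.
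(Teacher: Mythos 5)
Your proposal is correct and follows the same essential strategy as the paper: realize the isomorphism by tensoring with a fixed irreducible graded Clifford module and observing that this is a homeomorphism on the spaces of Fredholm sections. The differences are cosmetic and a few are worth noting. (i) The paper complexifies, regarding $\Delta_{1,1}^{\C}$ as an $\id_{\Z_2}$-twisted bundle on $\pt//\Z_2$ and pulling back along $\phi : \X \to \pt//\Z_2$; your use of the real module $\Delta_{1,1}$ tensored over $\R$ achieves the same thing, since $E\otimes_{\R}\Delta_{1,1}\cong E\otimes_{\C}(\Delta_{1,1}\otimes_{\R}\C)$ compatibly with the $\phi$-twisted (anti-linear) action, but if you write it up you should make this identification explicit so that the tensor product is genuinely a tensor product in ${}^\phi\Vect^{(\tau,c)+(p,q)}(\X)$. (ii) For the period-$8$ isomorphisms the paper iterates the $(p,q)\mapsto(p+1,q+1)$ shift four times and then identifies $Cl_{p+8,q}$- with $Cl_{p+4,q+4}$- and $Cl_{p,q+8}$-modules via the operator $\gamma'_i=\gamma_i\gamma_1\cdots\gamma_4$, whereas you tensor directly with $\Delta_{8,0}$ or $\Delta_{0,8}$; both are valid Morita arguments, the paper's being slightly more economical since it reuses the $\Delta_{1,1}$ step. (iii) You obtain the inverse abstractly via $\mathrm{Hom}_{Cl_{1,1}}(\Delta_{1,1},-)$, while the paper directly verifies that every admissible operator on $E\otimes\Delta_{1,1}^{\C}$ has the product form; these are equivalent. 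Finally, your flag on the Koszul-sign bookkeeping is apt: in your convention (new generators $\mathrm{id}\otimes\gamma'_\pm$, old generators $\gamma_i\otimes\eta'$) the Fredholm map must indeed be $A\mapsto A\otimes\eta'$, while the paper's formula $a\mapsto a\otimes 1$ corresponds to the other standard convention (old generators $\gamma_i\otimes 1$, new generators $\epsilon\otimes\gamma'_\pm$); these are globally equivalent, but each internally consistent choice must be fixed before verifying the anticommutation relations, and you have done so correctly.
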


\begin{proof}
Let us consider ${}^\phi K^{(\tau, c) + (p, q)}(\X) \cong  {}^\phi K^{(\tau, c) + (p+1, q+1)}(\X)$. To realize this isomorphism, we let $\Delta_{1,1}$ be an irreducible $\Z_2$-graded real $Cl_{1,1}$-module. Concretely, we can choose $\Delta_{1,1} = \R^2$ and
\begin{align*}
\epsilon 
&=
\left(
\begin{array}{rr}
1 & 0 \\
0 & -1
\end{array}
\right),
&
\gamma_1
&=
\left(
\begin{array}{rr}
0 & -1 \\
1 & 0
\end{array}
\right),
&
\gamma_2
&=
\left(
\begin{array}{rr}
0 & 1 \\
1 & 0
\end{array}
\right).
\end{align*}
Its complexification $\Delta_{1, 1}^{\C} = \Delta_{1, 1} \otimes \C$, being irreducible also, has the obvious `Real' structure from the complex conjugation on $\C$, so that we can regard it as an $\mathrm{id}_{\Z_2}$-twisted vector bundle on $\pt//\Z_2$ with $Cl_{1,1}$-action. Furthermore, we pull this bundle back to $\X$ by the map $\phi : \X \to \pt//\Z_2$ to get
$$
\Delta_{1, 1}^{\C} \in {}^{\phi}\Vect^{(1, 1)}(\X).
$$
For a $(\phi, \tau, c)$-twisted locally universal twisted vector bundle $E$ on $\X$ with $Cl_{p, q}$-action, the tensor product $E \otimes \Delta_{1,1}^{\C}$ is a $(\phi, \tau, c)$-twisted locally universal twisted vector bundle on $\X$ with $Cl_{p+1, q+1}$-action. We then consider
\begin{align*}
&\Gamma(\X, \Fred(E)) \to \Gamma(\X, \Fred(E \otimes \Delta_{1,1}^{\C})), &
&a \mapsto a \otimes 1.
\end{align*}
We can directly see that any odd map $A : E \otimes \Delta_{1,1}^{\C} \to E \otimes \Delta_{1,1}^{\C}$ such that $A(1 \otimes \gamma_i) = - (1 \otimes \gamma_i)A$ is uniquely expressed as $A = a \otimes 1$ by using an odd map $a : E \to E$. As a result, $\Gamma(\X, \Fred(E)) \to \Gamma(\X, \Fred(E \otimes \Delta_{1,1}^{\C}))$ is a homeomorphism, and hence induces an isomorphism 
$$
{}^\phi K^{(\tau, c) + (p, q)}(\X) \to {}^\phi K^{(\tau, c) + (p+1, q+1)}(\X).
$$
The other isomorphisms follow from this: Iterating it, we get
$$
{}^\phi K^{(\tau, c) + (p, q)}(\X) \to {}^\phi K^{(\tau, c) + (p+4, q+4)}(\X).
$$
In general, if $\gamma_i$, ($i = 1, \ldots, 4$) realize a real $Cl_{4,0}$-module, then $\gamma'_i = \gamma_i\gamma_1 \cdots \gamma_4$, ($i = 1, \ldots, 4$) realize a real $Cl_{0, 4}$-module. This construction induces natural isomorphisms
$$
{}^\phi K^{(\tau, c) + (p+8, q)}(\X) \cong 
{}^\phi K^{(\tau, c) + (p+4, q+4)}(\X) \cong 
{}^\phi K^{(\tau, c) + (p, q+8)}(\X).
$$
In the case that $\phi$ is trivial, there are natural identifications of complex modules over $Cl_{2,0}$, $Cl_{1,1}$ and $Cl_{0, 2}$, so that
$$
K^{(\tau, c) + (p+2, q)}(\X) \cong 
K^{(\tau, c) + (p+1, q+1)}(\X) \cong 
K^{(\tau, c) + (p, q+2)}(\X),
$$
and the lemma is established.
\end{proof}

\begin{rem}
There are two equivalent variants of the fiber bundle $\Fred(E)$. A variant is the fiber bundle given by dropping the spectral condition $\mathrm{Spec}(A) \subset [-i, i]$ in Definition \ref{dfn:fredholm_family}. The resulting fiber bundle has a deformation retract to $\Fred(E)$, and we can use it to formulate the Freed-Moore $K$-theory. Another variant is to use self-adjoint operators instead of skew-adjoint operators. Its detail will be given in Remark \ref{rem:twist_change}.
\end{rem}

%%%%%%%%%%%%%%%%%%%%%%%%%%%%%%%%%%%%%%%%%%%%%%%%
\subsection{The Bott periodicity}

Let $\X$ be a groupoid, $\phi : \X \to \pt//\Z_2$ a map of groupoid, $(L, \tau, c)$ a $\phi$-twisted $\Z_2$-graded extension of $\X$, and $E$ a $(\phi, \tau, c)$-twisted vector bundle $E$ of $\X$ with $Cl_{p, q}$-action. For a full subgroupoid $\Y \subset \X$, we write
$$
\Gamma(\X, \Y, \Fred(E))
= \{ A \in \Gamma(\X, \Fred(E)) |\ A|_{\Y} \in \Gamma(\Y, \Fred(E)^*) \}
$$
for the space of sections $A$ of $\Fred(E)$ such that $A_x : E_x \to E_x$ is invertible for all $x \in \Y_0$. A homotopy between such sections is defined by using sections in $\Gamma(\X \times [0, 1], \Y \times [0, 1], \Fred(E \times [0, 1]))$. For an object $\phi \in \Phi(\X)$ and a $\phi$-twist $(\tau, c)$, the space of sections $\Gamma(\X, \Y, \Fred(\tau))$ and their homotopy $\sim$ are defined in the obvious way.

\begin{dfn} \label{dfn:Freed_Moore_K_theory}
Let $\X$ be a local quotient groupoid, $\phi = (F : \tilde{\X} \to \X, \phi) \in \Phi(\X)$ an object, and $(\tau, c) = (\tilde{F} : \tilde{\tilde{\X}} \to \tilde{\X}, L, \tau, c)$ a $\phi$-twist on $\X$. 
\begin{itemize}
\item[(a)]
For a full subgroupoid $\Y \subset \X$, we define
$$
{}^\phi K^{(\tau, c)}(\X, \Y)
= \Gamma(\tilde{\X} \cup \tilde{\Y} \times [0, 1], 
\tilde{\Y} \times \{ 1 \}, \Fred(\tau \times [0, 1]))/\sim \\
$$
where $\tilde{\X} \cup \tilde{\Y} \times [0, 1] \subset \tilde{\X} \times [0, 1]$ is the mapping cylinder of the full subgroupoid $\tilde{\Y} = \tilde{F}^{-1}(\Y) \subset \tilde{\X}$, $\Fred(\tau)$ is the $\phi$-twisted bundle on $\tilde{\X}$ such that $\tilde{F}^*\Fred(\tau) \cong \Fred(E)$ for a $(\tilde{F}^*\phi, \tau, c)$-twisted locally universal vector bundle $E \to \tilde{\tilde{\X}}$ with $Cl_{0, 0}$-action, and $\tau \times [0, 1]$ is the pull-back of $\tau$ under the projection $\tilde{\X} \times [0, 1] \to \tilde{\X}$.

\item[(b)]
For a non-negative integer $n \ge 0$, we define
$$
{}^\phi K^{(\tau, c) - n}(\X, \Y) 
=
{}^\phi K^{(\tau, c)}(\X \times [0, 1]^n, \Y \times [0, 1]^n \cup
\X \times \partial [0, 1]^n).
$$
\end{itemize}
\end{dfn}

\begin{thm} \label{thm:Atiyah_Singer_map}
Let $\X$ be a local quotient groupoid, $\phi \in \Phi(\X)$ an object, and $(\tau, c)$ a $\phi$-twist on $\X$. For $n \ge 0$, there is a natural isomorphism of groups
$$
{}^\phi K^{(\tau, c) - n}(\X)
\cong
{}^\phi K^{(\tau, c) + (n, 0)}(\X).
$$
\end{thm}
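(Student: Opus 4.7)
The plan is to produce an explicit natural map realizing the isomorphism, adapting the classical Atiyah--Singer construction \cite{A-Si} to the twisted groupoid setting. Let $(\tilde{F} : \tilde{\tilde{\X}} \to \tilde{\X}, L, \tau, c)$ represent the $\phi$-twist, and fix a $(\tilde{F}^*\phi, \tau, c)$-twisted locally universal bundle $E$ on $\tilde{\tilde{\X}}$ with $Cl_{n, 0}$-action, whose Clifford generators I denote $\gamma_1, \ldots, \gamma_n$ (satisfying $\gamma_i^2 = -\id$ and $\gamma_i \gamma_j + \gamma_j \gamma_i = 0$ for $i \neq j$). Since any $(\tilde{F}^*\phi, \tau, c)$-twisted bundle without Clifford action embeds isometrically into $E$ (by tensoring with a fixed $Cl_{n,0}$-module and including the unit factor), the underlying bundle of $E$ is itself locally universal without Clifford action, so both sides of the claimed isomorphism are computed from $\Fred(E)$ after descent along $\tilde{F}$. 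For $A \in \Gamma(\tilde{\tilde{\X}}, \Fred(E))$ anti-commuting with each $\gamma_i$, I would define
$$
\alpha(A)(x, t) = \sqrt{1 - |t|^2}\, A(x) + \sum_{i=1}^{n} t_i\, \gamma_i(x), \qquad t \in B^n = \{ t \in \R^n \mid |t| \le 1 \}.
$$

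Applying $A\gamma_i + \gamma_i A = 0$ and $\gamma_i \gamma_j + \gamma_j \gamma_i = -2\delta_{ij}\id$, a direct expansion gives $\alpha(A)(x,t)^2 = (1 - |t|^2)\, A(x)^2 - |t|^2 \id$. Consequently $\alpha(A)^2 + \id = (1 - |t|^2)(A^2 + \id)$ is compact, $\alpha(A)$ is skew-adjoint (since $\gamma_i^* = -\gamma_i$) with spectrum contained in $[-i, i]$, and on the boundary $|t| = 1$ the operator $\alpha(A) = \sum_i t_i \gamma_i$ squares to $-\id$ and is therefore invertible. Because each $\gamma_i$ intertwines the $\phi$-twisted action $\rho$ by definition of a twisted bundle with Clifford action, $\alpha(A)$ descends to a section of the pull-back of $\Fred(\tau)$ to $\tilde{\X} \times B^n$ that is invertible on $\tilde{\X} \times S^{n-1}$. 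Identifying the pair $(B^n, S^{n-1})$ with $([0,1]^n, \partial[0,1]^n)$ up to homeomorphism then yields a class in ${}^\phi K^{(\tau, c) - n}(\X)$. The assignment $A \mapsto \alpha(A)$ is continuous in the compact-open topology, commutes with direct sums, and sends homotopies to homotopies, so it descends to a natural group homomorphism $\alpha : {}^\phi K^{(\tau, c) + (n, 0)}(\X) \to {}^\phi K^{(\tau, c) - n}(\X)$.

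To show that $\alpha$ is bijective I would follow the reduction strategy used elsewhere in the paper (compare the proofs of Lemmas \ref{lem:locally_unversal_bundle} and \ref{lem:extend_Clifford_action}): cover $\X$ by open full subgroupoids weakly equivalent to quotient groupoids $\pt //G$ for compact Lie groups $G$ and assemble the pieces by a Mayer--Vietoris argument, then apply the Mackey decomposition of Appendix \ref{sec:mackey_decomposition} to reduce to the case where $G$ is trivial, and finally invoke the classical Atiyah--Singer theorem \cite{A-Si} identifying the space of skew-adjoint Fredholm operators anti-commuting with $n$ generators of $Cl_{n,0}$ as the classifying space for $K^{-n}$. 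The hard part will be ensuring that the Clifford-anticommutation data survive each reduction step intact; this is handled by the observation that a locally universal $(\phi, \tau, c)$-twisted bundle with $Cl_{n, 0}$-action is canonically the tensor product of such a bundle without Clifford action and a fixed $\Z_2$-graded $Cl_{n, 0}$-module, so that the $\gamma_i$'s act only on the module factor and are carried through the reductions untouched.
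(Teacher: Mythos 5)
Your proof is correct and takes essentially the same approach as the paper: define the Atiyah--Singer map on sections of $\Fred(E)$, check it lands in the relative group, then reduce via local quotient gluing and Mackey decomposition to the classical point case from Atiyah--Singer. The only cosmetic difference is that you write the map all at once as $\sqrt{1-|t|^2}\,A + \sum_i t_i\gamma_i$ on $B^n$, while the paper iterates the one-variable map $A \mapsto \gamma_1\cos\pi t + A\sin\pi t$ over the cube $[0,1]^n$; these are equivalent after reparametrization, and your algebraic verifications (skew-adjointness, $\alpha(A)^2 + \id$ compact, spectral bound, invertibility on $S^{n-1}$) are all accurate.
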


\begin{proof}
The proof is essentially the same as in the complex case \cite{FHT1} (Proposition A.41): For a $(\phi, \tau, c)$-twisted locally universal vector bundle $E \to \X$ with $Cl_{1, 0}$-action, we have the Atiyah-Singer map \cite{A-Si}
$$
\mathrm{AS} : \
\Gamma(\X, \Fred(E)) \to 
\Gamma(\X \times [0, 1], \X \times \{ 0, 1 \}, \Fred(E \times [0, 1]))
$$
given by $A \mapsto \gamma_1 \cos\pi t + A \sin\pi t$, where $t \in [0, 1]$, and $\gamma_1 = \gamma(e_1)$ is the action of the generator $e_1 \in Cl_{1, 0}$. The iteration of this map defines a homomorphism
$$
{}^\phi K^{(\tau, c) + (n, 0)}(\X) \to {}^\phi K^{(\tau, c) - n}(\X).
$$
To prove that this homomorphism is bijective, we show that $\mathrm{AS}$ induces a weak homotopy equivalence on the spaces of sections. As before, thanks to the reduction argument as in Lemma \ref{lem:locally_unversal_bundle} and \cite{FHT1} (Proposition A.19), it suffices to consider the case of the quotient groupoid $\X = \pt//G$ with $G$ a compact Lie group and trivial $\tau$. In this case, the map $\mathrm{AS}$ provides a homotopy equivalence, as will be shown in Appendix \ref{sec:mackey_decomposition} (Lemma \ref{lem:atiyah_singer_map_point_case}).
\end{proof}

By the periodicities in Lemma \ref{lem:weak_periodicity}, we get:

\begin{cor}[Bott periodicity] \label{cor:bott_periodicity}
For $n \ge 0$, there is a natural isomorphism 
$$
{}^\phi K^{(\tau, c) - n}(\X) \cong {}^\phi K^{(\tau, c) - n - 8}(\X).
$$
If $\phi$ is trivial, then there is a natural isomorphism
$$
{}^\phi K^{(\tau, c) - n}(\X) \cong {}^\phi K^{(\tau, c) - n - 2}(\X).
$$
\end{cor}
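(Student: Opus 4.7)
The plan is a direct two-step composition of the preceding results. First I would apply Theorem \ref{thm:Atiyah_Singer_map} to identify
$$
{}^\phi K^{(\tau, c) - n}(\X) \cong {}^\phi K^{(\tau, c) + (n, 0)}(\X),
$$
trading the suspension/relative description of negative degrees for a Fredholm description with a $Cl_{n,0}$-anticommuting structure. Next I would apply the weak periodicity of Lemma \ref{lem:weak_periodicity}, which yields the bigraded shift
$$
{}^\phi K^{(\tau, c) + (n, 0)}(\X) \cong {}^\phi K^{(\tau, c) + (n+8, 0)}(\X);
$$
this is the step that packages in the $8$-periodicity of real Clifford modules, via the chain of isomorphisms ${}^\phi K^{(\tau, c) + (p, q)} \cong {}^\phi K^{(\tau, c) + (p+1, q+1)}$ together with ${}^\phi K^{(\tau, c) + (p+8, q)} \cong {}^\phi K^{(\tau, c) + (p, q+8)}$ coming from $Cl_{8,0}$-Morita triviality. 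Finally I would invoke Theorem \ref{thm:Atiyah_Singer_map} in the opposite direction at degree $n+8$ to pass back to
$$
{}^\phi K^{(\tau, c) + (n+8, 0)}(\X) \cong {}^\phi K^{(\tau, c) - (n+8)}(\X),
$$
and composing the three isomorphisms gives the desired ${}^\phi K^{(\tau, c) - n}(\X) \cong {}^\phi K^{(\tau, c) - n - 8}(\X)$. Naturality follows because each of the three constituent isomorphisms is natural (the Atiyah--Singer map is defined fiberwise and preserves the subgroupoid conditions, and the $Cl_{1,1}$-tensoring isomorphisms of Lemma \ref{lem:weak_periodicity} are induced by pull-back of a fixed $\phi$-twisted Clifford module).

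For the case when $\phi$ is trivial, I would run exactly the same three-step composition, but substitute the stronger periodicity statement from Lemma \ref{lem:weak_periodicity} valid in that case, namely $K^{(\tau, c) + (p, q)} \cong K^{(\tau, c) + (p+2, q)}$, which reflects the fact that complex Clifford algebras obey $Cl_{2,0} \otimes_\R \C \cong M_2(\C)$. Replacing the shift $(n,0) \to (n+8,0)$ by $(n,0) \to (n+2,0)$ in the middle step yields the $2$-periodicity $K^{(\tau, c) - n}(\X) \cong K^{(\tau, c) - n - 2}(\X)$.

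There is no substantive obstacle at this stage: the hard analytic input (reduction to a compact Lie group via the Mackey decomposition, and the Atiyah--Singer homotopy equivalence on a point) has already been absorbed into Theorem \ref{thm:Atiyah_Singer_map}, while the algebraic $8$-periodicity is housed in Lemma \ref{lem:weak_periodicity}. The only thing worth writing out carefully in the actual proof is the compatibility with the relative construction used in Definition \ref{dfn:Freed_Moore_K_theory}, so that iterating the Atiyah--Singer map $n$ times commutes with suspension along the $[0,1]^n$ coordinates; this compatibility is clear because the map $A \mapsto \gamma_1 \cos \pi t + A \sin \pi t$ is built fiberwise in the new Clifford/suspension direction and leaves the other coordinates untouched.
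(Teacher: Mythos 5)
Your proposal is correct and is exactly the route the paper takes: the paper states Corollary \ref{cor:bott_periodicity} immediately after Theorem \ref{thm:Atiyah_Singer_map} with the one-line justification ``By the periodicities in Lemma \ref{lem:weak_periodicity}, we get,'' which unpacks to precisely your three-step composition $\,{}^\phi K^{(\tau,c)-n}(\X) \cong {}^\phi K^{(\tau,c)+(n,0)}(\X) \cong {}^\phi K^{(\tau,c)+(n+8,0)}(\X) \cong {}^\phi K^{(\tau,c)-n-8}(\X)$ (and the analogous $2$-step shift when $\phi$ is trivial).
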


\begin{cor} \label{cor:degree_correspondence}
For $- p+ q \le 0$, we have a natural isomorphism 
$$
{}^\phi K^{(\tau, c) - p + q}(\X) \cong {}^\phi K^{(\tau, c) + (p, q)}(\X).
$$
\end{cor}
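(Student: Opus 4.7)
The plan is to deduce the corollary directly by chaining together Theorem \ref{thm:Atiyah_Singer_map} with the weak periodicity established in Lemma \ref{lem:weak_periodicity}. Since the hypothesis $-p+q \le 0$ is exactly the condition that $n := p-q$ is a non-negative integer, the symbol ${}^\phi K^{(\tau, c) - p + q}(\X) = {}^\phi K^{(\tau, c) - n}(\X)$ is well-defined via Definition \ref{dfn:Freed_Moore_K_theory}, so there is nothing to check on the left-hand side.

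First, I would invoke Theorem \ref{thm:Atiyah_Singer_map} with this value of $n$ to obtain the natural isomorphism
$$
{}^\phi K^{(\tau, c) - n}(\X) \;\cong\; {}^\phi K^{(\tau, c) + (n, 0)}(\X) \;=\; {}^\phi K^{(\tau, c) + (p-q, 0)}(\X).
$$
Next, I would apply the ``diagonal'' weak periodicity isomorphism ${}^\phi K^{(\tau, c) + (r, s)}(\X) \cong {}^\phi K^{(\tau, c) + (r+1, s+1)}(\X)$ of Lemma \ref{lem:weak_periodicity}, iterated $q$ times, to obtain
$$
{}^\phi K^{(\tau, c) + (p-q, 0)}(\X) \;\cong\; {}^\phi K^{(\tau, c) + (p, q)}(\X).
$$
Composing these two isomorphisms yields the desired natural isomorphism ${}^\phi K^{(\tau, c) - p + q}(\X) \cong {}^\phi K^{(\tau, c) + (p, q)}(\X)$.

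Since both ingredient isomorphisms are natural (the Atiyah--Singer map is defined pointwise on sections, and the diagonal periodicity is realized by tensoring with $\Delta_{1,1}^{\C}$), no further verification of naturality is required. There is really no obstacle to overcome here: the corollary is a purely formal consequence of the preceding results, and the only point that needs a brief remark is the sign convention identifying $-p+q$ with the negative degree $-(p-q)$ used in Definition \ref{dfn:Freed_Moore_K_theory}.
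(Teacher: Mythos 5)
Your proof is correct and follows exactly the intended route: the paper states this corollary immediately after Theorem \ref{thm:Atiyah_Singer_map} and Corollary \ref{cor:bott_periodicity} without spelling out the argument, precisely because it is the straightforward combination of the Atiyah--Singer isomorphism ${}^\phi K^{(\tau,c)-n}(\X) \cong {}^\phi K^{(\tau,c)+(n,0)}(\X)$ with the diagonal periodicity of Lemma \ref{lem:weak_periodicity} iterated $q$ times. Nothing further is needed.
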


\begin{thm} \label{thm:axiom}
Let $\X$ be a local quotient groupoid, $\phi \in \Phi(\X)$ an object, and $(\tau, c)$ a $\phi$-twist on $\X$. We can extend the $K$-group ${}^\phi K^{(\tau, c) + n}(\X)$ to define ${}^\phi K^{(\tau, c) + n}(\X, \Y)$ for a full subgroupoid $\Y \subset \X$ and $n \in \Z$ so that the Bott periodicity holds true:
$$
{}^\phi K^{(\tau, c) + n}(\X, \Y)
\cong
{}^\phi K^{(\tau, c) + n - 8}(\X, \Y).
$$
These groups have the following properties.
\begin{itemize}
\item[(a)]
(the homotopy axiom)
Let $\X'$ be another local quotient groupoid, and $\Y' \subset \X'$ a full subgroupoid. Let $f_0, f_1 : \X' \to \X$ be maps of groupoids such that $f_i(\Y') \subset \Y$. If $\tilde{f} : \X' \times [0, 1] \to \X$ is such that $\tilde{f}(\Y' \times \{ t \}) \subset \Y$ for all $t \in [0, 1]$, then there is an isomorphism of twists $\beta_{\tilde{f}} : f_0^*\tau \to f_1^*\tau$, and the following diagram becomes commutative
$$
\begin{CD}
{}^\phi K^{(\tau, c) + n}(\X, \Y) @>{f_0^*}>>
{}^{f_0^*\phi} K^{(f_0^*\tau, f_0^*c) + n}(\X', \Y') \\
%%%%%
@| @AA{\beta_{\tilde{f}}^*}A \\
%%%%%
{}^\phi K^{(\tau, c) + n}(\X, \Y) @>{f_1^*}>>
{}^{f_1^*\phi} K^{(f_1^*\tau, f_1^*c) + n}(\X', \Y').
\end{CD}
$$ 

\item[(b)]
(the excision axiom)
Let $\mathcal{A}$ and $\mathcal{B}$ be closed full subgroupoids in $\X$. Then the inclusion $\mathcal{A} \to \mathcal{A} \cup \mathcal{B}$ induces the isomorphisms
$$
{}^{\phi_{A \cup B}} 
K^{(\tau_{A \cup B}, c_{A \cup B}) + n}
(\mathcal{A} \cup \mathcal{B}, \mathcal{B})
\cong
{}^{\phi_A} K^{(\tau_A, c_A) + n}(\mathcal{A}, \mathcal{A} \cap \mathcal{B}),
$$
where $\phi_{A \cup B} = \phi|_{\mathcal{A} \cup \mathcal{B}}$, etc.\ mean the restrictions.

\item[(c)]
(the exactness axiom)
For a full subgroupoid $\Y \subset \X$, there is a long exact sequence
$$
\cdot\cdot \to
{}^\phi K^{(\tau, c) + n-1}(\Y) \to
{}^\phi K^{(\tau, c) + n}(\X, \Y) \overset{j^*}{\to}
{}^\phi K^{(\tau, c) + n}(\X) \overset{i^*}{\to}
{}^\phi K^{(\tau, c) + n}(\Y) \to
\cdot\cdot,
$$
in which $j^*$ is induced from the forgetful functor of $\Y$ and $i^*$ from the inclusion $i : \Y \to \X$. The restriction of twisting data ($\phi \mapsto \phi|_{\mathcal{Y}}$) is omitted.

\item[(d)]
(the additivity axiom)
For a family $\X_\lambda$ of local quotient groupoids and their full subgroupoids $\Y_\lambda \subset \X_\lambda$, the inclusions $\X_\lambda \to \sqcup \X_\lambda$ induce an isomorphism
$$
{}^{\sqcup \phi_\lambda}K^{(\sqcup \tau_\lambda, \sqcup c_\lambda) + n}
(\sqcup \X_\lambda, \sqcup \Y_\lambda)
\cong
\prod_\lambda 
{}^{\phi_\lambda} K^{(\tau_\lambda, c_\lambda) + n}(\X_\lambda, \Y_\lambda).
$$
\end{itemize}
\end{thm}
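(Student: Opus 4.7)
The plan is first to extend the $K$-groups to all integer degrees by invoking the Bott periodicity already established, and then to verify each axiom by reducing to the Fredholm-theoretic setup of Definition~\ref{dfn:Freed_Moore_K_theory} together with the homotopy and universality results from \S\ref{sec:twisted_bundle}.

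To extend the indexing, for arbitrary $n\in\Z$ and for any pair $(\X,\Y)$ of a local quotient groupoid and a full subgroupoid, I choose $k\ge 0$ so that $n-8k\le 0$ and define
$$
{}^\phi K^{(\tau,c)+n}(\X,\Y)\;:=\;{}^\phi K^{(\tau,c)-(8k-n)}(\X,\Y).
$$
Corollary~\ref{cor:bott_periodicity} promoted to the relative setting (which is immediate, since the defining mapping cylinder $\tilde\X\cup\tilde\Y\times[0,1]$ is itself a local quotient groupoid) shows this is well defined up to canonical isomorphism and that the Bott periodicity ${}^\phi K^{(\tau,c)+n}(\X,\Y)\cong{}^\phi K^{(\tau,c)+n-8}(\X,\Y)$ holds tautologically.

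For the homotopy axiom (a), a homotopy $\tilde f:\X'\times[0,1]\to\X$ restricting to a homotopy of pairs $(\X',\Y')$ produces, by Lemma~\ref{lem:homotopy_property}(a) applied to the $\phi$-twist $(\tau,c)$, the unique isomorphism $\beta_{\tilde f}:f_0^*(\tau,c)\to f_1^*(\tau,c)$; applied further to a locally universal Fredholm family it exhibits a path of sections connecting $f_0^*A$ to $\beta_{\tilde f}^* f_1^*A$ for each representative $A$, giving commutativity of the asserted square. For the additivity axiom, a locally universal $(\phi,\tau,c)$-twisted bundle on $\bigsqcup\X_\lambda$ is a disjoint union of locally universal bundles on the factors (since local universality is a fibrewise condition), and sections of $\Fred$ on a coproduct are products of sections on each component; the claimed isomorphism is then immediate from Definition~\ref{dfn:Freed_Moore_K_theory}.

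For the excision axiom (b), the strategy is to exhibit a mutually inverse pair between Fredholm classes on the two mapping cylinders. Given a section on $\widetilde{\mathcal A}\cup\widetilde{\mathcal A\cap\mathcal B}\times[0,1]$ invertible on the top face, I extend it across $\tilde{\mathcal B}\setminus\widetilde{\mathcal A\cap\mathcal B}$ using the Tietze-type extension established in the proof of Lemma~\ref{lem:homotopy_property} together with the non-emptiness and contractibility of the space of invertible (indeed squaring-to-$-\id$) Fredholm sections supplied by Lemma~\ref{lem:extend_Clifford_action}; the extension is unique up to homotopy because $\Fred(\tau)^\dagger\subset\Fred(\tau)^*$ is a deformation retract and invertible sections form an open subbundle. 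Restriction in the other direction is tautological, and the two constructions are mutually inverse on homotopy classes. I expect this step to be the main obstacle, since one must combine the local-quotient reduction of \S\ref{subsec:twisted_vector_bundle} with the Clifford-equivariant averaging needed to respect the twisting data throughout the extension.

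Finally, the exactness axiom (c) follows from the mapping cylinder description in Definition~\ref{dfn:Freed_Moore_K_theory} by a standard Puppe-type argument: the inclusions
$$
\tilde\Y\hookrightarrow\tilde\X\hookrightarrow\tilde\X\cup\tilde\Y\times[0,1]\hookrightarrow(\tilde\X\cup\tilde\Y\times[0,1])\cup(\tilde\Y\times\{1\})\times[0,1]
$$
give three-term exact sequences by the excision already proved together with the homotopy axiom, and iterating (together with the identification, via Theorem~\ref{thm:Atiyah_Singer_map}, of the cylinder $\X\times[0,1]/\X\times\partial[0,1]$ with a shift in degree) stitches them into the desired long exact sequence. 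The connecting homomorphism ${}^\phi K^{(\tau,c)+n-1}(\Y)\to{}^\phi K^{(\tau,c)+n}(\X,\Y)$ is induced by restriction to the mapping cylinder collar $\tilde\Y\times(0,1]$, and exactness at each term is checked by producing explicit null-homotopies of compositions and concatenation of paths of sections, exactly as in the classical Fredholm formulation of complex $K$-theory.
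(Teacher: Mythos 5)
Your overall strategy matches the paper's: extend to positive degrees by Bott periodicity, obtain the non-positive long exact sequence from a Puppe/cofibration argument, verify the homotopy axiom via Lemma~\ref{lem:homotopy_property}, verify excision via Tietze extension together with Lemma~\ref{lem:extend_Clifford_action}, and read off additivity from the definition. The one place where your argument diverges, and where a genuine gap opens, is the assertion that ``Corollary~\ref{cor:bott_periodicity} promoted to the relative setting \ldots\ is immediate, since the defining mapping cylinder \ldots\ is itself a local quotient groupoid.'' This reasoning does not work as stated: the group ${}^\phi K^{(\tau,c)}(\X,\Y)$ is \emph{not} the absolute $K$-group of the mapping cylinder, but the set of homotopy classes of Fredholm sections on the mapping cylinder that are invertible over the top face. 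Corollary~\ref{cor:bott_periodicity} is stated only for absolute groups, so it does not promote ``for free'' to this relative situation merely because the mapping cylinder is a local quotient groupoid.

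The paper closes this gap by establishing the non-positive portion of the long exact sequence \emph{first}, via the Puppe sequence, and only then invoking the Atiyah--Singer map of Theorem~\ref{thm:Atiyah_Singer_map} (or, equivalently, the five-lemma against the absolute Bott isomorphism) to obtain the relative periodicity ${}^\phi K^{(\tau,c)+n}(\X,\Y)\cong{}^\phi K^{(\tau,c)+n-8}(\X,\Y)$. Your exposition has the non-positive LES appearing only at the end (as the exactness axiom), after you have already used relative periodicity to define the positive-degree groups; as written, the logic leans on a fact not yet available. The fix is either to move the Puppe-sequence construction of the non-positive LES to the front, before you extend to positive $n$, or to directly verify that the Atiyah--Singer map $\mathrm{AS}(A)(t)=\gamma_1\cos\pi t + A\sin\pi t$ preserves the invertible locus (one checks $\mathrm{AS}(A)(t)^2=-\cos^2\pi t + A^2\sin^2\pi t$, which is invertible wherever $A$ is), and so restricts to a homotopy equivalence on the relative section space; the paper's reduction argument then applies verbatim. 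Either repair recovers the paper's argument exactly.
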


\begin{proof}
With the Bott periodicity in Corollary \ref{cor:bott_periodicity}, the proof of the theorem is a standard and rather formal procedure: In view of the so-called the cofibration (or Puppe) sequence, we get the non-positive part of the long exact sequence in (c). Using this part, the map $\mathrm{AS}$ in the proof of Theorem \ref{thm:Atiyah_Singer_map} induces an isomorphism
$$
{}^\phi K^{(\tau, c) + n}(\X, \Y) \cong
{}^\phi K^{(\tau, c) + n - 8}(\X, \Y).
$$
Based on this periodicity, for a positive integer $n > 0$, we define
$$
{}^\phi K^{(\tau, c) + n}(\X, \Y)
= {}^\phi K^{(\tau, c) + n - 8k}(\X, \Y),
$$
where $k$ is an integer such that $n - 8k < 0$. Because of this definition, the non-negative part of the long exact sequence is extended to the complete long exact sequence in (c). The homotopy axiom (a) follows from Lemma \ref{lem:homotopy_property} and the definition of the $K$-theory in terms of a homotopy. The excision axiom (b) follows from a deformation and an extension of a section $A \in \Gamma(\mathcal{A}, \mathcal{A} \cap \mathcal{B}, \Fred(\tau))$ by using Lemma \ref{lem:extend_Clifford_action}. Finally, (d) just follows from the definition of the $K$-group.
\end{proof}

The tensor product functor on the category of twisted vector bundles induces a multiplication on the $K$-groups
$$
\otimes :\ 
{}^\phi K^{(\tau, c) + n}(\X, \Y) \times
{}^\phi K^{(\tau', c') + n'}(\X, \Y') \to
{}^\phi K^{((\tau, c) + (\tau', c')) + (n + n')}(\X, \Y \cup \Y').
$$
Thus, for example, ${}^\phi K^0(\X)$ is a ring, and ${}^\phi K^{(\tau, c) + n}(\X, \Y)$ is a module over ${}^\phi K^0(\X)$. In the following, for a quotient groupoid $X//G$, we may apply the notation
\begin{align*}
{}^\phi K^{(\tau, c) + n}(X//G) = {}^\phi K^{(\tau, c) + n}_G(X).
\end{align*}
In this case, ${}^\phi K^{(\tau, c) + n}_G(X)$ is a module over the ring ${}^{\phi_G} K^0_G(\pt)$, where $\phi_G \in \Phi(\pt//G)$ is the restriction of $\phi \in \Phi(X//G)$, which is always equivalent to the object associated to a homomorphism $\phi_G : G \to \Z_2$.

%%%%%%%%%%%%%%%%%%%%%%%%%%%%%%%%%%%%%%%%%%%%%%%%
\subsection{Twist and degree shift}
\label{subsec:twist_and_degree_shift}

For the quotient groupoid $\pt//\Z_2$, the identity homomorphism $\id : \Z_2 \to \Z_2$ defines a non-trivial object $\id \in \Phi(\pt//\Z_2)$. There are then two distinguished $\id$-twisted $\Z_2$-graded extensions of $\pt//\Z_2$.

\begin{itemize}
\item
The $\id$-twisted $\Z_2$-graded extension $\tau_{\id} = (\Z_2 \times \C, \tau_{\id}, 1)$ consisting of the product line bundle $\Z_2 \times \C \to \Z_2$, the $2$-cocycle $\tau_{\id} \in Z^2_{\mathrm{group}}(\Z_2; U(1)_{\id})$ given by $\tau_{\id}((-1)^{m_1}, (-1)^{m_2}) = \exp \pi i m_1m_2$,
$$
\begin{array}{|c|c|c|}
\hline
\tau_{\id}(g, h) & h = 1 & h = -1 \\
\hline
g = 1 & 1 & 1 \\
\hline
g = -1 & 1 & -1 \\
\hline
\end{array}
$$
and the trivial map of groupoids $1 : \pt//\Z_2 \to \pt//\Z_2$ induced from the trivial homomorphism $1 : \Z_2 \to \Z_2$.

\item
The $\id$-twisted $\Z_2$-graded extension $c_{\id} = (\Z_2 \times \C, 1, c_{\id})$ consisting of the product line bundle $\Z_2 \times \C \to \Z_2$, the trivial $2$-cocycle $1 \in Z^2_{\mathrm{group}}(\Z_2; U(1)_{\id})$ and the non-trivial map of groupoids $c_{\id} : \pt//\Z_2 \to \pt//\Z_2$ given by the identity homomorphism $c_{\id} = \id : \Z_2 \to \Z_2$.
\end{itemize}

The $\id$-twisted extension $\tau_{\id}$ can be seen as an ungraded twist, and generates
$$
H^3(\pt//\Z_2; \Z_\phi) \cong
H^3_{\mathrm{group}}(\pt; \Z_\phi) \cong 
H^3_{\mathrm{group}}(\pt; U(1)_\phi) \cong \Z_2,
$$ 
while $c_{\id}$ can be seen as the datum of the $\Z_2$-grading, and generates
$$
H^1(\pt//\Z_2; \Z_2) \cong
H^1_{\mathrm{group}}(\pt; \Z_2)
= \mathrm{Hom}(\Z_2, \Z_2) \cong \Z_2.
$$
A $c_{\id}$-twisted vector bundle on $\pt//\Z_2$ just amounts to a $\Z_2$-graded Hilbert space $E$ equipped with an odd anti-unitary map $\rho : E \to E$ such that $\rho^2 = \id_E$. By the sign rule, the square of their tensor product is calculated as
$$
(\rho \otimes \rho) \circ (\rho \otimes \rho)
= - (\rho \circ \rho) \otimes (\rho \circ \rho)
= - \id_E \otimes \id_E
= - \id_{E \otimes E}.
$$
The sign cannot be eliminated by multiplying the anti-unitary map $\rho$ with scalars. This explains that $(0, c_{\id}) + (0, c_{\id}) = (\tau_{\id}, 0)$ and $(\tau_{\id}, 0) + (\tau_{\id}, 0) = 0$ in the group
$$
\pi_0({}^{\id}\Twist(\pt//\Z_2)) \cong 
H^3_{\Z_2}(\pt; \Z_{\id}) \times H^1_{\Z_2}(\pt; \Z_2)
\cong \Z_4,
$$
so that $c_{\id}$ is a generator of this group.

Let $\X$ be a groupoid, and $\phi = \{ F : \tilde{\X} \to \X, \phi : \tilde{\X} \to \pt//\Z_2 \}$ an object in $\Phi(\X)$. By the pull-back under $\phi : \tilde{\X} \to \pt//\Z_2$, the $\id$-twisted $\Z_2$-graded extensions $\tau_{\id}$ and $c_{\id}$ define $\phi$-twisted $\Z_2$-graded extensions $\tau_\phi = \phi^*\tau_{\id}$ and $c_\phi = \phi^*c_{\id}$ of $\tilde{\X}$. Hence we have $\phi$-twists $\tau_\phi$ and $c_\phi$ on $\X$.

\begin{thm} \label{thm:degree_shift}
Let $\X$ be a local quotient groupoid, $\phi \in \Phi(\X)$ an object, and $(\tau, c)$ a $\phi$-twist on $\X$. For a full subgroupoid $\Y \subset \X$ and $n \in \Z$, there are natural isomorphisms
\begin{align*}
{}^\phi K^{(\tau, c) + c_{\phi} + n}(\X, \Y) 
&\cong 
{}^\phi K^{(\tau, c) + n + 2}(\X, \Y), \\
%%%%%
{}^\phi K^{(\tau, c) + \tau_{\phi} + n}(\X, \Y) 
&\cong 
{}^\phi K^{(\tau, c) + n + 4}(\X, \Y), \\
%%%%%
{}^\phi K^{(\tau, c) + (\tau_{\phi}, c_{\phi}) + n}(\X, \Y) 
&\cong 
{}^\phi K^{(\tau, c) + n + 6}(\X, \Y).
\end{align*}
\end{thm}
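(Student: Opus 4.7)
First reduce the three isomorphisms to a single case. Because $c_\phi$ generates the cyclic group $\pi_0({}^{\id}\Twist(\pt//\Z_2)) \cong \Z_4$ with $2 c_\phi = \tau_\phi$ and $3 c_\phi = (\tau_\phi, c_\phi)$, iterating the first isomorphism twice yields the second (shift by $+4$) and three times yields the third (shift by $+6$); a fourth iteration returns to the identity, consistent with the Bott periodicity modulo $8$ of Corollary~\ref{cor:bott_periodicity}. Thus it suffices to establish
$$
{}^\phi K^{(\tau, c) + c_\phi + n}(\X, \Y) \cong {}^\phi K^{(\tau, c) + n + 2}(\X, \Y).
$$

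To construct this isomorphism, I would pass through tensor product with a finite-rank twisted bundle pulled back from $\pt//\Z_2$. On $\pt//\Z_2$, realize the quaternions $\mathbb{H}$ as a rank-$2$ complex $\Z_2$-graded Hilbert space $\C \oplus \C j$. Left multiplication by $j$ is a $\C$-antilinear odd operator $\sigma$ with $\sigma^2 = -\id$, which provides a $(\id, \tau_\id, c_\id)$-twisted extension structure on this rank-$2$ bundle $\Delta$. The $\C$-linear degree-$1$ unit-square unitaries $v \mapsto i v j$ and $v \mapsto i v k$ anticommute with each other and with $\sigma$, giving $\Delta$ a $Cl_{0, 2}$-action compatible with the twisted structure. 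Pulling back via $\phi : \tilde{\X} \to \pt//\Z_2$ produces $\phi^* \Delta$, a $(\phi, \tau_\phi, c_\phi)$-twisted rank-$2$ bundle on $\tilde{\X}$ with $Cl_{0,2}$-action. For a $(\phi, \tau, c + c_\phi)$-twisted locally universal Hilbert bundle $E$ on $\tilde{\tilde{\X}}$ with $Cl_{p, q}$-action, the tensor product $E \otimes \phi^* \Delta$ becomes a $(\phi, \tau, c)$-twisted locally universal Hilbert bundle with $Cl_{p, q+2}$-action (up to a canonical isomorphism of twists), since the $c$-component satisfies $(c + c_\phi) + c_\phi = c$ and $2 \tau_\phi = 0$. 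As in the proof of Lemma~\ref{lem:weak_periodicity}, the assignment $A \mapsto A \otimes \id_{\phi^* \Delta}$ induces a homeomorphism of spaces of Fredholm sections, because any odd skew-adjoint Fredholm operator on $E \otimes \phi^* \Delta$ anticommuting with the two added $Cl_{0,2}$-generators is uniquely of the product form $A \otimes \id$. Combining with Corollary~\ref{cor:degree_correspondence} and Lemma~\ref{lem:weak_periodicity} yields the desired isomorphism, and the relative version for a full subgroupoid $\Y \subset \X$ is obtained verbatim by applying the same construction to the mapping cylinder in Definition~\ref{dfn:Freed_Moore_K_theory}.

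\textbf{Main obstacle.} The principal technical difficulty lies in the twist bookkeeping. The group law on $\pi_0({}^\phi \Twist(\X))$ carries a Koszul-sign Bockstein correction, so that the sum $(\tau, c + c_\phi) + (\tau_\phi, c_\phi)$ produces twist $(\tau + \tilde{\beta}(c \cup c_\phi), c)$ rather than $(\tau, c)$, with $\tilde{\beta}(c \cup c_\phi) \in H^3(\X; \Z_\phi)$ non-zero in general. Consequently $\phi^* \Delta$ alone does not land the tensor product exactly at the desired twist class, and one must absorb this discrepancy either by modifying $\Delta$ by a $c$-dependent line bundle trivializing $\tilde{\beta}(c \cup c_\phi)$, or by working at the cocycle level with a specific isomorphism of twists that realizes the correction. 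Making this step precise and cocycle-independent is the delicate heart of the argument.
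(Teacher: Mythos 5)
The paper's proof avoids the obstacle you flag by never needing a single tensoring to be an isomorphism. It defines the map in the opposite direction, $\delta : {}^\phi K^{(\tau,c)+n} \to {}^\phi K^{(\tau,c)+c_\phi+n-2}$, via tensoring with a rank-two $(\id, c_\id)$-twisted $Cl_{2,0}$-module $\Delta$ whose twisted $\Z_2$-action is a real structure ($C^2 = \id$), and establishes bijectivity by iterating: $\delta^4$ is tensoring with $\Delta^{\otimes 4}$, whose cocycle and $c$-grading are both trivial (the Koszul signs from the graded tensor product exactly cancel), so that $\Delta^{\otimes 4}$ is simply an irreducible real $\Z_2$-graded $Cl_{8,0}$-module of dimension $16$. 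That reduces bijectivity of $\delta^4$, hence of $\delta$, to the already-established weak periodicity of Lemma~\ref{lem:weak_periodicity}, with no delicate twist identification required.

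Your route—tensoring once with a \emph{quaternionic} $\Delta$ (with $\sigma^2 = -\id$, $Cl_{0,2}$-action, twist $(\tau_\id, c_\id)$) and asserting a direct Morita-type homeomorphism of Fredholm section spaces—is genuinely different, and the fiberwise part of the claim is correct (the super-commutant of the $Cl_{0,2}$-action in $\mathrm{End}(\Delta) \cong M_{1|1}(\C)$ is $\C$, so odd operators anticommuting with both generators are scalars). But as written the argument is incomplete in two places. First, as you concede, you have not shown that the concrete twisted extension of $E \otimes \phi^*\Delta$ is $(\tau,c)$ rather than merely isomorphic to it; in fact a careful computation of the two Koszul signs together with the $\tau_\phi$ factor shows they cancel exactly, so the discrepancy you worry about does not arise—but this computation is precisely the missing step, and your proposed fixes (modifying $\Delta$ by a $c$-dependent line bundle, or choosing a trivialization of $\tilde\beta(c\cup c_\phi)$) would reintroduce non-canonicity rather than resolve it. Second, for $\Gamma(\tilde\X,\Fred(E)) \to \Gamma(\tilde\X,\Fred(E\otimes\phi^*\Delta))$ to compute the target $K$-group, you must also show that $E\otimes\phi^*\Delta$ is \emph{locally universal} as a $(\tau,c)$-twisted bundle with $Cl_{p,q+2}$-action, which requires upgrading the fiberwise Morita statement to an equivalence between twisted categories with \emph{different} twists; this step is not addressed. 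The paper's $\delta^4$ device is exactly a way to bypass both of these issues by iterating until the auxiliary module has trivial twist.
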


\begin{proof}
We define $\Delta \in {}^{\mathrm{id}}\Vect^{c_{\id} + (2, 0)}(\pt//\Z_2)$ as follows: The underlying vector space is $\Delta = \C \oplus \C$ with $\Delta^k = \C$. The twisted $\Z_2$-action $C$ and the $Cl_{2, 0}$-action $\gamma_i$ are 
\begin{align*}
C &=
\left(
\begin{array}{cc}
0 & 1 \\
1 & 0 
\end{array}
\right)K, 
&
%%%%%
\gamma_1 &=
\left(
\begin{array}{cc}
0 & -1 \\
1 & 0 
\end{array}
\right), 
&
%%%%%
\gamma_2 &=
\left(
\begin{array}{cc}
0 & i \\
i & 0 
\end{array}
\right), 
\end{align*}
where $K$ is the complex conjugation. For $\phi = \{ F : \tilde{\X} \to \X, \phi : \tilde{\X} \to \pt//\Z_2 \} \in \Phi(\X)$, we take the pull-back of the twisted bundle $\Delta$ above under $\phi$ to get
$$
\Delta \in {}^\phi\Vect^{c_\phi + (2, 0)}(\tilde{\X}).
$$
Thus, for a $(\phi, \tau, c)$-twisted locally universal vector bundle $E$ on $\tilde{\X}$ with $Cl_{p, q}$-action, the tensor product defines a map
\begin{align*}
&\Gamma(\tilde{\X}, \Fred(E)) \to 
\Gamma(\tilde{\X}, \Fred(E \otimes \Delta)), &
A& \mapsto A \otimes 1
\end{align*}
and this eventually induces a homomorphism
$$
\delta : 
{}^\phi K^{(\tau, c) + n}(\X, \Y) \to
{}^\phi K^{(\tau, c) + c_{\phi} + n -2}(\X, \Y).
$$
Recall that $c_{\phi} + c_{\phi} = \tau_{\phi}$ and $\tau_{\phi} + \tau_{\phi} = 0$. Therefore the present theorem will be established when $\delta$ is shown to be bijective. To prove the bijectivity of $\delta$, it is enough to consider the case of $\Y = \emptyset$, because of the exactness axiom. Now, let us consider the composition 
$$
\delta^4 : 
{}^\phi K^{(\tau, c) + n}(\X) \to
{}^\phi K^{(\tau, c) + n - 8}(\X).
$$
This map is induced from the tensor product with the twisted representation $\Delta^{\otimes 4}$, which is an $\id$-twisted representation of $\Z_2$ with $Cl_{8, 0}$-action
$$
\Delta^{\otimes 4}
\in {}^{\id}\Vect^{(8, 0)}(\pt//\Z_2).
$$
An $\id$-twisted representation of $\Z_2$ is nothing but a complex vector space with an anti-linear involution, or equivalently a real structure. By the operation of taking the real part, $\id$-twisted representations of $\Z_2$ with $Cl_{8, 0}$-action are in one to one correspondence with $\Z_2$-graded real representations of $Cl_{8, 0}$. The $\Z_2$-graded real representation of $Cl_{8, 0}$ corresponding to $\Delta^{\otimes 4}$ has the dimension $2^4 = 16$, and hence is an irreducible representation. Such an irreducible representation realizes the periodicity in Lemma \ref{lem:weak_periodicity}, so that $\delta$ turns out to be bijective.
\end{proof}

%%%%%%%%%%%%%%%%%%%%%%%%%%%%%%%%%%%%%%%%%%%%%%%%
\subsection{Reproduction of familiar $K$-theories}

As mentioned in \S\ref{sec:introduction}, we can recover familiar $K$-theories by specifying twists. We review here some examples.

\subsubsection{Twisted equivariant complex $K$-theory}

For a local quotient groupoid $\X$, if $\phi \in \Phi(\X)$ is trivial, then the twisted $K$-theory in \cite{FHT1} is recovered. In particular, for the quotient groupoid $\X = X//G$ associated to an action of a compact Lie group $G$ on a compact Hausdorff space $X$, we recover the twisted $G$-equivariant complex $K$-theory
$$
{}^\phi K^{(\tau, c) + n}(X//G) = K^{(\tau, c) + n}_G(X).
$$
In this case, the twists $(\tau, c)$ are classified by the Borel equivariant cohomology
$$
H^3_G(X; \Z) \times H^1_G(X; \Z_2).
$$
If $G = \Z_2$, then there is a distinguished twist $(0, c)$ coming from 
$$
H^1_{\Z_2}(\pt; \Z_2) 
\cong \mathrm{Hom}(\Z_2, \Z_2) = \Z_2.
$$
We can identify the $K$-theory with this twist with a variant of $K$-theory $K^{c + n}_{\Z_2}(X) \cong K^n_{\pm}(X)$ in \cite{A-H,W}.

\subsubsection{Twisted equivariant $KO$-theory}

Let $G$ be a compact Lie group acting on a compact Hausdorff space $X$. By this $G$-action and the trivial $\Z_2$-action, we define an action of $\Z_2 \times G$ on $X$. We let $p_{\Z_2} : \Z_2 \times G \to \Z_2$ be the projection onto the $\Z_2$-factor, which defines an object $p_{\Z_2} \in \Phi(\pt//(\Z_2 \times G))$. In this case, we have the identification with the twisted $G$-equivariant real $K$-theory
$$
{}^{p_{\Z_2}} K^{(\tau, c) + n}(X//(\Z_2 \times G))
\cong KO^{(\tau, c) + n}_G(X).
$$
The twists $(\tau, c)$ are classified by the equivariant cohomology
$$
H^3_{\Z_2 \times G}(X; \Z_{p_{\Z_2}}) \times
H^1_{\Z_2 \times G}(X; \Z_2).
$$
We can see that
\begin{align*}
H^3_{\Z_2 \times G}(X; \Z_{p_{\Z_2}})
&\cong
H^2_G(X; \Z_2) \oplus H^0_G(X; \Z_2), \\
H^1_{\Z_2 \times G}(X; \Z_2)
&\cong
H^1_G(X; \Z_2) \oplus H^0_G(X; \Z_2).
\end{align*}
The factors $H^2_G(X; \Z_2)$ and $H^1_G(X; \Z_2)$ are consistent with the twists for $KO$-theory in \cite{D-K}. Notice that a $G$-equivariant complex line bundle $L \to X$ defines a twisted extension in this case. The twist given by $L$ is classified by the image of the equivariant Chern class $c_1^G(L) \in H^2_G(X; \Z)$ under the mod $2$ reduction $H^2_G(X; \Z) \to H^2_G(X; \Z_2)$. The $KO$-theory twisted by $L$ is the $G$-equivariant version of the twisted $KO$-theory in \cite{A-R}. The remaining factors 
\begin{align*}
H^0_G(X; \Z_2) &\subset H^3_{\Z_2 \times G}(X; \Z_{p_{\Z_2}}), &
H^0_G(X; \Z_2) &\subset H^1_{\Z_2 \times G}(X; \Z_2)
\end{align*}
are the contributions of the twists $c_\phi$ and $\tau_\phi$ in \S\S\ref{subsec:twist_and_degree_shift} with $\phi = p_{\Z_2}$. In view of the values of the $2$-cocycle defining $\tau_\phi$, we find that the equivariant $KO$-theory twisted by $\tau_\phi$ is the $K$-theory of $G$-equivariant quaternionic vector bundles.

\subsubsection{$KR$-theory}

Let us consider the quotient groupoid $\X = X//\Z_2$ associated to a compact Hausdorff space $X$ with an action of $\Z_2$. The identity homomorphism $\id : \Z_2 \to \Z_2$ defines an object $\id \in \Phi(X//\Z_2)$. Then we recover Atiyah's $KR$-theory \cite{A2}
$$
{}^{\id} K^{n}(X//\Z_2)
\cong KR^{n}(X).
$$
The twists $(\tau, c)$ in this case are classified by 
$$
H^3_{\Z_2}(X; \Z_{\id}) \times H^1_{\Z_2}(X; \Z_2),
$$
and we can define a twisted $KR$-theory by
$$
KR^{(\tau, c) + n}(X) = {}^{\id} K^{(\tau, c)+n}(X//\Z_2).
$$
The twist $\tau_{\id}$ in \S\S\ref{subsec:twist_and_degree_shift} lives in the factor $H^3_{\Z_2}(X; \Z_{\id})$, and the $K$-theory with this twist provides Dupont's `Symplectic' $K$-theory \cite{Dup}. 

We notice that twisted $KR$-theories are already introduced by Moutuou \cite{Mou1,Mou2,Mou3} and also by Fok \cite{Fok1,Fok2}. The twisted $KR$-theory ${}^{\id} K^{(\tau, c) + n}(X//\Z_2)$ above is anticipated to reproduce their twisted $KR$-theories. In particular, the twisted $G$-equivariant $KR$-theory $KR_G^{\tau}(X)$ for a `Real' $G$-space of $X$ in the sense of \cite{Fok1,Fok2} would be identified with ${}^{\phi}K^\tau(X//(\Z_2 \ltimes G))$, where the semi-direct product $\Z_2 \ltimes G$ is defined by using the `Real' structure on $G$ and $\phi : \Z_2 \ltimes G \to \Z_2$ is the projection.

%%%%%%%%%%%%%%%%%%%%%%%%%%%%%%%%%%%%%%%%%%%%%%%%
\subsection{Finite rank realizability}
\label{subsec:finite_rank}

We here compare the Fredholm formulation of ${}^\phi K^{(\tau, c) + 0}(\X)$ with its finite-dimensional formulation in \cite{F-M}.

\begin{dfn} \label{dfn:finite_rank_freed_moore_K}
Let $\X$ be the quotient groupoid $X//G$ associated to an action of a finite group $G$ on a compact Hausdorff space $X$, $\phi : X//G \to \pt//\Z_2$ the map of groupoids associated to a homomorphism $\phi : G \to \Z_2$, and $(L, \tau, c)$ a $\phi$-twisted $\Z_2$-graded extension of $X//G$.
\begin{itemize}
\item[(a)]
We define ${}^\phi\Vect^{(\tau, c) + (p, q)}_G(X)_{\mathrm{fin}} \subset {}^\phi\Vect^{(\tau, c)+(p, q)}(X//G)$ to be the full subcategory whose objects are $(\phi, \tau, c)$-twisted vector bundles $E$ on $X//G$ with $Cl_{p, q}$-action such that the fibers of $E$ are finite dimensional.

\item[(b)]
We define ${}^\phi K^{(\tau, c)+n}_G(X)_{\mathrm{fin}}$ as the quotient monoid
$$
{}^\phi K^{(\tau, c)+n}_G(X)_{\mathrm{fin}}
= \pi_0({}^\phi\Vect^{(\tau, c) + (n, 0)}_G(X)_{\mathrm{fin}})/
\pi_0({}^\phi\Vect^{(\tau, c) + (n+1, 0)}_G(X)_{\mathrm{fin}}).
$$
\end{itemize}
\end{dfn}

Note that ${}^\phi\Vect^{(\tau, c) + (n+1, 0)}_G(X)_{\mathrm{fin}}$ is a full subcategory of ${}^\phi\Vect^{(\tau, c) + (n, 0)}_G(X)_{\mathrm{fin}}$, so that $\pi_0({}^\phi\Vect^{(\tau, c) + (1, 0)}_G(X)_{\mathrm{fin}}) \subset \pi_0({}^\phi\Vect^{(\tau, c) + (0, 0)}_G(X)_{\mathrm{fin}})$ is a submonoid. Using the idea of the proof of Lemma \ref{lem:abelian_group_structure}, we can show that the reversal of the $\Z_2$-grading of twisted bundles gives a monoid morphism satisfying the assumptions in Lemma \ref{appendix:lem_quotient_monoid}, from which ${}^\phi K^{(\tau, c)+n}_G(X)_{\mathrm{fin}}$ is an abelian group. If $c$ is trivial, then this group agrees with the Grothendieck construction of the monoid of isomorphism classes of ungraded vector bundles. 

For any $E \in {}^\phi \Vect_G^{(\tau, c) + (p, q)}(X)_{\mathrm{fin}}$, the trivial section $0 \in \Gamma(X//G, \Fred(E))$ makes sense. Moreover, any section $A \in \Gamma(X//G, \Fred(E))$ is homotopic to the trivial section. If $E_{\mathrm{uni}}$ is a locally universal twisted bundle, then we have an embedding $E \to E_{\mathrm{uni}}$. The orthogonal complement $E^\perp \subset E_{\mathrm{uni}}$ can be assumed to be locally universal. Then we have a map $\Gamma(X//G, \Fred(E)) \to \Gamma(X//G, \Fred(E_\mathrm{uni}))$ given by $A \mapsto A \oplus \gamma_*$, where $\gamma_* \in \Gamma(X//G, \Fred(E^\perp)^\dagger)$. Thus, taking $A = 0$, we have an induced homomorphism
$$
\imath: \
{}^\phi K^{(\tau, c)+n}_G(X)_{\mathrm{fin}} \longrightarrow
{}^\phi K^{(\tau, c)+n}_G(X).
$$
In the case of $X = \pt$, a standard argument shows that the operation of taking the kernel of skew-adjoint (Fredholm) operators induces a well-defined homomorphism
\begin{align*}
\varkappa &:
{}^\phi K^{(\tau, c)+n}_G(\pt) \to 
{}^\phi K^{(\tau, c)+n}_G(\pt)_{\mathrm{fin}},
&A \mapsto \mathrm{Ker}(A),
\end{align*}
which is inverse to $\imath$. The theorem of Atiyah-J\"{a}nich \cite{A1} is a generalization of this fact in the case that $n = 0$ and $\tau$ is trivial, and a twisted generalization due to \cite{F-M} is as follows.

\begin{prop}[\cite{F-M}] \label{prop:finite_rank_realizability_freed_moore}
Under the assumptions in Definition \ref{dfn:finite_rank_freed_moore_K}, if $c : X//G \to \pt//\Z_2$ is trivial and $n = 0$, then the homomorphism $\imath$ is an isomorphism.
\end{prop}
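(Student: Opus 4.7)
The plan is to construct an inverse $\varkappa \colon {}^\phi K^{(\tau, c) + 0}_G(X) \to {}^\phi K^{(\tau, c) + 0}_G(X)_{\mathrm{fin}}$ to $\imath$ by the equivariant index bundle construction. The crucial consequence of the triviality of $c$ is that, for a $(\phi, \tau, c)$-twisted locally universal vector bundle $E$ on $X//G$, the $\Z_2$-grading $E = E^0 \oplus E^1$ is itself $G$-equivariant, so that every $A \in \Gamma(X//G, \Fred(E))$ has the off-diagonal form
$$
A = \begin{pmatrix} 0 & -B^{*} \\ B & 0 \end{pmatrix}
$$
with $B \colon E^0 \to E^1$ a $(\phi, \tau)$-twisted $G$-equivariant Fredholm family in the fiberwise sense.

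The key geometric step is to produce a finite-rank $(\phi, \tau)$-twisted equivariant subbundle $W \subset E^1$ such that the augmented map $\widetilde{B} = B \oplus \iota_W \colon E^0 \oplus W \to E^1$ is pointwise surjective. Locally near any $x \in X$, Fredholmness of $B_x$ yields a finite-dimensional cokernel, and the local universality of $E$ together with the finiteness of $G$ allows one to find finite-rank $(\phi, \tau)$-twisted equivariant subbundles of $E^1$ covering this cokernel on a neighborhood of $x$. Compactness of $X$ reduces the problem to a finite cover, and summing the local choices equivariantly produces a global $W$ with the desired surjectivity. Then $\mathrm{Ker}\,\widetilde{B} \subset E^0 \oplus W$ is a finite-rank $(\phi, \tau)$-twisted equivariant subbundle, and I would set
$$
\varkappa([A]) = [\mathrm{Ker}\,\widetilde{B}] - [W] \in {}^\phi K^{(\tau, c) + 0}_G(X)_{\mathrm{fin}}.
$$

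To see that $\varkappa$ is well-defined, independence from the choice of $W$ is handled by the standard comparison through $W_1 \oplus W_2$, and independence from the representative $A$ within its homotopy class is obtained by running the same construction over $(X \times [0,1])//G$ and invoking the homotopy property of twisted vector bundles (Lemma \ref{lem:homotopy_property}). The identity $\varkappa \circ \imath = \id$ is checked on the representative $A = 0_F \oplus J$ associated with a finite-rank twisted bundle $F$ embedded into $E$ with invertible complement piece $J \in \Gamma(X//G, \Fred(F^\perp)^\dagger)$ provided by Lemma \ref{lem:extend_Clifford_action}: its index bundle is $F$ itself. For $\imath \circ \varkappa = \id$, a straight-line homotopy deforming $A$ through skew-adjoint Fredholm sections to the stabilization of $\widetilde{B}$ on $E^0 \oplus W$ combined with invertibility on the orthogonal complement shows that $\imath([\mathrm{Ker}\,\widetilde{B}] - [W]) = [A]$.

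The principal obstacle is the construction of $W$ as a global $(\phi,\tau)$-twisted equivariant subbundle with fiberwise surjectivity of $\widetilde{B}$. Here the triviality of $c$ is essential: when $c$ is nontrivial, the splitting $E = E^0 \oplus E^1$ is incompatible with the $G$-action because $\epsilon \rho(g) = c(g) \rho(g) \epsilon$, so $B \colon E^0 \to E^1$ is not an equivariant map and the classical index bundle construction has no direct analogue. The alternative mentioned in the introduction is to avoid this difficulty by passing through the Karoubi triple formulation, where $\varkappa$ is induced from $(E, \eta_0, \eta_1) \mapsto (\mathrm{Ker}(1 - \eta_0), \mathrm{Ker}(1 - \eta_1))$ and the kernel subbundles of the involutions have automatically locally constant rank without requiring any perturbation argument.
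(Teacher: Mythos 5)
Your proof is correct in approach for trivial $c$, and it follows the equivariant Atiyah--J\"anich index-bundle route that the paper attributes to Freed--Moore (Appendix E of \cite{F-M}) and explicitly accepts as valid precisely when $c$ is trivial; the paper, however, does not carry that argument out. Instead it reproves the statement in \S\S\ref{subsec:finite_dimensional_formulations} by going entirely through the Karoubi formulations: it establishes that $\vartheta$ (Theorem \ref{thm:Fredholm_vs_Karoubi}) and $\jmath$ (Theorem \ref{thm:finite_and_infinite_dimensional_Karoubi_formulations}) are isomorphisms, identifies $\alpha\circ\vartheta\circ\imath$ with the simple assignment $E\mapsto(\acute{E},-\acute{\epsilon},\acute{\epsilon})$, and inverts this by the algebraic map $(E,\eta_0,\eta_1)\mapsto\mathrm{Ker}(1-\eta_0)\oplus\mathrm{Ker}(1-\eta_1)$, which is immediate because the $(\pm 1)$-eigenbundles of the involutions $\eta_k$ have automatically locally constant rank, so no finite-rank perturbation of a Fredholm family is needed at the final step. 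Your route is more self-contained but must shoulder the construction of the global equivariant finite-rank $W$ making $\widetilde{B}$ surjective (the analytic half of the twisted equivariant Atiyah--J\"anich theorem), together with the stabilization and homotopy-invariance bookkeeping; the paper's route concentrates all the analysis into $\vartheta$ and the finite-rank approximation Lemma \ref{lem:approximation_gradation} (which it needs anyway for $\jmath$), making the remaining inversion transparent. Both approaches break down at the analogous place for nontrivial $c$, as you correctly diagnose: in yours, $B\colon E^0\to E^1$ ceases to be equivariant because $\epsilon\rho(g)=c(g)\rho(g)\epsilon$, while in the paper's, $\mathrm{Ker}(1-\eta_k)$ ceases to be $G$-invariant because $\eta_k\rho(g)=c(g)\rho(g)\eta_k$.
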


In \cite{F-M}, the result above is stated in Remark 7.37 and a proof is given in Appendix E, which is essentially the construction of the inverse $\varkappa$. However, this proof seems not to work in the presence of a non-trivial homomorphism $c : G \to \Z_2$, at the point that we apply the argument proving the Atiyah-J\"anich theorem in \cite{A1}. For this reason, $c$ is assumed to be trivial in the above proposition. We remark that the proposition will be reproved in a different way in \S\S\ref{subsec:finite_dimensional_formulations}. 

\medskip

In the presence of a non-trivial $c$, we can instead prove the following: 

\begin{prop} \label{prop:exact_sequence_with_c}
Under the assumptions in Definition \ref{dfn:finite_rank_freed_moore_K}, if $c : X//G \to \pt//\Z_2$ is associated to a non-trivial homomorphism $c : G \to \Z_2$, then there is an exact sequence of groups
$$
{}^\phi K^{\tau + 0}_G(X)_{\mathrm{fin}} 
\overset{\pi^*}{\longrightarrow}
{}^\phi K^{\tau + 0}_G(X \times \Z_2)_{\mathrm{fin}} 
\overset{\pi_*}{\longrightarrow}
{}^\phi K^{(\tau, c) + 0}_G(X)_{\mathrm{fin}} 
\longrightarrow
0, 
$$
where the original $G$-action on $X$ and the morphism $c : G \to \Z_2$ define the $G$-action on $X \times \Z_2$ by $(x, r) \mapsto (gx, c(g)r)$, and $\pi : X \times \Z_2 \to X$ is the projection.
\end{prop}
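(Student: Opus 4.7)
The plan is to exploit the fact that, since $c$ is non-trivial (hence surjective), $H := \ker c$ has index $2$ in $G$ and $G$ acts freely along the $\Z_2$-factor of $X \times \Z_2$. Fixing $\sigma \in G \setminus H$ yields a local equivalence of groupoids $(X \times \Z_2)//G \simeq X//H$, under which
$$
{}^\phi K^{\tau + 0}_G(X \times \Z_2)_{\mathrm{fin}} \cong
{}^{\phi|_H} K^{\tau|_H + 0}_H(X)_{\mathrm{fin}},
$$
and the pull-back $\pi^*$ becomes the ordinary restriction functor $\mathrm{Res}_H^G$. The proposition then reduces to the identification
$$
{}^\phi K^{(\tau, c) + 0}_G(X)_{\mathrm{fin}} \cong
{}^{\phi|_H} K^{\tau|_H + 0}_H(X)_{\mathrm{fin}} \big/ \mathrm{Res}_H^G\bigl({}^\phi K^{\tau + 0}_G(X)_{\mathrm{fin}}\bigr),
$$
with $\pi_*$ being the quotient map.

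To establish this, I would first show that the functor $V \mapsto V^0$, sending a $c$-graded $G$-equivariant $(\phi, \tau)$-twisted bundle to its even part equipped with the restricted $H$-action, defines an equivalence of categories
$$
{}^\phi\Vect^{(\tau, c) + (0,0)}_G(X)_{\mathrm{fin}} \simeq
{}^{\phi|_H}\Vect^{\tau|_H + (0,0)}_H(X)_{\mathrm{fin}}.
$$
Essential surjectivity is witnessed by the induction functor $W \mapsto W \oplus \sigma^* W$, with grading placing $W$ in degree $0$ and $\sigma^*W$ in degree $1$, the $H$-action on $\sigma^*W$ being the $\sigma$-conjugated representation, and $\rho(\sigma)$ the canonical swap of the two summands (twisted by $\tau(\sigma,\sigma)$ on the return). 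Full faithfulness follows because any degree-$0$ morphism is determined by its restriction to the even part, the odd part being identified with the even part via $\rho(\sigma)\colon V^0 \to V^1$.

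Next, I would verify that, under this equivalence, the subcategory ${}^\phi\Vect^{(\tau, c) + (1,0)}_G(X)_{\mathrm{fin}}$ of objects admitting a $Cl_{1,0}$-action corresponds exactly to the essential image of $\mathrm{Res}_H^G$. Writing a $Cl_{1,0}$-action on $V$ as $\gamma_0\colon V^0 \to V^1 \cong V^0$, the conditions $\gamma^2 = -1$ and $\gamma\rho(g) = c(g)\rho(g)\gamma$ translate (modulo the $\tau$-cocycle) into $\gamma_0^{-2} = \rho_W(\sigma^2)$ and $\gamma_0^{-1}\rho_W(h)\gamma_0 = \rho_W(\sigma h \sigma^{-1})$ for all $h \in H$. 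Setting $\rho_U(\sigma) := \gamma_0^{-1}$ therefore extends the $H$-action on $V^0$ to a genuine $G$-action, producing an object $U$ of ${}^\phi\Vect^{\tau + (0,0)}_G(X)_{\mathrm{fin}}$ with $\mathrm{Res}_H^G U \cong V^0$; conversely, any such $U$ yields a $Cl_{1,0}$-action through $\gamma_0 := \rho_U(\sigma)^{-1}$. Passing to $\pi_0$ and invoking the quotient-monoid presentation of ${}^\phi K^{(\tau, c) + 0}_G(X)_{\mathrm{fin}}$ (Lemma \ref{appendix:lem_quotient_monoid}) then yields the displayed quotient description, which is precisely the exactness asserted in the proposition.

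The main obstacle I anticipate is the careful tracking of the cocycle $\tau$ in the identifications above. The composition law $\rho(g)\rho(g') = \tau(g, g')\rho(gg')$ modifies the naive relations: for instance, $\rho(\sigma)^2 = \tau(\sigma, \sigma)\rho(\sigma^2)$ rather than $\rho(\sigma^2)$, so the identity $\gamma_0^{-2} = \rho_W(\sigma^2)$ and the conjugation relation acquire explicit $\tau$-factors which must cancel consistently on both sides of the correspondence between $Cl_{1,0}$-actions and $G$-extensions. Once this fiberwise bookkeeping is settled, the globalization from representations of $G$ to equivariant bundles on $X$ is straightforward.
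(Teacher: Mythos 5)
Your reduction to $X//H$ (with $H = \ker c$), using the free $G$-action on the $\Z_2$-factor, is precisely a restatement, through a local equivalence of groupoids, of the paper's direct argument: the paper's $\pi_*E = \hat E$ with $\hat E^k = E|_{X\times\{(-1)^k\}}$ is your even-part functor, and the paper's correspondence between $Cl_{1,0}$-actions on $\hat E$ and pullback structures $E\cong\pi^*F$ is your correspondence between $Cl_{1,0}$-actions and extensions of $H$-representations to $G$-representations (your $\rho_U(\sigma):=\gamma_0^{-1}$ is, up to sign and $\tau$-factor, the paper's $\rho_F(g)=\gamma_{01}\rho(g)_{10}$ for $c(g)=-1$). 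So the engine of the proof is the same; the induction/restriction packaging is tidier language but not a different route. Two points to firm up before this is airtight. First, the even-part functor does not land in the full graded category ${}^{\phi|_H}\Vect^{\tau|_H+(0,0)}_H(X)_{\mathrm{fin}}$: its image consists of bundles with trivial odd part, and your induction $W\mapsto W\oplus\sigma^*W$ only witnesses essential surjectivity onto those. The claimed equivalence should be stated with target the category of $(\phi|_H,\tau|_H)$-twisted \emph{ungraded} $H$-bundles, exactly as the paper passes to ungraded bundles on $(X\times\Z_2)//G$. Second, after the categorical identifications, you still need to reconcile the quotient-monoid description of ${}^\phi K^{(\tau,c)+0}_G(X)_{\mathrm{fin}}$ with the Grothendieck-group description of ${}^\phi K^{\tau+0}_G(X\times\Z_2)_{\mathrm{fin}}/\mathrm{im}(\pi^*)$; this is a short cofinality argument built on the involution $I$ of Lemma~\ref{appendix:lem_quotient_monoid} (here, $W\mapsto\sigma^*W$), and the paper leaves the analogous step implicit too. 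The $\tau$-bookkeeping you explicitly defer is carried out in the paper via the matrix blocks $\rho(g)_{ij}$; it does close up, so that is not a real obstruction.
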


\begin{proof}
First of all, we construct $\pi_* : {}^\phi K^{\tau + 0}_G(X \times \Z_2)_{\mathrm{fin}} \to {}^\phi K^{(\tau, c) + 0}_G(X)_{\mathrm{fin}}$. For the construction, we notice that the group ${}^\phi K^{\tau + 0}_G(X \times \Z_2)_{\mathrm{fin}}$ agrees with the Grothendieck construction of the monoid of isomorphism classes of $(\phi, \tau)$-twisted \textit{ungraded} vector bundles on $(X \times \Z_2)//G$ (or equivalently $(\phi, \tau)$-twisted vector bundles with trivial odd parts). For such a twisted vector bundle $E = E^0 \oplus 0$ on $(X \times \Z_2)//G$, we define a $\Z_2$-graded Hermitian vector bundle $\hat{E} = \hat{E}^0 \oplus \hat{E}^1$ on $X$ by setting $\hat{E}^k = E|_{X \times \{ (-1)^k \}}$. The $(\phi, \tau)$-twisted $G$-action on $E$ induces a $(\phi, \tau, c)$-twisted $G$-action on $\hat{E}$. Then the assignment $E \mapsto \hat{E}$ extends to the homomorphism $\pi_* : {}^\phi K_G^{\tau + 0}(X \times \Z_2)_{\mathrm{fin}} \to {}^\phi K_G^{(\tau, 0) + c}(X)_{\mathrm{fin}}$. 

If $\hat{E} = \hat{E}^0 \oplus \hat{E}^1$ is a $(\phi, \tau, c)$-twisted vector bundle on $X//G$, then we can define an ungraded vector bundle $E$ on $X \times \Z_2$ by $E|_{X \times \{ (-1)^k \}} = \hat{E}^k$. The $(\phi, \tau, c)$-twisted $G$-action on $\hat{E}$ induces a $(\phi, \tau)$-twisted $G$-action on $E$. We can directly check $\pi_*E \cong \hat{E}$, so that $\pi_*$ is surjective.

If $E = \pi^*F$ for a $(\phi, \tau)$-twisted ungraded vector bundle $F$ on $X//G$, then we have $\hat{E} = \hat{E}^0 \oplus \hat{E}^1$ with $\hat{E}^0 = \hat{E}^1 = F$. This $\Z_2$-graded vector bundle $\hat{E}$ admits a $Cl_{1, 0}$-action
$$
\gamma 
=
\left(
\begin{array}{rr}
0 & -1 \\
1 & 0
\end{array}
\right).
$$
This $Cl_{1,0}$-action is compatible with the $(\phi, \tau, c)$-twisted $G$-action on $\hat{E}$, so that we have $\hat{E} \in {}^\phi\Vect^{(\tau, c) + (1, 0)}(X//G)_{\mathrm{fin}}$. This proves $\pi_*\pi^*F = 0$ in ${}^\phi K_G^{(\tau, c) + 0}(X)_{\mathrm{fin}}$. In the opposite direction, suppose first that a $(\phi, \tau, c)$-twisted vector bundle $\hat{E} = \pi_*E$ on $X//G$ is constructed from a $(\phi, \tau)$-twisted ungraded vector bundle $E$ on $(X \times \Z_2)//G$. We express the twisted $G$-action on $E$ as $\rho(g)_{ij} : L|_{\{ g \} \times X} \otimes E|_{X \times \{ (-1)^j\}} \to E|_{X \times \{ (-1)^i \}}$, where $(-1)^i = c(g)(-1)^j$ holds true, and $L \to G \times X$ is the Hermitian line bundle in the data $(L, \tau)$ of the $\phi$-twisted (trivially $\Z_2$-graded, or ungraded) extension $\tau$. With this notation, the $(\phi, \tau, c)$-twisted $G$-action on $\hat{E} = \hat{E}^0 \oplus \hat{E}^1$ is expressed as
$$
\hat{\rho}(g)
=
\left\{
\begin{array}{cl}
\left(
\begin{array}{cc}
\rho(g)_{00} & 0 \\
0 & \rho(g)_{11}
\end{array}
\right),
& (c(g) = 1) \\
\left(
\begin{array}{cc}
0 & \rho(g)_{01} \\
\rho(g)_{10} & 0
\end{array}
\right).
& (c(g) = -1)
\end{array}
\right.
$$
Suppose next that $\hat{E}$ admits a compatible $Cl_{1,0}$-action, which is expresses as
$$
\gamma
=
\left(
\begin{array}{cc}
0 & \gamma_{01} \\
\gamma_{10} & 0
\end{array}
\right),
$$
where $\gamma_{01}\gamma_{10} = -1$. Then we have a $(\phi, \tau)$-twisted vector bundle $F$ on $X//G$ by setting $F = \hat{E}^0 = E|_{X \times \{ 1 \}}$ and defining its $(\phi, \tau)$-twisted $G$-action as
$$
\rho_F(g)
=
\left\{
\begin{array}{ll}
\rho(g)_{00}, & (c(g) = 1) \\
\gamma_{01}\rho(g)_{10}. & (c(g) = -1)
\end{array}
\right.
$$
We can verify that $E$ is isomorphic to $\pi^*F$. This means that the sequence in question is exact at ${}^\phi K_G^{\tau + 0}(X \times \Z_2)_{\mathrm{fin}}$, and the proof is completed.
\end{proof}

Let us apply the result above to the case where $G = \Z_2$ acts on $X$ trivially, $c : G \to \Z_2$ is the identity, and $\tau$ is trivial. Then $\pi^*$ turns out to be surjective and $K^{c+0}_{\Z_2}(X)_{\mathrm{fin}} = 0$, whereas $K^{c+0}_{\Z_2}(X) \cong K^0_{\pm}(X) \cong K^1(X)$ as shown in \cite{A-H}. Thus, if $X = S^1$, then $\imath : K^{c+0}_{\Z_2}(S^1)_{\mathrm{fin}} \to K^{c+0}_{\Z_2}(S^1) \cong \Z$ is not bijective (cf.\ \cite{Th2}).

\medskip

\begin{rem}
Under the assumptions in Definition \ref{dfn:finite_rank_freed_moore_K} that $G$ is finite and $X$ is compact, the isomorphism class of the ungraded twist $[\tau] \in H^3_G(X; \Z)$ is a torsion class. Hence Proposition \ref{prop:finite_rank_realizability_freed_moore} is consistent with the conjecture in \cite{TXL}.
\end{rem}

%%%%%%%%%%%%%%%%%%%%%%%%%%%%%%%%%%%%%%%%%%%%%%%%
\subsection{The Thom isomorphism}
\label{subsec:thom_isomorphism}

To state the Thom isomorphism in the Freed-Moore $K$-theory, let us recall, from \cite{L-M} for instance, that the $\Pin^c$-group $\mathrm{Pin}^c(r)$ is a central extension of the orthogonal group $O(r)$ by $U(1)$. This group sits in the complexified Clifford algebra $Cl_{r, 0} \otimes \C$, so that there is a natural complex conjugation on $\Pin^c(r)$. Using this, we define ${}^\phi g$ for $\phi = \pm 1$ and $g \in Cl_{r, 0} \otimes \C$ by
$$
{}^\phi g = 
\left\{
\begin{array}{ll}
g, & (\phi = 1) \\
\bar{g}. & (\phi = -1)
\end{array}
\right.
$$

\begin{dfn} \label{dfn:twisted_pinc}
Let $\X$ be a groupoid, $\phi \in \Phi(\X)$ an object, and $\pi : V \to \X$ a real vector bundle of rank $r$. We write $P = (P, \rho)$ for the principal $O(r)$-bundle arising as the frame bundle of $V$ with respect to a Riemannian metric. For $\phi$ realized as a map of groupoids $\phi : \X \to \pt//\Z_2$, a \textit{$\phi$-twisted $\Pin^c$-structure} on $V$ consists of 
\begin{itemize}
\item
A principal $\Pin^c(r)$-bundle $\tilde{\pi} : \tilde{P} \to \X_0$ which is a lift of the structure group $O(r)$ of $\pi : P \to \X_0$ to $\Pin^c(r)$ by an equivariant map $q : \tilde{P} \to P$. 

\item
A fiber preserving map $\tilde{\rho} : \partial_0^*\tilde{P} \to \partial_1^*\tilde{P}$ on $\X_1$ such that
\begin{itemize}
\item
$\partial_0^*q \circ \tilde{\rho} = \tilde{\rho} \circ \partial_1^*q$,

\item
$\tilde{\rho}(\tilde{p} \cdot \tilde{g}) = \tilde{\rho}(\tilde{p}) \cdot {}^{\phi(f)}\tilde{g}$ for $f \in \X_1$, $\tilde{p} \in \partial_0^*\tilde{P}|_f$ and $\tilde{g} \in \Pin^c(r)$,

\item
$\partial_2^*\tilde{\rho} \circ \partial_0^*\tilde{\rho} = \partial_1^*\tilde{\rho}$ on $\X_2$.
\end{itemize}
\end{itemize}
For general $\phi$ consisting of a local equivalence $\tilde{\X} \to \X$ and a map of groupoids $\phi : \tilde{\X} \to \pt//\Z_2$, a $\phi$-twisted $\Pin^c$-structure on $V \to \X$ means one on the pull-back of $V$ to $\tilde{\X}$. 
\end{dfn}

To help the understanding of the notion of $\phi$-twisted $\Pin^c$-structures, let us assume for a moment that the groupoid $\X$ is the quotient groupoid  $X//G$ associated to an action of a compact Lie group $G$ on $X$ and $\phi \in \Phi(X//G)$ is induced from a homomorphism $\phi : G \to \Z_2$. In this case, a real vector bundle $V$ on $X//G$ means a $G$-equivariant real vector bundle on $X$, so that its frame bundle $P$ is a $G$-equivariant principal $O(r)$-bundle, provided that the rank of $V$ is $r$. Then, a $\phi$-twisted $\Pin^c$-structure $\tilde{P}$ of $V$ is a $\Pin^c$-structure of the underlying vector bundle $V$ which has, for each $f \in G$, a $G$-action $\tilde{\rho}_f : \tilde{P} \to \tilde{P}$ covering the $G$-action $\rho_f : P \to P$ such that $\tilde{\rho}_f(\tilde{p}\tilde{g}) = \tilde{\rho}_f(\tilde{p}) \cdot {}^{\phi(f)}\tilde{g}$ for all $\tilde{p} \in \tilde{P}$ and $\tilde{g} \in \Pin^c(r)$.

\begin{lem} \label{lem:obstruction_to_twisted_Pin_c}
Let $\X$ be a groupoid,  $\phi \in \Phi(\X)$ an object, and $V$ a real vector bundle on $\X$ of rank $r$. There exists a $\phi$-twisted $\Pin^c$-structure on $V$ if and only if a cohomology class ${}^\phi W_3(V) \in H^3(\X; \Z_\phi)$ vanishes.
\end{lem}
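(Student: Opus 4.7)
The plan is to imitate the classical obstruction argument for $\Pin^c$-structures, carried out in the $\phi$-twisted setting using the central extension
\[
1 \to U(1) \to \Pin^c(r) \to O(r) \to 1.
\]
Specifically, to each real vector bundle $V \to \X$ of rank $r$ I will associate a $\phi$-twisted \emph{ungraded} extension $\eta(V)$ of $\X$ (a representative of a class in $\pi_0({}^\phi \Twist^+(\X)) \cong H^3(\X; \Z_\phi)$), and identify $\phi$-twisted $\Pin^c$-structures on $V$ with isomorphisms from $\eta(V)$ to the trivial $\phi$-twisted extension. The class ${}^\phi W_3(V) := [\eta(V)]$ is then, by construction, the precise obstruction.

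First I would construct $\eta(V)$ as follows. Fix a Riemannian metric on $V$ and let $P \to \X_0$ denote the frame bundle, equipped with the $O(r)$-equivariant groupoid action $\rho : \partial_0^* P \to \partial_1^* P$. Choose an open cover $\{U_\alpha\}$ of $\X_0$ together with local sections $s_\alpha : U_\alpha \to P|_{U_\alpha}$, and lift each $s_\alpha$ through the surjection $q : \Pin^c(r) \to O(r)$ to an abstract principal $\Pin^c(r)$-bundle $\tilde{P}_\alpha \to U_\alpha$ covering $P|_{U_\alpha}$. On double overlaps the transition functions $O(r)$-valued on $P$ lift to $\Pin^c(r)$, and the failure of the lifted transitions to satisfy the cocycle condition produces a $U(1)$-valued $2$-cochain on $\X_0$. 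Simultaneously, pulling back local lifts along $\partial_0$ and $\partial_1$, the groupoid action $\rho$ over $\X_1$ admits local lifts to $\Pin^c(r)$-equivariant maps, and their discrepancy over $\X_2$ yields a $U(1)$-valued $2$-cocycle on $\X_2$. The decisive point is that, because $\Pin^c(r) \subset Cl_{r,0} \otimes \C$ carries a natural complex conjugation, composing two lifted actions across a morphism $f \in \X_1$ with $\phi(f) = -1$ conjugates the $U(1)$-factor of the following lift. This is exactly the $\phi$-twisted cocycle condition of a $\phi$-twisted ungraded extension; assembling the data produces $\eta(V) \in {}^\phi \Twist^+(\X)$, whose isomorphism class is independent of the chosen cover and local lifts.

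Second, I would establish the bijection
\[
\{\phi\text{-twisted } \Pin^c\text{-structures on } V\}/\cong \ \longleftrightarrow \ \mathrm{Iso}_{{}^\phi \Twist^+(\X)}(\eta(V),\mathbf{1}).
\]
A global trivialization of $\eta(V)$ is exactly the data of a globally defined principal $\Pin^c(r)$-bundle $\tilde{P}$ with equivariant covering map $q : \tilde{P} \to P$ together with a $\phi$-twisted lift $\tilde{\rho} : \partial_0^* \tilde{P} \to \partial_1^* \tilde{P}$ of $\rho$ satisfying $\partial_2^* \tilde{\rho} \circ \partial_0^* \tilde{\rho} = \partial_1^* \tilde{\rho}$ on $\X_2$; these are precisely the conditions of Definition~\ref{dfn:twisted_pinc}. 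Conversely, such a $\phi$-twisted $\Pin^c$-structure allows one to resolve all the local ambiguities, giving an explicit trivialization of $\eta(V)$. The classification $\pi_0({}^\phi\Twist^+(\X)) \cong H^3(\X; \Z_\phi)$ then gives the final statement: $\phi$-twisted $\Pin^c$-structures exist if and only if $[\eta(V)] = 0$, i.e.\ ${}^\phi W_3(V) = 0$.

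The main obstacle, I expect, will be the bookkeeping for the coherence of the $\phi$-twisting: verifying that the complex conjugations on $U(1)$ arising from lifting the groupoid action assemble precisely into the twisted $2$-cocycle condition on $\X_2$ (with the $\phi_2$-action appearing in the middle factor) and into the hexagon of Definition of $\phi$-twisted $\Z_2$-graded extensions on $\X_3$. In the non-$\phi$-twisted case this is automatic; here one must track carefully how $\phi(f) \in \Z_2$ governs whether the subsequent $U(1)$-factor is conjugated, which is the reason the resulting class lands in $H^3(\X; \Z_\phi)$ rather than $H^3(\X; \Z)$. Once this is handled, the equivalence with existence of $\tilde{P}$ and $\tilde{\rho}$ is a direct unpacking of Definition~\ref{dfn:twisted_pinc}.
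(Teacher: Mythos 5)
Your proposal is correct and captures the same underlying obstruction-theoretic idea, but the way you build the $\phi$-twisted ungraded extension differs in a minor-but-instructive way from the paper's construction. You assemble $\eta(V)$ by \v{C}ech means: choose an open cover of $\X_0$, take local sections of the frame bundle $P$, lift transitions through $q : \Pin^c(r) \to O(r)$, and track the $U(1)$-discrepancies over overlaps and over $\X_1$, $\X_2$. This implicitly replaces $\X$ by the cover groupoid before the extension can be written down. The paper instead replaces $\X$ by the groupoid $\mathcal{P}//O(r)$ built from the total space of the frame bundle itself, which comes with a canonical local equivalence $\varpi : \mathcal{P}//O(r) \to \X$. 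On $\mathcal{P}//O(r)$ the frame bundle is tautologically trivial, so the central extension $U(1) \to \Pin^c(r) \to O(r)$ \emph{directly} furnishes a $U(1)$-bundle over the morphisms (which include the $O(r)$-factor), hence a $\varpi^*\phi$-twisted ungraded extension with no open cover and no choice of local lifts. Both refinements are legitimate local equivalences, so both constructions land in the same category ${}^\phi\Twist^+(\X)$ and represent the same class; the paper's route avoids all the cocycle bookkeeping you rightly flag as the delicate point, since the ``twisted cocycle condition'' is encoded once and for all in the group law of $\Pin^c(r)$ together with its natural complex conjugation. Your step identifying $\phi$-twisted $\Pin^c$-structures with trivializations, and the appeal to $\pi_0({}^\phi\Twist^+(\X)) \cong H^3(\X; \Z_\phi)$, agree with the paper. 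If you keep your \v{C}ech version, you would need to actually carry out the verification you postpone (that the $\phi(f)$-conjugations assemble into a $\phi$-twisted $2$-cocycle and satisfy the coherence on $\X_3$); in the paper's version this is automatic because the conjugation is a group automorphism of $\Pin^c(r)$ covering the identity of $O(r)$.
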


\begin{proof}
We can assume that $\phi$ is realized as a map of groupoids $\phi : \X \to \pt//\Z_2$. Then, from the frame bundle $P$ of $V$, we can construct a groupoid $\mathcal{P}//O(r)$ admitting a local equivalence $\varpi : \mathcal{P}//O(r) \to \X$. Concretely, $(\mathcal{P}//O(r))_0 = P$ and $(\mathcal{P}//O(r))_1 = O(r) \times \partial_0^*P$. Further, from the central extension $\Pin^c(r)$ of $O(r)$, we can construct a $\varpi^*\phi$-twisted (ungraded) extension $(L_V, \tau_V)$ of $\mathcal{P}//O(r)$ whose trivializations are in bijective correspondence with $\phi$-twisted $\Pin^c$-structures on $V$. Concretely, $L_V \to (\mathcal{P}//O(r))_1$ is the pull-back under the projection $(\mathcal{P}//O(r))_1 \to O(r)$ of the Hermitian line bundle $L \to O(r)$ associated to $\Pin^c(r)$, and $\tau_V$ is induced from the group structure on $\Pin^c(r)$. In general, the class in $H^3(\X; \Z_\phi)$ that classifies a $\phi$-twisted extension is the obstruction to admitting a trivialization, from which the lemma follows.
\end{proof}

\begin{thm}
Let $\X$ be a local quotient groupoid, $\phi \in \Phi(\X)$ an object, and $(\tau, c)$ a $\phi$-twist of $\X$. For any real vector bundle $\pi : V \to \X$ of rank $r$, we write $D(V)$ and $S(V)$ for the unit disk bundle and the unit sphere bundle of $V$ with respect to a Riemannian metric. Then there is a natural isomorphism
$$
{}^\phi K^{(\tau, c) + n}(\X) \cong
{}^{\pi^*\phi} K^{\pi^*((\tau, c) + (\tau_V, c_V)) + n + r}(D(V), S(V)),
$$
where $\tau_V$ is classified by the obstruction class ${}^\phi W_3(V) \in H^3(\X; \Z_\phi)$ for $V$ admitting a $\phi$-twisted $\Pin^c$-structure, and $c_V$ by the obstruction class $w_1(V) \in H^1(\X; \Z_2)$ for $V$ to being orientable. 
\end{thm}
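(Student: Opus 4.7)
My plan follows the standard pattern of twisted Thom isomorphism theorems in \cite{FHT1} and \cite{D-K}: build a Thom class $u$ in the relative $K$-group from a Clifford multiplication operator on a $\phi$-twisted spinor bundle, define the Thom map as exterior product with $u$, and prove that it is an isomorphism by a Mayer--Vietoris reduction to the trivial bundle case.

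Construction of $u$. First I would replace $\X$ by the frame-bundle groupoid $\mathcal{P}//O(r)$ of $V$, which comes with a local equivalence $\varpi\colon\mathcal{P}//O(r)\to\X$. The central extension $1\to U(1)\to\Pin^c(r)\to O(r)\to 1$, equipped with its natural complex conjugation, together with the orientation character $O(r)\to\Z_2$, combines into a canonical $\varpi^*\phi$-twisted $\Z_2$-graded extension of $\mathcal{P}//O(r)$ whose class in $H^3(\X;\Z_\phi)\times H^1(\X;\Z_2)$ is exactly $({}^\phi W_3(V),w_1(V))$ and therefore represents $(\tau_V,c_V)$. A fundamental complex Clifford module, pushed along the associated-bundle construction, then yields a spinor bundle $S$ on $D(V)$ which is a $(\pi^*\phi,\pi^*(\tau_V,c_V))$-twisted $\Z_2$-graded bundle carrying a compatible Clifford action by $\pi^*V$. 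Clifford multiplication by the tautological section $\vec{v}\in\Gamma(D(V),\pi^*V)$ produces a skew-adjoint Fredholm family on $S$ that is invertible away from the zero section, and packaging the external Clifford action into the bigrading of \S\S\ref{subsec:fredholm_family} gives, via Theorem \ref{thm:Atiyah_Singer_map} and Corollary \ref{cor:degree_correspondence}, a class
$$
u\in {}^{\pi^*\phi}K^{\pi^*(\tau_V,c_V)+r}(D(V),S(V)).
$$

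Thom map and proof of isomorphism. Exterior product with $u$ defines
$$
\Phi(x)=\pi^*x\otimes u,\qquad \Phi\colon{}^\phi K^{(\tau,c)+n}(\X)\longrightarrow {}^{\pi^*\phi}K^{\pi^*((\tau,c)+(\tau_V,c_V))+n+r}(D(V),S(V)),
$$
which is natural in $\X$ and compatible, via the excision and exactness axioms of Theorem \ref{thm:axiom}, with a Mayer--Vietoris covering of $\X$ by open full subgroupoids over which $V$ is trivializable and all twists split. A five-lemma argument then reduces the isomorphism claim to the case of a trivial bundle. Applying the reduction argument used in the proofs of Lemmas \ref{lem:locally_unversal_bundle} and \ref{lem:weak_periodicity}, this is further reduced to $\X=\pt//G$ with $V$ an orthogonal $G$-representation, and the Mackey decomposition in Appendix \ref{sec:mackey_decomposition} brings the question down to $G=\{e\}$ and $V=\R^r$. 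In this final case, $(D(V),S(V))=(D^r,S^{r-1})$, and $\Phi$ becomes the iterated Clifford suspension, which coincides with Theorem \ref{thm:Atiyah_Singer_map} composed with Corollary \ref{cor:degree_correspondence}, i.e.\ Bott periodicity on a point.

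Main obstacle. The most delicate step is the cocycle-level identification of $(\tau_V,c_V)$ with the obstructions $({}^\phi W_3(V),w_1(V))$: one must verify that the $\varpi^*\phi$-twisted $\Z_2$-graded extension produced from $\Pin^c(r)$ and the orientation character really represents the stated pair of cohomology classes, and that its pull-back along $\pi$ is canonically trivialized over $D(V)$ so that the spinor module descends. This is where the short exact sequence $\Z_\phi\xrightarrow{2}\Z_\phi\to\Z_2$ and the Bockstein identification of the extension class $\pi_0({}^\phi\Twist(\X))\to H^3(\X;\Z_\phi)$ from \S2 enter in an essential way. Once these bookkeeping issues are resolved, the remaining parts (multiplicativity of $\Phi$, Mayer--Vietoris, and the point case) follow by the standard arguments of \cite{A-B-S,FHT1,D-K,L-M}.
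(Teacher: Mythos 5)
Your proposal is a genuinely different route from the paper's. The paper does not construct a Thom class and prove an isomorphism by Mayer--Vietoris; instead it works directly at the level of spaces of Fredholm sections, proving (i) a homotopy equivalence $\Gamma(D(V),S(V),\Fred(\pi^*E))\simeq\Gamma(\X,\Fred_{Cl(V)}(\C l(V)\otimes E))$ via an $O(r)$-equivariant extension of the Atiyah--Singer map, (ii) a pull-back homeomorphism to the frame-bundle groupoid $\mathcal{P}//O(r)$ where $V$ becomes a constant $O(r)$-module, and (iii) a Morita equivalence against the $\varpi^*Cl(V)$-$Cl_{r,0}$-bimodule $\underline{M}=\C l(V)$, which is exactly where the twist shift $(\tau_V,c_V)$ appears. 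This buys a purely one-step argument: no sheaf-theoretic localization and no five-lemma are needed, because the isomorphism is already a homeomorphism of section spaces (up to homotopy) with a Morita twist.

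The gap in your version is the reduction. Over a local quotient groupoid, a Mayer--Vietoris covering by full subgroupoids weakly equivalent to $X_\alpha//G_\alpha$ does not make $V$ trivial in the sense your final step needs: after slicing, one lands on $\pt//H$ with $V$ a possibly highly nontrivial orthogonal $H$-representation, so the ``trivial bundle'' endpoint of your induction is really ``all compact $H$ and all $H$-representations $V$,'' not ``$G=\{e\}$, $V=\R^r$.'' The paper's Mackey decomposition (Appendix~B, Theorem~B.3) is an equivalence of categories of twisted bundles over $\pt//G$ built from irreducible $K$-representations; it is not stated (and it is not obvious) that it is compatible with a fixed orthogonal $G$-representation $V$ and its fiberwise Clifford action, i.e.\ with the bundle $\C l(V)\otimes E$, in the way your last sentence requires. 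The paper explicitly sidesteps exactly this by never reducing the group: it keeps the $O(r)$-equivariance throughout, reduces $\Fred_{Cl(V)}$ to $\Fred$ with a $Cl_{r,0}$-action by the Morita trick on $\mathcal{P}//O(r)$, and only then invokes Theorem~\ref{thm:Atiyah_Singer_map} and Corollary~\ref{cor:degree_correspondence}. To make your argument close, you would either need to supply an equivariant Thom isomorphism for $\pt//G$ with an arbitrary orthogonal $G$-module $V$ (which essentially forces you back into the paper's Morita-on-the-frame-bundle step), or prove that the Mackey functor $\Phi$ of Theorem~B.3 intertwines $\otimes u$ with an exterior product of the decomposed pieces --- a nontrivial compatibility you have not addressed. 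The first half of your proposal (the construction of $u$ from the $(\tau_V,c_V)$-twisted spinor module via the frame bundle groupoid, and the identification of $(\tau_V,c_V)$ with $({}^\phi W_3(V),w_1(V))$) is in good shape and matches the paper's bookkeeping.
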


\begin{proof}
We sketch the proof following \cite{FHT1}.  By replacing $\X$ if necessary, we can assume that the object $\phi$ is realized as a map of groupoids $\phi : \X \to \pt//\Z_2$ and the $\phi$-twist $(\tau, c)$ as a $\phi$-twisted extension of $\X$. Let $E$ be a locally universal $(\phi, \tau, c)$-twisted vector bundle on $\X$. The disk bundle $D(V)$ gives rise to a local quotient groupoid, and $S(V)$ is its subgroupoid. By definition, we have
$$
{}^{\pi^*\phi} K^{\pi^*(\tau, c) + 0}(D(V), S(V))
= \Gamma(D(V), S(V), \Fred(\pi^*E))/\sim.
$$
Associated to $V$ is a $\phi$-twisted vector bundle $\C l(V) \to \X$ whose fibers are the complexified Clifford algebra. Then, we have a homotopy equivalence
$$
\Gamma(D(V), S(V), \Fred(\pi^*E))
\simeq
\Gamma(\X, \Fred_{C l(V)}(\C l(V) \otimes E)),
$$
where $\Fred_{Cl(V)}(\C l(V) \otimes E)$ is defined by replacing the Clifford action in Definition \ref{dfn:fredholm_family} by the natural left fiberwise Clifford action of $Cl(V)$ on $\C l(V) \otimes E$. Except for the use of the $O(r)$-equivariance of the Atiyah-Singer map in Lemma \ref{lem:atiyah_singer_map_basic_case}, the proof of the homotopy equivalence is essentially the repetition of that of Theorem \ref{thm:Atiyah_Singer_map}. From the frame bundle $P$ of $V$, we can construct a groupoid $\mathcal{P}//O(r)$ and a local equivalence $\varpi : \mathcal{P}//O(r) \to \X$, as in the proof of Lemma \ref{lem:obstruction_to_twisted_Pin_c}. The pull-back under $\varpi$ induces a homeomorphism
$$
\Gamma(\X, \Fred_{C l(V)}(\C l(V) \otimes E))
\cong
\Gamma(\mathcal{P}//O(r), \Fred_{\varpi^*Cl(V)}(\varpi^*(\C l(V) \otimes E))).
$$
On $\mathcal{P}//O(r)$ is the $\varpi^*\phi$-twisted ungraded extension $L_V$ whose trivializations are bijective correspondence with the $\phi$-twisted $\Pin^c$-structures on $V$. The determinant $\det : O(r) \to \Z_2$ induces a grading $c_V$ of $L_V$ which is classified by $w_1(V) \in H^1(\X; \Z_2)$. Note that the pull-back $\varpi^*V \to \mathcal{P}//O(r)$ is isomorphic to the product bundle $\underline{\R}^r$ of rank $r$, so that $\varpi^*\C l(V)$ is isomorphic to the product bundle $\underline{M}$ with fiber $M = \C l(r) = Cl_{r, 0} \otimes \C$, as a vector bundle. The product bundle $\underline{M}$ gives rise to a $(\varpi^*\phi, \tau_V, c_V)$-twisted vector bundle whose twisted action is induced from the left action of $\Pin^c(r) \subset \C l(r)$ on $\C l(r)$. Furthermore, this twisted bundle is a $\varpi^*Cl(V)-Cl_{r, 0}$-bimodule, where $\varpi^*Cl(V)$ acts from the left through the trivialization $\varpi^*Cl(V) \cong \underline{M}$ and $Cl_{r, 0}$ from the right through $Cl_{r, 0} \subset \C l(r)$. Then, by a Morita equivalence based on $\underline{M}$, we have a homeomorphism
$$
\Gamma(\mathcal{P}//O(r), \Fred_{\varpi^*Cl(V)}(\varpi^*(\C l(V) \otimes E)))
\cong
\Gamma(\mathcal{P}//O(r), \Fred(E')),
$$
where $E'$ is the locally universal $\varpi^*((\tau, c)-(\tau_V, c_V))$-twisted vector bundle with $Cl_{r, 0}$-action given by
$$
E' = \mathrm{Hom}_{\varpi^*Cl(V)}(\underline{M}, \varpi^*(\C l(V) \otimes E)).
$$
Summarizing, we get a natural isomorphism
$$
{}^{\pi^*\phi} K^{\pi^*(\tau, c) +0}(D(V), S(V))
\cong {}^\phi K^{((\tau, c) - (\tau_V, c_V)) - r}(\X).
$$
Since an action of the Clifford algebra accounts for the degree $n \in \Z$, we can generalize the argument so far to have
$$
{}^{\pi^*\phi} K^{\pi^*(\tau, c) -n}(D(V), S(V))
\cong {}^\phi K^{((\tau, c) - (\tau_V, c_V)) - n - r}(\X),
$$
which is equivalent to the isomorphism in question.
\end{proof}

To provide examples of the Thom isomorphism, let us consider the quotient groupoid $\X = X//G$ associated to an action of a finite group $G$ on a space $X$ and $\phi \in \Phi(X//G)$ associated to a homomorphism $\phi : G \to \Z_2$. We let $(\tau, c)$ be any $\phi$-twist.

Let $E$ be a $\phi$-twisted vector bundle on $X$ of rank $r$. For its underlying real vector bundle of rank $2r$, we can show by a direct computation that the ungraded twist $\tau_E$ is given by the group cocycle $\tau_\phi$ if $r = 1, 2 \mod 4$, and is trivial if $r = 3, 4 \mod 4$. We can also show that the grading $c_E$ is given by the homomorphism $c_\phi = \phi$ if $r = 1, 3 \mod 4$ and $c_\phi$ is trivial if $r = 2, 4 \mod 4$. Because $\tau_\phi$ and $c_\phi$ have the effect of degree shifts, we eventually get
$$
{}^\phi K^{(\tau, c) + n}_G(X)
\cong {}^\phi K^{\pi^*(\tau, c) + n}_G(D(E), S(E)).
$$
This generalizes the Thom isomorphism for a complex vector bundle in complex $K$-theory and that for a `Real' vector bundle in `Real' $K$-theory.

Let $f : G \to \Z_2$ be any homomorphism, and $\underline{\R}_f$ the product real line bundle $X \times \R$ with the $G$-action $(x, r) \mapsto (gx, f(g)r)$. We have $\tau_{\underline{\R}_f} = \tau_f$ and $c_{\underline{\R}_f} = c_f = f$. In the case of $f = \phi$, the Thom isomorphism is
$$
{}^\phi K^{(\tau, c) + n}_G(X) \cong 
{}^\phi K^{\pi^*(\tau, c) + n - 1}_G
(D(\underline{\R}_\phi), S(\underline{\R}_\phi)).
$$
If $\phi$ is non-trivial, then $\Z_2 \cong G/\mathrm{Ker}\phi$ and the inclusion $\mathrm{Ker} \phi \to G$ induces a local equivalence of groupoids 
$$
X//\mathrm{Ker}\phi \to (X \times G/\mathrm{Ker}\phi)//G
\cong (X \times \Z_2)//G 
= S(\underline{\R}_\phi)//G.
$$
Thus, from the long exact sequence for $(D(\underline{\R}_\phi), S(\underline{\R}_\phi))$, we get a generalization of an exact sequence for `Real' $K$-theory in \cite{A2} (p.377, (3.4))
$$
\cdots \to
{}^\phi K^{(\tau, c) + n + 1}_G(X) \to
{}^\phi K^{(\tau, c) + n}_G(X) \to
K^{(\tau, c) + n}_{\mathrm{Ker}\phi}(X) \to
{}^\phi K^{(\tau, c) + n + 2}_G(X) \to
\cdots.
$$
In the case of $f = c$, the Thom isomorphism is
$$
{}^\phi K^{(\tau, c) + n}_G(X)
\cong 
{}^\phi K^{\pi^*(\tau, 0) + n + 1}_G
(D(\underline{\R}_c), S(\underline{\R}_c)).
$$
The long exact sequence for the  pair $(D(\underline{\R}_c), S(\underline{\R}_c))$ gives us
$$
\cdots \to
{}^\phi K^{\tau + n}_G(X) \to
{}^\phi K^{\tau + n}_G(X \times \Z_2) \to
{}^\phi K^{(\tau, c) + n}_G(X) \to
{}^\phi K^{\tau + n + 1}_G(X) \to
\cdots,
$$
where $G$ acts on $X \times \Z_2$ by $(x, r) \mapsto (gx, c(g)r)$. We remark ${}^\phi K^{\tau + n}_G(X \times \Z_2)  \cong {}^\phi K^{\tau + n}_{\mathrm{Ker}(c)}(X)$ if $c$ is non-trivial. We also remark that the above exact sequence extends the one in Proposition \ref{prop:exact_sequence_with_c}.

%%%%%%%%%%%%%%%%%%%%%%%%%%%%%%%%%%%%%%%%%%%%%%%%
%%%%%%%%%%%%%%%%%%%%%%%%%%%%%%%%%%%%%%%%%%%%%%%%

\section{Karoubi formulation of Freed-Moore $K$-theory}
\label{sec:Karoubi_formulation}

This section is devoted to Karoubi's formulations of the Freed-Moore $K$-theory: We introduce the infinite-dimensional Karoubi formulation, and relate it with the Fredholm formulation. We then introduce the finite-dimensional Karoubi formulation, and relate it with the other formulations.

%%%%%%%%%%%%%%%%%%%%%%%%%%%%%%%%%%%%%%%%%%%%%%%%

\subsection{Gradation}
%\label{sec:}

\begin{dfn}[gradation] \label{dfn:gradation}
Let $\X$ be a groupoid, $\phi : \X \to \Z_2$ a map of groupoids, and $(L, \tau, c)$ a $\phi$-twisted $\Z_2$-graded extension of $\X$. For a $(\phi, \tau, c)$-twisted vector bundle $(E, \epsilon, \rho, \gamma)$ on $\X$ with $Cl_{p, q}$-action, we define a fiber bundle $\Gr(E) \to \X$ as follows:
\begin{itemize}
\item
The fiber of the underlying fiber bundle $\Gr(E) \to \X_0$ at $x \in \X_0$ consists of bounded operators $\eta : E_x \to E_x$ such that:
\begin{itemize}
\item[(i)]
$\eta$ are self-adjoint involutions: $\eta^* = \eta$ and $\eta^2 = \id$.

\item[(ii)]
$\eta - \epsilon$ are compact.

\item[(iii)]
$\eta$ anti-commute with the $Cl_{p, q}$-action, that is, 
\begin{align*}
\eta \gamma(e) &= - \gamma(e)\eta,
\end{align*}
for any unit norm element $e \in \R^{p+q}$.
\end{itemize}

\item
The bundle isomorphism $\varrho : {}^\phi \partial_0^*\Gr(E) \to \partial_1^*\Gr(E)$ on $\X_1$ is given by $\varrho(\eta) = \rho \circ (\id_L \otimes \eta) \circ \rho^{-1}$, where $\id_L : L \to L$ is the identity map.
\end{itemize}
The fiber of $\Gr(E)$ is topologized by using the operator norm topology, and its structure group by the compact open topology in \cite{A-Se}. The space of sections is defined by
$$
\Gamma(\X, \Gr(E)) =
\{ \eta \in \Gamma(\X_0, \Gr(E)) |\ 
\varrho \circ {}^\phi \partial_0^*\eta = \partial_1^*\eta \}.
$$
We call a section $\eta \in \Gamma(\X, \Gr(E))$ a \textit{gradation} of $E$.
\end{dfn}

As in the case of $\mathrm{K}(E)$, the $\phi$-twisted action $\varrho$ is continuous with respect to the topology on $E$ given by the compact open topology.

We notice that no commutation relation among $\eta$ and $\epsilon$ is imposed. We also notice that $(E, \eta, \rho, \gamma)$ is a $(\phi, \tau, c)$-twisted vector bundle on $\X$ with $Cl_{p, q}$-action. Thus a gradation $\eta$ on $E$ is regarded as another choice of a $\Z_2$-grading of $E$. It is clear that $\epsilon \in \Gamma(\X, \Gr(E))$. A \textit{homotopy} between gradations $\eta_0$ and $\eta_1$ of $E$ is defined by a gradation $\tilde{\eta}$ of the twisted bundle $E \times [0, 1]$ on $\X \times [0, 1]$ such that $\tilde{\eta}|_{\X \times \{ i \}} = \eta_i$ for $i = 0, 1$. In this case, $\eta_0$ and $\eta_1$ are said to be homotopic, and we write $\eta_0 \sim \eta_1$.

\medskip

Suppose that for a groupoid $\X$ and $\phi = (F : \tilde{\X} \to \X, \phi) \in \Phi(\X)$, we have a $\phi$-twist on $\X$ consisting of a local equivalence $\tilde{F} : \tilde{\tilde{\X}} \to \tilde{\X}$, a $\tilde{F}^*\phi$-twisted extension $(L, \tau, c)$ of $\tilde{\tilde{\X}}$, and a $(\tilde{F}^*\phi, \tau, c)$-twisted vector bundle $E$ on $\tilde{\tilde{\X}}$ with $Cl_{p, q}$-action. As in the case of $\Fred(E)$, there uniquely exists a $\phi$-twisted bundle $\Gr(\tau)$ on $\tilde{\X}$ up to isomorphisms such that $\tilde{F}^*\Gr(\tau)$ is isomorphic to $\Gr(E)$ and $\tilde{F}^*$ induces a homeomorphism from $\Gamma(\tilde{\X}, \Gr(\tau))$ to $\Gamma(\tilde{\tilde{\X}}, \Gr(E))$ preserving the equivalence relations $\sim$ given by fiberwise homotopies of sections.

\begin{dfn} \label{dfn:bigraded_Karoubi_K}
Let $\X$ be a local quotient groupoid, $\phi = (F : \tilde{\X} \to \X, \phi) \in \Phi(\X)$ an object, and $(\tau, c) = (\tilde{F} : \tilde{\tilde{\X}} \to \tilde{\X}, L, \tau, c)$ a $\phi$-twist on $\X$. We define 
$$
{}^\phi \K^{(\tau, c) + (p, q)}(\X)
= \Gamma(\tilde{\X}, \Gr(\tau))/\sim
\cong \Gamma(\tilde{\tilde{\X}}, \Gr(E))/\sim
$$
where $\Gr(\tau) \to \tilde{\X}$ is the $\phi$-twisted bundle such that $\tilde{F}^*\Gr(\tau) \cong \Gr(E)$ for a $(\tilde{F}^*\phi, \tau, c)$-twisted locally universal vector bundle $E \to \tilde{\tilde{\X}}$ with $Cl_{p, q}$-action. In the case that $\X$ is a quotient groupoid $X//G$, we may write
$$
{}^\phi \K^{(\tau, c) + (p, q)}(X//G)
= {}^\phi \K^{(\tau, c) + (p, q)}_G(X).
$$
\end{dfn}

The operation of taking a direct sum makes ${}^\phi \K^{(\tau, c) + (p, q)}(\X)$ into an abelian monoid, in which the zero element is represented by $\epsilon$. This monoidal structure turns out to be a group structure, as will be seen shortly. Though will not be detailed, the same construction as in Lemma \ref{lem:weak_periodicity} provides isomorphisms, such as
$$
{}^\phi \K^{(\tau, c) + (p, q)}(\X)
\cong {}^\phi \K^{(\tau, c) + (p+1, q+1)}(\X).	
$$

%\smallskip

\begin{rem}
Let us consider the trivial setting that $\X = \pt//1$, $\phi$, $\tau$ and $c$ are trivial, and $p = q = 0$. In this setting, a locally universal bundle is just a $\Z_2$-graded vector space $(E, \epsilon)$ such that $E^k = \mathrm{Ker}(\epsilon - (-1)^k)$ are separable infinite-dimensional Hilbert spaces. Then $\Gr(E)$ is
$$
\Gr(E)
= \{
\eta \in \mathrm{End}(E) |\ 
\eta - \epsilon \in \mathrm{K}(E), \
\eta^* = \eta, \ \eta^2 = \id
\},
$$
where $\mathrm{End}(E)$ is the space of bounded operators and $\mathrm{K}(E)$ that of compact operators. All these spaces are topologized by the operator norm. The space $\Gr(E)$ appears in \cite{Qui} as a model of the classifying space of the even complex $K$-theory, and admits the identification
$$
\Gr(E) \cong
\{ U \in \mathrm{End}(E) |\ U - \id \in \mathrm{K}(E), \
\epsilon U \epsilon = U^* = U^{-1} \}
$$
given by $\eta \mapsto U = \eta \epsilon$. The space $\Gr(E)$ also admits the identification 
$$
\Gr(E) \cong
\{ W \subset E |\ 
\mbox{closed, $\mathrm{pr}_0 : W \to E^0$ Fredholm, \
$\mathrm{pr}_1 : W \to E^1$ compact}
\}
$$
given by $\eta \mapsto W = \mathrm{Im}(1 + \eta) = \mathrm{Ker}(1 - \eta)$, where $\mathrm{pr}_k : W \to E^k$ are the orthogonal projections. This space essentially agrees with the infinite-dimensional Grassmannian in \cite{P-S}, where Hilbert-Schmidt operators are used in place of compact operators.
\end{rem}

%%%%%%%%%%%%%%%%%%%%%%%%%%%%%%%%%%%%%%%%%%%%%%%%

\subsection{Relationship with Fredholm formulation}
\label{subsec:Fredholm_vs_Karoubi}

Now, we relate the $K$-theory ${}^\phi K^{(\tau, c) + (p, q)}(\X)$ in the Fredholm formulation with ${}^\phi \K^{(\tau, c) + (p, q)}(\X)$ in the Karoubi formulation. As is mentioned in \S\ref{sec:introduction}, a change of twists enters into the relationship of these two formulations.

\begin{dfn} \label{dfn:cocycle_fredholm_vs_karoubi}
Let $p_1 : \Z_2 \times \Z_2 \to \Z_2$ be the projection onto the first factor. We define a group $2$-cocycle $\mu \in Z^2_{\mathrm{group}}(\Z_2 \times \Z_2; U(1)_{p_1})$ by
$$
\mu(((-1)^m, (-1)^{n}), ((-1)^{m'}, (-1)^{n'}))
= \exp \pi i nn'.
$$ 
\end{dfn}

The $2$-cocycle $\mu$ defines a $p_1$-twisted $\Z_2$-graded extension $(\Z_2 \times \Z_2 \times \C, \mu, 1)$ of the quotient groupoid $\pt//(\Z_2 \times \Z_2)$, in which the line bundle on the space of morphisms is the product bundle $\Z_2 \times \Z_2 \times \C \to \Z_2 \times \Z_2$ with the trivial $\Z_2$-grading. Thus, if $\phi : \X \to \pt//\Z_2$ and $c : \X \to \pt//\Z_2$ are maps of groupoids, then we get a $\phi$-twisted $\Z_2$-graded extension $(\phi, c)^*\mu = (\X_1 \times \C, (\phi, c)^*\mu, 1)$ by the pull-back under $(\phi, c) : \X \to \pt//(\Z_2 \times \Z_2)$.

\begin{lem} \label{lem:triviality_of_twist_change}
Let $\phi : \X \to \pt//\Z_2$ and $c : \X \to \pt//\Z_2$ be maps of groupoids. If $\phi$ or $c$ is trivial, then $(\phi, c)^*\mu$ is trivial.
\end{lem}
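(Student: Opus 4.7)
The plan is to exhibit an explicit isomorphism $[K, \beta, b] : (\phi, c)^*\mu \to (\X_1 \times \C, 1, 1)$ to the trivial $\phi$-twisted $\Z_2$-graded extension. Since the $\Z_2$-grading component of $(\phi, c)^*\mu$ is already trivial (recall the line bundle on $\pt//(\Z_2 \times \Z_2)$ in the definition of $\mu$ carries the trivial grading), the map $b : \X_0 \to \Z_2$ in the isomorphism datum must be trivial and we may take $K$ to be the product line bundle. With these choices $\beta$ collapses to a continuous function $\X_1 \to U(1)$ subject to the single relation $(\phi, c)^*\mu(f_1, f_2) = {}^{\phi(f_1)}\!\beta(f_2) \cdot \beta(f_1) \cdot \beta(f_1 f_2)^{-1}$ on $\X_2$, i.e., the cocycle $(\phi, c)^*\mu$ must be a $\phi$-twisted coboundary. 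So in both cases the lemma reduces to producing such a $\beta$.

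The case $c$ trivial is immediate: if $c : \X_1 \to \Z_2$ is constantly $1$, then the second $\Z_2$-coordinate in $\mu(\cdots) = \exp(\pi i n n')$ is zero on all of $\X_2$, hence $(\phi, c)^*\mu \equiv 1$ and $\beta \equiv 1$ trivializes it.

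The case $\phi$ trivial is only slightly more substantial: the exponent ${}^{\phi(f_1)}$ is then the identity, so the cocycle condition becomes the untwisted one, and since the pulled-back cocycle depends only on $c$, it factors through $c : \X \to \pt//\Z_2$. Thus it suffices to trivialize the restriction $\mu' := \mu|_{\{1\} \times \Z_2} \in Z^2_{\mathrm{group}}(\Z_2; U(1))$ with trivial $\Z_2$-action on $U(1)$, which is the $2$-cocycle $\mu'((-1)^n, (-1)^{n'}) = (-1)^{n n'}$. I would take the explicit $1$-cochain $\beta' : \Z_2 \to U(1)$ given by $\beta'(1) = 1$ and $\beta'(-1) = \sqrt{-1}$, so that $\beta'(-1)^2 = -1 = \mu'(-1, -1)$ while the three other cocycle values equal $1$; this verifies $\delta\beta' = \mu'$, and then $\beta := c^*\beta'$ is the required trivialization on $\X$. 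The underlying content is simply the vanishing of $H^2(\Z_2; U(1))$ with trivial coefficients, and no genuine obstacle arises; the only point demanding care is bookkeeping the translation between the cocycle triviality and the formal isomorphism $[K, \beta, b]$ of $\phi$-twisted $\Z_2$-graded extensions, which the choices $K = $ product bundle and $b = 1$ make routine.
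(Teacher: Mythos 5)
Your proof is correct and follows essentially the same route as the paper: the $c$-trivial case is immediate from the definition of $\mu$, and for $\phi$ trivial you factor the cocycle through $c : \X \to \pt//\Z_2$ and trivialize the resulting $2$-cocycle on $\Z_2$ with the $1$-cochain $\beta'(1) = 1$, $\beta'(-1) = i$, exactly the cochain the paper uses.
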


\begin{proof}
If $c$ is trivial, then $(\phi, c)^*\mu$ is clearly trivial by the definition of $\mu$. If $\phi$ is trivial, then the map of groupoids $(\phi, c) : \X \to \pt//(\Z_2 \times \Z_2)$ factors as follows
$$
\X \overset{c}{\longrightarrow} 
\pt//\Z_2 \overset{(1, \id)}{\longrightarrow}
\pt//(\Z_2 \times \Z_2). 
$$
The pull-back cocycle $(1, \id)^*\mu \in Z^2_{\mathrm{group}}(\Z_2; U(1))$ can be trivialized by the group $1$-cochain $\beta \in C^1_{\mathrm{group}}(\Z_2; U(1))$ given by $\beta(1) = 1$ and $\beta(-1) = i$. Hence $(\phi, c)^*\mu$ is also trivialized.
\end{proof}

\begin{dfn}
Let $\X$ be a groupoid and $\phi : \X \to \pt//\Z_2$ a map of groupoids. For a $\phi$-twisted $\Z_2$-graded extension $(\tau, c) = (L, \tau, c)$ of $\X$, we define a $\phi$-twisted $\Z_2$-graded extension $(\acute{\tau}, c)$ by $(\acute{\tau}, c) = (L, \tau, c) + (\phi, c)^*\mu = (L, \tau \mu, c)$.
\end{dfn}

Since the $2$-cocycle $\mu^2$ is clearly trivial, we have $\acute{\acute{\tau}} = \tau$.

As is seen, for a quotient groupoid $\X = X//G$ and a map of groupoids $\phi : X//G \to \pt//\Z_2$ induced from a homomorphism $\phi : G \to \Z_2$, a group $2$-cocycle $\tau \in Z^2_{\mathrm{group}}(G; C(X, U(1))_\phi)$ defines a $\phi$-twisted $\Z_2$-graded extension $(\tau, c) = (G \times X \times \C, \tau, c)$ of $X//G$, in which the line bundle on the space of morphisms is the product bundle $G \times X \times \C \to G \times X$, and $c : \X \to \pt//\Z_2$ is a map of groupoids. In this case, $(\acute{\tau}, c) = (G \times X \times \C, \acute{\tau}, c)$ is the $\phi$-twisted $\Z_2$-graded extension of $X//G$ associated to the group $2$-cocycle $\acute{\tau} \in Z^2_{\mathrm{group}}(G; C(X, U(1))_\phi)$ given by
$$
\acute{\tau}(g, h; x)
=
\left\{
\begin{array}{ll}
\tau(g, h; x), & (\mbox{$c(g, hx) = 1$ or $c(h, x) = 1$}) \\
-\tau(g, h; x). & (\mbox{$c(g, hx) = c(h, x) = -1$})
\end{array}
\right.
$$

\begin{lem} \label{lem:categorical_correspondence_by_epsilon}
Let $\X$ be a groupoid, $\phi : \X \to \pt//\Z_2$ a map of groupoids, and $(\tau, c)$ a $\phi$-twisted $\Z_2$-graded extension of $\X$. 
\begin{itemize}
\item[(a)]
For a $(\phi, \tau, c)$-twisted vector bundle $E$ on $\X$ with $Cl_{p, q}$-action, there exists a $(\phi, \acute{\tau}, c)$-twisted vector bundle $\acute{E}$ on $\X$ with $Cl_{q, p}$-action such that $\acute{\acute{E}}$ is isomorphic to $E$.

\item[(b)]
For a degree $k$ map $f : E_1 \to E_2$ of $(\phi, \tau, c)$-twisted vector bundles on $\X$ with $Cl_{p, q}$-actions, there is a degree $k$ map $\acute{f} : \acute{E}_1 \to \acute{E}_2$ of $(\phi, \acute{\tau}, c)$-twisted vector bundles on $\X$ with $Cl_{q, p}$-action such that $\acute{f}_1 \circ \acute{f}_2 = (-1)^{k_1k_2} \acute{(f_1 \circ f_2)}$ for maps $f_i$ of degree $k_i$.
\end{itemize}
\end{lem}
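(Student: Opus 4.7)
The plan is to construct $\acute{E}$ by keeping the underlying Hermitian vector bundle and $\Z_2$-grading of $E$, and twisting only the $\phi$-twisted action and the Clifford action by the grading involution $\epsilon$. Writing $\chi(f)\in\{0,1\}$ for the exponent of $-1$ in $c(f)\in\Z_2$, I will set $\acute{\epsilon}=\epsilon$, $\acute{\rho}=(\partial_1^*\epsilon)^{\chi(c)}\circ\rho$, and $\acute{\gamma}_i=\epsilon\circ\gamma_i$. Applying the construction a second time returns the original data on the nose, because $\epsilon^2=\id$ and $\chi(c)+\chi(c)\equiv 0\pmod 2$; this supplies the involution asserted in (a).

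To verify the modified cocycle, I will use $\epsilon\cdot\rho=c\cdot\rho\cdot\epsilon$ on $\X_1$ to move powers of $\epsilon$ through $\rho$ inside $\partial_2^*\acute{\rho}\circ\partial_0^*\acute{\rho}$. Since $\chi(ff')\equiv\chi(f)+\chi(f')\pmod 2$ and $\epsilon^2=\id$, the leftover $\epsilon$-powers combine into the factor $(\partial_1^*\epsilon)^{\chi(ff')}$ needed on the right-hand side, and the scalar that gets peeled off is precisely $c(f)^{\chi(f')}=(-1)^{\chi(f)\chi(f')}=(\phi,c)^*\mu(f,f')$, yielding $\acute{\tau}=\tau\cdot(\phi,c)^*\mu$. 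The identities $\acute{\gamma}_i^2=-\gamma_i^2$, $\acute{\gamma}_i\acute{\gamma}_j=-\acute{\gamma}_j\acute{\gamma}_i$ for $i\neq j$, $\acute{\gamma}_i\acute{\epsilon}=-\acute{\epsilon}\acute{\gamma}_i$, and $\acute{\gamma}_i\acute{\rho}=c\cdot\acute{\rho}\acute{\gamma}_i$ then follow from the same kind of short $\epsilon$-manipulations, and the swap $\gamma_i^2=\mp 1\mapsto \acute{\gamma}_i^2=\pm 1$ promotes the $Cl_{p,q}$-action to a $Cl_{q,p}$-action.

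For (b), given a degree $k$ map $f:E_1\to E_2$, I will set $\acute{f}=\epsilon_2^{k}\circ f$, where $\epsilon_2$ is the grading on $E_2$. The relation $f\epsilon_1^{m}=(-1)^{km}\epsilon_2^{m}f$, obtained by iterating $f\epsilon_1=(-1)^k\epsilon_2 f$, together with $f\rho_1(f')=c(f')^{k}\rho_2(f')f$ and $f\gamma_{1,i}=(-1)^{k}\gamma_{2,i}f$, gives the required identities for $\acute{f}$ against $\acute{\rho}$ and $\acute{\gamma}_i$; the key cancellation is that the sign $(-1)^{k\chi(f')}$ produced when $f$ crosses $\epsilon^{\chi(f')}$ is absorbed by $c(f')^{k}$. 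For a composable pair $f_1:E_2\to E_3$ and $f_2:E_1\to E_2$ of degrees $k_1$ and $k_2$, the same commutation gives
$$
\acute{f}_1\circ\acute{f}_2
= \epsilon_3^{k_1}f_1\epsilon_2^{k_2}f_2
= (-1)^{k_1k_2}\epsilon_3^{k_1+k_2}f_1f_2
= (-1)^{k_1k_2}\acute{(f_1\circ f_2)}.
$$

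No serious obstacle is expected: the construction is local on morphisms, all coherence conditions reduce to elementary identities involving $\chi(c)\bmod 2$, and compatibility with local equivalences is automatic because the formulas only involve pullbacks of the given structure maps. The one thing that needs careful attention is the bookkeeping of $\epsilon$-powers that produces the $(\phi,c)^*\mu$ twist and the sign $(-1)^{k_1k_2}$ in the composition, which is essentially the entire content of the lemma.
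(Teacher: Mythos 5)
Your construction is essentially the same as the paper's: both keep the underlying bundle and grading, modify the twisted action on the $c^{-1}(-1)$ component by composing with the grading involution $\epsilon$, and set $\acute{\gamma}_i$ to be a product of $\gamma_i$ with $\epsilon$. The only cosmetic difference is that you compose with $\epsilon$ on the left (e.g.\ $\acute{\rho}=(\partial_1^*\epsilon)^{\chi(c)}\circ\rho$, $\acute{\gamma}_i=\epsilon\gamma_i$, $\acute{f}=\epsilon_2^k f$) while the paper composes on the right ($\acute{\rho}=\rho\circ(\id\otimes\epsilon_c)$, $\acute{\gamma}_i=\gamma_i\epsilon$, $\acute{f}=f\epsilon_1^k$); this shifts formulas by a sign but leads to the same $\acute{\tau}=\tau\cdot(\phi,c)^*\mu$ and the same composition identity, and your bookkeeping is correct.
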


\begin{proof}
For (a), let $E = (E, \epsilon, \rho, \gamma)$ be a $(\phi, \tau, c)$-twisted vector bundles $E$ on $\X$ with $Cl_{p, q}$-action. We then have a $(\phi, \acute{\tau}, c)$-twisted vector bundle $\acute{E} = (\acute{E}, \acute{\epsilon}, \acute{\rho}, \acute{\gamma})$ on $\X$ with $Cl_{q, p}$-actions as follows: The underlying Hermitian vector bundle on $\X_0$ is given by $\acute{E} = E$, and its $\Z_2$-grading by $\acute{\epsilon} = \epsilon$. The $Cl_{q, p}$-action $\acute{\gamma}$ on $\acute{E} = E$ is given by $\acute{\gamma}(e) = \gamma(e)\epsilon$ for unit norm vectors $e \in \R^{p+q}$. Finally, the twisted action $\acute{\rho} : L \otimes {}^\phi\partial_0^*E \to \partial_1^*E$ is given by the composition
$$
L \otimes {}^\phi \partial_0^* E 
\overset{\id \otimes \epsilon_c}{\longrightarrow}
L \otimes {}^\phi \partial_0^* E 
\overset{\rho}{\longrightarrow}
\partial_1^*E,
$$
where $\epsilon_c : \partial_0^*E \to \partial_0^*E$ is the identity on the component $c^{-1}(1) \subset \X_1$ and ${}^\phi\partial_0^*\epsilon$ on $c^{-1}(-1) \subset \X_1$. With some direct calculations, we can verify that $\acute{E}$ is a $(\phi, \acute{\tau}, c)$-twisted vector bundle on $\X$ with $Cl_{q, p}$-action. By construction, the identity map gives an isomorphism $\acute{\acute{E}} \to E$. For (b), we define $\acute{f} : \acute{E}_1 \to \acute{E}_2$ by $\acute{f} = f$ if $k$ is even and $\acute{f} = f\epsilon$ if $k$ is odd. We can readily verify that $\acute{f}$ is a degree $k$ map of $(\phi, \acute{\tau}, c)$-twisted vector bundles on $\X$ with $Cl_{q, p}$-action. The claim about the composition is also verified readily.
\end{proof}

For a better understanding of the construction of $\acute{E}$, let us assume that the groupoid $\X$ is a quotient groupoid $X//G$, the map of groupoids $\phi$ is induced from a homomorphism $\phi : G \to \Z_2$, and $c : X//G \to \pt//\Z_2$ is also induced from a homomorphism $c : G \to \Z_2$. As is reviewed already in \S\S\ref{subsec:twisted_vector_bundle}, a $(\phi, \tau, c)$-twisted vector bundle $E$ is a $\Z_2$-graded vector bundle $(E, \epsilon)$ with real orthogonal map $\rho(g) : E \to E$, ($g \in G$) realizing the twisted action and with unitary maps $\gamma_i : E \to E$ realizing the $Cl_{p, q}$-action. Then the twisted action $\acute{\rho}(g)$ and the $Cl_{q, p}$-action $\acute{\gamma}$ on $\acute{E} = E$ are given by
\begin{align*}
\acute{\rho}(g)
&=
\left\{
\begin{array}{ll}
\rho(g), & (c(g) = 1) \\
\rho(g)\epsilon, & (c(g) = -1)
\end{array}
\right.
&
\acute{\gamma}_i
&=
\gamma_i\epsilon.
\end{align*}

\medskip

We now introduce a map relating the Fredholm formulation with the Karoubi formulation, which originates from \cite{A-Si}.

\begin{dfn} \label{dfn:Aityha_Singer_map}
Let $\X$ be a groupoid, $\phi : \X \to \pt//\Z_2$ a map of groupoids, $(\tau, c)$ a $\phi$-twisted $\Z_2$-graded extension of $\X$, and $(E, \epsilon, \rho, \gamma)$ a $(\phi, \tau, c)$-twisted vector bundle on $\X$ with $Cl_{p, q}$-action. We define a map of fiber bundles on $\X_0$ 
$$
\vartheta : \Fred(E) \to \Gr(\acute{E})
$$
by $\vartheta(A) = -e^{\pi A}\epsilon$ for $A : E|_x \to E|_x$ belonging to the fiber of $\Fred(E)$ at $x \in \X_0$.
\end{dfn}

To see that $\vartheta$ is a well-defined continuous map, we start with the simplest case.

\begin{lem} \label{lem:key_continuity}
For a separable $\Z_2$-graded Hilbert space $(E, \epsilon)$, the map $\vartheta : \Fred(E) \to \Gr(E)$ is well-defined and continuous.
\end{lem}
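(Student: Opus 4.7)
The approach is to verify the three defining conditions of $\Gr(E)$ directly, and then establish continuity via a single factorization that converts the exponential $e^{\pi A}$ into a product involving the compact operator $A^2+\id$.

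For well-definedness, I would write $A = iB$ with $B$ self-adjoint and $\mathrm{Spec}(B) \subset [-1,1]$. The degree-one relation $A\epsilon = -\epsilon A$ yields $e^{\pi A}\epsilon = \epsilon e^{-\pi A}$, so $\vartheta(A)^* = -\epsilon(e^{\pi A})^* = -\epsilon e^{-\pi A} = -e^{\pi A}\epsilon = \vartheta(A)$ and $\vartheta(A)^2 = e^{\pi A}\epsilon e^{\pi A}\epsilon = e^{\pi A}e^{-\pi A} = \id$. To see that $\vartheta(A)-\epsilon = -(e^{\pi A}+\id)\epsilon$ is compact, observe that $h(t) := (e^{i\pi t}+1)/(1-t^2)$ extends to a continuous function on $[-1,1]$: the identity $e^{i\pi t}+1 = 2e^{i\pi t/2}\cos(\pi t/2)$ combined with the first-order zeros of $\cos(\pi t/2)$ at $t = \pm 1$ shows $h$ is bounded with continuous extension. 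Continuous functional calculus then gives
$$
e^{\pi A}+\id \,=\, h(B)(\id - B^2) \,=\, h(B)(A^2+\id),
$$
a product of a bounded operator and a compact one.

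For continuity, suppose $A_n \to A$ in $\Fred(E)$, i.e., $A_n \to A$ in the compact-open topology and $C_n := A_n^2+\id \to C := A^2+\id$ in operator norm. I want $e^{\pi A_n} \to e^{\pi A}$ in norm, which is equivalent to $\vartheta(A_n) \to \vartheta(A)$ in norm since $\epsilon$ is unitary. Writing $B_n = -iA_n$, the factorization above gives
$$
e^{\pi A_n} - e^{\pi A} \,=\, \bigl(h(B_n) - h(B)\bigr)C_n + h(B)(C_n - C).
$$
The second summand tends to zero in norm by boundedness of $h(B)$. Decomposing $C_n = C + (C_n-C)$ in the first summand and using the uniform bound $\|h(B_n)\|, \|h(B)\| \le \|h\|_\infty$, the problem reduces to showing $(h(B_n)-h(B))C \to 0$ in norm, i.e., that $h(B_n) \to h(B)$ in the compact-open topology.

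This last convergence I would establish by Stone--Weierstrass reduction to polynomials. Given $\epsilon > 0$, pick a polynomial $p$ with $\sup_{t\in[-1,1]}|h(t)-p(t)| < \epsilon$; the uniform bound $\|B_n\|, \|B\| \le 1$ yields $\|h(B_n)-p(B_n)\|, \|h(B)-p(B)\| < \epsilon$, so it suffices to show $p(B_n)K \to p(B)K$ in norm for every compact $K$ and every polynomial $p$. By linearity this reduces by induction on $k$ to $B_n^k K \to B^k K$ in norm, where the inductive step follows from
$$
B_n^k K - B^k K \,=\, B_n\bigl(B_n^{k-1}K - B^{k-1}K\bigr) + (B_n-B)B^{k-1}K.
$$
The first term vanishes in norm by the inductive hypothesis together with $\|B_n\| \le 1$, and the second by the compact-open convergence $B_n \to B$ applied to the compact operator $B^{k-1}K$. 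The main technical subtlety is just this interplay between compact-open and norm convergence, which the factorization through $A^2+\id$ packages cleanly into one step.
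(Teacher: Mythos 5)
Your proof is correct, and it takes a route that differs meaningfully from the paper's. The paper handles well-definedness by a spectral argument (eigenspaces with eigenvalues away from $\pm i$ are finite-dimensional, and on the spectral subspace for $\pm i$ the operator $-e^{\pi A}$ acts as the identity), and handles continuity by splitting $e^{\pi A}$ into a cosine part $C(A^2)$ and a sine part $A\cdot T(A^2+\id)$ via two explicit power series, arguing about each summand separately. You instead package everything through the single factorization $e^{\pi A}+\id = h(B)(A^2+\id)$ with $B=-iA$ and $h(t)=(e^{i\pi t}+1)/(1-t^2)$, exploiting that $h$ extends continuously across the zeros of $1-t^2$ because the numerator vanishes to matching order. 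This buys you a uniform treatment: compactness of $\vartheta(A)-\epsilon$ and the reduction of the continuity claim to the two convergence hypotheses both drop out of the same algebraic identity, rather than requiring a separate spectral decomposition and a separate cosine/sine split. Your Stone--Weierstrass step plus the induction on monomials $B_n^k K$ is the exact same interplay between compact-open convergence of $A_n$ and compactness of $A^2+\id$ that the paper uses in its estimate $\lVert (A_m-A_\infty)T(A_\infty^2+\id)\rVert\to 0$, just organized differently. One minor imprecision: where you say the problem reduces to $h(B_n)\to h(B)$ "in the compact-open topology", what you actually prove and what is needed is the (a priori slightly different, but in fact equivalent and sufficient) statement that $(h(B_n)-h(B))K\to 0$ in operator norm for every compact $K$; the argument you give is correct for that.
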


\begin{proof}
To see that $\vartheta$ is well-defined, we notice that $-iA : E \to E$ is a self-adjoint operator. Hence the functional calculus gives a bounded operator $e^{i(-i \pi A)} = e^{\pi A}$ on $E$. Because $A^2 + \id$ is compact, the spectral set $\mathrm{Spec}(A^2)$ and hence $\mathrm{Spec}(A)$ consist of eigenvalues only. The eigenspaces of $A$ whose eigenvalues differ from $\pm i$ are finite-dimensional. The eigenspaces of $A$ with their eigenvalues $\pm i$ are the only possible infinite-dimensional eigenspaces, on which $-e^{\pi A}$ acts by $\id$. This proves that $-e^{\pi A} - \id$ is compact, and so is $-e^{\pi A}\epsilon - \epsilon$. We can directly check that $-e^{\pi A}\epsilon$ is a self-adjoint involution. Hence $\vartheta$ is well-defined as a map. To prove that $\vartheta : \Fred(E) \to \Gr(E)$ is continuous, let $A_m \in \Fred(E)$ be a sequence convergent to $A_\infty \in \Fred(E)$ as $m \to \infty$. By definition, this means that the sequence of maps $A_m|_C : C \to E$ uniformly converges to $A_\infty|_C : C \to E$ on any compact subset $C \subset E$ and $A_m^2$ converges to $A_\infty^2$ in the operator norm, i.e.\ $\lVert A_m^2 - A_\infty^2 \rVert \to 0$ as $m \to \infty$. The continuity of $\vartheta$ will be established when we prove that $-\vartheta(A_m)\epsilon$ converges to $-\vartheta(A_\infty)\epsilon$ in the operator norm. For this aim, we express $-\vartheta(A)\epsilon$ as follows
\begin{align*}
-\vartheta(A)\epsilon = e^{i(-i \pi A)}
&= \cos(-i \pi A) + i \sin(-i \pi A) \\
&= \sum_{n \ge 0}\frac{(-1)^n}{(2n)!}(-i \pi A)^{2n}
+ i(-i \pi A) \sum_{n \ge 0}\frac{(-1)^n}{(2n+1)!}(-i \pi A)^{2n} \\
&= \sum_{n \ge 0} \frac{\pi^{2n}}{(2n)!}(A^2)^n
+ i(-iA) \pi \sum_{n \ge 0} \frac{\pi^{2n}}{(2n+1)!}(A^2)^n.
\end{align*}
Let $C(z)$ and $S(z)$ be the following power series in $z$
\begin{align*}
C(z) &= \sum_{n \ge 0} \frac{\pi^{2n}}{(2n)!}z^n, &
S(z) &= \pi \sum_{n \ge 0} \frac{\pi^{2n}}{(2n+1)!}z^n.
\end{align*}
These series are convergent on the whole of $\C$, and hence define holomorphic functions. As a result, $\cos(- i \pi A_m) = C(A_m^2)$ converges to $\cos(-i \pi A_\infty) = C(A_\infty^2)$ in the operator norm, since $A^2_m$ converges to $A^2_\infty$ in the operator norm. Let us define a holomorphic function $T(z)$ by $T(z) = S(z-1)$. In the expansion of $T(z)$ in $z$, the constant part is absent, because $T(0) = S(-1) = \sin \pi = 0$. Therefore $T(A^2 + \id)$ is a compact operator for any $A \in \Fred(E)$. Now, let us see the estimate
\begin{multline*}
\lVert \sin(-i\pi A_m) - \sin(-i\pi A_\infty) \rVert
= \lVert A_m T(A_m^2 + \id) 
- A_\infty T(A_\infty^2 + \id) \rVert \\
\le 
\lVert A_m \rVert \cdot
\lVert 
T(A_m^2 + \id) - T(A_\infty^2 + \id) 
\rVert
+
\lVert (A_m - A_\infty) T(A_\infty^2 + \id) \rVert.
\end{multline*}
Because $A_m$ is skew-adjoint, we have $\lVert A_m \rVert^2 = \lVert A_m^*A_m \rVert = \lVert A_m^2 \rVert$. Thus, we have $\lVert A_m \rVert \to \lVert A_\infty \rVert$ and $\lVert T(A_m^2 + \id) - T(A_\infty^2 + \id) \rVert \to 0$ as $m \to \infty$, since $A^2_m$ converges to $A^2_\infty$ in the operator norm. The compact operator $T(A^2_\infty + 1)$ maps the unit sphere $\{ v \in E |\ \lVert v \rVert = 1 \}$ in $E$ to a compact subset in $E$. On the compact subset, $A_m$ converges to $A_\infty$ uniformly, so that $\lVert (A_m - A_\infty) T(A_\infty^2 + \id) \rVert \to 0$. In summary, $\vartheta(A_m)\epsilon$ converges to $\vartheta(A_\infty)\epsilon$ in the operator norm, and $\vartheta : \Fred(E) \to \Gr(E)$ is a continuous map.
\end{proof}

\begin{lem}
The map $\vartheta$ in Definition \ref{dfn:Aityha_Singer_map} is well-defined, continuous, and gives rise to a map of $\phi$-twisted fiber bundles on the groupoid $\X$.
\end{lem}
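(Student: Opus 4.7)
The plan is to separate the claim into three parts: fiberwise well-definedness of $\vartheta$, its continuity, and compatibility with the $\phi$-twisted structures on $\Fred(E)$ and $\Gr(\acute{E})$. Fiberwise well-definedness follows almost directly from Lemma~\ref{lem:key_continuity}: at each $x\in\X_0$, the argument of that lemma shows $-e^{\pi A}\epsilon$ is a self-adjoint involution with $-e^{\pi A}\epsilon-\acute{\epsilon}=-(e^{\pi A}+1)\epsilon$ compact, using only $\mathrm{Spec}(A)\subset[-i,i]$ and compactness of $A^2+\id$. The remaining condition, anti-commutation with $\acute{\gamma}(e)=\gamma(e)\epsilon$, follows from the two identities $\gamma(e)\,e^{\pi A}=e^{-\pi A}\gamma(e)$ (a consequence of $A\gamma(e)=-\gamma(e)A$) and $\epsilon\gamma(e)=-\gamma(e)\epsilon$, after a brief calculation showing that the two halves of $\vartheta(A)\acute{\gamma}(e)+\acute{\gamma}(e)\vartheta(A)$ cancel.

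Continuity follows from Lemma~\ref{lem:key_continuity} applied in local trivializations of $E$ over $\X_0$: the fiberwise map is exactly the one whose continuity was proved there, and the chosen topology on $\Fred(E)$ (via $A\mapsto(A,A^2+\id)$ using the compact-open topology on $\mathrm{End}(E)$ and the operator-norm topology on $\mathrm{K}(E)$) is precisely tailored to promote this fiberwise norm-continuity into continuity as a map of bundles over $\X_0$.

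The heart of the lemma is the twist compatibility, namely commutativity on $\X_1$ of the square whose vertical arrows are $\varrho$ on $\Fred(E)$ and $\acute{\varrho}$ on $\Gr(\acute{E})$ and whose horizontal arrows are the two pull-backs of $\vartheta$. I would compute both compositions using (a) the functional-calculus identity $e^{\pi\varrho(A)}=\rho\circ(\id_L\otimes e^{\pi A})\circ\rho^{-1}$, valid because $\rho$ is an isometric isomorphism; (b) the defining relation $\acute{\rho}=\rho\circ(\id_L\otimes\epsilon_c)$ from Lemma~\ref{lem:categorical_correspondence_by_epsilon}; and (c) grading-preservation of $\rho$, which over $c^{-1}(1)$ commutes $\epsilon$ past $\rho$ freely and over $c^{-1}(-1)$ (where $L$ is odd) introduces a sign. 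On $c^{-1}(1)$ commutativity is immediate because $\acute{\rho}=\rho$. On $c^{-1}(-1)$ the two copies of $\epsilon_c=\epsilon$ in $\acute{\rho}$ and $\acute{\rho}^{-1}$ conjugate $\id_L\otimes\vartheta(A)$ to $\id_L\otimes\bigl(\epsilon\,\vartheta(A)\,\epsilon\bigr)$, and the identity $\epsilon\, e^{\pi A}=e^{-\pi A}\epsilon$ then matches this against the sign produced by grading-preservation of $\rho$.

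The main obstacle is this sign bookkeeping on $c^{-1}(-1)$: since $\vartheta(A)=-e^{\pi A}\epsilon$ is not of homogeneous parity, the Koszul rule decomposes $\id_L\otimes\vartheta(A)$ into its even and odd pieces, and one must verify that the twist change $\tau\mapsto\acute{\tau}=\tau+(\phi,c)^*\mu$ of Definition~\ref{dfn:cocycle_fredholm_vs_karoubi} absorbs exactly the resulting discrepancy. This is precisely the reason the target of $\vartheta$ is $\Gr(\acute{E})$ rather than $\Gr(E)$; once the bookkeeping is done, the square commutes on $\X_1$ and $\vartheta$ becomes a map of $\phi$-twisted fiber bundles.
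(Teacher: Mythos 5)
The paper's own proof of this lemma is extremely terse (one sentence per claim, no computations), so your proposal is useful in that it names the ingredients. The first two paragraphs (fiberwise well-definedness via Lemma~\ref{lem:key_continuity}, the anti-commutation check via $\gamma(e)e^{\pi A}=e^{-\pi A}\gamma(e)$ and $\epsilon\gamma(e)=-\gamma(e)\epsilon$, and continuity from local triviality) are correct and track the intended argument.

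The twist-compatibility paragraph, however, describes a route that does not close. You propose to handle $\id_L\otimes\vartheta(A)$ on $c^{-1}(-1)$ by splitting $\vartheta(A)=-e^{\pi A}\epsilon$ into its homogeneous pieces and applying the Koszul sign to each, so that (in a local trivialization of the odd line $L$) $\id_L\otimes\vartheta(A)$ becomes $\epsilon\,\vartheta(A)\,\epsilon$. If you carry this out, the two compositions around the square become $\acute{\varrho}(\vartheta(A))=\rho_0\,e^{\pi A}\rho_0^{-1}\epsilon$ and $\vartheta(\varrho(A))=-\rho_0\,e^{-\pi A}\rho_0^{-1}\epsilon$, which agree only when $e^{2\pi A}=-\id$, i.e.\ only when $A^2=-\id$; the cocycle shift $\tau\mapsto\acute{\tau}$ cannot absorb an operator-valued discrepancy of this kind. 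The point is that a gradation is not a degree-$k$ morphism and should not be fed into the parity-by-parity Koszul rule; rather, $\eta$ is a replacement for $\epsilon$, and on $c^{-1}(-1)$ the $\varrho$-action on $\Gr$ carries a single uniform sign $c(g)$ inherited from the grading of $L$ --- exactly the sign the paper ``restores'' in the displayed formula inside the proof of Lemma~\ref{lem:approximation_gradation}. With that convention, and using $\acute{\rho}=\rho\circ(\id_L\otimes\epsilon_c)$, the identity $\epsilon e^{\pi A}\epsilon=e^{-\pi A}$, and $\rho_0\epsilon\rho_0^{-1}=-\epsilon$ on $c^{-1}(-1)$, one finds
\[
\acute{\varrho}(\vartheta(A))=-\acute{\rho}_0\vartheta(A)\acute{\rho}_0^{-1}
=-\rho_0\,e^{-\pi A}\rho_0^{-1}\epsilon
=\vartheta(\varrho(A)),
\]
so the square commutes cleanly. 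So the framework you set up is right and the choice of target $\Gr(\acute{E})$ is indeed what makes things work, but the specific ``even-odd Koszul decomposition'' description, if pursued literally, leads to a dead end; it is the $\id_L\otimes\epsilon_c$ built into $\acute{\rho}$, cancelling the $\epsilon$ in $\vartheta$, that does the work.
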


\begin{proof}
In the same way as in Lemma \ref{lem:key_continuity}, we can show that $A \mapsto -e^{\pi A}\epsilon$ is well-defined. It is easy to verify the anti-commutation relation between $-e^{\pi A}\epsilon$ and the Clifford action on $\acute{E}$. This proves that the map $\vartheta : \Fred(E) \to \Gr(\acute{E})$ is well-defined as a map of the fiber bundles on $\X_0$. The continuity of $\vartheta$ follows from Lemma \ref{lem:key_continuity}, because of the local triviality of $E$. We can also directly verify that $\vartheta(A)$ is compatible with the twisted action on $\acute{E}$, which means that $\vartheta : \Fred(E) \to \Gr(\acute{E})$ is a map of fiber bundles on $\X$.
\end{proof}

As a result of the lemma above, we have a map of sections
$$
\vartheta : \Gamma(\X, \Fred(E)) \to \Gamma(\X, \Gr(\acute{E})),
$$
which is continuous, provided that the spaces of sections are topologized by the the compact open topologies. Clearly, this map preserves the operations of taking the direct sum. If $E$ is locally universal and $\gamma_* \in \Gamma(\X, \Fred(E)^\dagger)$, then $\vartheta(\gamma_*) = \epsilon$, because $\mathrm{Spec}(\gamma_*) = \{ \pm i \}$. Consequently, $\vartheta$ induces a well-defined map of monoids
$$
\vartheta : {}^\phi K^{(\tau, c) + (p, q)}(\X) \to
{}^\phi \K^{(\acute{\tau}, c) + (q, p)}(\X).
$$

\begin{thm} \label{thm:Fredholm_vs_Karoubi}
Let $\X$ be a local quotient groupoid, $\phi \in \Phi(\X)$ an object, and $(\tau, c)$ a $\phi$-twist on $\X$. For any $p, q \ge 0$, the monoid map
$$
\vartheta :\
{}^\phi K^{(\tau, c) + (p, q)}(\X)
\to
{}^\phi \K^{(\acute{\tau}, c) + (q, p)}(\X)
$$
is bijective. In particular, ${}^\phi \K^{(\acute{\tau}, c) + (q, p)}(\X)$ gives rise to an abelian group. Further, if we put ${}^\phi \K^{(\acute{\tau}, c) + n}(\X) = {}^\phi \K^{(\acute{\tau}, c) + (n, 0)}(\X)$, then $\vartheta$ induces an isomorphism of groups
$$
{}^\phi K^{(\tau, c) + n}(\X) \cong {}^\phi \K^{(\acute{\tau}, c) + n}(\X).
$$
\end{thm}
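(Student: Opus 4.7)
The plan is to first establish the bijectivity of the monoid map
\[
\vartheta \colon {}^\phi K^{(\tau,c)+(p,q)}(\X) \longrightarrow {}^\phi\K^{(\acute\tau,c)+(q,p)}(\X)
\]
for arbitrary $p,q\ge 0$; the abelian group structure on the Karoubi side and the single-degree statement of the theorem then follow formally, the latter by specializing to $(p,q)=(n,0)$ and invoking Corollary~\ref{cor:degree_correspondence}. Following the reduction strategy of the proof of Theorem~\ref{thm:Atiyah_Singer_map} and Lemma~\ref{lem:locally_unversal_bundle}, it suffices to treat the case $\X=\pt//G$ with $G$ a compact Lie group, $L$ the product bundle and $\tau$ trivial: passage to the unit sphere bundle of $L$, the Tietze-type gluing for local quotient groupoids, and the Mackey decomposition of Appendix~\ref{sec:mackey_decomposition} all commute with the formation of $\Fred$, $\Gr$ and the map $\vartheta$, because each of these is defined fiberwise and naturally with respect to pull-back.

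In this reduced setting the task is to show that the $G$-equivariant assignment $\vartheta(A)=-e^{\pi A}\epsilon$ induces a weak homotopy equivalence between the section spaces of $\Fred(E)$ and $\Gr(\acute E)$. The heart of the argument is a spectral-calculus inverse. For $\eta\in\Gr(\acute E)$, set $U_\eta:=-\eta\epsilon$; a direct computation using $\eta^2=\epsilon^2=\id$ and self-adjointness gives $\epsilon U_\eta\epsilon=U_\eta^{-1}$, so that $A_\eta:=\tfrac{1}{\pi}\operatorname{Log}(U_\eta)$, defined by the principal branch of the logarithm on $S^1\setminus\{-1\}$ via continuous functional calculus, is automatically skew-adjoint and anti-commutes with $\epsilon$. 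Since $\eta-\epsilon$ is compact, $U_\eta+\id=-(\eta-\epsilon)\epsilon$ is compact, whence $A_\eta^2+\id$ is compact and $\operatorname{Spec}(A_\eta)\subset[-i,i]$. The swap of Clifford signature $(p,q)\to(q,p)$ arising on passage from $E$ to $\acute E$ (Lemma~\ref{lem:categorical_correspondence_by_epsilon}) is exactly what is required for $A_\eta$ to anti-commute with the original $Cl_{p,q}$-action on $E$, while the twist change $\tau\to\acute\tau$ appears as the necessary cocycle correction of Definition~\ref{dfn:cocycle_fredholm_vs_karoubi}.

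The main obstacle is the ambiguity of $\operatorname{Log}$ on the eigenvalue $-1$ of $U_\eta$, whose corresponding eigenspace is the closed $\epsilon$-invariant subspace $\ker(\eta-\epsilon)$, on which a skew-adjoint, degree-$1$, Clifford-equivariant extension of $A_\eta$ squaring to $-\id$ must be chosen. I resolve this by exploiting the local universality: through the isomorphism $E\cong E\otimes\Delta_{1,1}$ used in the proof of Lemma~\ref{lem:extend_Clifford_action}, $E$ carries a distinguished operator $\gamma_*$ squaring to $-\id$, anti-commuting with $\epsilon$ and with the $Cl_{p,q}$-action, whose restriction to $\ker(\eta-\epsilon)$ furnishes a canonical, $\eta$-continuous, $G$-equivariant extension of $A_\eta$. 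The compositions $\vartheta\circ(\eta\mapsto A_\eta)$ and $A\mapsto A_{\vartheta(A)}$ are then verified to be the identity up to homotopy: on the orthogonal complement of $\ker(\eta-\epsilon)$ the continuous functional calculus from Lemma~\ref{lem:key_continuity} gives equality on the nose, while on $\ker(\eta-\epsilon)$ the space of admissible extensions is weakly contractible by Lemma~\ref{lem:extend_Clifford_action}, which supplies the needed homotopies. This establishes the bijectivity of $\vartheta$ in every bidegree and hence the theorem.
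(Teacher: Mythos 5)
Your reduction to $\pt//G$ with trivial $\tau$ via Mackey decomposition is exactly the paper's route, and the reduction does require $\vartheta$ to be a \emph{weak homotopy equivalence} on section spaces, not merely a $\pi_0$-bijection; you acknowledge this. The trouble is in the base case, where your argument diverges from the paper's and breaks.

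Your proposed inverse $\eta\mapsto A_\eta=\tfrac1\pi\mathrm{Log}(U_\eta)$ plus a completion by $\gamma_*$ on $\ker(\eta-\epsilon)$ has two genuine gaps. First, $\gamma_*$ does not preserve $\ker(\eta-\epsilon)$: for $v$ with $\eta v=\epsilon v$ one computes $(\eta-\epsilon)\gamma_*v=(\eta\gamma_*+\gamma_*\eta)v$, and there is no reason for $\eta$ to anti-commute with the auxiliary $\gamma_*$ (the gradations in $\Gr(\acute E)$ only anti-commute with the Clifford generators $\acute\gamma_i=\gamma_i\epsilon$, and $\gamma_*$ is a new operator). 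So ``the restriction of $\gamma_*$ to $\ker(\eta-\epsilon)$'' is not even a well-defined operator on that subspace. Second, and more fundamentally, even if a pointwise skew-adjoint completion could be chosen, the projection onto $\ker(\eta-\epsilon)$ is \emph{not} continuous in $\eta$: eigenvalues of $U_\eta$ peel off from $-1$ or merge into it as $\eta$ varies, so the rank and location of $\ker(\eta-\epsilon)$ jump discontinuously, and the prescribed $A_\eta$ cannot be assembled into a continuous inverse map. This discontinuity is exactly what the paper, following Atiyah--Singer, circumvents: in Lemma \ref{lem:Fredholm_vs_Karoubi_basic_case} one stratifies $\Gr$ by the sets $D(n)=\{\eta:\mathrm{rank}(\id-\eta\epsilon)=n\}$, shows $\vartheta:\vartheta^{-1}(D(n))\to D(n)$ is a fiber bundle with contractible fiber (the contractibility coming from Lemma \ref{lem:contractible_basic_case}), checks the ``respectable open neighborhood'' condition for $C(n-1)\subset C(n)$, and invokes Lemma (3.7) of \cite{A-Si} to deduce a homotopy equivalence without ever constructing a global continuous inverse. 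Your appeal to Lemma \ref{lem:extend_Clifford_action} is pointing in the right direction (contractibility of the space of admissible completions), but that contractibility has to enter through the fiber-bundle/quasifibration argument, not by selecting a single ``canonical'' completion.
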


\begin{proof}
As before, we apply the reduction argument as in Lemma \ref{lem:locally_unversal_bundle} and \cite{FHT1} (Proposition A.19). Then, it is enough to see that $\vartheta : \Gamma(\X, \Fred(E)) \to \Gamma(\X, \Gr(\acute{E}))$ is a weak homotopy equivalence when $\X$ is the groupoid $\pt//G$ associated to a compact Lie group $G$ and $\tau$ is trivial. In this case, $\vartheta$ is a homotopy equivalence, as shown in Appendix \ref{sec:mackey_decomposition} (Lemma \ref{lem:Fredholm_vs_Karoubi_point_case}).
\end{proof}

%\smallskip

\begin{rem} \label{rem:twist_change}
One can avoid the twist change $\tau \mapsto \acute{\tau}$ by using self-adjoint operators instead of skew-adjoint operators in Definition \ref{dfn:fredholm_family}. To be precise, we define $\acute{\mathrm{F}}\mathrm{red}(E)$ by replacing (i), (ii) and (iii) in Definition \ref{dfn:fredholm_family} by
\begin{itemize}
\item[(i)']
$A$ are self-adjoint: $A^* = A$.

\item[(ii)']
$A^2 - \id$ are compact.

\item[(iii)']
$\mathrm{Spec}(A) \subset [-1, 1]$.
\end{itemize}
Using $\acute{\mathrm{F}}\mathrm{red}(E)$, we also define ${}^\phi\acute{K}^{(\tau, c) + (p, q)}(\X)$ and ${}^\phi\acute{K}^{(\tau, c) -n}(\X, \Y)$ as in Definition \ref{dfn:bigraded_K} and Definition \ref{dfn:Freed_Moore_K_theory}. By Lemma \ref{lem:categorical_correspondence_by_epsilon}, we have an isomorphism  of fiber bundles
\begin{align*}
&\Fred(E) \to \acute{\mathrm{F}}\mathrm{red}(\acute{E}), &
&A \mapsto \acute{A} = A \epsilon,
\end{align*}
where $\epsilon$ is the $\Z_2$-grading of $E$. This induces the isomorphisms of groups
\begin{align*}
{}^\phi K^{(\tau, c) + (p, q)}(\X)
&\cong
{}^\phi \acute{K}^{(\acute{\tau}, c) + (q, p)}(\X), &
{}^\phi K^{(\tau, c) + n}(\X)
&\cong
{}^\phi \acute{K}^{(\acute{\tau}, c) + n}(\X).
\end{align*}
Hence Theorem \ref{thm:Fredholm_vs_Karoubi} provides the isomorphisms without the twist change
\begin{align*}
{}^\phi \acute{K}^{(\tau, c) + (q, p)}(\X)
&\cong
{}^\phi \K^{(\tau, c) + (q, p)}(\X), &
{}^\phi \acute{K}^{(\tau, c) + n}(\X)
&\cong
{}^\phi \K^{(\tau, c) + n}(\X).
\end{align*}
Note that the counterpart of Corollary \ref{cor:degree_correspondence} reads
$$
{}^\phi\acute{K}^{(\tau, c) + p - q}(\X)
\cong {}^\phi\acute{K}^{(\tau, c) + (p, q)}(\X).
$$
Note also that the counterpart of $\imath$ in \S\S\ref{subsec:finite_rank} for ${}^\phi\acute{K}^{(\tau, c) + n}_G(X)$ is defined for
$$
{}^\phi\acute{K}^{(\tau, c) + n}(X)_{\mathrm{fin}}
= \pi_0({}^\phi\Vect_G^{(\tau, c) + (0, n)}(X)_{\mathrm{fin}})/
\pi_0({}^\phi\Vect_G^{(\tau, c) + (0, n+1)}(X)_{\mathrm{fin}}),
$$
which is isomorphic to ${}^\phi K^{(\acute{\tau}, c) + n}(X)_{\mathrm{fin}}$ by $E \mapsto \acute{E}$. Finally, we remark that the use of self-adjoint operators affects the signs of the degree shifts corresponding to twists in Theorem \ref{thm:degree_shift}, for example,
$$
{}^\phi \acute{K}^{(\tau, c) + c_{\phi} + n}(\X, \Y) 
\cong 
{}^\phi \acute{K}^{(\tau, c) + n - 2}(\X, \Y).
$$
\end{rem}

%%%%%%%%%%%%%%%%%%%%%%%%%%%%%%%%%%%%%%%%%%%%%%%%

\subsection{Finite-dimensional Karoubi formulation}
%\label{sec:}

Let us consider the same setup as in \S\S\ref{subsec:finite_rank} to introduce a finite-dimensional Karoubi formulation.

\begin{dfn}[triple] \label{dfn:karoubi_triple}
Let $\X$ be the quotient groupoid $X//G$ associated to an action of a finite group $G$ on a compact Hausdorff space $X$, $\phi : X//G \to \pt//\Z_2$ the map of groupoids associated to a homomorphism $\phi : G \to \Z_2$, and $(L, \tau, c)$ a $\phi$-twisted $\Z_2$-graded extension of $X//G$.
\begin{itemize}
\item[(a)]
We define a \textit{triple} $(E, \eta_0, \eta_1)$ on $X//G$ by the requirement that $(E, \eta_0, \rho, \gamma)$ and $(E, \eta_1, \rho, \gamma)$ are objects of ${}^\phi\Vect^{(\tau, c) + (p, q)}_G(X)_{\mathrm{fin}}$.

\item[(b)]
We define an isomorphism of triples $f : (E, \eta_0, \eta_1) \to (E', \eta'_0, \eta'_1)$ to be an isomorphism of vector bundles $f : E \to E'$ on $X$ which gives isomorphisms $f : (E, \eta_i, \rho, \gamma) \to (E', \eta'_i, \rho', \gamma')$ in ${}^\phi\Vect^{(\tau, c) + (p, q)}_G(X)_{\mathrm{fin}}$ for $i = 0, 1$.
\end{itemize}
\end{dfn}

By definition, a triple $(E, \eta_0, \eta_1)$ can be regarded as a twisted vector bundle $(E, \eta_1, \rho, \gamma)$ on $X//G$ equipped with a gradation $\eta_0$ in the sense of Definition \ref{dfn:gradation}, where the compactness of $\eta_0 - \eta_1$ is automatically satisfied by the finite-dimensionality of $E$. The direct sum of triples is defined by $(E, \eta_0, \eta_1) \oplus (E', \eta'_0, \eta'_1) = (E \oplus E', \eta_0 \oplus \eta'_0, \eta_1 \oplus \eta'_1)$.

\begin{dfn} \label{dfn:finite_rank_karoubi}
We assume the same setting as in Definition \ref{dfn:karoubi_triple}.
\begin{itemize}
\item[(a)]
We define ${}^\phi\mathcal{M}^{(\tau, c) + (p, q)}_G(X)_{\mathrm{fin}}$ to be the monoid of the isomorphism classes of triples on $X//G$.

\item[(b)]
We define ${}^\phi\mathcal{Z}^{(\tau, c) + (p, q)}_G(X)_{\mathrm{fin}} \subset {}^\phi\mathcal{M}^{(\tau, c) + (p, q)}_G(X)_{\mathrm{fin}}$ to be the submonoid consisting of triples $(E, \eta_0, \eta_1)$ such that $\eta_0$ is homotopic to $\eta_1$ as gradations.

\item[(c)]
We define ${}^\phi\K^{(\tau, c) + (p, q)}_G(X)_{\mathrm{fin}} = {}^\phi\mathcal{M}^{(\tau, c) + (p, q)}_G(X)_{\mathrm{fin}}/{}^\phi\mathcal{Z}^{(\tau, c) + (p, q)}_G(X)_{\mathrm{fin}}$ to be the quotient monoid.
\end{itemize}
\end{dfn}

\begin{lem} \label{lem:formula_in_finite_Karoubi}
The monoid ${}^\phi \K^{(\tau, c) + (p, q)}_G(X)_{\mathrm{fin}}$ is an abelian group, in which the additive inverse of $[E, \eta_0, \eta_1]$ is given by $[E, \eta_1, \eta_0]$. It also holds that
$$
[E, \eta_0, \eta_1] + [E, \eta_1, \eta_2] = [E, \eta_0, \eta_2].
$$
\end{lem}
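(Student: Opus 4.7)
The plan is to prove first the three-gradation identity
$$
[E, \eta_0, \eta_1] + [E, \eta_1, \eta_2] + [E, \eta_2, \eta_0] = 0 \ \text{in}\ {}^\phi\K^{(\tau, c) + (p, q)}_G(X)_{\mathrm{fin}},
$$
from which both conclusions of the lemma follow formally. Specializing $\eta_2 = \eta_0$ and using that $(E, \eta_0, \eta_0)$ lies in ${}^\phi\mathcal{Z}^{(\tau, c) + (p, q)}_G(X)_{\mathrm{fin}}$ gives $[E, \eta_0, \eta_1] + [E, \eta_1, \eta_0] = 0$, which exhibits an additive inverse for every class and hence shows that ${}^\phi\K^{(\tau, c) + (p, q)}_G(X)_{\mathrm{fin}}$ is an abelian group with $-[E, \eta_0, \eta_1] = [E, \eta_1, \eta_0]$. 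The addition formula is then the three-term identity rearranged by this inverse.

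Since addition is induced by direct sum, the left-hand side of the three-term identity is represented by the triple $(E^3, \eta_0 \oplus \eta_1 \oplus \eta_2, \eta_1 \oplus \eta_2 \oplus \eta_0)$, so it suffices to show that $\eta_0 \oplus \eta_1 \oplus \eta_2$ and $\eta_1 \oplus \eta_2 \oplus \eta_0$ are homotopic as gradations on $E^3$. I will produce such a homotopy from a path in $SO(3)$. View $E^3 = \R^3 \otimes_\R E$ and let $SO(3)$ act through its standard real representation on $\R^3$. Because each matrix has real entries and touches only the $\R^3$-factor, the resulting action on $E^3$ commutes with the Hermitian form, the complex structure, the fibrewise $Cl_{p, q}$-action, and the $(\phi, \tau, c)$-twisted $G$-action, including the antilinear components that arise when $\phi(g) = -1$. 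The cyclic permutation $\sigma : (x_1, x_2, x_3) \mapsto (x_3, x_1, x_2)$ lies in $SO(3)$, and a direct block computation gives $\sigma^{-1}(\eta_0 \oplus \eta_1 \oplus \eta_2)\sigma = \eta_1 \oplus \eta_2 \oplus \eta_0$. Choosing any path $\sigma(t)$ from $I$ to $\sigma$ in the connected group $SO(3)$ and setting $\eta(t) = \sigma(t)^{-1}(\eta_0 \oplus \eta_1 \oplus \eta_2)\sigma(t)$ will give a continuous family joining the two gradations.

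The main thing to check is that each $\eta(t)$ is itself a gradation in the sense of Definition \ref{dfn:gradation}: self-adjointness and involutivity are immediate from the orthogonality of $\sigma(t)$; anti-commutation with $Cl_{p, q}$ and compatibility with the twisted action follow from the commutation properties above; and condition (ii), compactness of $\eta(t) - \epsilon$, is automatic in the finite-rank setting. With the three-term identity in hand, both assertions of the lemma then follow by the specializations described in the first paragraph.
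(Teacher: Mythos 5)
Your proof is correct, but it takes a genuinely different route from the paper. The paper's proof uses the $SO(2)$ rotation trick (citing Karoubi's 4.16 Lemma) to show $\eta_0 \oplus \eta_1 \sim \eta_1 \oplus \eta_0$ on $E \oplus E$, then invokes the quotient-monoid lemma (Lemma \ref{appendix:lem_quotient_monoid}) with $I([E, \eta_0, \eta_1]) = [E, \eta_1, \eta_0]$ to get the group structure, and handles the addition formula by reference to Karoubi's 4.17 Lemma. You instead prove the single three-term identity $[E, \eta_0, \eta_1] + [E, \eta_1, \eta_2] + [E, \eta_2, \eta_0] = 0$ via a path in $SO(3)$ conjugating $\eta_0 \oplus \eta_1 \oplus \eta_2$ to $\eta_1 \oplus \eta_2 \oplus \eta_0$, and then derive both conclusions formally. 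The decisive fact in your argument — that a $3$-cycle is an even permutation, hence in the identity component of $O(3)$ — plays exactly the role that the $\pi/2$ rotation plays in the paper's $SO(2)$ trick, and both approaches rest on the same observation that a constant real special-orthogonal matrix acting only on the $\R^n$-factor of $\R^n \otimes_\R E$ gives an automorphism of the twisted bundle $E^{\oplus n}$ (commuting with $\epsilon$, with $\gamma$, and with the twisted action $\rho$ including its antilinear parts when $\phi(g) = -1$), so that the conjugated family consists of genuine gradations in the sense of Definition \ref{dfn:gradation}. Your version is self-contained — it replaces two external Karoubi citations and the appendix lemma with a single uniform argument — at the modest cost of working on $E^{\oplus 3}$ rather than $E^{\oplus 2}$.
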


\begin{proof}
To see that the quotient monoid is an abelian group, we verify the monoid morphism $I([E, \eta_0, \eta_1]) = [E, \eta_1, \eta_0]$ satisfies the assumptions in Lemma \ref{appendix:lem_quotient_monoid}. Then the non-trivial thing is that $[E, \eta_0, \eta_1] + [E, \eta_1, \eta_0] = 0$, namely, the gradations $\eta_0 \oplus \eta_1$ and $\eta_1 \oplus \eta_0$ on $E \oplus E$ are homotopic. As given in \cite{Kar} (4.16 Lemma), the family of gradations
\begin{align*}
\tilde{\eta}(\theta)
&=
\left(
\begin{array}{rr}
\cos \theta & - \sin \theta \\
\sin \theta & \cos\theta
\end{array}
\right)
\left(
\begin{array}{cc}
\eta_0 & 0 \\
0 & \eta_1
\end{array}
\right)
\left(
\begin{array}{rr}
\cos\theta & \sin\theta \\
-\sin\theta & \cos\theta
\end{array}
\right) \\
&=
\left(
\begin{array}{cc}
\eta_0 \cos^2\theta + \eta_1 \sin^2\theta & 
(\eta_0 - \eta_1)\cos\theta\sin\theta \\
(\eta_0 - \eta_1)\cos\theta\sin\theta & 
\eta_0\sin^2\theta + \eta_1\cos^2\theta
\end{array}
\right)
\end{align*}
realizes such a homotopy in our setting. The remaining formula can be shown in the same way as in \cite{Kar} (4.17 Lemma).
\end{proof}

Using the idea of the proof of Lemma \ref{lem:weak_periodicity}, we can also prove
$$
{}^\phi \K^{(\tau, c) + (p, q)}(\X)_{\mathrm{fin}}
\cong {}^\phi \K^{(\tau, c) + (p+1, q+1)}(\X)_{\mathrm{fin}}.	
$$

\medskip

As in \S\S\ref{subsec:finite_rank}, we can relate finite-dimensional formulation ${}^\phi \K^{(\tau, c) + (p, q)}_G(X)_{\mathrm{fin}}$ with the infinite-dimensional formulation ${}^\phi \K^{(\tau, c) + (p, q)}_G(X)$.

\begin{lem}
Under the assumptions in Definition \ref{dfn:karoubi_triple}, there is a homomorphism of monoids
$$
\jmath :\ 
{}^\phi \K^{(\tau, c) + (p, q)}_G(X)_{\mathrm{fin}} \to
{}^\phi \K^{(\tau, c) + (p, q)}_G(X).
$$
\end{lem}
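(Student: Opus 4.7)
The strategy is to send a finite-dimensional triple to a gradation on a locally universal bundle. Fix a $(\phi,\tau,c)$-twisted locally universal vector bundle $E_{\mathrm{uni}}$ on $X//G$ with $Cl_{p,q}$-action, equipped with its $\Z_2$-grading $\epsilon$. Given a triple $(E,\eta_0,\eta_1)$, regard $(E,\eta_1,\rho,\gamma)$ as an object of ${}^\phi\Vect^{(\tau,c)+(p,q)}_G(X)_{\mathrm{fin}}$, and choose an isometric embedding $\iota\colon (E,\eta_1,\rho,\gamma)\hookrightarrow E_{\mathrm{uni}}$, which exists by local universality. Let $E^\perp\subset E_{\mathrm{uni}}$ be the orthogonal complement of $\iota(E)$, endowed with the induced $\Z_2$-grading $\epsilon^\perp$, the induced twisted $G$-action and the induced $Cl_{p,q}$-action. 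Define
$$
\jmath[E,\eta_0,\eta_1] \;=\; [\,\iota(\eta_0)\oplus\epsilon^\perp\,]\;\in\;{}^\phi\K^{(\tau,c)+(p,q)}_G(X).
$$

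I would then verify that $\eta:=\iota(\eta_0)\oplus\epsilon^\perp$ is a gradation in the sense of Definition \ref{dfn:gradation}: it is a self-adjoint involution that anti-commutes with the $Cl_{p,q}$-action on each summand and is compatible with the $(\phi,\tau,c)$-twisted action, since both $(E,\eta_0,\rho,\gamma)$ and $(E^\perp,\epsilon^\perp,\rho^\perp,\gamma^\perp)$ are honest $(\phi,\tau,c)$-twisted bundles with $Cl_{p,q}$-action; moreover $\eta-\epsilon=(\iota(\eta_0)-\iota(\eta_1))\oplus 0$ is of finite rank, hence compact. Independence of the homotopy class on the choice of $\iota$ follows from the standard argument that, under local universality, the space of isometric embeddings of a fixed finite-rank twisted subbundle into $E_{\mathrm{uni}}$ is weakly contractible (by the reduction as in Lemmas \ref{lem:locally_unversal_bundle} and \ref{lem:extend_Clifford_action}); a path $\iota_t$ of embeddings produces a path of orthogonal complements and of direct-sum gradations joining the two candidates. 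Isomorphic triples yield homotopic gradations by first transporting $\eta_0$ along the isomorphism and then invoking the same contractibility.

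To see that $\jmath$ descends to the quotient monoid, I would check that every triple in ${}^\phi\mathcal{Z}^{(\tau,c)+(p,q)}_G(X)_{\mathrm{fin}}$ maps to the zero class $[\epsilon]$. Indeed, if $\eta_0$ and $\eta_1$ are joined by a homotopy $\eta_t$ of gradations of $(E,\rho,\gamma)$, then $\iota(\eta_t)\oplus\epsilon^\perp$ is a homotopy of gradations of $E_{\mathrm{uni}}$ from $\iota(\eta_0)\oplus\epsilon^\perp$ to $\iota(\eta_1)\oplus\epsilon^\perp=\epsilon$. Additivity of $\jmath$ is obtained by taking, for a direct sum $(E_1\oplus E_2,\eta_0^{(1)}\oplus\eta_0^{(2)},\eta_1^{(1)}\oplus\eta_1^{(2)})$, the embedding to be the direct sum of the chosen embeddings $\iota_1,\iota_2$ composed with a unitary identification $E_{\mathrm{uni}}\oplus E_{\mathrm{uni}}\cong E_{\mathrm{uni}}$ afforded by local universality, and invoking once more the homotopy uniqueness of such identifications. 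The main obstacle is this bookkeeping of choices: the value $\jmath[E,\eta_0,\eta_1]$ is manifestly defined only up to the ambiguity in $\iota$ and in the identification $E_{\mathrm{uni}}\oplus E_{\mathrm{uni}}\cong E_{\mathrm{uni}}$, and one must systematically absorb these ambiguities into the equivalence $\sim$ defining ${}^\phi\K^{(\tau,c)+(p,q)}_G(X)$, using contractibility of the relevant spaces of sections as in the proofs of Lemmas \ref{lem:locally_unversal_bundle} and \ref{lem:extend_Clifford_action}.
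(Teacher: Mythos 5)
Your proposal is correct and follows essentially the same route as the paper: embed the finite-rank bundle $(E,\eta_1)$ into a locally universal bundle, equip the orthogonal complement with its inherited grading $\epsilon^\perp$, and send the triple to the gradation $\eta_0\oplus\epsilon^\perp$, whose difference from $\epsilon_{\mathrm{uni}}=\eta_1\oplus\epsilon^\perp$ is finite rank hence compact. The paper leaves the well-definedness and homomorphism checks as brief remarks (invoking $E_{\mathrm{uni}}\oplus E_{\mathrm{uni}}\cong E_{\mathrm{uni}}$ and uniqueness of embeddings up to homotopy), which you spell out in more detail, but there is no substantive difference of method.
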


\begin{proof}
Given a triple $(E, \eta_0, \eta_1)$ on $X//G$, we put $\epsilon_E = \eta_1$ to regard $E$ as a $\Z_2$-graded vector bundle. In particular, $E \in {}^\phi\Vect^{(\tau, c) + (p, q)}(X//G)$. Then we can embed $E$ into a locally universal bundle $E_{\mathrm{uni}}$. If $\epsilon_{E^\perp}$ denotes the $\Z_2$-grading of the orthogonal complement $E^\perp$ of $E$, then the $\Z_2$-grading $\epsilon_{\mathrm{uni}}$ of $E_{\mathrm{uni}}$ is expressed as $\epsilon_\mathrm{uni} = \epsilon_E \oplus \epsilon_{E^\perp}$. Now, we have a self-adjoint involution $\eta = \eta_0 \oplus \epsilon_{E^\perp}$ on $E_{\mathrm{uni}}$ such that $\eta - \epsilon_{\mathrm{uni}} = (\eta_0 - \eta_1) \oplus 0$ is compact. This gives a gradation $\eta \in \Gamma(X//G, \Gr(E_{\mathrm{uni}}))$, and we define $\jmath$ by the assignment $(E, \eta_0, \eta_1) \mapsto \eta$. Using the property $E_{\mathrm{uni}} \oplus E_{\mathrm{uni}} \cong E_{\mathrm{uni}}$, we can show that $\jmath$ is well-defined. It is then clear that $\jmath$ is a homomorphism. 
\end{proof}

To prove that $\jmath$ is bijective, we show that any gradation on a locally universal bundle admits a ``finite dimensional approximation''.

\begin{lem} \label{lem:approximate_compact_on_vector_space}
Let $X$ be a compact Hausdorff space.

\begin{itemize}
\item[(a)]
Let $\mathcal{E}$ be a separable Hilbert space, and $\{ K_x \}_{x \in X}$ a family of self-adjoint compact operators on $\mathcal{E}$ which are continuous in the operator norm. Then, for any $r > 0$, there is a finite rank subspace $\mathcal{F} \subset \mathcal{E}$ such that $\lVert K_x - PK_x \rVert < r$ for all $x \in X$, where $P : \mathcal{E} \to \mathcal{E}$ is the orthogonal projection onto $\mathcal{F} \subset \mathcal{E}$.

\item[(b)]
We additionally suppose in (a) that $\mathcal{E}$ gives rise to a $(\phi, c)$-twisted vector bundle on $\pt//G$ with compatible $Cl_{p, q}$-action for homomorphisms $\phi : G \to \Z_2$ and $c : G \to \Z_2$. Then we can take the subspace $\mathcal{F}$ in (a) so that $\mathcal{F}$ is a $(\phi, c)$-twisted vector bundle on $\pt//G$.
\end{itemize}
\end{lem}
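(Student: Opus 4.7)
For part (a), the plan is to combine pointwise compactness of the operators with norm continuity of the family and compactness of $X$. Fix $r>0$. Since each $K_{x_0}$ is compact, we may choose a finite rank orthogonal projection $P_{x_0}$ onto a finite-dimensional subspace $\mathcal{F}_{x_0}\subset\mathcal{E}$ with $\lVert K_{x_0}-P_{x_0}K_{x_0}\rVert<r/3$; this is standard, for instance by diagonalizing $K_{x_0}$ and cutting off the small eigenvalues. By the assumed norm continuity, there is an open neighborhood $U_{x_0}\ni x_0$ on which $\lVert K_x-K_{x_0}\rVert<r/3$. Since $X$ is compact, finitely many such neighborhoods $U_{x_1},\ldots,U_{x_n}$ cover $X$, and I take $\mathcal{F}=\mathcal{F}_{x_1}+\cdots+\mathcal{F}_{x_n}$ with orthogonal projection $P$.

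To check the uniform estimate, for $x\in U_{x_i}$ I would first bound
\[
\lVert K_x-P_{x_i}K_x\rVert\le\lVert K_x-K_{x_0}\rVert+\lVert K_{x_0}-P_{x_i}K_{x_0}\rVert+\lVert P_{x_i}\rVert\,\lVert K_{x_0}-K_x\rVert<r.
\]
The key observation is that $\mathcal{F}_{x_i}\subset\mathcal{F}$ forces $PP_{x_i}=P_{x_i}$, hence $(I-P)P_{x_i}=0$, so
\[
\lVert K_x-PK_x\rVert=\lVert(I-P)(K_x-P_{x_i}K_x)\rVert\le\lVert K_x-P_{x_i}K_x\rVert<r,
\]
which gives the desired uniform approximation.

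For part (b), the plan is to simply enlarge each $\mathcal{F}_{x_0}$ constructed above to the smallest $\Z_2$-graded $Cl_{p,q}$-submodule $\widetilde{\mathcal{F}}_{x_0}\subset\mathcal{E}$ containing it, namely the linear span of $\gamma(a)\epsilon^j v$ for $v\in\mathcal{F}_{x_0}$, $a$ running through a basis of $Cl_{p,q}$, and $j\in\{0,1\}$. Since $Cl_{p,q}$ is finite-dimensional, $\widetilde{\mathcal{F}}_{x_0}$ is again finite-dimensional. Replacing $\mathcal{F}_{x_0}$ by $\widetilde{\mathcal{F}}_{x_0}$ only enlarges the projection, and since $\widetilde{P}_{x_0}P_{x_0}=P_{x_0}$ we have
\[
\lVert K_{x_0}-\widetilde{P}_{x_0}K_{x_0}\rVert=\lVert(I-\widetilde{P}_{x_0})(K_{x_0}-P_{x_0}K_{x_0})\rVert\le\lVert K_{x_0}-P_{x_0}K_{x_0}\rVert<r/3,
\]
so the estimates in part (a) go through verbatim. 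The finite sum $\mathcal{F}=\widetilde{\mathcal{F}}_{x_1}+\cdots+\widetilde{\mathcal{F}}_{x_n}$ is still a finite-dimensional $\Z_2$-graded $Cl_{p,q}$-submodule, and the argument above with $P$ the projection onto $\mathcal{F}$ yields $\lVert K_x-PK_x\rVert<r$ uniformly in $x$. There is no substantive new obstacle here; the only point to watch is that the proof of (a) used nothing about $\mathcal{F}_{x_0}$ beyond being finite-dimensional and that adjusting to a larger finite-dimensional subspace preserves the estimate, which holds automatically from $(I-\widetilde P)\widetilde P_{x_i}=0$ for any $\widetilde{\mathcal{F}}_{x_i}\subset\mathcal{F}$.
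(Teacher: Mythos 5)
Your proof is correct, but it takes a genuinely different route from the paper's. The paper's argument for (a) is global: it shows that the set $\{K_x v : x\in X,\ v\in \overline{B(0;1)}\}$ is a compact subset of $\mathcal{E}$ (this is where a cited proposition of Segal enters, applied to the map $(x,v)\mapsto K_x v$), then covers that compact set by finitely many balls $B(v_i;r/2)$ and sets $\mathcal{F}=\mathrm{Span}\{v_1,\dots,v_n\}$, with a short direct estimate using $\|Pv_i - PK_xv\|\le \|v_i - K_x v\|$. Your argument instead is local-to-global and entirely elementary: you use compactness of each individual operator $K_{x_0}$ to obtain a local finite-rank approximant, norm continuity of the family to get a tube, and compactness of $X$ to extract a finite subcover, then sum the finite-rank subspaces; the key observation $(I-P)P_{x_i}=0$ whenever $\mathcal{F}_{x_i}\subset\mathcal{F}$ cleanly reduces the global estimate to the local ones. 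Your route avoids the Segal-type compactness-of-image result altogether, at the cost of a three-term triangle-inequality bookkeeping and needing $r/3$ at each stage, while the paper's route is shorter once that auxiliary compactness is granted. One small slip to fix: in your displayed estimate you write $K_{x_0}$ and $P_{x_i}$ mixed together; it should read $\lVert K_x - K_{x_i}\rVert + \lVert K_{x_i} - P_{x_i}K_{x_i}\rVert + \lVert P_{x_i}\rVert\,\lVert K_{x_i} - K_x\rVert$, consistently using the center $x_i$ of the covering neighborhood. For (b) the two treatments are essentially the same: pass to the finite-dimensional $Cl_{p,q}$-submodule generated by the chosen vectors, and note that enlarging $\mathcal{F}$ only improves the estimate.
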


\begin{proof}
For (a), let $B(v; R) \subset \mathcal{E}$ denote the open ball centered at $v \in \mathcal{E}$ and radius $R$, and $\overline{B(v; R)}$ its closure. Then $\{ K_xv \in \mathcal{E} |\ x \in X, v \in \overline{B(0; 1)} \}$ is a compact subset. To see this, we define a continuous map $k : X \times \mathcal{E} \to X \times \mathcal{E}$ by $k(x, v) = K_xv$. As shown in \cite{Se3} (Proposition 2.1), the projection $X \times \mathcal{E} \to X$ restricts to a proper map $k(X \times \overline{B(0; 1)}) \to X$. Since $X$ is compact, so are $k(X \times \overline{B(0; 1)})$ and its image under the projection $X \times \mathcal{E} \to \mathcal{E}$.

As a result, we can find a finite number of vectors $v_1, \cdots, v_n \in \mathcal{E}$ so that the open balls $B(v_i; r/2)$ cover $\{ K_xv \in \mathcal{E} |\ x \in X, v \in \overline{B(0; 1)} \}$. Let $\mathcal{F} = \mathrm{Span}\{ v_1, \ldots, v_n \}$ be the subspace spanned by the vectors $v_1, \ldots, v_n$, and $P$ the orthogonal projection onto $\mathcal{F}$. Then, for any $x \in X$ and $v \in \overline{B(0; 1)}$, we can find a vector $v_i$ from $v_1, \cdots, v_n$ such that $\lVert K_xv - v_i \rVert < r/2$, so that
\begin{align*}
\lVert K_xv - PK_xv \rVert
&\le
\lVert K_xv - v_i \rVert + \lVert v_i - PK_xv \rVert
=
\lVert K_xv - v_i \rVert 
+ \lVert Pv_i - PK_xv \rVert \\
&\le
\Vert K_xv - v_i \rVert + \lVert v_i - K_xv \rVert 
< \frac{r}{2} + \frac{r}{2} = r.
\end{align*}
This implies that $\lVert K_x - PK_x \rVert < r$ for any $x \in X$.

For (b), we take the subspace $\mathcal{F} \subset \mathcal{E}$ in (a) to be
$$
\mathcal{F}
= \mathrm{Span}\{ g \xi v_i \in \mathcal{E} |\ 
g \in G, \xi \in Cl_{p, q}, 1 \le i \le n \}.
$$
As a vector space, $\mathcal{F}$ still remains finite rank. Since $v_1, \ldots, v_n \in \mathcal{F}$, the succeeding argument in (a) works without change.
\end{proof}

\begin{lem} \label{lem:approximation_gradation}
Under the assumptions in Definition \ref{dfn:karoubi_triple}, let $E$ be the locally universal $(\phi, \tau, c)$-twisted vector bundle on $X//G$ with $Cl_{p, q}$-action. For any gradation $\eta \in \Gamma(X//G, \Gr(E))$, there exists a finite rank $(\phi, \tau, c)$-twisted subbundle $F \subset E$ with $Cl_{p, q}$-action such that $\eta$ is homotopic to $\eta_F \oplus \epsilon_{E/F}$ within gradations, where $\eta_F \in \Gamma(X//G, \Gr(F))$ is the section expressed as $\eta_F = \lvert R \eta i \rvert^{-1} R\eta i$ by using the inclusion $i : F \to E$ and the projection $R : E \to F$, and $\epsilon_{E/F}$ is the $\Z_2$-grading of the orthogonal complement $F^\perp$ of $F \subset E = F \oplus F^\perp$.
\end{lem}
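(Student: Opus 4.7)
Let $K:=\eta-\epsilon\in\Gamma(X//G,\mathrm{K}(E))$. Because $\eta$ and $\epsilon$ are each self-adjoint involutions anticommuting with the $Cl_{p,q}$-action and compatible with the $(\phi,\tau,c)$-twisted $G$-action, the difference $K$ is a continuous equivariant family of compact self-adjoint operators that anticommutes with $Cl_{p,q}$. The strategy is: (i) use a bundle-equivariant version of Lemma~\ref{lem:approximate_compact_on_vector_space} to produce a finite rank $(\phi,\tau,c)$-twisted sub-bundle $F\subset E$ with $Cl_{p,q}$-action such that the orthogonal projection $P:E\to F$ satisfies $\|K_x-P_xK_x\|<r$ uniformly in $x$, for a small constant $r>0$ to be chosen; (ii) verify that $\eta_F\oplus\epsilon_{E/F}$ is a gradation close to $\eta$ in operator norm; (iii) connect them by the straight-line homotopy, suitably normalized.

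\emph{Step 1 (finite rank approximation as a sub-bundle).} I would first upgrade Lemma~\ref{lem:approximate_compact_on_vector_space} to the equivariant bundle setting. Following Segal's argument invoked in the proof of part (a), the set $S:=\{K_xv\mid x\in X,\ v\in E_x,\ \|v\|\le 1\}\subset E$ is compact in the total space. Cover $S$ by finitely many $r/2$-open balls whose centres come from values of continuous sections of $E$, then replace each such section by the (finite) family consisting of its $G$-translates (twisted by $(\phi,\tau,c)$) and its $Cl_{p,q}$-translates; this is possible because $G$ is finite and $Cl_{p,q}$ is finite-dimensional. Their pointwise span gives a locally generated finite rank $(\phi,\tau,c)$-twisted sub-object with $Cl_{p,q}$-action which, after absorbing it into any locally universal finite rank sub-bundle (available over $X//G$ for $G$ finite) or by a standard perturbation stabilizing the rank, yields the desired sub-bundle $F\subset E$ with $\|K-PK\|<r$ everywhere on $X$.

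\emph{Step 2 (closeness of $\eta_F\oplus\epsilon_{E/F}$ to $\eta$).} Since $F$ respects the $\Z_2$-grading of $E$, we have $R\epsilon i=\epsilon|_F$ and $R\epsilon j=0$, where $i:F\hookrightarrow E$ and $j:F^\perp\hookrightarrow E$ are the inclusions and $R:E\to F$, $S:E\to F^\perp$ the projections. Therefore $R\eta j=RKj$ and $S\eta i=SKi$ both have norm at most $\|K-PK\|<r$. The identity $\eta^2=\mathrm{id}$ in block form gives $(R\eta i)^2=\mathrm{id}_F-(R\eta j)(S\eta i)$, so $R\eta i$ is a self-adjoint operator on $F$ with spectrum in a neighbourhood of $\{\pm 1\}$ once $r<1$. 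Consequently $\eta_F=(R\eta i)\,|R\eta i|^{-1}$ is a well-defined self-adjoint involution on $F$ (hence a gradation), and a direct block computation shows that $\eta-(\eta_F\oplus\epsilon_{E/F})$ is of order $r$ in operator norm, with the off-diagonal blocks already bounded by $r$ and the diagonal corrections of order $r^2$.

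\emph{Step 3 (homotopy) and main obstacle.} For $r$ sufficiently small, the straight-line path $\eta_t:=(1-t)\eta+t(\eta_F\oplus\epsilon_{E/F})$ has $\eta_t^2$ uniformly close to $\mathrm{id}$, so $|\eta_t|$ is invertible and the normalized path $\tilde\eta_t:=\eta_t\,(\eta_t^2)^{-1/2}$ is a continuous family of self-adjoint involutions on $E$ connecting $\eta$ to $\eta_F\oplus\epsilon_{E/F}$. Each $\tilde\eta_t$ anticommutes with the $Cl_{p,q}$-action and is compatible with the twisted $G$-action because all operators involved are, and $\tilde\eta_t-\epsilon$ is compact because $\eta-\epsilon$ is compact and the correction from $F$ is finite rank. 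This produces the required homotopy of gradations on the $(\phi,\tau,c)$-twisted bundle $E\times[0,1]$ over $X//G\times[0,1]$. The principal obstacle is Step~1: ensuring that the finite rank approximation produces a genuine sub-bundle of constant rank that is simultaneously $Cl_{p,q}$-invariant and stable under the twisted $G$-action; the finiteness of $G$, the compactness of $X$, and the finite-dimensionality of $Cl_{p,q}$ make this tractable, but it requires careful handling of the rank-dropping loci, which is where a locally universal finite rank envelope or an explicit perturbation argument enters.
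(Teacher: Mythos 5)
Your overall strategy is the same as the paper's: write $K=\eta-\epsilon$, produce an equivariant finite-rank approximation, and connect $\eta$ to $\eta_F\oplus\epsilon_{E/F}$ by a normalized straight-line path. Steps 2 and 3 are essentially sound (with one minor overclaim: the correction $SKj=(1-P)K(1-P)$ on the $F^\perp$-diagonal is only $O(r)$, not $O(r^2)$, from $\lVert(1-P)K\rVert<r$; this is still enough). But the substance of the lemma lives in Step 1, and as you yourself flag, your sketch does not close there. Two specific problems: first, there is no ``locally universal finite rank sub-bundle'' to absorb your sub-object into — locally universal twisted bundles are infinite rank by construction, so that escape route is unavailable; second, the pointwise span of the $G$- and $Cl_{p,q}$-translates of finitely many sections has no reason to have locally constant rank, which is exactly the rank-jumping obstruction you name but do not resolve.

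The paper eliminates this obstruction at the source by first trivializing $E\cong X\times\mathcal{E}$ as a Hilbert bundle (legitimate because the fiber is a separable infinite-dimensional Hilbert space and the unitary group, in the relevant topology, is contractible). One then applies Lemma~\ref{lem:approximate_compact_on_vector_space}(b) to the norm-continuous family $K_x\in\mathrm{K}(\mathcal{E})$ to obtain a \emph{single} finite-rank $\Z_2$-graded $Cl_{p,q}$-submodule $\mathcal{F}\subset\mathcal{E}$ with constant projection $P$ such that $\lVert K_x-PK_x\rVert<r/2$ for all $x$. The equivariant family is produced not by enlarging spans fibrewise but by conjugating this fixed $P$ by the twisted $G$-action and averaging over the finite group $G$, giving a norm-continuous family $P_x$ of finite-rank projections that commutes with $\epsilon$ and the $\gamma_i$ and intertwines the twisted $G$-action; its image bundle is the required $F$, with no rank-jumping to worry about. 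Self-adjointness of $K$ and $P_x$ then upgrades $\lVert K_x-P_xK_x\rVert<r/2$ to $\lVert K_x-P_xK_xP_x\rVert<r$.

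For Step 3 the paper also chooses the interpolation differently: it sets $H_{x,t}=\epsilon+(1-t)K_x+tP_xK_xP_x$, so the invertibility estimate $\lVert H_{x,t}-\eta_x\rVert=t\lVert K_x-P_xK_xP_x\rVert<r$ is immediate, and it only normalizes once at the end by $\eta_{x,t}=\lvert H_{x,t}\rvert^{-1}H_{x,t}$. Your version interpolates between the two \emph{already normalized} gradations $\eta$ and $\eta_F\oplus\epsilon_{E/F}$, which works but forces you to prove the $O(r)$ closeness estimate of Step 2 in advance; the paper's choice avoids needing that bound at all.
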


\begin{proof}
Let $\X = X//G$ be the quotient groupoid. First of all, we point out that it suffices to consider the case where the twisting data $\phi$, $(L, \tau)$ and $c$ are all trivial. This is a consequence of a reduction argument in \cite{F-M}:
\begin{enumerate}
%\item
%As is utilized 
%in the proof of Lemma \ref{lem:locally_unversal_bundle} already, 
%associated to $L$ is a groupoid $\mathcal{L}$ 
%such that $\mathcal{L}_0 = \X_0$ and $\mathcal{L}_1 = S(L)$ is 
%the unit circle bundle of the line bundle $L \to \X_1$. 
%The obvious map of groupoids $\pi : \mathcal{L} \to \X$ 
%is a local equivalence, and the pull-back $\pi^*(L, \tau)$ is trivial. 
%The maps on twisted bundles on $\X$ correspond to those 
%on the pull-back bundles on $S(L)$ which are equivariant 
%under an additional $U(1)$-action covering the right $U(1)$-action on $S(L)$. 
%(cf. Lemma 3.11, \cite{FHT1}) 
%This argument allows us to assume that $L \to \X_1$ is a trivial bundle. 

\item
Let $(g, (-1)^i) \in G' = G \times \Z_2$ act on $X' = X \times \Z_2$ by $(x, (-1)^j) \mapsto (gx, \phi(g)(-1)^{i+j})$. Because the normal subgroup $1 \times \Z_2 \subset G'$ acts on $X'$ freely, the projections $X' \to X$ and $G' \to X$ induce a local equivalence of groupoids $\pi : \X' \to \X$. Recall that the homomorphism $\phi : G \to \Z_2$ defines an object $\phi \in \Phi(\X)$ in the category introduced in \S\ref{sec:twisted_bundle}. There is an isomorphism $\psi : \pi^*\phi \to \phi'$ in $\Phi(\X')$, where $\phi'$ is the object associated to the projection $\phi' : G' \to \Z_2$. The isomorphism $\psi$ induces equivalences of categories ${}^{\pi^*\phi}\Twist(\X') \to {}^{\phi'}\Twist(\X')$ and ${}^{\pi^*\phi}\Vect^{(\pi^*\tau', \pi^*c)}(\X') \to {}^{\phi'}\Vect^{(\tau', c')}(\X')$, where $(L', \tau', c')$ is the $\phi'$-twist corresponding to $\pi^*(L, \tau, c)$ under the former equivalence. The inclusion $G \to G'$ induces a map from the quotient groupoid $\tilde{\X} = X'//G$ to $\X' = X'//G'$. The restriction $(L', \tau')|_{\tilde{\X}}$ is an untwisted central extension of $\X'$, so that any $(\phi', \tau', c')$-twisted vector bundle on $\X'$ restricts to a $(\tau'|_{\tilde{\X}}, c'|_{\tilde{\X}})$-twisted vector bundle on $\tilde{\X}$. As shown in \cite{F-M} (proof of Lemma 10.17), an involution on the groupoid $\X'$ is able to recover the information on $(\phi', \tau', c')$-twisted vector bundles on $\X'$ from the $(\tau'|_{\tilde{\X}}, c'|_{\tilde{\X}})$-twisted vector bundles on $\tilde{\X}$. As a result, we can consider the quotient groupoid $\tilde{\X} = X'//G$ with the $\Z_2$-graded central extension $(L', \tau', c')$ instead of the original groupoid $\X = X//G$ with the $\phi$-twisted $\Z_2$-graded extension $(L, \tau, c)$. Put differently, we can assume that $\phi$ is trivial from the beginning. As is pointed out in \cite{F-M} (Appendix E), a similar argument can be applied in order to suppress the twisting datum $c$ (cf.\ Proposition \ref{prop:exact_sequence_with_c}), so that we can also assume that $c$ is trivial.

\item
Let $\X = X//G$ be the quotient groupoid associated to an action of a finite group $G$ on a space $X$, and $(L, \tau)$ a central extension of $\X$. By an argument in \cite{F-M} (Lemma E.1), one can show that $\X$ is weakly equivalent to a global quotient $Y//H$ of a compact space $Y$ by a compact Lie group $H$, on which $(L, \tau)$ can be trivialized. Concretely, the push-forward $(\partial_1)_*L \to X$ gives rise to a $\tau$-twisted $G$-equivariant vector bundle whose rank $n$ agrees with the order of $G$. We write $P$ for the unitary frame bundle of $(\partial_1)_*L$, and $P/U(1)$ for the associated principal bundle whose structure group is the projective unitary group $PU(n) = U(n)/U(1)$. Because $L_g = L|_{\{ g \} \times X}$ is a line bundle on $X$ for each $g \in G$, a unitary frame of $(\partial_1)_*L$ at $x \in X$ gives a unitary frame of $L_g \otimes (\partial_1)_*L$ at $x$, which is unique up to a multiple of an element of $U(1)$. This unitary frame is mapped to a unitary frame of $(\partial_1)_*L$ at $gx \in X$ by the $\tau$-twisted action $\rho_g : L_g \otimes (\partial_1)_*L \to (\partial_1)_*L$ of $g \in G$. This transformation of unitary frames up to multiples of elements in $U(1)$ defines an honest $G$-action on $P/U(1)$ commuting with the right action of $PU(n)$. It turns out that this $G$-action is free. Now, we get the space $Y = P/U(1)$ with the right action of $H = PU(n)$.

%\item
%\textcolor{red}{
%Under the weak equivalence between $\X$ and $Y//H$ above, 
%the twisting data $\tau$ and $c$ respectively correspond to $\tau'$ and $c'$, 
%where $\tau'$ is now trivial and $[c'] \in H^1(Y; \Z_2)$. 
%Considering the double covering associated to $c'$, 
%we can find a global quotient $\tilde{Y}//\tilde{H}$ 
%with a local equivalence to $Y//H$, 
%on which $c'$ corresponds to a homomorphism $\tilde{c} : \tilde{H} \to \Z_2$. 
%}
\end{enumerate}

Under the assumption that $\phi$, $L \to \X_1$, $\tau$ and $c$ are trivial, one can apply the argument in \cite{Se2} to realize the locally universal $\Z_2$-graded $G$-equivariant vector bundle $E$ with $Cl_{p, q}$-action as $X \times \mathcal{E}$, where $\mathcal{E}$ is a separable infinite-dimensional Hilbert space such that $\mathcal{E}$ gives rise to a $\Z_2$-graded vector bundle on $\pt//G$ with $Cl_{p, q}$-action. Then $\eta \in \Gamma(X//G, \Gr(E))$ is expressed as $\eta(x, v) = (x, \eta_xv)$ by using a self-adjoint involution $\eta_x : \mathcal{E} \to \mathcal{E}$ which is continuous in $x$ with respect to the operator norm and is compatible with the $G$-action and the $Cl_{p, q}$-action. We take $r > 0$ to be a positive real number such that: for each $x \in X$, any bounded operator $\eta' : \mathcal{E} \to \mathcal{E}$ satisfying $\lVert \eta_x - \eta' \rVert < r$ admits a bounded inverse. Such an $r$ exists because $X$ is compact and the invertible bounded operators on $\mathcal{E}$ form an open subset in the space of bounded operators equipped with the operator norm topology.

We put $K_x = \eta_x - \epsilon$ to define a continuous family of self-adjoint compact operators on $\mathcal{E}$. Then, by Lemma \ref{lem:approximate_compact_on_vector_space} (b), we have a finite rank invariant subspace $\mathcal{F} \subset \mathcal{E}$ such that $\lVert K_x - PK_x \rVert < r/2$ for all $x \in X$, where $P : \mathcal{E} \to \mathcal{E}$ is the orthogonal projection onto $\mathcal{F}$. By construction, $P$ commutes with the $\Z_2$-grading $\epsilon$, the $G$-action and the $Cl_{p, q}$-action on $\mathcal{E}$.

Now, we have a finite rank $\Z_2$-graded $G$-equivariant subbundle $F = X \times \mathcal{F} \subset X \times \mathcal{E}$ with $Cl_{p, q}$-action. We put $H_{x, t} = \epsilon + (1-t)K_x + tPK_xP$ for $x \in X$ and $t \in [0, 1]$. Since $K_x$ as well as $P$ are self-adjoint, we get
\begin{align*}
\lVert K_x - PK_xP \rVert
&\le
\lVert K_x - PK_x \rVert + \lVert PK_x - PK_xP \rVert \\
&\le
\lVert K_x - PK_x \rVert + \lVert K_x - K_xP \rVert \\
&= \lVert K_x - PK_x \rVert + \lVert (K_x - K_xP)^*\rVert \\
&= \lVert K_x - PK_x \rVert + \lVert K_x - PK_x \rVert
<
\frac{r}{2} + \frac{r}{2} = r.
\end{align*}
Because of the estimate
$$
\lVert H_{x, t} - \eta_x \rVert
=
t \lVert K_x - PK_xP \rVert
< r,
$$
the operator $H_{x, t} : \mathcal{E} \to \mathcal{E}$ is invertible for all $x \in X$ and $t \in [0, 1]$. We then define $\eta_{x, t} : \mathcal{E} \to \mathcal{E}$ by $\eta_{x, t} = \lvert H_{x, t} \rvert^{-1} H_{x, t}$, which is a self-adjoint involution. Notice that $T = H_{x, t}\epsilon$ is an invertible operator on $\mathcal{E}$ which differs from the identity by a compact operator. By the spectral theorem for compact operators, the unitary operator $(TT^*)^{-1/2}T$ differs from the identity by a compact operator. Therefore 
$$
\eta_{x, t} = \lvert H_{x, t} \rvert^{-1}H_{x, t} = (TT^*)^{-1/2}T\epsilon
$$
differs from $\epsilon$ by a compact operator. As a result, we get a gradation $\eta_t \in \Gamma(X//G, \Gr(E))$ by defining the bundle map $\eta_t : E \to E$ as $\eta_t(x, v) = (x, \eta_{x, t}v)$. This is a homotopy from $\eta_0 = \eta$ to $\eta_1$ within gradations. Since $\epsilon + PK_xP$ commutes with $P$, we can decompose $\eta_1$ as $\eta_1 = \eta_F \oplus \eta_F^\perp$ by using gradations $\eta_F \in \Gamma(X//G, \Gr(F))$ and $\eta_F^\perp \in \gamma(X//G, \Gr(F^\perp))$. Because of the expression $H_{x, 1} = P \eta_x P$, if $i : F \to E$ and $R : E \to F$ respectively denote the inclusion and the projection, then
$$
\eta_F = R \eta_1 i
= R(\lvert P \eta P \rvert^{-1}{P \eta P})i
= \lvert R \eta i \rvert^{-1}(R \eta i).
$$
Similarly, we find $\eta_F^\perp = \epsilon_{E/F}$.
\end{proof}

\begin{lem} \label{lem:another_approximation}
Let $F'$ be another subbundle of $E$ as in Lemma \ref{lem:approximation_gradation}, and $\eta_{F'} \in \Gamma(X//G, \Gr(F'))$ the associated gradation. Then there exists a subbundle $F'' \subset E$ such that $F$ and $F'$ are subbundle of $F''$ and $\eta_{F} \oplus \epsilon_{F''/F}$ and $\eta_{F'} \oplus \epsilon_{F''/F'}$ are homotopic within the gradations of $F''$, where $\epsilon_{F''/F}$ and $\epsilon_{F''/F'}$ are the $\Z_2$-gradings of the orthogonal complements of $F \subset F''$ and $F' \subset F''$, respectively.
\end{lem}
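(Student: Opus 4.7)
The plan is to reduce the statement to showing that both $\eta_F \oplus \epsilon_{F''/F}$ and $\eta_{F'} \oplus \epsilon_{F''/F'}$ are homotopic, within $\Gamma(X//G, \Gr(F''))$, to a common canonical gradation $\eta_{F''}$ of $F''$ built by the same procedure as in Lemma \ref{lem:approximation_gradation}. Transitivity of $\sim$ will then close the argument.

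First I would construct $F''$. Working in a trivialization $E = X \times \mathcal{E}$ as in the proof of Lemma \ref{lem:approximation_gradation}, I would reapply Lemma \ref{lem:approximate_compact_on_vector_space} starting from a finite-rank $Cl_{p, q}$-submodule $\mathcal{F}'' \subset \mathcal{E}$ enlarged to contain the initial subspaces used to manufacture $F$ and $F'$ and to guarantee the uniform estimate $\lVert K_x - P'' K_x P'' \rVert < r_0 / 2$, where $P''$ is the orthogonal projection onto $\mathcal{F}''$, $K_x = \eta_x - \epsilon$, and $r_0 > 0$ is a constant (independent of $x$) such that any self-adjoint operator within distance $r_0$ of $\eta_x$ is invertible. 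The $G$-averaging procedure from the proof of Lemma \ref{lem:approximation_gradation} then yields a family $P''_x$ whose image $F''_x$ is a finite-rank $(\phi, \tau, c)$-twisted subbundle with $Cl_{p, q}$-action; by insisting $\mathcal{F}''$ contain the data used to construct $F$ and $F'$, we obtain $F \subset F''$ and $F' \subset F''$ as twisted subbundles. I would then define the canonical gradation of $F''$ by $\eta_{F''} = \lvert R'' \eta i'' \rvert^{-1} R'' \eta i''$, exactly as $\eta_F$ was defined.

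Next I would exhibit the homotopy inside $\Gamma(X//G, \Gr(F''))$ by adapting the polar-decomposition interpolation of Lemma \ref{lem:approximation_gradation}. For $s \in [0, 1]$ set
\[
H_{x, s} = \epsilon + (1-s)\, P^F_x K_x P^F_x + s\, P''_x K_x P''_x, \qquad \tilde{\eta}_{x, s} = \lvert H_{x, s} \rvert^{-1} H_{x, s}.
\]
Since $F \subset F''$, both $P^F_x K_x P^F_x$ and $P''_x K_x P''_x$ have range in $F''_x$, so $H_{x, s} - \epsilon$ preserves $F''_x$, and consequently $\tilde{\eta}_{x, s}$ equals $\epsilon$ on the orthogonal complement of $F''$ in $E$ and restricts to a gradation of $F''$. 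Invertibility of $H_{x, s}$ follows from
\[
\lVert H_{x, s} - \eta_x \rVert \le (1-s) \lVert K_x - P^F_x K_x P^F_x \rVert + s \lVert K_x - P''_x K_x P''_x \rVert < r_0,
\]
once both approximations are taken sharp enough. At $s = 0$ we recover $\eta_F \oplus \epsilon_{F''/F}$ and at $s = 1$ we reach $\eta_{F''}$, giving the required homotopy. Running the same interpolation with $F'$ in place of $F$ produces a homotopy $\eta_{F'} \oplus \epsilon_{F''/F'} \sim \eta_{F''}$, and concatenation completes the proof.

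The main obstacle is arranging the literal inclusions $F \subset F''$ and $F' \subset F''$ after the $G$-averaging. The averaged operator $P''_x = \frac{1}{\lvert G \rvert} \sum_g \rho(g)(\id \otimes P'')\rho(g)^{-1}$ is a priori only a positive contraction, and one must either begin from a $G$-stable $\mathcal{F}''$ (allowing further finite-dimensional enlargement) or pass to a nearby spectral projection, while checking that the uniform estimates used for invertibility of $H_{x, s}$ are preserved. With this bookkeeping in place, everything else reduces to the polar-decomposition argument already employed in Lemma \ref{lem:approximation_gradation}.
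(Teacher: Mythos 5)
Your proposal follows essentially the same route as the paper: take a $Cl_{p,q}$-submodule $\mathcal{F}''$ containing both $\mathcal{F}$ and $\mathcal{F}'$, feed it through the construction of Lemma \ref{lem:approximation_gradation} to obtain $F''$ together with a canonical intermediate gradation $\eta_{F''}$, and connect $\eta_F \oplus \epsilon_{F''/F}$ and $\eta_{F'} \oplus \epsilon_{F''/F'}$ to $\eta_{F''}$ by the linear interpolation $H_{x,t} = \epsilon + (1-t)P_x K_x P_x + t P''_x K_x P''_x$ followed by polar decomposition, with invertibility controlled by the estimate $\lVert H_{x,t} - \eta_x \rVert < r''$ for $r'' = \max\{r, r'\}$. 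The only discrepancy is your closing caveat about the averaged operator $P''_x$ being a priori only a positive contraction: that is really a remark about the proof of Lemma \ref{lem:approximation_gradation}, where the paper simply asserts that the average is already an orthogonal projection, and once that is granted the inequality $P_x \le P''_x$ immediately yields $F, F' \subset F''$ without any extra spectral-projection step.
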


\begin{proof}
As in the proof of Lemma \ref{lem:approximation_gradation}, we assume that $\phi$, $L$, $\tau$ and $c$ are trivial, so that the locally universal bundle is realized as $E = X \times \mathcal{E}$. We express $\eta$ as $\eta(x, v) = (x, \eta_xv)$, and put $K_x = \eta_x - \epsilon$. We suppose that the finite rank subbundle $F' \subset E$ and the gradation $\eta_{F'}$ are constructed from a certain suitable choice of a real number $r' > 0$ and a finite rank invariant subspace $\mathcal{F}' \subset \mathcal{E}$ along the proof of Lemma \ref{lem:approximation_gradation}. Therefore the fibers of $F = X \times \mathcal{F}$ and $F' = X \times \mathcal{F}'$ at $x \in X$ are respectively the images of the orthogonal projections $P$ and $P'$ satisfying
\begin{align*}
\lVert K_x - PK_xP \rVert &< r, &
\lVert K_x - P'K_xP' \rVert & < r'.
\end{align*}
We put $H_x = \epsilon + PK_xP$ and $H'_x = \epsilon + P' K_x P'$, which are self-adjoint invertible operators. Then the gradations $\eta_F$ and $\eta_{F'}$ at $x \in X$ are realized as
\begin{align*}
\eta_{F, x} &= R \lvert H_x \rvert^{-1} H_x i, &
\eta_{F', x} &= R' \lvert H'_x \rvert^{-1} H'_x i',
\end{align*}
where $R : \mathcal{E} \to \mathcal{F}$ and $R' : \mathcal{E} \to \mathcal{F'}$ are the projections, and $i : \mathcal{F} \to \mathcal{E}$ and $i' : \mathcal{F}' \to \mathcal{E}$ are the inclusions. 

To give a subbundle $F'' \subset E$, we put $r'' = \max\{ r, r' \}$. For each $x \in X$, any bounded operator $\eta'$ on $\mathcal{E}$ such that $\lVert \eta' - \eta_x \rVert < r''$ admits a bounded inverse. Let $\mathcal{F}''$ be a finite rank invariant subspace which contains both $\mathcal{F}$ and $\mathcal{F}'$. In view of the proof of Lemma \ref{lem:approximate_compact_on_vector_space}, the orthogonal projection $P''$ onto $\mathcal{F}''$ satisfies $\lVert K_x - P'' K_x \rVert < r''/2$ for all $x \in X$. By construction, the vector bundle $F'' = X \times \mathcal{F}''$ on $X//G$ with $Cl_{p, q}$-action contains both $F$ and $F'$. It also holds that $\lVert K_x - P'' K_x P'' \rVert < r''$ for all $x \in X$.

To show that $\eta_F \oplus \epsilon_{F''/F}$ and $\eta_{F'} \oplus \epsilon_{F''/F'}$ are homotopic within gradations of $F''$, we use an intermediate gradation. We put $H''_x = \epsilon + P'' K_x P''$. By the proof of Lemma \ref{lem:approximation_gradation}, this is a self-adjoint invertible operator for each $x \in X$, and we have a gradation $\eta_{F''}$ of $F''$ given by
$$
\eta''_x = R'' \lvert H''_x \rvert^{-1} H''_x i'',
$$
where $R'' : \mathcal{E} \to \mathcal{F}''$ is the projection and $i'' : \mathcal{F}'' \to \mathcal{E}$ is the inclusion. We then put $H_{x, t} = (1 - t)H_x + t H''_x$ for $x \in X$ and $t \in [0, 1]$. This is self-adjoint on $\mathcal{E}$, and further invertible, since 
\begin{align*}
\lVert H_{x, t} - \eta_x \rVert
&=
\lVert (1 - t)(PK_xP - K_x) + t(P''K_xP'' - K_x) \rVert \\
&<
(1 - t) r'' + t r'' = r''.
\end{align*}
Then $\eta_{x, t} = R'' \lvert H_{x, t} \rvert^{-1} H_{x, t} i''$ defines a homotopy of gradations $\eta_t$ on $F''$ which connects $\eta_0 = \eta_F \oplus \epsilon_{F''/F}$ with $\eta_1 = \eta''$. The same construction gives a homotopy of gradations $\eta'_t$ from $\eta'_0 = \eta_{F'} \oplus \epsilon_{F''/F'}$ to $\eta'_1 = \eta''$. Hence $\eta_F \oplus \epsilon_{F''/F}$ and $\eta_{F'} \oplus \epsilon_{F''/F'}$ are homotopic within the gradations of $F''$.
\end{proof}

\begin{thm} \label{thm:finite_and_infinite_dimensional_Karoubi_formulations}
Under the assumptions in Definition \ref{dfn:karoubi_triple}, the homomorphism 
$$
\jmath :\ 
{}^\phi \K^{(\tau, c) + (p, q)}_G(X)_{\mathrm{fin}} \to
{}^\phi \K^{(\tau, c) + (p, q)}_G(X)
$$
is bijective. In particular, if we put ${}^\phi \K^{(\tau, c) + n}_G(X)_{\mathrm{fin}} \cong {}^\phi \K^{(\tau, c) + (n, 0)}_G(X)_{\mathrm{fin}}$, then $\jmath$ induces an isomorphism of groups
$$
{}^\phi \K^{(\tau, c) + n}_G(X)_{\mathrm{fin}} \cong
{}^\phi \K^{(\tau, c) + n}_G(X).
$$
\end{thm}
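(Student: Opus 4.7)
My plan is to construct an explicit inverse $\kappa : {}^\phi \K^{(\tau, c) + (p, q)}_G(X) \to {}^\phi \K^{(\tau, c) + (p, q)}_G(X)_{\mathrm{fin}}$ to $\jmath$ using the approximation Lemmas \ref{lem:approximation_gradation} and \ref{lem:another_approximation}. For a gradation $\eta \in \Gamma(X//G, \Gr(E_\mathrm{uni}))$ on a locally universal bundle $E_\mathrm{uni}$, Lemma \ref{lem:approximation_gradation} supplies a finite-rank $(\phi,\tau,c)$-twisted subbundle $F \subset E_\mathrm{uni}$ with $Cl_{p,q}$-action and a gradation $\eta_F$ of $F$ such that $\eta \sim \eta_F \oplus \epsilon_{E_\mathrm{uni}/F}$; I will set $\kappa([\eta]) = [F, \eta_F, \epsilon_F]$ with $\epsilon_F$ inherited from the $\Z_2$-grading of $E_\mathrm{uni}$.

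Well-definedness of $\kappa$ breaks into two pieces. Given two approximations $F, F'$ of the same $\eta$, Lemma \ref{lem:another_approximation} provides a common enlargement $F'' \supset F, F'$ inside which $\eta_F \oplus \epsilon_{F''/F} \sim \eta_{F'} \oplus \epsilon_{F''/F'}$ as gradations, and the identity $[E, \eta_0, \eta_1] + [E, \eta_1, \eta_2] = [E, \eta_0, \eta_2]$ of Lemma \ref{lem:formula_in_finite_Karoubi}, combined with the vanishing of $(F''/F, \epsilon_{F''/F}, \epsilon_{F''/F})$ and $(F''/F', \epsilon_{F''/F'}, \epsilon_{F''/F'})$ in ${}^\phi\mathcal{Z}$, yields $[F, \eta_F, \epsilon_F] = [F', \eta_{F'}, \epsilon_{F'}]$. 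For invariance under homotopy $\eta \sim \eta'$, view the homotopy $\tilde{\eta}$ as a gradation of $E_\mathrm{uni} \times [0, 1]$ over the quotient groupoid $(X \times [0, 1])//G$ and apply Lemma \ref{lem:approximation_gradation} parametrically to obtain a finite-rank subbundle $\tilde{F}$ whose endpoint fibers $\tilde{F}_0, \tilde{F}_1$ approximate $\eta$ and $\eta'$ respectively. The homotopy property of twisted bundles (Lemma \ref{lem:homotopy_property}) trivializes $\tilde{F}$ over $X \times [0, 1]$, producing a twisted isomorphism $\tilde{F}_0 \cong \tilde{F}_1$ together with simultaneous homotopies $\eta_{\tilde{F}_0} \sim \eta_{\tilde{F}_1}$ and $\epsilon_{\tilde{F}_0} \sim \epsilon_{\tilde{F}_1}$, and two applications of the additive identity then equate the associated triple classes.

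Given well-definedness, the relation $\jmath \circ \kappa = \mathrm{id}$ is immediate from the defining homotopy $\eta \sim \eta_F \oplus \epsilon_{E_\mathrm{uni}/F}$. For $\kappa \circ \jmath = \mathrm{id}$, starting with a triple $(E, \eta_0, \eta_1)$ the image $\jmath([E, \eta_0, \eta_1])$ is the class of $\eta = \eta_0 \oplus \epsilon_{E^\perp}$ (after embedding $E$ into $E_\mathrm{uni}$ via $\epsilon_E = \eta_1$), and I observe that $F = E$ is itself a valid approximation: the difference $K = \eta - \epsilon_{E_\mathrm{uni}} = (\eta_0 - \eta_1) \oplus 0$ is already supported on $E$ and is $G$- and $Cl_{p,q}$-equivariant, so the construction in the proof of Lemma \ref{lem:approximation_gradation} applies with $P_x$ equal to the projection onto $E$, giving $PKP = K$, $\eta_E = \eta_0$, and a constant homotopy; hence $\kappa(\jmath([E, \eta_0, \eta_1])) = [E, \eta_0, \epsilon_E] = [E, \eta_0, \eta_1]$. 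Additivity of $\kappa$ is seen by choosing $F \oplus F'$ as an approximation of $\eta \oplus \eta'$. The main obstacle is the parametric well-definedness step, which requires orchestrating the approximation lemma over $X \times [0, 1]$ with the homotopy trivialization of twisted bundles and the algebraic identities of Lemma \ref{lem:formula_in_finite_Karoubi}; everything else is either formal or a direct application of the approximation lemmas.
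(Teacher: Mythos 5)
Your proposal is correct and follows essentially the same route as the paper's proof: both construct the inverse $\alpha = \kappa$ by applying Lemma \ref{lem:approximation_gradation} to a gradation on the universal bundle, use Lemma \ref{lem:another_approximation} for independence of the approximating subbundle, and use the parametric version over $(X \times [0,1])//G$ for homotopy invariance. You spell out details the paper leaves implicit (the chain-identity manipulations from Lemma \ref{lem:formula_in_finite_Karoubi}, the exactness of $F = E$ as an approximation for $\kappa \circ \jmath = \mathrm{id}$), but the strategy and key lemmas are identical.
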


\begin{proof}
We construct a homomorphism of monoids
$$
\alpha :\ {}^\phi \K^{(\tau, c) + (p, q)}_G(X) \to
{}^\phi \K^{(\tau, c) + (p, q)}_G(X)_{\mathrm{fin}}
$$
which gives the inverse to $\jmath$. For this construction, let $E = E_{\mathrm{uni}}$ be a $(\phi, \tau, c)$-twisted locally universal bundle on $X//G$ with $Cl_{p, q}$-action, and $\eta \in \Gamma(X//G, \Gr(E))$ a gradation. Thanks to Lemma \ref{lem:approximation_gradation}, there is a finite rank subbundle $F \subset E$ such that $\eta$ is homotopic to $\eta_F \oplus \epsilon_{E/F}$, where $\eta_F \in \Gamma(X//G, \Gr(F))$ is a gradation on $F$ and $\epsilon_{E/F} \in \Gamma(X//G, \Gr(F^\perp))$ is the $\Z_2$-grading of the orthogonal complement of $F \subset E$. Denote by $\epsilon_F$ the $\Z_2$-grading of $F$. Then we have a triple $(F, \eta_F, \epsilon_F)$, and let it represent $\alpha([\eta]) \in \K^{(\tau, c) + (p, q)}_G(X)_{\mathrm{fin}}$. Once $\alpha$ is shown to be well-defined, it is clear that $\alpha$ is a homomorphism and gives the inverse to $\jmath$. The definition of $\alpha$ is independent of the choice of a subbundle $F$ as in Lemma \ref{lem:approximation_gradation}. This is a direct consequence of Lemma \ref{lem:another_approximation}. If $\eta$ and $\eta'$ are homotopic within the gradations of $E$, then $\alpha([\eta]) = \alpha([\eta'])$. This is a consequence of the definition that a homotopy between $\eta$ and $\eta'$ is a gradation on $E \times [0, 1]$.
\end{proof}

%%%%%%%%%%%%%%%%%%%%%%%%%%%%%%%%%%%%%%%%%%%%%%%%

\subsection{Relationship of finite-dimensional formulations}
\label{subsec:finite_dimensional_formulations}

Summarizing the formulations of the Freed-Moore $K$-theory so far, we get the following diagram under the setting of Definition \ref{dfn:finite_rank_freed_moore_K} and Definition \ref{dfn:karoubi_triple}:
$$
\begin{CD}
{}^\phi K^{(\tau, c) + n}_G(X)_{\mathrm{fin}}
@>{\imath}>>
{}^\phi K^{(\tau, c) + n}_G(X) \\
@. @V{\cong}V{\vartheta}V \\
{}^\phi \K^{(\acute{\tau}, c) + n}_G(X)_{\mathrm{fin}}
@>{\jmath}>{\cong}>
{}^\phi \K^{(\acute{\tau}, c) + n}_G(X).
\end{CD}
$$
Here $\jmath$ has the inverse $\alpha$ given in the proof of Theorem \ref{thm:finite_and_infinite_dimensional_Karoubi_formulations}.

\begin{prop}
Under the assumptions in Definition \ref{dfn:finite_rank_freed_moore_K} and Definition \ref{dfn:karoubi_triple}, the composition
$$
\alpha \circ \vartheta \circ \imath : \
{}^\phi K^{(\tau, c) + n}_G(X)_{\mathrm{fin}}
\longrightarrow
{}^\phi \K^{(\acute{\tau}, c) + n}_G(X)_{\mathrm{fin}}
$$
is induced from the assignment of $(\acute{E}, - \acute{\epsilon}, \acute{\epsilon})$ to $E \in {}^\phi\Vect^{(\tau, c) + (n, 0)}_G(X)_{\mathrm{fin}}$, where $\acute{\epsilon} = \epsilon$ is the $\Z_2$-grading of $E$. If $c$ is trivial and $n = 0$, then $\alpha \circ \vartheta \circ \imath$ is bijective.
\end{prop}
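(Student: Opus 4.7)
The plan is to verify the composition formula by direct computation, and then to establish bijectivity by constructing an explicit two-sided inverse via eigenspace projections.

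First, to compute $(\alpha\circ\vartheta\circ\imath)([E])$ for a finite-rank twisted bundle $E = E^0 \oplus E^1$, I embed $E$ into a locally universal bundle $E_{\mathrm{uni}} = E \oplus E^\perp$. By construction of $\imath$, the class $\imath([E])$ is represented by $A = 0 \oplus \gamma_* \in \Gamma(X//G, \Fred(E_{\mathrm{uni}}))$, where $\gamma_* \in \Gamma(X//G, \Fred(E^\perp)^\dagger)$ comes from Lemma \ref{lem:extend_Clifford_action}. Applying $\vartheta(A) = -e^{\pi A}\epsilon_{\mathrm{uni}}$: on the $E$-summand $e^{\pi \cdot 0} = \id$ yields the gradation $-\epsilon_E$, while on $E^\perp$ the identity $\gamma_*^2 = -\id$ gives $e^{\pi \gamma_*} = \cos\pi + \gamma_* \sin\pi = -\id$, yielding $+\epsilon_{E^\perp}$. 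Finally $\alpha$, with the finite-rank subbundle $F = \acute E \subset \acute E_{\mathrm{uni}}$ (for which the conclusion of Lemma \ref{lem:approximation_gradation} holds with the constant homotopy), produces the triple $[\acute E, -\acute\epsilon, \acute\epsilon]$.

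For the bijectivity, assume $c$ is trivial and $n = 0$. By Lemma \ref{lem:triviality_of_twist_change}, $\acute\tau \cong \tau$, and by Lemma \ref{lem:categorical_correspondence_by_epsilon} the twist change reduces to the identity on objects (since $\acute\rho = \rho$ when $c \equiv 1$ and no Clifford action is present for $n = 0$). So the composition becomes $[E] \mapsto [E, -\epsilon, \epsilon]$. I define the candidate inverse
\[
\beta : {}^\phi\K^{\tau+0}_G(X)_{\mathrm{fin}} \longrightarrow {}^\phi K^{\tau+0}_G(X)_{\mathrm{fin}}, \qquad \beta([E, \eta_0, \eta_1]) = [\mathrm{Ker}(1-\eta_1)] - [\mathrm{Ker}(1-\eta_0)].
\]
When $c$ is trivial, each $\eta_i$ is a $G$-equivariant self-adjoint involution whose $\pm 1$-eigenspaces are finite-rank $(\phi,\tau)$-twisted $G$-subbundles of locally constant rank, hence give well-defined classes in $K$. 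For a homotopy $\tilde\eta$ of gradations on $E \times [0,1]$, the family $\bigcup_t \mathrm{Ker}(1-\tilde\eta_t)$ forms a twisted bundle on $X \times [0,1]$ whose endpoint restrictions are isomorphic by Lemma \ref{lem:homotopy_property}; this shows $\beta$ descends to the quotient monoid and is an additive homomorphism. The direct computation $\beta([E, -\epsilon, \epsilon]) = [\mathrm{Ker}(1-\epsilon)] - [\mathrm{Ker}(1+\epsilon)] = [E^0] - [E^1] = [E]$ yields $\beta \circ (\alpha\circ\vartheta\circ\imath) = \id$, proving injectivity of the composition.

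The hard part is verifying the opposite identity $(\alpha\circ\vartheta\circ\imath)\circ\beta = \id$, which amounts to showing that for every triple,
\[
[E, \eta_0, \eta_1] = [F_1 \oplus F_0, (-\id_{F_1}, \id_{F_0}), (\id_{F_1}, -\id_{F_0})] \quad \text{in } \K^{\tau+0}_G(X)_{\mathrm{fin}},
\]
with $F_i = \mathrm{Ker}(1-\eta_i)$. The underlying bundles $E = F_0 \oplus G_0 = F_1 \oplus G_1$ (where $G_i = \mathrm{Ker}(1+\eta_i)$) and $F_1 \oplus F_0$ are not a priori isomorphic, so I stabilize both sides with the elementary triples $[F_1 \oplus F_0, (-\id_{F_1}, \id_{F_0}), (-\id_{F_1}, \id_{F_0})]$ on the left and $[E, \eta_0, \eta_0]$ on the right, so that after a swap isomorphism both triples live on $E \oplus F_1 \oplus F_0$ with matching first gradation $\eta_0 \oplus (-\id_{F_1}, \id_{F_0})$. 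Applying the additivity formula of Lemma \ref{lem:formula_in_finite_Karoubi}, the equality in $\K$ reduces to showing that the triple $[E \oplus F_1 \oplus F_0, \eta_0 \oplus (\id_{F_1}, -\id_{F_0}), \eta_1 \oplus (-\id_{F_1}, \id_{F_0})]$ lies in $\mathcal{Z}$, i.e.\ that its two gradations are homotopic. Both gradations have $+1$-eigenspace of rank $\mathrm{rk}(F_0)+\mathrm{rk}(F_1)$, realized as the subbundles $F_0 \oplus F_1 \oplus \{0\}$ and $F_1 \oplus \{0\} \oplus F_0$ of $E \oplus F_1 \oplus F_0$. These subbundles are isotopic via the composite of two Gram--Schmidt rotations $(v,w)\mapsto(\cos\theta\,v - \sin\theta\,w, \sin\theta\,v + \cos\theta\,w)$: one rotating $F_0 \subset E$ into the appended copy of $F_0$, and another rotating the appended $F_1$ into $F_1 \subset E$. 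The corresponding continuous path of self-adjoint involutions provides the required homotopy of gradations, completing the proof.
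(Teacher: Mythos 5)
Your proof is correct and takes essentially the same approach as the paper's: the first half is the direct computation of $\vartheta(0 \oplus \gamma_*) = -\acute\epsilon_E \oplus \acute\epsilon_{E^\perp}$, and the bijectivity is shown by exhibiting the eigenspace inverse $\beta$, following Karoubi. The paper's proof is much terser — it delegates the verification that $\beta$ is a two-sided inverse to the reference \cite{Kar} — whereas you supply the stabilization-and-rotation argument showing $(\alpha\circ\vartheta\circ\imath)\circ\beta = \id$ in full, which is the genuinely nontrivial step; your formula $\beta([E,\eta_0,\eta_1]) = [\mathrm{Ker}(1-\eta_1)] - [\mathrm{Ker}(1-\eta_0)]$ also has the order of $\eta_0,\eta_1$ swapped relative to the paper's displayed $\Z_2$-graded bundle $\mathrm{Ker}(1-\eta_0)\oplus\mathrm{Ker}(1-\eta_1)$, and your ordering is the one that yields the identity rather than its negative.
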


\begin{proof}
We can readily see the description of $\alpha \circ \vartheta \circ \imath$ along the definitions of $\alpha$, $\vartheta$ and $\imath$. In the case that $c$ is trivial and $n = 0$, the inverse of $\alpha \circ \vartheta \circ \imath$ can be constructed as in \cite{Kar}: Let $(E, \eta_0, \eta_1)$ be a triple representing an element of ${}^\phi \K^{\tau + (0, 0)}_G(X)_{\mathrm{fin}}$. For $k = 0, 1$, the subbundle $\mathrm{Ker}(1 - \eta_k) \subset E$ gives rise to a $(\phi, \tau)$-twisted ungraded vector bundle on $X//G$. Therefore we have a $(\phi, \tau)$-twisted (graded) vector bundle $\mathrm{Ker}(1 - \eta_0) \oplus \mathrm{Ker}(1 - \eta_1) \in {}^\phi\Vect^{\tau + (0, 0)}_G(X)$.  Then the assignment $(E, \eta_0, \eta_1) \mapsto \mathrm{Ker}(1 - \eta_0) \oplus \mathrm{Ker}(1 - \eta_1)$ induces the inverse of $\alpha \circ \vartheta \circ \imath$. (Because of Lemma \ref{lem:triviality_of_twist_change}, the difference of $\tau$ and $\acute{\tau}$ does not matter in this case.)
\end{proof}

Since $\vartheta$ and $\jmath$ are bijective, Proposition \ref{prop:finite_rank_realizability_freed_moore} is reproved:

\begin{cor}
Under the assumptions in Definition \ref{dfn:finite_rank_freed_moore_K} and Definition \ref{dfn:karoubi_triple}, the homomorphism $\imath$ is bijective, if $c$ is trivial and $n = 0$.
\end{cor}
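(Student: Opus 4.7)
The plan is very short because essentially all the work has already been done. The diagram
$$
\begin{CD}
{}^\phi K^{(\tau, c) + n}_G(X)_{\mathrm{fin}}
@>{\imath}>>
{}^\phi K^{(\tau, c) + n}_G(X) \\
@. @V{\cong}V{\vartheta}V \\
{}^\phi \K^{(\acute{\tau}, c) + n}_G(X)_{\mathrm{fin}}
@>{\jmath}>{\cong}>
{}^\phi \K^{(\acute{\tau}, c) + n}_G(X)
\end{CD}
$$
has its right vertical arrow $\vartheta$ identified as a group isomorphism by Theorem \ref{thm:Fredholm_vs_Karoubi}, and its bottom horizontal arrow $\jmath$ identified as a group isomorphism by Theorem \ref{thm:finite_and_infinite_dimensional_Karoubi_formulations}, so that the inverse $\alpha = \jmath^{-1}$ is also a group isomorphism. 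Hence the composite $\alpha \circ \vartheta$ is an isomorphism of groups.

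The previous proposition supplies the key content: when $c$ is trivial and $n=0$, the composite $\alpha \circ \vartheta \circ \imath$ coincides with the map induced by the explicit assignment $E \mapsto (E, -\epsilon, \epsilon)$ (noting that Lemma \ref{lem:triviality_of_twist_change} makes the distinction between $\tau$ and $\acute{\tau}$ vacuous under the triviality of $c$), and this composite is shown to be bijective by exhibiting the inverse induced by $(E, \eta_0, \eta_1) \mapsto \mathrm{Ker}(1-\eta_0) \oplus \mathrm{Ker}(1-\eta_1)$.

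My plan is therefore simply to invoke this: since $\alpha \circ \vartheta$ is a bijection and $\alpha \circ \vartheta \circ \imath$ is a bijection, the map $\imath = (\alpha \circ \vartheta)^{-1} \circ (\alpha \circ \vartheta \circ \imath)$ is a bijection as well. No independent argument is needed; the corollary is a purely formal consequence of the proposition together with Theorems \ref{thm:Fredholm_vs_Karoubi} and \ref{thm:finite_and_infinite_dimensional_Karoubi_formulations}. The only thing one should check carefully in writing this out is that the diagram indeed commutes on the nose (which is essentially built into the definitions of $\imath$, $\vartheta$, and $\jmath$: each is induced from the natural construction of enlarging a finite rank twisted bundle inside a locally universal one). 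Since no genuine obstacle arises, the proof will be a one-line deduction from the proposition.
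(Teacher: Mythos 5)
Your argument is exactly the paper's: the preceding proposition shows $\alpha\circ\vartheta\circ\imath$ is bijective, while Theorems \ref{thm:Fredholm_vs_Karoubi} and \ref{thm:finite_and_infinite_dimensional_Karoubi_formulations} give that $\vartheta$ and $\jmath$ (hence $\alpha=\jmath^{-1}$) are bijective, so $\imath=(\alpha\circ\vartheta)^{-1}\circ(\alpha\circ\vartheta\circ\imath)$ is bijective. (Your side remark about the square commuting is unnecessary --- there is no square to commute, since the diagram has no left vertical arrow; the deduction only needs bijectivity of the three composable maps.)
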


As is clear from the proof above, the construction of the inverse of $\alpha \circ \vartheta \circ \imath$ does not work in the presence of a non-trivial $c$. An example in which $\alpha \circ \vartheta \circ \imath$ is not bijective can be constructed from the example in \S\S\ref{subsec:finite_rank}. An example in which $\alpha \circ \vartheta \circ \imath$ is bijective is as follows: Let $G = \Z_2$ act on the unit circle $S^1 \subset \C$ trivially. As studied in \cite{G1}, we have $H^3_{\Z_2}(S^1; \Z) \cong \Z_2$, and its generator can be represented by a group $2$-cocycle $\tau \in Z^2_{\mathrm{group}}(\Z_2; C(S^1, U(1)))$ which takes the following values:
$$
\begin{array}{|c|c|c|}
\hline
\tau(g, h; u) & h = 1 & h = -1 \\
\hline
g = 1 & 1 & 1 \\
\hline
g = -1 & 1 & u \\
\hline
\end{array}
$$
We take $\phi : \Z_2 \to \Z_2$ to be the trivial homomorphism, but $c : \Z_2 \to \Z_2$ to be the identity. By using the Mayer-Vietoris exact sequence for example (\cite{SSG3}, VIII, E, 2), we have
\begin{align*}
K^{(\tau, c) + 0}_{\Z_2}(S^1) &\cong \Z_2, &
K^{(\tau, c) + 1}_{\Z_2}(S^1) &= 0.
\end{align*}
Let $E = S^1 \times \C^2$ be the product bundle on $S^1$. This bundle gives rise to a $(\tau, c)$-twisted vector bundle on $S^1//\Z_2$ by the following $\Z_2$-grading $\epsilon$ and the $(\tau, c)$-twisted $\Z_2$-action $\rho(g) : E \to E$,
\begin{align*}
\epsilon &=
\left(
\begin{array}{cc}
1 & 0 \\
0 & -1
\end{array}
\right), &
\rho(1)(u, v) &= (u, v), &
\rho(-1)(u, v) &=
(u, 
\left(
\begin{array}{cc}
0 & u \\
1 & 0
\end{array}
\right)v
).
\end{align*}
It is easy to see that any finite rank $(\tau, c)$-twisted vector bundle on $S^1//\Z_2$ is isomorphic to the direct sum of some copies of $E$ above. A consequence of this classification is that $\imath : K^{(\tau, c)+ 0}_{\Z_2}(S^1)_{\mathrm{fin}} \to K^{(\tau, c)+ 0}_{\Z_2}(S^1)$ gives rise to an isomorphism
$$
K^{(\tau, c)+ 0}_{\Z_2}(S^1)_{\mathrm{fin}} \cong
K^{(\tau, c)+ 0}_{\Z_2}(S^1).
$$
Therefore $\alpha \circ \vartheta \circ \imath : K^{(\tau, c)+ 0}_{\Z_2}(S^1)_{\mathrm{fin}} \to \K^{(\tau, c)+ 0}_{\Z_2}(S^1)_{\mathrm{fin}}$ is also an isomorphism, where the isomorphism $\tau \cong \acute{\tau}$ due to the triviality of $\phi$ is understood. As is seen, the image of $[E] \in K^{(\tau, c)+ 0}_{\Z_2}(S^1)_{\mathrm{fin}}$ is represented by the triple $(\acute{E}, -\acute{\epsilon}, \acute{\epsilon})$. Since $[E]$ is a generator, so is $[E, -\epsilon, \epsilon]$. The triple $(E, -\epsilon, \epsilon)$, which turns out to be non-trivial by the argument here, is essentially the same as the building block of nonsymmorphic topological crystalline insulators in \cite{SSG1}.

%%%%%%%%%%%%%%%%%%%%%%%%%%%%%%%%%%%%%%%%%%%%%%%%
%%%%%%%%%%%%%%%%%%%%%%%%%%%%%%%%%%%%%%%%%%%%%%%%

\appendix

%%%%%%%%%%%%%%%%%%%%%%%%%%%%%%%%%%%%%%%%%%%%%%%%
%%%%%%%%%%%%%%%%%%%%%%%%%%%%%%%%%%%%%%%%%%%%%%%%
\section{Classification of some twists}
\label{sec:classification_of_twists}

This appendix classifies some twists to be used in Appendix \ref{sec:mackey_decomposition}.

%%%%%%%%%%%%%%%%%%%%%%%%%%%%%%%%%%%%%%%%%%%%%%%%
\subsection{Classification of some twists}
\label{subsec:classification_of_twists}

Let $G$ be a compact Lie group. A typical space with $G$-action is $G/H$, where $H \subset G$ is a closed subgroup and $G$ acts on $G/H$ by the left multiplication. Since the inclusions $\pt \subset G/H$ and $H \subset G$ induce the local equivalence $\pt//H \to (G/H)//G$, we have $H^1((G/H)//G; \Z_2) \cong H^1(\pt//H; \Z_2)$, and hence any object $\phi$ in $\Phi((G/H)//G)$ comes from a group homomorphism $H \to \Z_2$. Furthermore, we have
$$
H^3((G/H)//G; \Z_\phi) \cong H^3(\pt//H; \Z_{\phi|_H}),
$$
where $\phi|_H : H \to \Z_2$ is the homomorphism induced from $\phi \in \Phi((G/H)//G)$ by restriction. Thus, the classification of (ungraded) twists on $(G/H)//G$ amounts the that of twists on $\pt//H$.

Applying the argument above and computations of cohomology groups in \cite{G3}, we give the classification of (ungraded) twists in the case of $G = \Z_2$ and $G = \Z_2 \times \Z_2$, which we will need later on. In these cases, we can assume that an object $\phi \in \Phi((G/H)//G)$ is induced from a homomorphism $\phi : G \to \Z_2$. In the below, $H \subset G$ will be a subgroup.

\begin{itemize}
\item
The case where $\phi$ is trivial. 
$$
\begin{array}{|c|c|c|}
\hline
G & H & H^3((G/H)//G; \Z) \\
\hline
\Z_2 & \mbox{any subgroup} &  0 \\
\hline
\Z_2 \times \Z_2 & \mbox{not $\Z_2 \times \Z_2$} & 0 \\
\hline
\Z_2 \times \Z_2 & \Z_2 \times \Z_2 & \Z_2 \\
\hline
\end{array}
$$

\item
The case where $\phi$ is non-trivial. For $G = \Z_2$, the identity $\phi = \id$ is the unique non-trivial homomorphism $\phi : G \to \Z_2$. We have:
$$
\begin{array}{|c|c|c|c|}
\hline
G & H & \phi : G \to \Z_2 & H^3((G/H)//G; \Z_\phi) \\
\hline
\Z_2 & 1 & \id & 0 \\
\hline
\Z_2 & \Z_2 & \id & \Z_2 \\
\hline
\end{array}
$$
For $G = \Z_2 \times \Z_2$, the three non-trivial homomorphisms $\phi : \Z_2 \times \Z_2 \to \Z_2$ are permuted by the outer automorphisms of $\Z_2 \times \Z_2$. Thus, it suffices to consider a non-trivial homomorphism, for example the first projection $\phi = p_1$. In $\Z_2 \times \Z_2$, there are three non-trivial subgroups of order two: $\Z_2 \times 1$, $1 \times \Z_2$ and the image $\Delta(\Z_2)$ of the diagonal embedding $\Delta: \Z_2 \to \Z_2 \times \Z_2$. We then have:
$$
\begin{array}{|c|c|c|c|}
\hline
G & H & \phi : G \to \Z_2 & H^3((G/H)//G; \Z_\phi) \\
\hline
\Z_2 \times \Z_2 & 1 & p_1 & 0 \\
\hline
\Z_2 \times \Z_2 & \Z_2 \times 1 & p_1 & \Z_2 \\
\hline
\Z_2 \times \Z_2 & 1 \times \Z_2 & p_1 & 0 \\
\hline
\Z_2 \times \Z_2 & \Delta(\Z_2) & p_1 & \Z_2 \\
\hline
\Z_2 \times \Z_2 & \Z_2 \times \Z_2 & p_1 & \Z_2 \oplus \Z_2 \\
\hline
\end{array}
$$
\end{itemize}

%%%%%%%%%%%%%%%%%%%%%%%%%%%%%%%%%%%%%%%%%%%%%%%%
\subsection{Realization by group cocycle}
\label{subsec:group_cocycle}

We next realize the non-trivial ungraded twists classified in \S\S\ref{subsec:classification_of_twists}. For this aim, we start with a review of group cocycles. 

Let $G$ be a compact Lie group, and $M$ a two sided $G$-module. We define the group of $n$-cochains of $G$ with coefficients in $M$ to be
$$
C^n_{\mathrm{group}}(G; M) = \{ \tau : G^n \to M |\ \mbox{continuous} \}
$$
and the coboundary operator $\partial : C^n_{\mathrm{group}}(G; M) \to C^{n+1}_{\mathrm{group}}(G; M)$ to be
\begin{align*}
(\partial \tau)(g_0, \ldots, g_n)
&= g_0 \cdot \tau(g_1, \ldots, g_n)
+ \sum_{i = 1}^{n-1}(-1)^i\tau(g_1, \ldots, g_ig_{i+1}, \ldots, g_n) \\
&\quad
+ (-1)^n\tau(g_0, \ldots, g_{n-1}) \cdot g_n,
\end{align*}
by using the two sided action of $G$. As usual, the group of $n$-cocycles $Z^n_{\mathrm{group}}(G; M)$ and that of $n$-coboundaries $B^n_{\mathrm{group}}(G; M)$ are defined. Then the group cohomology of $G$ with coefficients in $M$ is defined as the quotient group
$$
H^n_{\mathrm{group}}(G; M) = 
Z^n_{\mathrm{group}}(G; M)/B^n_{\mathrm{group}}(G; M).
$$

The two sided $G$-module $C(X, U(1))_\phi$ in the body of this paper is defined when a compact Lie group $G$ acts on a space $X$ from the left and a homomorphism $\phi : G \to \Z_2$ is given. The underlying group is the group $C(X, U(1))$ of $U(1)$-valued functions on $X$. The left action of $g \in G$ on $f \in C(X, U(1))$ is $f \mapsto f^{\phi(g)}$, and the right action is $f \mapsto g^*f$. In this setting, we identify a group cochain $\tau \in C^n_{\mathrm{grup}}(G; C(X, U(1))_\phi)$ with a continuous map $\tau : G^n \times X \to U(1)$. 

An example of a $2$-cocycle
$$
\tau_f \in Z^2_{\mathrm{group}}(G; C(X, U(1))_{\phi})
$$
is constructed from any homomorphism $f : G \to \Z_2$ by setting
$$
\tau_f(g, h; x)
=
\left\{
\begin{array}{ll}
1, & (\mbox{$f(g) = 1$ or $f(h) = 1$}) \\
-1. & (f(g) = f(h) = -1)
\end{array}
\right.
$$

We remark that the group cohomology $H^*_{\mathrm{group}}(G; C(X, U(1)_\phi))$ is an invariant of the quotient groupoid $X//G$ under the local equivalences. Thus, if $H \subset G$ is a closed subgroup, then the inclusion $i : H \to G$ induces an isomorphism
$$
i^* : H^n_{\mathrm{group}}(G; C(G/H, U(1))_\phi) \to
H^n_{\mathrm{group}}(H; U(1)_{\phi|_H}),
$$
where $C(\pt, U(1))$ is identified with $U(1)$. 

We also remark that the exponential exact sequence of two sided $G$-modules
$$
0 \to \Z_\phi \to \R_\phi \to U(1)_\phi \to 0
$$
induces the long exact sequence
$$
\cdot\cdot \to
H^n_{\mathrm{group}}(G; \Z_\phi) \to
H^n_{\mathrm{group}}(G; \R_\phi) \to
H^n_{\mathrm{group}}(G; U(1)_\phi) \to
H^{n+1}_{\mathrm{group}}(G; \Z_\phi) \to
\cdot\cdot,
$$
where, for $A = \Z, \R$, the two sided $H$-module $A_\phi$ above has $A$ as the underlying group, on which the left action of $g \in G$ is defined as $a \mapsto \phi(g)a$ by using a homomorphism $\phi : G \to \Z_2$ and the right action is trivial. By an averaging argument based on the Haar measure on $G$, we have $H^n_{\mathrm{group}}(G; \R_\phi) = 0$ for $n > 0$, so that $H^n_{\mathrm{group}}(G; U(1)_\phi) \cong H^{n+1}_{\mathrm{group}}(G; \Z_\phi)$ for $n > 0$. 

Suppose that $G$ is a finite group. Then the group cohomology $H_{\mathrm{group}}^n(G; A_\phi)$ appears as the $E_2$-term of a spectral sequence computing $H^n(\pt//G; A_\phi)$. Furthermore, the spectral sequence collapses at $E_2$, so that
$$
H^n_{\mathrm{group}}(G; A_\phi) \cong
H^n(\pt//G; A_\phi).
$$
In view of this isomorphism, we represent below the non-trivial ungraded twists classified in \S\S\ref{subsec:classification_of_twists} by group $2$-cocycles with coefficients in $U(1)_\phi$. 

\begin{itemize}
\item
In the case that $G = \Z_2 \times \Z_2$, $H = 1$ and $\phi : \Z_2 \times \Z_2 \to \Z_2$ is trivial,
$$
H^3(\pt//(\Z_2 \times \Z_2); \Z)
\cong H^2_{\mathrm{group}}(\Z_2 \times \Z_2; U(1)) \cong \Z_2.
$$
A group $2$-cocycle $\tau \in Z^2_{\mathrm{group}}(\Z_2 \times \Z_2; U(1))$ representing this non-trivial cohomology class is given by
$$
\tau(((-1)^{m_1}, (-1)^{n_1}), ((-1)^{m_2}, (-1)^{n_2}) 
= \exp \pi i n_1m_2.
$$

\item
In the case that $G = \Z_2$, $H = 1$ and $\phi : \Z_2 \to \Z_2$ is the identity $\phi = \id$,
$$
H^3(\pt//\Z_2; \Z_{\id})
\cong H^2_{\mathrm{group}}(\Z_2; U(1)_{\id}) \cong \Z_2.
$$
A group $2$-cocycle $\tau_{\id} \in Z^2_{\mathrm{group}}(\Z_2; U(1)_{\id})$ representing this non-trivial cohomology class is 
$$
\tau_{\id}((-1)^{m_1}, (-1)^{m_2}) = \exp\pi i m_1m_2.
$$
The values of this $2$-cocycle is as follows:
$$
\begin{array}{|c|c|c|}
\hline
\tau_{\id}(g, h) & h = 1 & h = -1 \\
\hline
g = 1 & 1 & 1 \\
\hline
g = -1 & 1 & -1 \\
\hline
\end{array}
$$
We remark that $\tau_{\id}$ is the unique cocycle that represents the non-trivial cohomology class and is subject to the normalization condition $\tau_{\id}(1, g) = \tau_{\id}(g, 1) = 1$ for all $g \in \Z_2$.

\item
In the case that $G = \Z_2 \times \Z_2$, $H = \Z_2 \times 1$ and $\phi : \Z_2 \times \Z_2 \to \Z_2$ is the first projection $\phi = p_1$, we have
$$
H^3((\Z_2 \times \Z_2)//(\Z_2 \times 1); \Z_{p_1})
\cong H^3(\pt//\Z_2; \Z_{\id})
\cong \Z_2.
$$
Thus, this non-trivial cohomology class is essentially represented by $\tau_{\id}$.

\item
In the case of $G = \Z_2 \times \Z_2$, $H = \Delta(\Z_2)$ and $\phi : \Z_2 \times \Z_2 \to \Z_2$ is the first projection $\phi = p_1$, we have
$$
H^3((\Z_2 \times \Z_2)//\Delta(\Z_2); \Z_{p_1})
\cong H^3(\pt//\Z_2; \Z_{\id})
\cong \Z_2.
$$
Thus, the non-trivial class is essentially represented by $\tau_{\id}$ also.

\item
In the case of $G = \Z_2 \times \Z_2$, $H = 1$ and $\phi : \Z_2 \times \Z_2 \to \Z_2$ is the first projection $\phi = p_1$, we have
$$
H^3(\pt//(\Z_2 \times \Z_2); \Z_{p_1})
\cong H^2_{\mathrm{group}}(\Z_2 \times \Z_2; U(1)_{p_1}) 
\cong \Z_2 \oplus \Z_2.
$$
This group is generated by the cocycles $\tau_{p_i} \in Z^2_{\mathrm{group}}(\Z_2 \times \Z_2; U(1)_{p_1})$ associated to the $i$th projection $p_i : \Z_2 \times \Z_2 \to \Z_2$
\begin{align*}
\tau_{p_1}(((-1)^{m_1}, (-1)^{n_1}), ((-1)^{m_2}, (-1)^{n_2}) 
&= \exp \pi i m_1m_2, \\
\tau_{p_2}(((-1)^{m_1}, (-1)^{n_1}), ((-1)^{m_2}, (-1)^{n_2}) 
&= \exp \pi i n_1n_2.
\end{align*}
The cocycle $\tau_{p_1}$ agrees with the pull-back $p_1^*\tau_{\id}$, and $\tau_{p_2}$ with the cocycle $\mu$ introduced in Definition \ref{dfn:cocycle_fredholm_vs_karoubi}.
\end{itemize}

We notice that, for $\beta \in C^1_{\mathrm{group}}(\Z_2 \times \Z_2; U(1)_{p_1})$ such that $\beta(1) = 1$, its coboundary $\partial \beta$ takes the following values.
$$
\begin{array}{|c|c|c|c|c|}
\hline
\partial \beta (g, h) & h = (1, 1) & (-1, 1) & (1, -1) & (-1, -1) \\
\hline
g = (1, 1) & 1 & 1 & 1 & 1 \\
\hline
g = (-1, 1) & 1 & 1 & Y^{-1} & Y^{-1} \\
\hline
g = (1, -1) & 1 & X & XY & Y \\
\hline
g = (-1, -1) & 1 & X^{-1} & X^{-1} & 1 \\
\hline
\end{array}
$$ 
Here $X$ and $Y$ are given by
\begin{align*}
X &= \beta(-1, 1)\beta(-1, -1)^{-1}\beta(1, -1), &
Y &= \beta(-1, -1)\beta(-1, 1)^{-1}\beta(1, -1).
\end{align*}
Using this fact, we can verify that $\tau_{p_1}$ and $\tau_{p_2}$ generate $H^2_{\mathrm{group}}(\Z_2 \times \Z_2); U(1)_{p_1})$. Furthermore, we can also verify that 
\begin{align*}
\tilde{\beta}(c_{p_1} \cup c_{p_1}) &= \tau_{p_1}, &
\tilde{\beta}(c_{p_1} \cup c_{p_2}) &= \tilde{\beta}(c_{p_2} \cup c_{p_1})
= \tau_{p_2}, &
\tilde{\beta}(c_{p_2} \cup c_{p_2}) &= \tau_{p_2}, 
\end{align*}
where $c_{p_i} = p_i$ forms a basis of $H^1(\pt//(\Z_2 \times \Z_2; \Z_2) \cong \mathrm{Hom}(\Z_2 \times \Z_2, \Z_2)$ and $\tilde{\beta}$ is the Bockstein homomorphism (recall \S\S\ref{subsec:twist}.) This result shows that the product of $p_1$-twists makes the set 
$$
\pi_0({}^{p_1}\Twist(\pt//(\Z_2 \times \Z_2)))
\cong H^3(\pt//(\Z_2 \times \Z_2); \Z_{p_1})
\times H^1(\pt//(\Z_2 \times \Z_2); \Z_2)
$$
into the abelian group $\Z_4 \times \Z_4$.

%%%%%%%%%%%%%%%%%%%%%%%%%%%%%%%%%%%%%%%%%%%%%%%%
%%%%%%%%%%%%%%%%%%%%%%%%%%%%%%%%%%%%%%%%%%%%%%%%

\section{Mackey decomposition and the periodicity on a point}
\label{sec:mackey_decomposition}

This appendix contains the argument needed to complete the proof of Lemma \ref{lem:locally_unversal_bundle}, Lemma \ref{lem:extend_Clifford_action}, Theorem \ref{thm:Atiyah_Singer_map} and Theorem \ref{thm:Fredholm_vs_Karoubi}. The argument is to reduce the problem of showing certain properties on the quotient groupoid $\pt//G$, with $G$ a compact Lie group, to one in the case with $G$ trivial. The reduction is based on a categorical lift of the so-called Mackey decomposition considered in \cite{F-M} (Theorem 9.8). We then prove the properties on the point, describing some details in the application of results in \cite{A-Se,A-Si}.

%%%%%%%%%%%%%%%%%%%%%%%%%%%%%%%%%%%%%%%%%%%%%%%%
\subsection{Mackey decomposition}

Let $K$ be a compact Lie group. We denote by $\widehat{K} = \{ \lambda \}$ the complete set of (labels of) finite-dimensional irreducible unitary representations of $K$. Since $K$ is compact, $\widehat{K}$ is a discrete set and its cardinality is at most countable. For each $\lambda \in \widehat{K}$, we choose and fix its realization $(V_\lambda, \rho_\lambda)$, where $V_\lambda$ is a Hermitian vector space of finite rank and $\rho_\lambda : K \to U(V_\lambda)$ is a homomorphism. 

Suppose that $K$ is a closed normal subgroup of a compact Lie group $G$. Then, for any $g \in G$, we define a representation $({}^gV_\lambda, {}^g\rho_\lambda)$ of $K$ by setting ${}^gV_\lambda = V_\lambda$ and ${}^g\rho_\lambda(k) = \rho_\lambda(g^{-1}kg)$. Since $({}^gV_\lambda, {}^g\rho_\lambda)$ is irreducible, there uniquely exists an element $g\lambda \in \widehat{K}$ such that $({}^gV_\lambda, {}^g\rho_\lambda)$ is equivalent to $(V_{g\lambda}, \rho_{g\lambda})$. Then the assignment $\lambda \mapsto g\lambda$ defines a left action of $G$ on $\widehat{K}$ which descends to an action of $G/K$. 

Given $(V_\lambda, \rho_\lambda)$, its complex conjugation $(\overline{V_\lambda}, \overline{\rho_\lambda})$ is also an irreducible representation. We write $\overline{\lambda} \in \widehat{K}$ for the corresponding label. The assignment $\lambda \mapsto \overline{\lambda}$ defines a $\Z_2$-action on $\widehat{K}$ commuting with the action of $G$, so that we have an action of $G \times \Z_2$ on $\widehat{K}$.

\begin{lem} \label{lem:irrep_bundle}
Let $G$ be a compact Lie group, and $K \subset G$ a closed normal subgroup such that $G/K$ is finite. We write $p_{\Z_2} : G \times \Z_2 \to \Z_2$ for the projection, and $\pi : G \to G/K$ for the quotient. Then the Hermitian vector bundle $V \to \widehat{K}$ given by
$$
V = \bigcup_{\lambda \in \widehat{K}} V_\lambda.
$$
can be made into a $(p_{\Z_2}, (\pi, \id)^*\tau_{G/K \times \Z_2})$-twisted vector bundle on $\widehat{K}//(G \times \Z_2)$ such that: 
\begin{itemize}
\item
The subgroup $K \subset G \times \Z_2$ acts on the fiber of $\lambda \in \widehat{K}$ by the representation $\rho_\lambda : K \to U(V_\lambda)$ of $K$.

\item
$\tau_{G/K \times \Z_2} \in Z^2_{\mathrm{group}}(G/K \times \Z_2; C(\widehat{K}, U(1))_{p_{\Z_2}})$ is a group $2$-cocycle, where $p_{\Z_2} : G/K \times \Z_2 \to \Z_2$ is also the projection.

\end{itemize}
\end{lem}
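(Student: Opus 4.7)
The plan is to build the $(G\times\Z_2)$-action on $V=\bigsqcup_{\lambda\in\widehat K}V_\lambda$ fiber by fiber using Schur's lemma, and then read off the cocycle $\tau_{G/K\times\Z_2}$ from the ambiguity of the construction. Since $\widehat K$ is discrete and at most countable, $V\to\widehat K$ is already a well-defined Hermitian bundle without any topological subtlety, and every function on $\widehat K$ is automatically continuous; the only real content lies in the algebra of the action.

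First I would construct the twisted action. For each $g\in G$ and $\lambda\in\widehat K$, the identity map of the underlying vector space realizes an intertwiner between the representations $({}^gV_\lambda,{}^g\rho_\lambda)$ and $(V_\lambda,\rho_\lambda)\circ\mathrm{Ad}_{g^{-1}}$, while by construction of $g\lambda$ there exists a unitary $T_g(\lambda)\colon V_\lambda\to V_{g\lambda}$ intertwining $({}^gV_\lambda,{}^g\rho_\lambda)$ with $(V_{g\lambda},\rho_{g\lambda})$; by Schur's lemma it is unique up to a scalar in $U(1)$. I would normalize the choice so that $T_k(\lambda)=\rho_\lambda(k)$ whenever $k\in K$ (which is consistent, because then $k\lambda=\lambda$) and so that $T_g(\lambda)$ depends only on the coset $gK\in G/K$; any system of coset representatives of $G/K$ (finite in number) together with the prescribed values on $K$ determines the remaining $T_g(\lambda)$ uniquely. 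For the non-trivial element $-1\in\Z_2$, complex conjugation defines a canonical anti-unitary $C_\lambda\colon V_\lambda\to V_{\overline\lambda}={}^{\!-1}V_\lambda$ intertwining $\rho_\lambda$ with $\rho_{\overline\lambda}$, and I would set $\rho(g,\pm 1)|_\lambda=T_g(\pm\lambda)\circ C_\lambda^{(1\mp 1)/2}$.

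Next I would extract the cocycle. The composition $\rho(g_1,\epsilon_1)\rho(g_2,\epsilon_2)$ and $\rho(g_1g_2,\epsilon_1\epsilon_2)$ are both intertwiners between equivalent irreducible $K$-representations on $V_{(g_1,\epsilon_1)(g_2,\epsilon_2)\lambda}$, so by Schur they differ by a unique scalar $\tau((g_1,\epsilon_1),(g_2,\epsilon_2);\lambda)\in U(1)$. The cocycle identity $\partial\tau=1$ is automatic from associativity. By the normalization $T_k(\lambda)=\rho_\lambda(k)$ and its behavior under multiplication with representatives, $\tau$ is trivial whenever one of the $G$-arguments lies in $K$ and, by an elementary bookkeeping, it descends to a group $2$-cocycle $\tau_{G/K\times\Z_2}\in Z^2_{\mathrm{group}}(G/K\times\Z_2;C(\widehat K,U(1))_{p_{\Z_2}})$, where the left $\Z_2$-action on coefficients via $p_{\Z_2}$ arises because $C_\lambda$ is anti-unitary: reordering a factor of $C$ past a scalar conjugates it. Pulling back along $(\pi,\id)$ recovers a cocycle on $G\times\Z_2$ with the required triviality along $K\times 1$, which is exactly the data of the $(p_{\Z_2},(\pi,\id)^*\tau_{G/K\times\Z_2})$-twisted action on $V$.

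The main obstacle I foresee is purely a bookkeeping one: ensuring that all signs and complex conjugations line up, so that $\tau$ really takes values in the twisted module $C(\widehat K,U(1))_{p_{\Z_2}}$ rather than in the untwisted one, and that the anti-unitary $C_\lambda$ is compatible with the chosen $T_g(\lambda)$ in a manner that respects the semi-direct structure of $G\times\Z_2$ acting on $\widehat K$. Continuity and the descent through $K$ are essentially formal given the discreteness of $\widehat K$ and the defining normalization; the cocycle identity itself follows from Schur's lemma once the compositions have been disentangled.
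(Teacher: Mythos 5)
Your proposal takes essentially the same route as the paper: choose unitary intertwiners indexed by coset representatives of $G/K$ together with the $K$-representation maps, build $\rho$ by composition, read off the $U(1)$-valued cocycle from Schur's lemma, and show it descends to $G/K\times\Z_2$ via the normalization on $K$ and normality of $K$; the role of the anti-unitary $C_\lambda$ in producing the $p_{\Z_2}$-twisted coefficient module is likewise the paper's mechanism. One caveat: the two normalizations you write — $T_k(\lambda)=\rho_\lambda(k)$ for $k\in K$, and that $T_g(\lambda)$ ``depends only on the coset $gK$'' — are literally incompatible (the second would force $\rho_\lambda(k)=\mathrm{id}$ for every $k\in K$); what you actually use downstream, and what suffices, is only the first normalization plus the multiplicative rule $T_{g_ik}=T_{g_i}(\cdot)\circ\rho_\lambda(k)$ for a fixed system of representatives, which gives $\tau$ trivial when an argument lies in $K$ and hence the descent.
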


We remark that the $\Z_2$-grading of $V$ is assumed to be trivial, so that the even part $V^0 = V$ is $V$ and the odd part $V^1 = 0$ is trivial.

\begin{proof}
We choose representatives $g_i$ of the coset $G/K$ as well as unitary equivalences of $K$-modules $\alpha(g_i; \lambda) : {}^{g_i}V_\lambda \to V_{g_i\lambda}$ for all $\lambda \in \widehat{K}$. For any $g \in G$, we have the unique decomposition $g = g_ik$ for an $i$. Using this decomposition, we define $\rho(g; \lambda) : V_\lambda \to V_{g\lambda}$ to be the composition of 
$$
V_\lambda \overset{\rho_\lambda(k)}{\to}
V_\lambda = V_{k\lambda} = {}^{g_i}V_{k\lambda} 
\overset{\alpha(g_i; k\lambda)}{\to}
V_{g_i k\lambda} = V_{g\lambda}.
$$
We also choose unitary equivalences of $K$-modules $\beta(\lambda) : \overline{V_\lambda} \to V_{\overline{\lambda}}$ for all $\lambda \in \widehat{K}$ and define $\rho(-1; \lambda) : V_\lambda \to V_{\overline{\lambda}}$ to be the composition of
$$
V_\lambda \to \overline{V_\lambda} 
\overset{\beta(\lambda)}{\to}
V_{\overline{\lambda}},
$$
where the first map is $v \mapsto \overline{v}$. The maps $\rho(g; \lambda)$ and $\rho(-1; \lambda)$ generate an action of $G \times \Z_2$ up to $U(1)$-phases, since each $V_\lambda$ is irreducible. The $U(1)$-phase factor yields a group $2$-cocycle $\tau_{G \times \Z_2} \in Z^2_{\mathrm{group}}(G \times \Z_2; C(\widehat{K}, U(1))_{p_{\Z_2}})$, and $V$ is a $(p_{\Z_2}, \tau_{G \times \Z_2})$-twisted vector bundle on $\widehat{K}//(G \times \Z_2)$. By construction, it holds that
$$
\tau_{G \times \Z_2}(g_1k_1, g_2k_2; k_3\lambda)
= \tau_{G \times \Z_2}(g_1, g_2; \lambda)
$$
for all $k_1, k_2, k_3 \in K$, $g_1, g_2 \in G$ and $\lambda \in \widehat{K}$. This means $\tau_{G \times \Z_2} = (\pi, \id)^*\tau_{G/K \times \Z_2}$ for a cocycle $\tau_{G/K \times \Z_2}$, and the lemma is proved.
\end{proof}

\begin{thm} \label{thm:mackey_decomposition}
Let $G$ be a compact Lie group, $K \subset G$ a closed normal subgroup such that $G/K$ is finite, and $\pi : G \to G/K$ the projection. For homomorphisms $\overline{\phi} : G /K \to \Z_2$ and $\overline{c} : G/K \to \Z_2$, there is an equivalence of categories
$$
\Phi : {}^{\pi^*\overline{\phi}}\Vect^{\pi^*\overline{c} + (p, q)}(\pt//G) \to
{}^{\overline{\phi}}\Vect^{(-\tau_{\widehat{K}}, \overline{c}) + (p, q)}(\widehat{K}//(G/K)),
$$
where $\tau_{\widehat{K}} = (\id, \overline{\phi})^*\tau_{G/K \times \Z_2} \in Z^2_{\mathrm{group}}(G/K; C(\widehat{K}, U(1))_{\overline{\phi}})$.
\end{thm}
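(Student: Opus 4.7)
My plan is to construct the Mackey decomposition functor $\Phi$ by taking $K$-isotypic multiplicity spaces, construct its inverse by tensoring with the universal bundle $V$ from Lemma \ref{lem:irrep_bundle}, and verify that the ungraded twist $-\tau_{\widehat{K}}$ appearing on the right-hand side is exactly the cocycle obstruction needed to cancel the twist carried by $V$.

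First I would define $\Phi$ on objects. Given $(E, \epsilon, \rho, \gamma) \in {}^{\pi^*\overline{\phi}}\Vect^{\pi^*\overline{c} + (p,q)}(\pt//G)$, observe that the restrictions $\pi^*\overline{\phi}|_K$ and $\pi^*\overline{c}|_K$ are trivial, so $K$ acts on $E$ by genuine complex-linear unitaries preserving the $\Z_2$-grading and commuting with $\gamma$. Set
\[
W_\lambda := \mathrm{Hom}_K(V_\lambda, E), \qquad \lambda \in \widehat K,
\]
which inherits a $\Z_2$-grading from $\epsilon$ and a $Cl_{p,q}$-action by postcomposition with $\gamma$ (this uses that $K$-equivariance is preserved since $\gamma$ commutes with the $K$-action). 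These assemble into a $\Z_2$-graded Hermitian vector bundle $W \to \widehat K$ with $Cl_{p,q}$-action.

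Next I would construct the twisted $G/K$-action on $W$. Fix coset representatives $\{g_i\} \subset G$ of $G/K$ and the intertwiners $\alpha(g_i;\lambda) : {}^{g_i}V_\lambda \to V_{g_i\lambda}$ used in the proof of Lemma \ref{lem:irrep_bundle}. For $g = [g_i] \in G/K$, define a $\overline{\phi}(g)$-linear bundle map $\rho_W(g) : W_\lambda \to W_{g\lambda}$ by
\[
\rho_W(g)(f) := \rho(g_i) \circ {}^{\overline{\phi}(g)}\!f \circ \alpha(g_i;\lambda)^{-1}, \qquad f \in W_\lambda;
\]
it is degree $0$ or $1$ according as $\overline{c}(g) = \pm 1$, reflecting the $\pi^*\overline{c}$-grading of $\rho(g_i)$ on $E$. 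The critical computation is then to compare $\rho_W(g_1)\rho_W(g_2)$ with $\rho_W(g_1 g_2)$: the defect equals precisely $\tau_V(g_1,g_2;\lambda)^{-1}$, because the $E$-side composition is strictly associative (there is no ungraded twist on the left-hand category) while the $V$-side intertwiner defect is $\tau_V = \pi^*\tau_{\widehat K}$ by Lemma \ref{lem:irrep_bundle}. Hence $(W,\epsilon_W,\rho_W,\gamma_W)$ is a $(\overline{\phi}, -\tau_{\widehat K}, \overline{c})$-twisted vector bundle with $Cl_{p,q}$-action on $\widehat K//(G/K)$, and morphisms are sent to induced maps on multiplicity spaces, defining the functor $\Phi$.

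To construct a quasi-inverse $\Psi$, for $(W, \epsilon_W, \rho_W, \gamma_W) \in {}^{\overline{\phi}}\Vect^{(\overline{c}, -\tau_{\widehat K}) + (p,q)}(\widehat K//(G/K))$ form the completed Hilbert direct sum $E := \widehat{\bigoplus}_{\lambda \in \widehat K} V_\lambda \otimes W_\lambda$ with the tensor $\Z_2$-grading, diagonal $G$-action $\rho_V \otimes \rho_W$ (anti-linear exactly when $\overline{\phi}\circ\pi$ prescribes), and Clifford action $1 \otimes \gamma_W$. The cocycle $+\pi^*\tau_{\widehat K}$ on $V$ cancels $-\pi^*\tau_{\widehat K}$ on $W$, producing a strict $\pi^*\overline{\phi}$-twisted, $\pi^*\overline{c}$-graded $G$-action with no residual ungraded twist. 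The natural isomorphism $\Psi \Phi \cong \id$ is the Peter--Weyl $K$-isotypic decomposition $E \cong \widehat{\bigoplus}_\lambda V_\lambda \otimes \mathrm{Hom}_K(V_\lambda, E)$, and $\Phi \Psi \cong \id$ follows from Schur's lemma $\mathrm{Hom}_K(V_\lambda, V_\mu \otimes W_\mu) = \delta_{\lambda\mu} W_\lambda$.

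The main obstacle I anticipate is the cocycle bookkeeping in the second step: verifying that the failure of $\rho_W$ to be strictly multiplicative reproduces the specific cocycle $-\tau_{\widehat K} = -(\id,\overline\phi)^*\tau_{G/K \times \Z_2}$ on the nose (not merely up to coboundary), including the interplay with the $\Z_2$-factor that encodes complex conjugation in Lemma \ref{lem:irrep_bundle}. This requires carefully tracking the $\overline{\phi}$-linearity and the $\overline{c}$-grading through the intertwiners $\alpha(g_i;\lambda)$; once this direct verification is in place, functoriality and the inverse equivalence follow formally.
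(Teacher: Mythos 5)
Your proposal follows the paper's proof essentially verbatim: $\Phi$ is defined via the $K$-isotypic multiplicity spaces $\mathrm{Hom}_K(V_\lambda, E)$, the quasi-inverse $\Psi$ via the completed tensor $\widehat{\bigoplus}_\lambda V_\lambda \otimes F|_\lambda$, and the natural equivalences $\Psi\Phi \cong \id$, $\Phi\Psi \cong \id$ via Peter--Weyl and Schur's lemma. The one notational economy the paper makes is to write $\hat\rho(\bar g)(f) = \rho_E(g)\circ f\circ \rho_V(g)^{-1}$ directly in terms of the twisted action $\rho_V$ from Lemma \ref{lem:irrep_bundle} (which already packages both the intertwiners $\alpha(g_i;\lambda)$ and the conjugation factor $\beta$), so that independence of the representative $g\in\pi^{-1}(\bar g)$ and the appearance of the inverse cocycle $-\tau_{\widehat K}$ fall out immediately from $\rho_V(g_2)^{-1}\rho_V(g_1)^{-1} = \tau_V(g_1,g_2)^{-1}\rho_V(g_1g_2)^{-1}$---this is precisely the bookkeeping you flag as the main anticipated obstacle.
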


\begin{proof}
By means of the embedding $(\id, \pi \circ \overline{\phi}) : G \to G \times \Z_2$, the vector bundle $V \to \widehat{K}$ in Lemma \ref{lem:irrep_bundle} gives rise to a $(\pi \circ \overline{\phi}, \tau_G, \pi \circ \overline{c})$-twisted vector bundle on $\widehat{K}//G$, where the $2$-cocycle $\tau_G$ is given by the pull-back
$$
\tau_G = (\id, \pi \circ \overline{\phi})^*(\pi, \id)^*\tau_{G/K \times \Z_2}
= \pi^*(\id, \overline{\phi})^*\tau_{G/K \times \Z_2}
= \pi^*\tau_{\widehat{K}}.
$$
For a $(\pi^*\overline{\phi}, \pi^*\overline{c})$-twisted vector bundle $(E, \epsilon, \rho_E, \gamma)$ on $\pt//G$ with $Cl_{p, q}$-action, we define a vector bundle $\hat{E} \to \widehat{K}$ by
$$
\hat{E} = \bigcup_{\lambda \in \widehat{K}}
\mathrm{Hom}_K(V_\lambda, E),
$$
where $\mathrm{Hom}_K(V_\lambda, E)$ is the space of complex linear maps $f : V_\lambda \to E$ commuting with the actions of $K \subset G$ on $V_\lambda$ and $E$. Note that $\mathrm{Hom}(V_\lambda, E) = V_\lambda^* \otimes E$ is a Hilbert space, and so is its subspace $\mathrm{Hom}_K(V_\lambda, E)$. The topology of $\hat{E}$ is given by this Hilbert space structure (rather than the compact open topology). By the $\Z_2$-grading $\epsilon$ on $E$ (and the trivial $\Z_2$-grading on $V$), the vector bundle $\hat{E}$ has the $\Z_2$-grading $\hat{\epsilon}(f) = \epsilon \circ f$. For $\overline{g} \in G/K$, we choose $g \in \pi^{-1}(\overline{g})$ and put $\hat{\rho}(\overline{g})(f) = \rho_E(g) \circ f \circ \rho_V(g)^{-1}$. This turns out to be independent of the choice of $g$, and $\hat{E}$ gives rise to a $(\overline{\phi}, -\tau_{\widehat{K}}, \overline{c})$-twisted vector bundle on $\widehat{K}//(G/K)$, which has a $Cl_{p, q}$-action defined by the composition with $\gamma$. The assignment $E \mapsto \hat{E}$ defines the functor $\Phi$, where $\Phi : \mathrm{Hom}(E, E') \to \mathrm{Hom}(\hat{E}, \hat{E'})$ is defined by the composition of homomorphisms. To complete the proof, we construct a functor in the opposite direction
$$
\Psi : {}^{\overline{\phi}}
\Vect^{(-\tau_{\widehat{K}}, \overline{c}) + (p, q)}(\widehat{K}//(G/K))
\to
{}^{\pi^*\overline{\phi}}\Vect^{\pi^*\overline{c} + (p, q)}(\pt//G).
$$
To construct $\Psi$, let $F$ be a $(\overline{\phi}, -\tau_{\widehat{K}}, \overline{c})$-twisted vector bundle on $\widehat{K}//(G/K)$ with $Cl_{p, q}$-action. We then define a vector space $\check{F}$ by
$$
\check{F} = \widehat{\bigoplus}_{\lambda \in \widehat{K}}
V_\lambda \otimes F|_\lambda,
$$
where $\widehat{\oplus}$ means the $L^2$-completion of the algebraic direct sum $\oplus$. The vector bundle $V$ has a twisted $G$-action, and $F$ also has a twisted $G$-action induced from $\pi : G \to G/K$. With these twisted $G$-actions, $\check{F}$ gives rise to a $(\overline{\phi} \circ \pi, \overline{c} \circ \pi)$-twisted vector bundle on $\pt//G$, which inherits a $Cl_{p, q}$-action from $F$. The functor $\Psi$ is induced from the assignment $F \mapsto \check{F}$. For objects
\begin{align*}
E &\in 
{}^{\pi^*\overline{\phi}}\Vect^{\pi^*\overline{c} + (p, q)}(\pt//G), &
F &\in
{}^{\overline{\phi}}
\Vect^{(-\tau_{\widehat{K}}, \overline{c}) + (p, q)}(\widehat{K}//(G/K)),
\end{align*}
we can see the maps
\begin{align*}
&\Psi\Phi(E) = 
\widehat{\bigoplus}_{\lambda \in \widehat{K}} 
V_\lambda \otimes \mathrm{Hom}_K(V_\lambda, E)
\to
E, &
&\widehat{\oplus}_\lambda v_\lambda \otimes f_\lambda
\mapsto
\widehat{\oplus}_\lambda f_\lambda(v_\lambda), \\
%%%%%
&F \to 
\Phi\Psi(F) =
\bigcup_{\lambda \in \widehat{K}}
\mathrm{Hom}_K(V_\lambda, V_\lambda) \otimes F|_\lambda,
&
&f_\lambda \mapsto \id_{V_\lambda} \otimes f_\lambda,
\end{align*}
provide the natural equivalences of functors $\Psi \Phi \Rightarrow \id$ and $\id \Rightarrow \Phi \Psi$, which proves that $\Phi$ is an equivalence of categories. 
\end{proof}

We now apply the theorem above to some concrete cases. In the following, we use $\simeq$ to mean an equivalence of categories. We also use the notation $\Vect_{\C}^{(p, q)} = \Vect^{(p, q)}(\pt)$ for the category of $\Z_2$-graded complex modules over $Cl_{p, q}$, and $\Vect_{\R}^{(p, q)}$ for the category of $\Z_2$-graded real modules over $Cl_{p, q}$. As in the body of this paper, infinite-dimensional modules are allowed, and the vector spaces underlying infinite dimensional modules are separable Hilbert spaces. We notice that there is an equivalence of categories 
$$
{}^{\id}\Vect^{(p, q)}(\pt//\Z_2) \simeq \Vect^{(p, q)}_{\R},
$$
since an $\id$-twisted vector bundle $E$ on $\pt//\Z_2$ is nothing but a complex vector space with a real structure (i.e.\ an anti-unitary involution) $T : E \to E$. Thus, the $T$-invariant part $E^T$ is a real vector space, and $E \mapsto E^T$ defines the equivalence of categories.

\begin{lem} \label{lem:mackey_decomposition_complex_case}
Let $G$ be a compact Lie group. 
\begin{itemize}
\item[(a)]
There is an equivalence of categories
$$
\Vect^{(p, q)}(\pt//G) \simeq \Vect^{(p, q)}(\widehat{G})
= \prod_{\lambda \in \widehat{G}} \Vect_{\C}^{(p, q)}.
$$

\item[(b)]
Let $c : G \to \Z_2$ be a non-trivial homomorphism, and $K = \mathrm{Ker}(c)$ the kernel of $c$. Then there is an equivalence of categories
$$
\Vect^{c + (p, q)}(\pt//G) \simeq 
\Vect^{\id_{\Z_2} + (p, q)}(\widehat{K}//\Z_2),
$$
and the category $\Vect^{\id_{\Z_2} + (p, q)}(\widehat{K}//\Z_2)$ is equivalent to the product of some copies of the following categories
\begin{align*}
&\Vect_{\C}^{(p, q)}, &
&\Vect_{\C}^{(p, q+1)}.
\end{align*}
\end{itemize}
\end{lem}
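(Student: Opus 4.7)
The plan is to apply the Mackey decomposition of Theorem \ref{thm:mackey_decomposition} and then decompose the resulting category over orbits of the action of $G/K$ on $\widehat{K}$.

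For part (a), I take $K = G$ itself, so that $G/K$ is trivial and the data $\overline{\phi}$, $\overline{c}$, $\tau_{\widehat{K}}$ are all trivial; Theorem \ref{thm:mackey_decomposition} then yields $\Vect^{(p, q)}(\pt//G) \simeq \Vect^{(p, q)}(\widehat{G})$. Because $\widehat{G}$ is a discrete set with no further group action, specifying a $\Z_2$-graded $Cl_{p, q}$-equivariant vector bundle on $\widehat{G}$ amounts to choosing an object of $\Vect_{\C}^{(p, q)}$ at each point independently, giving the product $\prod_\lambda \Vect_{\C}^{(p, q)}$.

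For part (b), I take $K = \mathrm{Ker}(c)$, which is closed and normal with $G/K \cong \Z_2$. Applying Theorem \ref{thm:mackey_decomposition} with $\overline{\phi} = 1$ (trivial, since the twisting datum on $\pt//G$ has no $\phi$-part) and $\overline{c} = \id_{\Z_2}$ produces
\[
\Vect^{c + (p, q)}(\pt//G) \simeq \Vect^{(\id_{\Z_2}, -\tau_{\widehat{K}}) + (p, q)}(\widehat{K}//\Z_2).
\]
I then trivialize the ungraded twist $\tau_{\widehat{K}}$ orbit by orbit: on a free orbit the restricted cocycle lies in $H^2_{\mathrm{group}}(\Z_2; C(\Z_2, U(1))) = 0$ by Shapiro's lemma, and on a fixed orbit it lies in $H^2_{\mathrm{group}}(\Z_2; U(1)) \cong H^3_{\mathrm{group}}(\Z_2; \Z) = 0$ with trivial action, via the exponential sequence $\Z \to \R \to U(1)$. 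Hence $\tau_{\widehat{K}}$ is a coboundary on each orbit, producing $\Vect^{c + (p, q)}(\pt//G) \simeq \Vect^{\id_{\Z_2} + (p, q)}(\widehat{K}//\Z_2)$.

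Finally I decompose by $\Z_2$-orbits on $\widehat{K}$. A free orbit $\{\lambda, \bar\lambda\}$ yields a quotient groupoid equivalent to a point, so the corresponding factor is $\Vect_{\C}^{(p, q)}$. A fixed orbit $\{\lambda\}$ yields $\Vect^{\id_{\Z_2} + (p, q)}(\pt//\Z_2)$; since $\overline{\phi} = 1$, an object is a $\Z_2$-graded complex $Cl_{p, q}$-module equipped with a complex-linear odd involution $\rho$ squaring to $+1$ and anticommuting with the Clifford generators. Interpreting $\rho$ as an additional Clifford generator $e_{p+q+1}$ with $e_{p+q+1}^2 = +1$ that is odd and anticommutes with the others gives the desired equivalence with $\Vect_{\C}^{(p, q+1)}$. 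The main obstacle I expect is the careful bookkeeping needed to trivialize $\tau_{\widehat{K}}$ on fixed orbits so that the Clifford action and the grading remain compatible; once the parity conventions are tracked correctly the remaining identifications are routine.
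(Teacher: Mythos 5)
Your proposal is correct and follows essentially the same strategy as the paper's proof: apply Theorem~\ref{thm:mackey_decomposition}, trivialize $-\tau_{\widehat{K}}$ orbitwise, and identify the factors $\Vect^{\id_{\Z_2}+(p,q)}(\Z_2//\Z_2)\simeq\Vect_{\C}^{(p,q)}$ and $\Vect^{\id_{\Z_2}+(p,q)}(\pt//\Z_2)\simeq\Vect_{\C}^{(p,q+1)}$ by reading the twisted $\Z_2$-action as an extra $Cl_{0,1}$-generator. The only cosmetic difference is that you establish triviality of $\tau_{\widehat{K}}$ directly via Shapiro's lemma and the exponential sequence, where the paper simply cites the table of cohomology computations in Appendix~\ref{sec:classification_of_twists}; both reach the same vanishing $H^3((\Z_2/H)//\Z_2;\Z)=0$.
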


\begin{proof}
For (a), the equivalence of categories just follows from a direct application of Theorem \ref{thm:mackey_decomposition}. For (b), Theorem \ref{thm:mackey_decomposition} provides the equivalence of categories
$$
\Vect^{c + (p, q)}(\pt//G) \simeq 
\Vect^{(-\tau_{\widehat{K}}, \id_{\Z_2}) + (p, q)}(\widehat{K}//\Z_2).
$$
Since $\widehat{K}$ is a discrete set, it is a disjoint union of $\Z_2$-spaces of the form $\Z_2/H$, where $H \subset \Z_2$ is a subgroup. On the groupoid $(\Z_2/H)//\Z_2$, all the twists are trivial, as seen in \S\S\ref{subsec:classification_of_twists}. Hence a trivialization of the twist $-\tau_{\widehat{K}}$ leads to the equivalence of categories in (b). To show the remaining claim, we focus on the $\Z_2$-orbits $(\Z_2/H)//\Z_2$ of $\widehat{K}//\Z_2$. If $H = 1$, then we have the equivalence of categories
$$
\Vect^{\id_{\Z_2} + (p, q)}(\Z_2//\Z_2)
\simeq
\Vect^{(p, q)}(\pt) = \Vect_{\C}^{(p, q)}
$$
in view of the local equivalence $\pt//1 \to \Z_2//\Z_2$. If $H = \Z_2$, then we have
$$
\Vect^{\id_{\Z_2} + (p, q)}(\pt//\Z_2)
\simeq
\Vect_{\C}^{(p, q+1)},
$$
since an $\id_{\Z_2}$-twisted $\Z_2$-action can be regarded as an additional $Cl_{0, 1}$-action.
\end{proof}

\begin{lem} \label{lem:mackey_decomposition_Z2_case}
Let $G$ be a compact Lie group, $\phi : G \to \Z_2$ a non-trivial homomorphism, and $K = \mathrm{Ker}(\phi)$ the kernel of $\phi$.
\begin{itemize}
\item[(a)]
There is an equivalence of categories
$$
{}^{\phi}\Vect^{(p, q)}(\pt//G) \simeq 
{}^{\id_{\Z_2}}\Vect^{-\tau_{\widehat{K}} + (p, q)}(\widehat{K}//\Z_2),
$$
and the category ${}^{\id_{\Z_2}}\Vect^{-\tau_{\widehat{K}} + (p, q)}(\widehat{K}//\Z_2)$ is equivalent to the product of some copies of the following categories
\begin{align*}
&\Vect^{(p, q)}_{\C},
&\Vect^{(p, q)}_{\R}, &
&\Vect^{(p+4, q)}_{\R}.
\end{align*}

\item[(b)]
If $c = \phi : G \to \Z/2$, then there is an equivalence of categories
$$
{}^{\phi}\Vect^{c + (p, q)}(\pt//G) \simeq 
{}^{\id_{\Z_2}}\Vect^{(-\tau_{\widehat{K}}, \id_{\Z_2}) + (p, q)}
(\widehat{K}//\Z_2),
$$
and the category $\Vect^{(-\tau_{\widehat{K}}, \id_{\Z_2}) + (p, q)}(\widehat{K}//\Z_2)$ is equivalent to the product of some copies of the following categories
\begin{align*}
&\Vect^{(p, q)}_{\C},
&\Vect^{(p+2, q)}_{\R}, &
&\Vect^{(p, q+2)}_{\R}.
\end{align*}
\end{itemize}
\end{lem}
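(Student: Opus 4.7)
The plan for both parts is to apply Theorem \ref{thm:mackey_decomposition} with $K = \mathrm{Ker}(\phi)$, so that $G/K \cong \Z_2$ and $\overline{\phi} = \id_{\Z_2}$, taking $\overline{c}$ trivial for part (a) and $\overline{c} = \id_{\Z_2}$ for part (b). This immediately yields the first displayed equivalence in each part. For the remaining claim, the discrete $\Z_2$-set $\widehat{K}$ (under $\lambda \mapsto \overline{g\lambda}$ for any lift $g$ of the nontrivial coset) decomposes into orbits, each either a free orbit (whose quotient groupoid is locally equivalent to $\pt$) or a fixed orbit $\pt//\Z_2$; by additivity of the category of twisted bundles over this disjoint union, the analysis reduces to a case-by-case study of each orbit type.

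Free orbits contribute $\Vect^{(p, q)}(\pt) = \Vect_\C^{(p, q)}$, since every twist trivializes after the local equivalence $\pt//1 \to \Z_2//\Z_2$. For a fixed orbit $\{\lambda\}$, the restriction of $-\tau_{\widehat{K}}$ represents a class in $H^2_{\mathrm{group}}(\Z_2; U(1)_{\id}) \cong \Z_2$, either trivial (``real type'') or equal to $\tau_{\id}$ (``quaternionic type''), the dichotomy being a generalized Frobenius--Schur indicator for $V_\lambda$ under the combined conjugating-and-complex-conjugation action. An object at such an orbit is therefore a complex $\Z_2$-graded $Cl_{p, q}$-module $V$ equipped with an anti-linear involution $\rho$ satisfying $\rho^2 = \pm 1$ accordingly, whose parity and interaction with the $\gamma_i$ are governed by $\overline{c}$.

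For part (a) the operator $\rho$ is even and commutes with every $\gamma_i$. In the real case, the $+1$-eigenspace $V^\rho$ furnishes the required real $\Z_2$-graded $Cl_{p, q}$-module, giving the equivalence with $\Vect_\R^{(p, q)}$; in the quaternionic case, $\rho$ is a quaternionic structure commuting with the Clifford action, and the Morita equivalence $\mathbb{H} \sim Cl_{4, 0}$ identifies $\Vect_\mathbb{H}^{(p, q)}$ with $\Vect_\R^{(p+4, q)}$.

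For part (b) the operator $\rho$ is odd and satisfies $\{\rho, \gamma_i\} = 0$. After enlarging by an irreducible $Cl_{1, 1}$-module (Lemma \ref{lem:weak_periodicity}) to arrange $p \geq 1$, set $\tilde{\rho} = \gamma_1 \rho$ and $\tilde{\gamma}_0 = i \gamma_1$. A direct computation yields $\tilde{\rho}^2 = -\gamma_1^2 \rho^2 = \rho^2$ (real stays real, quaternionic stays quaternionic), that $\tilde{\rho}$ is even anti-linear and commutes with $\gamma_2, \ldots, \gamma_{p+q}$ and with $\tilde{\gamma}_0$, and that $\tilde{\gamma}_0$ is odd with $\tilde{\gamma}_0^2 = +1$ anti-commuting with $\gamma_2, \ldots, \gamma_{p+q}$. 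The family $(\tilde{\gamma}_0, \gamma_2, \ldots, \gamma_{p+q})$ then realizes a $Cl_{p-1, q+1}$-action on the real form $V^{\tilde{\rho}}$ (real case) or on $V$ regarded as an $\mathbb{H}$-module via $\tilde{\rho}$ (quaternionic case), producing $\Vect_\R^{(p-1, q+1)} \simeq \Vect_\R^{(p, q+2)}$ and $\Vect_\mathbb{H}^{(p-1, q+1)} \simeq \Vect_\R^{(p+3, q+1)} \simeq \Vect_\R^{(p+2, q)}$ respectively, both via $(1, 1)$-periodicity. The main obstacle is precisely this part (b) construction: the odd parity of $\rho$ precludes a direct eigenspace approach, and one must absorb the $\id_{\Z_2}$-grading together with one Clifford generator into a new real or quaternionic structure while simultaneously releasing an extra positive-square Clifford generator, thereby realizing the shift by $2$ predicted by Theorem \ref{thm:degree_shift}.
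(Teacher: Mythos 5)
Your proof is correct, and part (a) follows the paper's argument almost verbatim (orbit decomposition into $\Z_2/H$, trivialization of twists on free orbits, and the real/quaternionic dichotomy on fixed orbits governed by $H^2_{\mathrm{group}}(\Z_2;U(1)_{\id})\cong\Z_2$, with the $\mathbb{H}\sim Cl_{4,0}$ Morita equivalence producing the $+4$ shift).

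For part (b), however, your route differs from the paper's. The paper passes to the underlying real vector space $E_\R$ of a fixed-orbit module and observes that the odd anti-unitary $T$ together with $iT$ form a new anti-commuting pair that anti-commutes with $\epsilon$ and all $\gamma_i$; since $T^2=(iT)^2=\pm 1$ according to the real/quaternionic dichotomy, this furnishes an extra $Cl_{0,2}$- or $Cl_{2,0}$-action on $E_\R$, landing directly in $\Vect_\R^{(p,q+2)}$ or $\Vect_\R^{(p+2,q)}$ without any $(1,1)$-periodicity correction or case split. You instead stay in the complex world: after inflating to ensure $p\geq 1$, you absorb the odd anti-linear $\rho$ and the generator $\gamma_1$ into an even anti-linear $\tilde\rho=\gamma_1\rho$ of the same Frobenius--Schur type, release $\tilde\gamma_0=i\gamma_1$ as a positive-square $\C$-linear generator, and take the $\tilde\rho$-fixed points (or the quaternionic module). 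Your computations of $\tilde\rho^2$, parity, and the commutation relations are all correct, and the bookkeeping closes up via $(1,1)$-periodicity. The paper's realification argument is more uniform (no padding by $Cl_{1,1}$, no periodicity shuffle at the end) and directly matches the stated exponents; your version has the merit of exhibiting explicitly how the $c_\phi$-grading ``eats'' one Clifford generator of a given sign and ``emits'' one of the opposite sign, which is precisely the degree-$2$ shift mechanism behind Theorem \ref{thm:degree_shift}.
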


\begin{proof}
For (a), Theorem \ref{thm:mackey_decomposition} gives the equivalence of categories. The $\Z_2$-space $\widehat{K}$ is a disjoint union of $\Z_2/H$, where $H \subset \Z_2$ is a subgroup. By the classification of twists in \S\S\ref{subsec:classification_of_twists}, the category ${}^{\id_{\Z_2}}\Vect^{-\tau_{\widehat{K}} + (p, q)}(\widehat{K}//\Z_2)$ is the product of some copies of the following categories
\begin{align*}
&{}^{\id_{\Z_2}}\Vect^{(p, q)}(\Z_2//\Z_2), &
&{}^{\id_{\Z_2}}\Vect^{(p, q)}(\pt//\Z_2), &
&{}^{\id_{\Z_2}}\Vect^{\tau_{\id} + (p, q)}(\pt//\Z_2), 
\end{align*}
where $\tau_{\id}$ represents the non-trivial twist in $H^3(\pt//\Z_2; \Z_{\id})$. The local equivalence $\pt//1 \to \Z_2//\Z_2$ induces the equivalence of categories
$$
{}^{\id_{\Z_2}}\Vect^{(p, q)}(\Z_2//\Z_2) \simeq
\Vect^{(p, q)}_{\C}.
$$
As is pointed out already, if $E \in {}^{\id_{\Z_2}}\Vect^{(p, q)}(\pt//\Z_2)$ is a twisted vector bundle with $Cl_{p, q}$-action, then the twisted $\Z_2$-action on $E$ provides a real structure $T : E \to E$ commuting with the $Cl_{p, q}$-action. Thus, $E \mapsto E^T$ provides the equivalence
$$
{}^{\id_{\Z_2}}\Vect^{(p, q)}(\pt//\Z_2) \simeq
\Vect_{\R}^{(p, q)}.
$$
If $E \in {}^{\id_{\Z_2}}\Vect^{\tau_{\id} + (p, q)}(\pt//\Z_2)$ is a twisted vector bundle with $Cl_{p, q}$-action, then the twisted $\Z_2$-action on $E$ provides a quaternionic structure (i.e.\ anti-unitary map whose square is $-1$) $T : E \to E$ commuting with the $Cl_{p, q}$-action. As is known (for example Proposition B.4, \cite{F-M}), the category of vector spaces over the skew field $\mathbb{H}$ of quaternions are in one to one correspondence with that of $Cl_{4,0}$-modules. With the $Cl_{p, q}$-actions, $E$ induces a real $Cl_{p+4,q}$-modules, and we get the equivalence
$$
{}^{\id_{\Z_2}}\Vect^{\tau_{\id} + (p, q)}(\pt//\Z_2) \simeq
\Vect_{\R}^{(p+4, q)}. 
$$

For (b), the same argument as in (a) proves that ${}^{\id_{\Z_2}}\Vect^{(-\tau_{\widehat{K}}, \id_{\Z_2}) + (p, q)}(\widehat{K}//\Z_2)$ is the product of some copies of the following categories
\begin{align*}
&{}^{\id}\Vect^{\id + (p, q)}(\Z_2//\Z_2), &
&{}^{\id}\Vect^{\id + (p, q)}(\pt//\Z_2), &
&{}^{\id}\Vect^{(\tau_{\id}, \id) + (p, q)}(\pt//\Z_2),
\end{align*}
where $\id : \Z_2 \to \Z_2$ is the identity homomorphism. The local equivalence $\pt//1 \to \Z_2//\Z_2$ induces the equivalence of categories
$$
{}^{\id}\Vect^{\id + (p, q)}(\Z_2//\Z_2) \simeq
\Vect^{(p, q)}(\pt) = \Vect^{(p, q)}_{\C}.
$$
If $E \in {}^{\id}\Vect^{\id + (p, q)}(\pt//\Z_2)$ is a twisted vector bundle, then the twisted $\Z_2$-action on $E$ induces an odd real structure $T : E \to E$. On real vector space $E_{\R}$ underlying $E$, we have an additional $Cl_{0,2}$-action generated by $T$ and $iT$. Consequently, $E_{\R}$ is a real $Cl_{p,q+2}$-module, and this construction leads to the equivalence 
$$
{}^{\id}\Vect^{\id + (p, q)}(\pt//\Z_2) \simeq
\Vect_{\R}^{(p, q+2)}.
$$
Similarly, if $E \in {}^{\id}\Vect^{(\tau_{\id}, \id) + (p, q)}(\pt//\Z_2)$ is a twisted vector bundle, then the twisted $\Z_2$-action induces an odd quaternionic structure $T : E \to E$. The real vector space $E_{\R}$ acquires an additional $Cl_{2, 0}$-action generated by $T$ and $iT$. Hence we get the equivalence of categories
$$
{}^{\id}\Vect^{(\tau_{\id}, \id) + (p, q)}(\pt//\Z_2) \simeq
\Vect_{\R}^{(p+2, q)}
$$
induced by the assignment $E \mapsto E_{\R}$.
\end{proof}

\begin{lem} \label{lem:mackey_decomposition_D2_case}
Let $G'$ be a compact Lie group. Suppose that $\phi : G' \to \Z_2$ and $c : G' \to \Z_2$ are non-trivial homomorphisms such that $\phi \neq c$. We write $K = \mathrm{Ker}(\phi, c)$ for the kernel of $(\phi, c) : G' \to \Z_2 \times \Z_2$, and $p_i : \Z_2 \times \Z_2 \to \Z_2$ the $i$th projection. Then there is an equivalence of categories
$$
{}^{\phi}\Vect^{c + (p, q)}(\pt//G') \simeq 
{}^{p_1}\Vect^{(-\tau_{\widehat{K}}, p_2) + (p, q)}
(\widehat{K}//(\Z_2 \times \Z_2)),
$$
and the category ${}^{p_1}\Vect^{(-\tau_{\widehat{K}}, p_2) + (p, q)}(\widehat{K}//(\Z_2 \times \Z_2))$ is equivalent to the product of some copies of the following categories
\begin{align*}
&\Vect^{(p, q)}_{\C}, &
&\Vect^{(p, q+1)}_{\C}, &
&\Vect^{(p, q)}_{\R}, &
&\Vect^{(p+4, q)}_{\R}, &
&\Vect^{(p, q+2)}_{\R}, \\
&\Vect^{(p+2, q)}_{\R}, &
&\Vect^{(p, q+1)}_{\R}, &
&\Vect^{(p+1, q)}_{\R}, &
&\Vect^{(p+4, q+1)}_{\R}, &
&\Vect^{(p+5, q)}_{\R}.
\end{align*}
\end{lem}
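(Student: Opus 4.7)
The proof will mirror the strategy of Lemma \ref{lem:mackey_decomposition_Z2_case}: apply the Mackey decomposition, then break up the resulting groupoid into orbit pieces $\pt//H$ via local equivalences, then identify each small category with a Clifford module category.

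First I would observe that $K := \ker(\phi,c)$ is closed and normal. Because $\phi$ and $c$ are non-trivial and distinct, the three characters $\phi$, $c$, $\phi c$ are pairwise distinct and non-trivial, so $(\phi,c)\colon G\to\Z_2\times\Z_2$ is surjective and induces an isomorphism $G/K\cong\Z_2\times\Z_2$ under which $\phi$ and $c$ become $p_1$ and $p_2$ respectively. Theorem~\ref{thm:mackey_decomposition} then yields the first equivalence
$$
{}^{\phi}\Vect^{c + (p, q)}(\pt//G) \simeq
{}^{p_1}\Vect^{(-\tau_{\widehat{K}}, p_2) + (p, q)}
(\widehat{K}//(\Z_2 \times \Z_2)),
$$
with $\tau_{\widehat K}\in Z^2_{\mathrm{group}}(\Z_2\times\Z_2;C(\widehat K,U(1))_{p_1})$.

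Next I would decompose the discrete $(\Z_2\times\Z_2)$-space $\widehat K$ into orbits; every orbit has the form $(\Z_2\times\Z_2)/H$ for a subgroup $H\in\{1,\;\Z_2\times1,\;1\times\Z_2,\;\Delta(\Z_2),\;\Z_2\times\Z_2\}$. For each orbit the inclusion of a basepoint gives a local equivalence $\pt//H\to((\Z_2\times\Z_2)/H)//(\Z_2\times\Z_2)$, so the contribution of the orbit is ${}^{p_1|_H}\Vect^{(\tau|_H,p_2|_H)+(p,q)}(\pt//H)$, where $\tau|_H\in H^3(\pt//H;\Z_{p_1|_H})$ is arbitrary. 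By the classification tabulated in Appendix~\ref{sec:classification_of_twists}, the possible cohomology classes for $\tau|_H$ number $1,2,1,2,4$ respectively, giving $1+2+1+2+4=10$ pairs $(H,[\tau|_H])$, matching the ten categories in the statement.

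Each pair must then be identified with a Clifford category. For $H=1$ one gets $\Vect_{\C}^{(p,q)}$ directly; for $H=1\times\Z_2$ only the trivial twist occurs and the odd complex-linear involution $S=\rho((1,-1))$ (odd because $p_2(s)=-1$, anti-commuting with the $\gamma_j$ because $c(s)=-1$, squaring to $+1$) gives $\Vect_{\C}^{(p,q+1)}$. For $H=\Z_2\times1$ and $H=\Delta(\Z_2)$ the generator $g_0$ is anti-linear ($p_1|_H=\mathrm{id}$), and $\rho(g_0)^2=\tau(g_0,g_0)=\pm1$ splits the two twist classes into a real and a quaternionic structure; the same argument as in Lemma~\ref{lem:mackey_decomposition_Z2_case} (passing to $E^T$ in the real case, or to $E_{\R}$ equipped with its $Cl_{4,0}$-action in the quaternionic case, and in the diagonal case additionally accounting for the odd operator $iD$ produced from the anti-linear odd $D=\rho((-1,-1))$ via $Di=-iD$) produces $\Vect_{\R}^{(p,q)},\,\Vect_{\R}^{(p+4,q)},\,\Vect_{\R}^{(p,q+2)}$ and $\Vect_{\R}^{(p+2,q)}$.

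The main obstacle, and the only place needing a genuinely new calculation, is the case $H=\Z_2\times\Z_2$. Here I have both $T=\rho((-1,1))$ (anti-linear, even, commuting with the $\gamma_j$ because $c(t)=1$) and $S=\rho((1,-1))$ (linear, odd, anti-commuting with the $\gamma_j$). Writing $\tau|_H$ in the basis $\{\tau_{p_1},\tau_{p_2}\}$ of $H^2_{\mathrm{group}}(\Z_2\times\Z_2;U(1)_{p_1})\cong\Z_2\oplus\Z_2$ from Appendix~\ref{sec:classification_of_twists}, a direct evaluation on the generators $t,s$ gives
$$
T^2=\tau(t,t),\qquad S^2=\tau(s,s),\qquad \tau(t,s)=\tau(s,t)=1
$$
in all four classes, so $TS=ST$ always and the four sign combinations $(T^2,S^2)\in\{\pm1\}^2$ occur exactly once. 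Applying the real-or-quaternionic passage from $T$ as in the previous step, the additional odd generator $S$ (commuting with $T$, anti-commuting with the existing $p$ or $p+4$ Clifford generators, squaring to $S^2=\pm1$) augments the Clifford signature by $(0,1)$ or $(1,0)$. This yields $\Vect_{\R}^{(p,q+1)},\,\Vect_{\R}^{(p+1,q)},\,\Vect_{\R}^{(p+4,q+1)},\,\Vect_{\R}^{(p+5,q)}$, completing the enumeration and hence the proof.
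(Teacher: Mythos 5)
Your proof is correct and takes essentially the same route as the paper's: apply Theorem \ref{thm:mackey_decomposition} after noting that $(\phi,c)$ induces $G/K\cong\Z_2\times\Z_2$ with $\phi\mapsto p_1$, $c\mapsto p_2$; decompose $\widehat{K}//(\Z_2\times\Z_2)$ into orbits $(G/H)//G$ and reduce to $\pt//H$; and for $H=\Z_2\times\Z_2$ analyze $T=\rho(-1,1)$ and $S=\rho(1,-1)$, checking $TS=ST$ (your explicit computation of $\tau(t,s)=\tau(s,t)=1$ is what the paper merely asserts) and matching the four sign patterns $(T^2,S^2)$ to the four twist classes exactly as in the paper. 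The minor extra verifications you include (surjectivity of $(\phi,c)$, the cocycle values) tighten steps the paper passes over quickly, but the strategy and the decisive case analysis are the same.
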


\begin{proof}
The equivalence of categories is given by Theorem \ref{thm:mackey_decomposition}. To suppress notations, we put $G = \Z_2 \times \Z_2$. The groupoid $\widehat{K}//G$ is the disjoint union of $(G/H)//G$, where $H \subset G$ is a subgroup. Hence ${}^{p_1}\Vect^{(-\tau_{\widehat{K}}, p_2) + (p, q)}(\widehat{K}//G)$ is the product of the categories of the form
$$
{}^{p_1}\Vect^{(\tau, p_2) + (p, q)}
((G/H)//G)
\simeq
{}^{p_1|_H}\Vect^{(\tau|_H, p_2|_H) + (p, q)}
(\pt//H),
$$
where $\tau$ is an ungraded twist. There are four subgroups $H$, as seen in \S\S\ref{subsec:classification_of_twists}. In the case of $H = 1$, the twist $\tau$ is trivial, and
$$
{}^{p_1}\Vect^{(\tau, p_2) + (p, q)}(G//G)
\simeq \Vect^{(p, q)}(\pt) 
\simeq \Vect^{(p, q)}_{\C}.
$$
In the case of $H = 1 \times \Z_2$, the twist $\tau$ can be trivialized, and we use the argument in the proof of Lemma \ref{lem:mackey_decomposition_complex_case} (b) to get the equivalence of categories
$$
{}^{p_1}\Vect^{(\tau, p_2) + (p, q)}
((G/(1 \times \Z_2))//G)
\simeq
\Vect^{\id + (p, q)}(\pt//\Z_2)
\simeq
\Vect^{(p, q+1)}_{\C}.
$$
In the case of $H = \Z_2 \times 1$, the twist $\tau|_H$ is isomorphic to the trivial twist or $\tau_{\id}$. Then, as in the proof of Lemma \ref{lem:mackey_decomposition_Z2_case} (a), we get the equivalence of categories
$$
{}^{p_1}\Vect^{(\tau, p_2) + (p, q)}
((G/(\Z_2 \times 1))//G)
\simeq
\left\{
\begin{array}{l}
{}^{\id}\Vect^{(p, q)}(\pt//\Z_2)
\simeq 
\Vect_{\R}^{(p, q)}, \\
{}^{\id}\Vect^{\tau_{\id} + (p, q)}(\pt//\Z_2)
\simeq
\Vect_{\R}^{(p+4, q)}.
\end{array}
\right.
$$
In the case of $H = \Delta(\Z_2)$, the twist $\tau|_H$ is again isomorphic to the trivial twist or $\tau_{\id}$. By the proof of Lemma \ref{lem:mackey_decomposition_Z2_case} (b), we get the equivalence of categories
$$
{}^{p_1}\Vect^{(\tau, p_2) + (p, q)}
((G/\Delta(\Z_2))//G)
\simeq
\left\{
\begin{array}{l}
{}^{\id}\Vect^{\id + (p, q)}(\pt//\Z_2)
\simeq
\Vect_{\R}^{(p, q+2)}, \\
{}^{\id}\Vect^{(\tau_{\id}, \id) + (p, q)}
(\pt//\Z_2)
\simeq
\Vect_{\R}^{(p+2, q)}.
\end{array}
\right.
$$
Finally, in the case of $H = \Z_2 \times \Z_2$, the twist $\tau|_H = \tau$ is isomorphic to $1$ (trivial twist), $\tau_{p_1}$, $\tau_{p_2}$ or $\tau_{p_1} + \tau_{p_2}$, where $\tau_{p_1}$ and $\tau_{p_2}$ are the $2$-cocycles given in \S\S\ref{subsec:group_cocycle}. Let $E \in {}^{p_1}\Vect^{(\tau, p_2) + (p, q)}(\pt//G)$ be a twisted vector bundle. We write $T = \rho(-1, 1)$ and $S = \rho(1, -1)$ for the twisted action of the generators $(-1, 1), (1, -1)$ of $G$. By construction, $T : E \to E$ is even and anti-unitary, whereas $S : E \to E$ is odd and unitary. If $\tau$ is one of the four twists above, then $S$ and $T$ are commutative. Now, in the case of $\tau = 1$, we have $T^2 = 1$ and $S^2 = 1$. Hence $T$ is a real structure on $E$, and $S$ gives an additional $Cl_{0, 1}$-action on the real vector space $E^T$. Together with the original $Cl_{p, q}$-action, $E^T$ is a real $Cl_{p, q+1}$-module, so that the assignment $E \mapsto E^T$ induces 
$$
{}^{p_1}\Vect^{(1, p_2) + (p, q)}(\pt//G)
\simeq
\Vect_{\R}^{(p, q+1)}.
$$
In the case of $\tau = \tau_{p_2}$, we have $T^2 = 1$ and $S^2 = -1$. Hence $S$ defines an additional $Cl_{1, 0}$-action. Thus, in the same way as above, we have the equivalence
$$
{}^{p_1}\Vect^{(\tau_{p_2}, p_2) + (p, q)}(\pt//G)
\simeq
\Vect_{\R}^{(p+1, q)}.
$$
In the case of $\tau = \tau_{p_1}$, we have $T^2 = -1$ and $S^2 = 1$. Hence $T$ defines a quaternionic structure on $E$, and $S$ an additional $Cl_{0, 1}$-action. Then the equivalence of the categories of quaternionic vector spaces and that of real $Cl_{4, 0}$-modules induces
$$
{}^{p_1}\Vect^{(\tau_{p_1}, p_2) + (p, q)}(\pt//G)
\simeq
\Vect_{\R}^{(p+4, q+1)}.
$$
In the case of $\tau = \tau_{p_1} + \tau_{p_2}$, we have $T^2 = -1$ and $S^2 = -1$. By the same consideration as above, we have the equivalence of categories
$$
{}^{p_1}\Vect^{(\tau_{p_1} + \tau_{p_2}, p_2) + (p, q)}(\pt//G)
\simeq
\Vect_{\R}^{(p+5, q)},
$$
which completes the proof.
\end{proof}

%%%%%%%%%%%%%%%%%%%%%%%%%%%%%%%%%%%%%%%%%%%%%%%%
\subsection{The space of Fredholm operators}

This subsection summarizes some properties of the spaces of Fredholm operators as models of the classifying spaces of complex and real $K$-theories.

\begin{lem} \label{lem:universal_clifford_module}
There exists a universal $Cl_{p, q}$-module in $\Vect^{(p, q)}_k$.
\end{lem}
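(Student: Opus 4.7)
The plan is to construct the universal $Cl_{p,q}$-module as an infinite direct sum of countably many copies of each (finitely many) isomorphism classes of irreducible $\Z_2$-graded $Cl_{p,q}$-modules over $k$. Recall from Atiyah--Bott--Shapiro that over either $k = \R$ or $k = \C$, the $\Z_2$-graded Clifford algebra $Cl_{p,q}$ admits only finitely many (in fact at most two) isomorphism classes of irreducible $\Z_2$-graded $k$-modules, and every finite-dimensional $\Z_2$-graded $Cl_{p,q}$-module decomposes as a finite direct sum of these irreducibles. Enumerate representatives $\Delta_1, \dots, \Delta_r$ of these classes and set
$$
U_{p,q} = \widehat{\bigoplus}_{i=1}^{r} \widehat{\bigoplus}_{n \in \mathbb{N}} \Delta_i,
$$
where $\widehat{\oplus}$ denotes the Hilbert space completion of the algebraic direct sum. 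Then $U_{p,q}$ is a separable Hilbert space and inherits a $\Z_2$-grading and a $Cl_{p,q}$-action in the obvious way.

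The universality claim is that for any $E \in \Vect^{(p,q)}_k$ there is an isometric embedding $E \hookrightarrow U_{p,q}$ of $\Z_2$-graded $Cl_{p,q}$-modules. For finite-dimensional $E$, this is immediate from the classification: decompose $E \cong \bigoplus_i \Delta_i^{\oplus m_i}$ with each $m_i$ finite, and embed each summand into the corresponding factor in $U_{p,q}$. For a general separable $E$, one needs the analogous decomposition in the infinite-dimensional setting; this follows from the spectral-theoretic fact that the commutant $\mathrm{End}_{Cl_{p,q}}(E)$ has a natural von Neumann algebra structure, and that $E$ splits as a Hilbertian direct sum $\widehat{\bigoplus}_i \Delta_i \otimes \mathcal{H}_i$, where the multiplicity space $\mathcal{H}_i$ is a separable Hilbert space. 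Since each $\mathcal{H}_i$ embeds isometrically into $\widehat{\bigoplus}_{n \in \mathbb{N}} k$, one obtains the desired embedding $E \hookrightarrow U_{p,q}$.

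Uniqueness up to unitary isomorphism, and absorption $U_{p,q} \oplus U_{p,q} \cong U_{p,q}$, both follow immediately from the construction: any such isometric embedding yields an orthogonal complement that itself decomposes into the same irreducibles with countably infinite multiplicity, so it is again isomorphic to $U_{p,q}$.

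The only potentially delicate point is the multiplicity-space decomposition for infinite-dimensional separable $Cl_{p,q}$-modules; but since $Cl_{p,q}$ is finite-dimensional and semisimple (as a $\Z_2$-graded algebra over $k$, in the graded sense of Atiyah--Bott--Shapiro), the argument reduces to the elementary fact that for a finite-dimensional division algebra $D$ over $k$ (here $k$, $\C$, or $\mathbb{H}$, depending on the type of $\Delta_i$), every separable Hilbert module over $D$ is an orthogonal Hilbertian direct sum of copies of $D$, which is standard.
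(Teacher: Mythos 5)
Your proof is correct and follows essentially the same approach as the paper: construct a module containing each irreducible $\Z_2$-graded $Cl_{p,q}$-module with countably infinite multiplicity (the paper realizes this concretely as $(Cl_{p,q}\otimes k)\otimes\mathcal{E}$ with $\mathcal{E}$ a separable infinite-dimensional Hilbert space over $k$, which is isomorphic to your $U_{p,q}$). You additionally spell out the Hilbertian decomposition of an arbitrary separable module into irreducibles with multiplicity spaces, a step the paper leaves implicit.
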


\begin{proof}
Let $E \in \Vect_k^{(p, q)}$ be a $\Z_2$-graded $Cl_{p, q}$-module over $k$ which contains all the inequivalent $\Z_2$-graded $Cl_{p, q}$-modules over $k$ infinitely many times. (Actually, we can take $E = (Cl_{p, q} \otimes k) \otimes \mathcal{E}$ with $\mathcal{E}$ a separable infinite-dimensional Hilbert space over $k$.) Then any $\Z_2$-graded $Cl_{p, q}$-module over $k$ can be embedded into $E$, and hence $E$ has the universality. 
\end{proof}

From now on, we assume that $E \in \Vect^{(p, q)}_k$ is a universal module. As in \S\S\ref{subsec:fredholm_family}, we denote by $\mathrm{End}(E)$ the space of bounded operators with the compact open topology, and by $\mathrm{K}(E)$ the space of compact operators with the operator norm topology. Changing slightly the notation in Definition \ref{dfn:fredholm_family}, we denote
$$
\Fred^{(p, q)}_k(E) =
\left\{
A \in \mathrm{End}(E) \bigg|\ 
\begin{array}{ll}
A^* = -A, \ 
A^2 + \id \in \mathrm{K}(E), \
\mathrm{Spec}(A) \subset [-i, i] \\
\mbox{degree $1$}, \ A\gamma_i = -\gamma_iA \ (i = 1, \ldots, p+q) 
\end{array}
\right\},
$$
where $\gamma_1, \ldots, \gamma_{p+q}$ are the Clifford actions of fixed vectors $e_1, \ldots, e_{p+q} \in \R^{p+q}$ forming an orthonormal basis. As before, $\Fred^{(p, q)}_k(E)$ is topologized by
\begin{align*}
&\Fred^{(p, q)}_k(E) \to \mathrm{End}(E) \times \mathrm{K}(E), &
&A \mapsto (A, A^2 + \id),
\end{align*}
where $\mathrm{End}(E)$ has the compact open topology and $\mathrm{K}(E)$ the operator norm topology. We sometimes omit $E$ to write $\Fred^{(p, q)}_k = \Fred^{(p, q)}_k(E)$. We define a subspace
$$
\Fred^{(p, q)}_k(E)^\dagger
= \{ A \in \Fred^{(p, q)}_k(E) |\ A^2 = - \id \}.
$$

\begin{lem}
$\Fred^{(p, q)}_k(E)^\dagger$ is non-empty for any universal $E$.
\end{lem}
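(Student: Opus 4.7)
The plan is to interpret the desired operator $A$ as the action of an additional Clifford generator. The defining conditions of $\Fred^{(p,q)}_k(E)^\dagger$ — that $A$ be skew-adjoint with $A^2 = -\id$, of degree $1$, and anti-commuting with each $\gamma_i$ — are precisely the relations satisfied by an extra generator $e_{p+1}$ (squaring to $-1$) that extends the $Cl_{p,q}$-action on $E$ to a $\Z_2$-graded $Cl_{p+1,q}$-action. Hence the problem reduces to showing that the universal $\Z_2$-graded $Cl_{p,q}$-module over $k$ carries a compatible $\Z_2$-graded $Cl_{p+1,q}$-structure.

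To exhibit one concretely, I would apply Lemma \ref{lem:universal_clifford_module} with $(p+1, q)$ in place of $(p,q)$ to obtain a universal $\Z_2$-graded $Cl_{p+1,q}$-module $U$ over $k$, and let $E_0$ denote $U$ regarded as a $\Z_2$-graded $Cl_{p,q}$-module by restriction of scalars. The action of $e_{p+1}$ on $U$ tautologically furnishes an operator $A_0 \in \Fred^{(p,q)}_k(E_0)^\dagger$. The key intermediate check is that $E_0$ is itself universal in $\Vect^{(p,q)}_k$: given any $\Z_2$-graded $Cl_{p,q}$-module $M$, the induced module $Cl_{p+1,q} \otimes_{Cl_{p,q}} M$ is a $\Z_2$-graded $Cl_{p+1,q}$-module containing $M$ as the $Cl_{p,q}$-summand $1 \otimes M$; since $U$ absorbs this induced module infinitely often by universality, every $\Z_2$-graded $Cl_{p,q}$-module appears in $E_0$ with infinite multiplicity, so $E_0$ is indeed universal.

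Finally, to transfer $A_0$ to an arbitrary universal $E \in \Vect^{(p,q)}_k$, I would invoke uniqueness of universal modules up to isometric $\Z_2$-graded $Cl_{p,q}$-isomorphism. Writing the isotypical decomposition
$$
E \cong \widehat{\bigoplus}_\lambda V_\lambda \otimes H_\lambda
$$
over the countable list $\{V_\lambda\}$ of irreducible $\Z_2$-graded $Cl_{p,q}$-modules, universality forces each multiplicity space $H_\lambda$ to be a separable infinite-dimensional Hilbert space over the endomorphism skew field of $V_\lambda$; hence $E$ and $E_0$ admit a compatible unitary isomorphism, and pulling $A_0$ back produces the required element of $\Fred^{(p,q)}_k(E)^\dagger$. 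The main technical nuisance will be pinning down the isotypical decomposition cleanly in the infinite-dimensional graded setting, but this is a standard application of Clifford representation theory and does not require new ideas.
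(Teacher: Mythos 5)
Your proposal is correct and rests on the same key idea as the paper: an element of $\Fred^{(p,q)}_k(E)^\dagger$ is exactly the data of an extra Clifford generator $e_{p+1}$ (with $e_{p+1}^2 = -1$) extending the $Cl_{p,q}$-action on $E$ to a $Cl_{p+1,q}$-action. Where you differ is in how you manufacture that extra generator on the given universal $E$. The paper argues directly: universality gives $E \cong E \oplus \Pi E$, and $E \oplus \Pi E \cong E \otimes k^2$ as $\Z_2$-graded $Cl_{p,q}$-modules, where $k^2$ is the $1|1$-dimensional graded $k$-vector space carrying its standard $Cl_{1,0}$-action; the action of $1 \otimes \gamma_*$ then provides the required operator. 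Your route instead takes a universal $Cl_{p+1,q}$-module $U$ from Lemma~\ref{lem:universal_clifford_module}, restricts scalars to $Cl_{p,q}$, checks (via induction $Cl_{p+1,q} \otimes_{Cl_{p,q}} M$) that this restriction remains universal, and then invokes uniqueness of universal $Cl_{p,q}$-modules to transport $e_{p+1}$ back to $E$. Both arguments are sound; yours makes the appeal to uniqueness of universal modules (via isotypical decompositions) explicit, whereas the paper folds the same kind of reasoning into the single isometry $E \cong E \oplus \Pi E$ and so avoids restating it. The paper's version is a bit more economical but carries the same underlying content.
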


\begin{proof}
The same construction as in the proof of Lemma \ref{lem:extend_Clifford_action} applies: Let $\Pi E$ be the $\Z_2$-graded $k$-vector space with the $\Z_2$-grading of $E$ reversed. Since $\Pi E$ is also universal, we have an isometry $E \cong E \oplus \Pi E$. As $\Z_2$-graded $k$-vector spaces, we also have $E \oplus \Pi E \cong E \otimes k^2$, where $k^2$ is the $\Z_2$-graded $k$-vector space whose degree $0$ and $1$ parts are $k$. The action of $e_i \in \R^{p+q}$ on $E \oplus \Pi E$ is then identified with $\gamma_i \otimes 1$. Note that $k^2$ can be a $\Z_2$-graded $Cl_{1, 0}$-module, and hence the $Cl_{p, q}$-action on $E \otimes k^2$ extends to a $Cl_{p+1, q}$-action. Now the additional $Cl_{1, 0}$-action provides $\gamma \in \Fred^{(p, q)}_k(E)^\dagger$. 
\end{proof}

\begin{lem} \label{lem:contractible_basic_case}
$\Fred^{(p, q)}_k(E)^\dagger$ is contractible for any universal $E$.
\end{lem}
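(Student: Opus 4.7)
The plan is to deduce contractibility from Kuiper-type theorems on infinite-dimensional unitary groups, following the strategy of Atiyah--Singer \cite{A-Si}. Fix $A_0 \in \Fred^{(p, q)}_k(E)^\dagger$, which exists by the previous lemma. The data $(\epsilon, \gamma_1, \ldots, \gamma_{p+q}, A_0)$ make $E$ into a $\Z_2$-graded $Cl_{p+1, q}$-module over $k$, and any other element $A \in \Fred^{(p, q)}_k(E)^\dagger$ provides a second such structure with the same restriction to $Cl_{p, q}$.

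First I would realize $\Fred^{(p, q)}_k(E)^\dagger$ as a single orbit under the conjugation action of the group $G$ of even $k$-linear unitary operators on $E$ commuting with $\gamma_1, \ldots, \gamma_{p+q}$. Surjectivity of the orbit map $\Theta : G \to \Fred^{(p, q)}_k(E)^\dagger$, $U \mapsto U A_0 U^{-1}$, amounts to the existence of a unitary intertwiner between the two $Cl_{p+1, q}$-module structures, which is guaranteed by the structure theorem for $\Z_2$-graded Clifford modules combined with the universality of $E$ (both structures decompose into the same isotypic components, each appearing with the countably-infinite multiplicity forced by universality). The compact open topology on $\Fred^{(p, q)}_k(E)$ restricts to the compact open topology on the subspace $\Fred^{(p, q)}_k(E)^\dagger$, since the norm factor $A^2 + \id$ vanishes identically there, so $\Theta$ is continuous. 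The stabilizer of $A_0$ is the subgroup $H \subset G$ of operators additionally commuting with $A_0$.

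Next I would identify $G$ and $H$ with products of infinite-dimensional classical unitary groups. By the structure theorem, as a $\Z_2$-graded $Cl_{p, q}$-module, $E$ decomposes into isotypic pieces $M_\alpha \otimes \mathcal{H}_\alpha$, with $\mathcal{H}_\alpha$ a separable infinite-dimensional Hilbert space over the commutant division algebra $k_\alpha \in \{\R, \C, \mathbb{H}\}$, yielding a topological isomorphism $G \cong \prod_\alpha U(\mathcal{H}_\alpha)$. A parallel decomposition coming from the finer $Cl_{p+1, q}$-action gives an analogous product formula for $H$. By Kuiper's theorem and its real and quaternionic versions in the compact open topology (cf.\ \cite{A-Se}), each $U(\mathcal{H}_\alpha)$ is contractible, hence so are $G$ and $H$.

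Finally, I would check that $\Theta$ exhibits $G$ as a locally trivial principal $H$-bundle over $\Fred^{(p, q)}_k(E)^\dagger$; the long exact sequence in homotopy then forces $\pi_n(\Fred^{(p, q)}_k(E)^\dagger) = 0$ for all $n$, and the CW-type structure on the quotient upgrades weak contractibility to contractibility. The main obstacle is verifying local triviality of the orbit map in the compact open topology; as a cleaner alternative one can bypass the fibration argument entirely by constructing a strong deformation retraction of $\Fred^{(p, q)}_k(E)^\dagger$ onto $\{ A_0 \}$ directly: pick a continuous contraction $c : G \times [0, 1] \to G$ of $G$ onto $\id$ and, after writing $A = \Theta(U)$, define the retraction to be $(A, t) \mapsto c(U, t) A_0 c(U, t)^{-1}$, which is well defined since it depends only on the coset of $U$ modulo $H$.
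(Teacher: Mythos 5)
Your plan takes a genuinely different route from the paper, which instead uses the Atiyah--Segal ``moving wall'' contraction: it picks an isometry $E\cong L^2([0,1],E)$ and defines $h_t(A) = i_tQ_t^{-1}AQ_tR_t + (1-P_t)\gamma_*$, a compression of $A$ to a shrinking subspace padded by a fixed base point $\gamma_*$. This is continuous in the compact open topology, while the conjugation orbits you work with are invariants of the norm topology. That distinction is not cosmetic here, and it is where your argument breaks down.

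The key gap is the surjectivity of the orbit map $\Theta : G \to \Fred^{(p,q)}_k(E)^\dagger$. You assert that universality of $E$ as a $Cl_{p,q}$-module forces the two $Cl_{p+1,q}$-module structures $(E,\gamma_\bullet,A_0)$ and $(E,\gamma_\bullet,A)$ to be isomorphic, ``both decomposing into the same isotypic components with infinite multiplicity.'' This is false: universality of $E$ over $Cl_{p,q}$ says nothing about the $Cl_{p+1,q}$-isotypic multiplicities of the \emph{extension} determined by $A$. A concrete counterexample already occurs for $(p,q) = (1,0)$ and $k = \C$. Write $E = \Delta \otimes \mathcal{H}$ with $\Delta = \C^2$, $\epsilon = \sigma_z\otimes 1$, $\gamma_1 = i\sigma_y\otimes 1$. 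Then every $A \in \Fred^{(1,0)}_\C(E)^\dagger$ has the form $A = \sigma_x\otimes B$ with $B^*=-B$, $B^2 = -\mathrm{id}$, and the even unitaries commuting with $\gamma_1$ are exactly $1\otimes W$ with $W\in U(\mathcal{H})$. Two such $A$, $A'$ are conjugate in $G$ iff the self-adjoint involutions $iB$, $iB'$ have the same pair of eigenspace dimensions $(\dim\ker(iB-1),\dim\ker(iB+1))$; in particular $B = i\cdot\mathrm{id}$ and $B' = -i\cdot\mathrm{id}$ both lie in $\Fred^{(1,0)}_\C(E)^\dagger$ but are not in the same $G$-orbit (they give graded $Cl_{2,0}^\C$-module structures whose volume elements $\gamma_1 A$ act on $E^0$ by opposite scalars). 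So $\Theta$ has infinitely many orbits. This is exactly the phenomenon behind the ``contractible component'' discussion of Atiyah--Singer and the paper's closing remark: in the \emph{norm} topology the space is disconnected, and conjugation cannot move between components. Even if Kuiper-type contractibility shows each orbit is contractible, that is strictly weaker than contractibility of the whole space, so the conclusion does not follow. The paper's contraction $h_t$ avoids this obstruction precisely because it is not conjugation by unitaries of $E$ but a compression plus padding, which is continuous in the compact open topology and is free to cross between the norm-components.

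Two secondary points. First, even on a single orbit, your ``cleaner alternative'' $(A,t)\mapsto c(U,t)A_0 c(U,t)^{-1}$ is not automatically well defined: for an arbitrary contraction $c$ of $G$, replacing $U$ by $UV$ with $V\in H$ does not give $c(U,t)^{-1}c(UV,t)\in H$, so the formula depends on the lift $U$ and not only on $\Theta(U)$. Making this work is equivalent to producing an $H$-equivariant contraction or a global section of $G\to G/H$, i.e.\ essentially the local triviality you were trying to avoid. Second, even if local triviality were established and $G$, $H$ shown contractible, the fibration argument would only yield weak contractibility, and promoting this to genuine contractibility requires a CW (or similar) structure on the orbit space, which the compact open topology does not obviously supply. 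None of these issues arise in the paper's direct construction of an explicit homotopy to a point.
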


\begin{proof}
First of all, we notice the identification
$$
\Fred^{(p, q)}_k(E)^\dagger =
\left\{
A \in \mathrm{End}(E) \bigg|\ 
\begin{array}{ll}
A^* = -A, \ 
A^2 + \id = 0, \
A \epsilon = - \epsilon A, \\
A\gamma_i = -\gamma_iA \ (i = 1, \ldots, p+q) 
\end{array}
\right\},
$$
where $\epsilon$ and $\gamma_i$ are the $\Z_2$-grading and the $Cl_{p, q}$-action on $E$, respectively. Thus, $\Fred^{(p, q)}_k(E)^\dagger$ is topologized by the inclusion $\Fred^{(p, q)}_k(E)^\dagger \to \mathrm{End}(E)$ and the compact open topology on $\mathrm{End}(E)$, which allows us to prove the present lemma as a generalization of Proposition A2.1 in \cite{A-Se}. Since $E$ is universal, we have an isometry $E \cong L^2([0, 1], E) = L^2([0, 1]) \otimes E$, where $L^2([0, 1]) = L^2([0, 1], k)$ is the space of $k$-valued $L^2$-functions on the interval $[0, 1]$. For $t \in [0, 1]$, we let $R_t$ and $i_t$ be
\begin{align*}
R_t &: L^2([0, 1], E) \to L^2([0, t], E), &
&\mbox{restriction}, \\
i_t &: L^2([0, t], E) \to L^2([0, 1], E), &
&\mbox{inclusion}. 
\end{align*}
By construction, the composition $P_t = i_tR_t$ is the orthogonal projection onto $L^2([0, t], E) \subset L^2([0, 1], E)$, and $i_tR_t$ is the identity of $L^2([0, t], E)$. For $t \in (0, 1]$, we also let $Q_t$ be the isometric isomorphism given by
\begin{align*}
Q_t &: L^2([0, t], E) \to L^2([0, 1], E), &
(Q_tf)(x) &= t^{\frac{1}{2}}f(tx).
\end{align*}
As a base point $\gamma_* \in \Fred^{(p, q)}_k(L^2([0, 1], E))^\dagger$, we choose $\gamma_* = 1 \otimes \gamma$, where $\gamma \in \Fred^{(p, q)}_k(E)^\dagger$. We now define $h_t : \mathrm{End}(L^2([0, 1], E)) \to \mathrm{End}(L^2([0, 1], E))$ by
$$
h_t(A) = i_t Q_t^{-1}AQ_tR_t + (1 - P_t)\gamma_*
$$
for $t \in (0, 1]$, and $h_0(A) = \gamma_*$. As in \cite{A-Se}, we can see the continuity of
\begin{align*}
h &: \
\mathrm{End}(L^2([0, 1], E)) \times [0, 1] \to \mathrm{End}(L^2([0, 1], E)), 
&
(A, t) \mapsto h_t(A).
\end{align*}
We can also see $h_t(A) \in \Fred^{(p, q)}_k(L^2([0, 1], E))^\dagger$ for any $A \in \Fred^{(p, q)}_k(L^2([0, 1], E))^\dagger$ and $t \in [0, 1]$. Therefore $h$ contracts $\Fred^{(p, q)}_k(L^2([0, 1], E))^\dagger$ to the base point. 
\end{proof}

\begin{lem}[weak periodicity] \label{lem:weak_periodicity_most_basic}
In the real case $k = \R$, there are natural homeomorphisms
$$
\Fred^{(p, q)}_{\R} \cong
\Fred^{(p+1, q+1)}_{\R} \cong
\Fred^{(p+8, q)}_{\R} \cong
\Fred^{(p, q+8)}_{\R}.
$$
In the complex case $k = \C$, there are natural homeomorphisms
$$
\Fred^{(p, q)}_{\C} \cong
\Fred^{(p+1, q+1)}_{\C} \cong
\Fred^{(p+2, q)}_{\C} \cong
\Fred^{(p, q+2)}_{\C}.
$$
\end{lem}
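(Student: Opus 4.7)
The plan is to transcribe the construction in the proof of Lemma \ref{lem:weak_periodicity} to the present pointwise setting, where the role of the $\phi$-twisted locally universal bundle on $\X$ is played by a universal $\Z_2$-graded $Cl_{p,q}$-module $E$ over $k$ (Lemma \ref{lem:universal_clifford_module}). The three claimed isomorphisms in each case parallel the three isomorphisms in that lemma.

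First I would establish the key homeomorphism $\Fred^{(p,q)}_k \cong \Fred^{(p+1,q+1)}_k$. Fix the irreducible real $\Z_2$-graded $Cl_{1,1}$-module $\Delta_{1,1}$ together with the explicit $\epsilon$, $\gamma_1$, $\gamma_2$ written in Lemma \ref{lem:weak_periodicity}; in the complex case use its complexification. If $E$ is universal over $Cl_{p,q}$, then $E \otimes \Delta_{1,1}$ is universal over $Cl_{p+1,q+1}$. The assignment $A \mapsto A \otimes 1_{\Delta_{1,1}}$ then defines a map
\[
\Phi : \Fred^{(p,q)}_k(E) \longrightarrow \Fred^{(p+1,q+1)}_k(E \otimes \Delta_{1,1}),
\]
since $A \otimes 1$ is skew-adjoint, $(A\otimes 1)^2 + \id = (A^2+\id)\otimes 1$ is compact, has the right spectrum, has degree $1$, anti-commutes with the original $Cl_{p,q}$-action $\gamma_i \otimes 1$, and anti-commutes with the two new generators $1 \otimes \gamma_1$, $1 \otimes \gamma_2$. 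A direct linear-algebra computation (the commutant of the $Cl_{1,1}$-action on $\Delta_{1,1}$ inside $\mathrm{End}(\Delta_{1,1})$ is just the scalars) shows that any odd operator $B$ on $E \otimes \Delta_{1,1}$ anti-commuting with $1 \otimes \gamma_1$ and $1 \otimes \gamma_2$ is uniquely of the form $A \otimes 1$. This gives a bijective inverse $\Phi^{-1}$.

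Next I would check that $\Phi$ and $\Phi^{-1}$ are continuous with respect to the topology defined by the embedding $\Fred \hookrightarrow \mathrm{End} \times \mathrm{K}$. Tensoring with the fixed finite-dimensional $\Delta_{1,1}$ is continuous in the compact-open topology on $\mathrm{End}$ and in the operator norm on $\mathrm{K}$. Conversely, $\Phi^{-1}$ is realized by restricting $B$ to the summand $E \otimes \Delta_{1,1}^0 \cong E$, and restriction to a closed subspace is continuous in both topologies; this is the main technical point that needs care, but it reduces to the fact that the map sending a bounded operator $B$ on $E \otimes \Delta_{1,1}$ to its matrix block with respect to the finite decomposition by $\Delta_{1,1}^0 \oplus \Delta_{1,1}^1$ is compact-open continuous. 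Similarly, $B^2 + \id = (A^2+\id) \otimes 1$, and the projection to the block $A^2 + \id$ is norm continuous.

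For the eight-fold periodicity in the real case, I would iterate the isomorphism four times to obtain $\Fred^{(p,q)}_\R \cong \Fred^{(p+4,q+4)}_\R$, and then use the standard algebraic trick: if $\gamma_1,\ldots,\gamma_4$ generate a real $Cl_{4,0}$-action on a universal module $E$, the operators $\gamma'_i = \gamma_i \gamma_1\gamma_2\gamma_3\gamma_4$ generate a real $Cl_{0,4}$-action on the same underlying $\Z_2$-graded space, and the assignment of $\gamma_i \leftrightarrow \gamma'_i$ induces homeomorphisms $\Fred^{(p+8,q)}_\R \cong \Fred^{(p+4,q+4)}_\R \cong \Fred^{(p,q+8)}_\R$, since the anti-commutation condition with the Clifford generators is translated unambiguously and the remaining data ($A$, $A^2+\id$) are untouched. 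For the complex case, the additional identification $\gamma'_j = i\gamma_j$ converts $\gamma_j^2 = -1$ into $(\gamma'_j)^2 = +1$ (and vice versa), yielding $\Fred^{(p+2,q)}_\C \cong \Fred^{(p+1,q+1)}_\C \cong \Fred^{(p,q+2)}_\C$ from the first step.

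The main obstacle is the verification that $\Phi^{-1}$ is continuous in the compact-open topology: the topology on $\Fred^{(p,q)}_k$ is a hybrid of compact-open on $\mathrm{End}(E)$ and operator norm on $\mathrm{K}(E)$, and one must confirm that matrix-block projection is continuous in this hybrid topology. This is straightforward but deserves a careful statement, and once handled, the rest of the argument is a purely algebraic repackaging of the corresponding steps in Lemma \ref{lem:weak_periodicity}.
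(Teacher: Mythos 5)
Your proposal is correct and follows essentially the same route as the paper's proof: tensoring with the irreducible $\Z_2$-graded $Cl_{1,1}$-module $\Delta_{1,1}$ (or its complexification) for the $(p,q)\to(p+1,q+1)$ step, checking that $A\mapsto A\otimes 1$ is a homeomorphism by exhibiting an inverse and verifying continuity, iterating four times, and passing between $Cl_{4,0}$- and $Cl_{0,4}$-structures via $\gamma'_i = \gamma_i(\gamma_1\cdots\gamma_4)$ (resp.\ $i\gamma_j$ in the complex case). The only addition you make is to flag explicitly the continuity of $\Phi^{-1}$ in the hybrid compact-open/norm topology, a point the paper leaves to ``a direct computation,'' but this is a matter of emphasis rather than a different argument.
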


\begin{proof}
The proof is essentially the same as Lemma \ref{lem:weak_periodicity}. In the real case, let $\Delta_{1,1} = \R^2$ be the $\Z_2$-graded $Cl_{1,1}$-module whose $\Z_2$-grading $\epsilon$ and $Cl_{1,1}$-action $\gamma_i$ are
\begin{align*}
\epsilon 
&=
\left(
\begin{array}{rr}
1 & 0 \\
0 & -1
\end{array}
\right),
&
\gamma_1
&=
\left(
\begin{array}{rr}
0 & -1 \\
1 & 0
\end{array}
\right),
&
\gamma_2
&=
\left(
\begin{array}{rr}
0 & 1 \\
1 & 0
\end{array}
\right).
\end{align*}
Since $\Delta_{1,1}$ is irreducible, the tensor product induces an equivalence of categories
$$
\cdot \otimes \Delta_{1,1} : \
\Vect^{(p, q)}_{\R} \to \Vect^{(p+1, q+1)}_{\R}.
$$
Thus, in particular, if $E \in \Vect_{\R}^{(p, q)}$ is universal, then so is $E \otimes \Delta_{1,1} \in \Vect_{\R}^{(p, q)}$. Now, the functor induces a continuous map
\begin{align*}
&\Fred^{(p, q)}_{\R}(E) \to \Fred^{(p+1, q+1)}_{\R}(E \otimes \Delta_{1,1}), &
&A \mapsto A \otimes 1.
\end{align*}
By a direct computation for example, we can verify that this map is bijective, and also a homeomorphism. The iteration of this homeomorphism gives
$$
\Fred^{(p, q)}_{\R}(E) \cong 
\Fred^{(p+4, q+4)}_{\R}(E \otimes \Delta_{1,1}^{\otimes 4}).
$$
In general, if $e_1, \cdots, e_4$ generate $Cl_{4, 0}$, then $e'_1, \cdots, e'_4$ generate $Cl_{0, 4}$, where $e'_i = e_i(e_1 \cdots e_4)$. As a result, we have the equivalences of categories
$$
\Vect_{\R}^{(p + 8, q)} \simeq
\Vect_{\R}^{(p + 4, q + 4)} \simeq
\Vect_{\R}^{(p, q+8)},
$$
and also homeomorphisms
$$
\Fred^{(p+8, q)}_{\R} \cong
\Fred^{(p+4, q + 4)}_{\R} \cong
\Fred^{(p, q+8)}_{\R}.
$$
In the complex case ($k = \C$), we consider $\Delta_{1,1}^{\C} = \Delta_{1,1} \otimes \C$, which is an irreducible $\Z_2$-graded complex module over $Cl_{1,1}$. As in the real case, we have a homeomorphism
\begin{align*}
&\Fred^{(p, q)}_{\C}(E) \to 
\Fred^{(p+1, q+1)}_{\C}(E \otimes \Delta^{\C}_{1,1}), &
&A \mapsto A \otimes 1.
\end{align*}
If $\gamma_j$ acts on a (universal) module $E \in \Vect_{\R}^{(p, q)}$ by $\gamma_i^2 = \pm 1$, then $i\gamma_j$ acts on $E$ by $(i\gamma_j)^2 = \mp 1$. Hence we have equivalence of categories
$$
\Vect_{\C}^{(p + 2, q)} \simeq
\Vect_{\C}^{(p + 1, q + 1)} \simeq
\Vect_{\C}^{(p, q + 2)},
$$
and also homeomorphisms
$$
\Fred^{(p + 2, q)}_{\C} \cong
\Fred^{(p + 1, q + 1)}_{\C} \cong
\Fred^{(p, q + 1)}_{\C},
$$
which completes the proof.
\end{proof}

For further analysis of $\Fred^{(p, q)}_k(E)$, it is useful to express this space in terms of an \textit{ungraded} Clifford module. Lemma \ref{lem:weak_periodicity_most_basic} allows us to set $q = 0$. For a universal $Cl_{p, 0}$-module $E$, we can assume that $E^0 = E^1 = \hat{E}$ is a separable infinite-dimensional Hilbert space. Then the $\Z_2$-grading $\epsilon$ on $E$ and the actions $\gamma_i$ of the Clifford algebra $Cl_{p, 0}$ are expressed as
\begin{align*}
\epsilon 
&=
\left(
\begin{array}{rr}
1 & 0 \\
0 & -1
\end{array}
\right),
&
%%%%%
\gamma_1
&=
\left(
\begin{array}{cc}
0 & -1 \\
1 & 0
\end{array}
\right),
&
%%%%%
\gamma_i
&=
\left(
\begin{array}{cc}
0 & \hat{\gamma}_i \\
\hat{\gamma}_i & 0
\end{array}
\right),
\ \
(i \ge 2)
\end{align*}
where the skew-adjoint maps $\hat{\gamma}_i : \hat{E} \to \hat{E}$, ($2 \le i \le p$) make $\hat{E}$ into an ungraded module over $Cl_{p-1, 0}$ for $p \ge 2$.

\begin{lem} \label{lem:bijection_graded_vs_ungraded}
We have the following bijections 
\begin{align*}
\Fred^{(0,0)}_k(E)
&\cong
\left\{
\hat{A} \in \mathrm{End}(\hat{E}) 
|\
\hat{A}^*\hat{A} - \id, \hat{A}\hat{A}^* - \id \in \mathrm{K}(\hat{E}), \
\lVert \hat{A} \rVert = 1
\right\}, \\
%%%%%
\Fred^{(1,0)}_k(E)
&\cong
\left\{
\hat{A} \in \mathrm{End}(\hat{E}) 
|\
\hat{A}^2 + \id \in \mathrm{K}(\hat{E}), \
\lVert \hat{A} \rVert = 1, \
\hat{A}^* = - \hat{A}
\right\}.
\end{align*}
If $p \ge 2$, then there is the following bijection
$$
\Fred^{(p,0)}_k(E)
\cong
\left\{
\hat{A} \in \mathrm{End}(\hat{E}) 
\bigg|
\begin{array}{l}
\hat{A}^2 + \id \in \mathrm{K}(\hat{E}), \
\lVert \hat{A} \rVert = 1, \
\hat{A}^* = - \hat{A}, \\
\hat{\gamma}_i \hat{A} = - \hat{A}\hat{\gamma}_i \ (i = 2, \ldots, p)
\end{array}
\right\}.
$$
\end{lem}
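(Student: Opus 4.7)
The plan is to prove all three bijections by the same direct strategy: parametrize the block matrix of $A$ relative to the splitting $E = \hat E \oplus \hat E$ determined by $\epsilon$, translate each of the conditions defining $\Fred^{(p,0)}_k(E)$ into conditions on the off-diagonal block $\hat A$, and check that the correspondence is a bijection.

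First I would handle $p = 0$. Since $A$ is degree $1$, writing $A$ in the $\epsilon$-decomposition forces $A = \begin{pmatrix} 0 & B \\ C & 0 \end{pmatrix}$, and the skew-adjointness $A^* = -A$ forces $B = -C^*$. Setting $\hat A = C$ gives $A = \begin{pmatrix} 0 & -\hat A^* \\ \hat A & 0 \end{pmatrix}$. A direct computation yields $A^2 = -\mathrm{diag}(\hat A^*\hat A,\ \hat A\hat A^*)$, so $A^2 + \id$ is compact exactly when both $\hat A^*\hat A - \id$ and $\hat A\hat A^* - \id$ are compact. The spectral condition $\mathrm{Spec}(A)\subset[-i,i]$ is equivalent to $\lVert A\rVert\le 1$, which in turn is equivalent to $\lVert\hat A\rVert\le 1$ (since $\lVert A\rVert^2=\lVert A^*A\rVert=\max(\lVert\hat A^*\hat A\rVert,\lVert\hat A\hat A^*\rVert)=\lVert\hat A\rVert^2$); conversely, compactness of $\hat A^*\hat A-\id$ on the infinite-dimensional Hilbert space $\hat E$ forces the essential spectrum of $\hat A^*\hat A$ to equal $\{1\}$, whence $\lVert\hat A\rVert\ge 1$. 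Together these give $\lVert\hat A\rVert=1$, so the assignment $A\mapsto\hat A$ is a bijection onto the stated set.

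For $p = 1$ I keep the same parametrization and impose in addition $A\gamma_1=-\gamma_1 A$ with $\gamma_1=\begin{pmatrix}0 & -1\\ 1 & 0\end{pmatrix}$. A one-line block computation gives
\[
A\gamma_1+\gamma_1 A
=\begin{pmatrix}-(\hat A^*+\hat A) & 0 \\ 0 & -(\hat A+\hat A^*)\end{pmatrix},
\]
so the anticommutation with $\gamma_1$ is equivalent to $\hat A^*=-\hat A$. Under this constraint the compactness conditions $\hat A^*\hat A-\id\in\mathrm K(\hat E)$ and $\hat A\hat A^*-\id\in\mathrm K(\hat E)$ reduce to the single condition $\hat A^2+\id\in\mathrm K(\hat E)$, and the norm analysis from the previous paragraph still yields $\lVert\hat A\rVert=1$. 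This gives the claimed bijection for $p=1$.

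For $p\ge 2$ I would simply iterate: with $\gamma_i=\begin{pmatrix}0 & \hat\gamma_i\\ \hat\gamma_i & 0\end{pmatrix}$ and $A$ as above (already skew with $\hat A^*=-\hat A$), the block calculation gives
\[
A\gamma_i+\gamma_i A
=\begin{pmatrix}\hat\gamma_i\hat A+\hat A\hat\gamma_i & 0 \\ 0 & \hat\gamma_i\hat A+\hat A\hat\gamma_i\end{pmatrix},
\]
so the anticommutation $A\gamma_i=-\gamma_i A$ on $E$ is equivalent to $\hat A\hat\gamma_i=-\hat\gamma_i\hat A$ on $\hat E$. All remaining conditions are identical to the $p=1$ case, so the map $A\mapsto\hat A$ is again a bijection onto the indicated set. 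The argument is essentially a collection of elementary matrix identities; the only mildly delicate point is the norm equality $\lVert\hat A\rVert=1$, which I expect to be the main technical nuisance, since one must combine the spectral bound with the fact that compact perturbations of the identity on an infinite-dimensional Hilbert space have essential spectrum $\{1\}$.
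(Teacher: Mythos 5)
Your proposal follows exactly the same strategy as the paper's proof: write $A$ in block form with respect to the $\Z_2$-grading $E = \hat E \oplus \hat E$, use skew-adjointness to reduce to a single block $\hat A$, and translate each defining condition. The block computations and the norm argument (combining $\|A\| = \|\hat A\|$ from the spectral condition with $\|\hat A\| \ge 1$ from compactness of $\hat A^*\hat A - \id$ on the infinite-dimensional $\hat E$) are all correct; the paper simply states that the assignment $A \mapsto \hat A$ induces the bijections without spelling out these details.
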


\begin{proof}
Any skew-adjoint bounded operator $A : E \to E$ of degree $1$ is expressed as
$$
A=
\left(
\begin{array}{cc}
0 & -\hat{A}^* \\
\hat{A} & 0
\end{array}
\right)
$$
by using a bounded operator $\hat{A} : \hat{E} \to \hat{E}$. The assignment $A \mapsto \hat{A}$ induces all the bijections stated in the lemma. 
\end{proof}

Let $\hat{E}$ be as before. We introduce
$$
\overline{\Omega}_0
= 
\left\{
T \in \mathrm{End}(\hat{E}) |\
T - \id \in \mathrm{K}(\hat{E}), \ T^*T = TT^* = \id
\right\}.
$$
For $p \ge 2$, we follow \cite{A-Si} (\S 4) to introduce
$$
\overline{\Omega}_{p-1}
=
\left\{
T \in \mathrm{End}(\hat{E}) \bigg|\
\begin{array}{l}
T - \hat{\gamma}_p \in \mathrm{K}(\hat{E}), \ T^*T = TT^* = \id, \\
T^2 = -\id, \
T\hat{\gamma}_i = - \hat{\gamma}_i T \ (i \le p-1)
\end{array}
\right\}.
$$
For $p \ge 1$, we topologize $\overline{\Omega}_{p-1}$ by the operator norm topology.

\begin{lem} \label{lem:the_key_to_periodicity}
There is a homotopy equivalence $\Fred^{(p, 0)}_k \simeq \overline{\Omega}_{p-1}$ for $p \ge 1$.
\end{lem}

\begin{proof}
The lemma can be shown by adapting the argument to prove Proposition (3.3) and Proposition (4.2) in \cite{A-Si}.

In the case of $p = 1$, we use the bijection in Lemma \ref{lem:bijection_graded_vs_ungraded} to introduce a map
\begin{align*}
\varpi_1 &: \ \Fred^{(1, 0)}_k(\hat{E}) \to 
\overline{\Omega}_0, 
&
\varpi_1(\hat{A}) &= - e^{\pi \hat{A}}.
\end{align*}
In the same way as in the proof of Lemma \ref{lem:key_continuity}, we can show that $\varpi_1$ above is well-defined and continuous. (In the case of $k = \R$, we consider the complexification $\hat{E} \otimes \C$ and its obvious real structure.) In view of the spectral decomposition of unitary operators, $\varpi_1$ is surjective. We would then like to apply Lemma (3.7) in \cite{A-Si} to $\varpi_1$. For this aim, it is enough to check that $\varpi_1 : \varpi_1^{-1}(D(n)) \to D(n)$ is a fiber bundle with contractible fiber, where
$$
D(n) = \{ T \in \overline{\Omega}_0 |\ \mathrm{rank}(\id - T) = n \}
$$
for $n \ge 0$, as defined in \cite{A-Si}. A consideration similar to the proof of Lemma (3.6) in \cite{A-Si} shows that $\varpi_1 : \varpi_1^{-1}(D(n)) \to D(n)$ is the fiber bundle associated to a Hilbert space subbundle $\{ \mathrm{Ker}(\id - T) \}_{T \in D(n)}$ of $D(n) \times \hat{E}$ whose fiber is identified with
$$
\{ \hat{A} \in \mathrm{End}(\hat{E}') |\
\hat{A}^2 = - \id, \lVert \hat{A} \rVert = 1, \hat{A}^* = - \hat{A}
\},
$$
where $\hat{E}' \subset \hat{E}$ is the orthogonal complement of a finite rank subspace of the form $\mathrm{Ker}(\id - T)$ with $T \in D(n)$. The space above is identified with $\Fred^{(1, 0)}_k(E')^\dagger \subset \Fred^{(1, 0)}_k(E')$ under Lemma \ref{lem:bijection_graded_vs_ungraded}, and is contractible by Lemma \ref{lem:contractible_basic_case}, since $E' = \hat{E}' \oplus \hat{E}'$ is a universal $Cl_{1, 0}$-module. As a result, $\varpi_1$ is a homotopy equivalence. 

In the case of $p \ge 2$, we consider
\begin{align*}
\varpi_p &: \ \Fred^{(p, 0)}_k(\hat{E}) \to 
\overline{\Omega}_{p-1}, 
&
\varpi_p(\hat{A}) &= - \hat{\gamma}_pe^{\pi \hat{A} \hat{\gamma}_p}.
\end{align*}
We can see directly that $\varpi_p$ is well-defined. By the spectral decomposition of unitary operators, $\varpi_p$ is surjective. We can also see $\varpi_p$ is continuous as in Lemma \ref{lem:key_continuity}. For $n \ge 0$, let $D(n)$ be
$$
D(n) = \{ T \in \overline{\Omega}_{p-1} |\ 
\mathrm{rank}(\id + \hat{\gamma}_p T) = n \}.
$$
As in the case of $p = 1$, the restriction $\varpi_p : \varpi_p^{-1}(D(n)) \to D(n)$ is a fiber bundle. Its fiber is identified with
$$
\left\{
\hat{A} \in \mathrm{End}(\hat{E}) 
\bigg|
\begin{array}{l}
\hat{A}^2 = - \id, \
\hat{A}^* = - \hat{A}, \\
\hat{\gamma}_i \hat{A} = - \hat{A}\hat{\gamma}_i \ (i = 2, \ldots, p)
\end{array}
\right\}
\cong
\Fred^{(p,0)}_k(E)^\dagger,
$$
which is contractible by Lemma \ref{lem:contractible_basic_case}. As a result, $\varpi_p$ is a homotopy equivalence. 
\end{proof}

\begin{lem} \label{lem:atiyah_singer_map_basic_case}
For $p \ge 1$ and $q \ge 0$, there is a homotopy equivalence 
$$
\mathrm{AS} : \ \Fred^{(p, q)}_k \to \Omega\Fred^{(p-1, q)}_k,
$$
where $\Omega \Fred^{(p-1, q)}_k$ is the space of continuous paths in $\Fred^{(p-1, q)}_k$ from $\gamma_p$ to $-\gamma_p$.
\end{lem}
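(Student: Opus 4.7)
The plan is to reduce to the classical Atiyah--Singer periodicity theorem by modelling both sides with the spaces $\overline{\Omega}_n$ from Lemma~\ref{lem:he_key_to_periodicity}.

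First I would verify that $\mathrm{AS}$ is well-defined and continuous. For $A\in\Fred^{(p,q)}_k$ and $t\in[0,1]$, set $B_t=\gamma_p\cos\pi t+A\sin\pi t$. Skew-adjointness of $\gamma_p$ and $A$ together with the anticommutation $A\gamma_p=-\gamma_p A$ give $B_t^2=-\cos^2\pi t\cdot\id+\sin^2\pi t\cdot A^2$, so $B_t^2+\id=\sin^2\pi t\cdot(A^2+\id)$ is compact. Skew-adjointness and $\|B_t^2\|\le 1$ force $\mathrm{Spec}(B_t)\subset[-i,i]$, while the anticommutation with the remaining $\gamma_i$ ($i\ne p$) and with $\epsilon$ and the degree are inherited from $A$ and $\gamma_p$. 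Since $B_0=\gamma_p$ and $B_1=-\gamma_p$, we obtain $\mathrm{AS}(A)\in\Omega\Fred^{(p-1,q)}_k$, and continuity in the topologies of \S\S\ref{subsec:fredholm_family} is immediate from the formula.

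Next, using Lemma~\ref{lem:weak_periodicity_most_basic}, the homeomorphism $\Fred^{(p,q)}_k\cong\Fred^{(p-1,q-1)}_k$ (together with the $8$-fold periodicity in the real case and the $2$-fold periodicity in the complex case) is induced by tensoring with $\Delta_{1,1}$, an operation that evidently commutes with the Atiyah--Singer formula once a consistent choice of ``$\gamma_p$'' is made on both sides. Iterating this brings us into the case $q=0$ and $p\ge 1$, where it remains to show that $\mathrm{AS}:\Fred^{(p,0)}_k\to\Omega\Fred^{(p-1,0)}_k$ is a homotopy equivalence.

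For $p\ge 2$, Lemma~\ref{lem:he_key_to_periodicity} supplies homotopy equivalences $\varpi_p:\Fred^{(p,0)}_k\to\overline{\Omega}_{p-1}$ and $\Omega\varpi_{p-1}:\Omega\Fred^{(p-1,0)}_k\to\Omega\overline{\Omega}_{p-2}$. One checks by a direct exponential computation that the composite $\Omega\varpi_{p-1}\circ\mathrm{AS}\circ\varpi_p^{-1}$ is homotopic to the standard ``suspension'' map $\overline{\Omega}_{p-1}\to\Omega\overline{\Omega}_{p-2}$ which is proved to be a homotopy equivalence in \cite{A-Si} (this is the classical Bott periodicity theorem in the Fredholm setting, holding in both the real and complex cases). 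For the base case $p=1$ one uses Lemma~\ref{lem:bijection_graded_vs_ungraded} to identify $\Fred^{(0,0)}_k$ with a space of Fredholm partial isometries on the ungraded Hilbert space $\hat E$ and $\Fred^{(1,0)}_k\simeq\overline{\Omega}_0$ with the unitary group of operators differing from $\id$ by a compact operator; the map $\mathrm{AS}$ then agrees with the classical loop-space comparison map of \cite{A-Si}, which is a homotopy equivalence.

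The main obstacle is verifying commutativity up to an explicit homotopy of the square
$$
\begin{CD}
\Fred^{(p,0)}_k @>{\mathrm{AS}}>> \Omega\Fred^{(p-1,0)}_k \\
@VV{\varpi_p}V @VV{\Omega\varpi_{p-1}}V \\
\overline{\Omega}_{p-1} @>>> \Omega\overline{\Omega}_{p-2}
\end{CD}
$$
where the lower horizontal map is the standard loop-suspension map of \cite{A-Si}. The two routes around the square compose exponentials of skew-adjoint operators with the linear formula $\gamma_p\cos\pi t+A\sin\pi t$, and the required homotopy is constructed via a one-parameter family of spectral deformations. Care must be taken because $\Fred^{(p,0)}_k$ carries the compact-open topology whereas $\overline{\Omega}_n$ carries the operator-norm topology; the estimates in Lemma~\ref{lem:key_continuity} and the contraction argument of Lemma~\ref{lem:contractible_basic_case} provide the template, but their adaptation to the present setting is the technical heart of the proof.
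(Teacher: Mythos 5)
The gap you flag as ``the technical heart of the proof'' is a genuine gap, and your framing makes it harder than it needs to be. You try to conjugate $\mathrm{AS}$ by the maps $\varpi_p$ (which involve exponentials of operators) and compare the result to a ``standard suspension map'' on the $\overline{\Omega}$-spaces. But $\varpi_p$ is only a homotopy equivalence, so $\varpi_p^{-1}$ does not exist as a map; and the exponential formula defining $\varpi_p$ does not intertwine the linear homotopy $\gamma_p\cos\pi t+A\sin\pi t$ with anything standard on the $\overline{\Omega}$ side, so the homotopy commutativity of your square would have to be built explicitly. You do not build it, and it is not merely a routine spectral deformation.

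The paper avoids this issue entirely by a different intermediate. It introduces $\mathbb{F}^p_k$, the Atiyah--Singer model $F^p_k$ with the \emph{operator-norm} topology after discarding the contractible components (in the real and complex degrees where those occur), together with the inclusion $\mathbb{F}^p_k\to\Fred^{(p,0)}_k$, which is the identity on underlying sets. On this model $\mathrm{AS}:\mathbb{F}^p_k\to\Omega\mathbb{F}^{p-1}_k$ is verbatim the map of \cite{A-Si}, so its being a homotopy equivalence is a direct citation. The square
$$
\begin{CD}
\mathbb{F}^p_k @>>> \Fred^{(p,0)}_k \\
@V{\mathrm{AS}}VV @VV{\mathrm{AS}}V \\
\Omega\mathbb{F}^{p-1}_k @>>> \Omega\Fred^{(p-1,0)}_k
\end{CD}
$$
then commutes on the nose, because the horizontal arrows are set-theoretic inclusions (changes of topology) and both vertical arrows are given by the identical formula. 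What remains is to show the horizontal inclusions are homotopy equivalences, which the paper does by factoring both $\mathbb{F}^p_k$ and $\Fred^{(p,0)}_k$ through the \emph{same} space $\overline{\Omega}_{p-1}$, using Lemma~\ref{lem:he_key_to_periodicity} on one side and \cite{A-Si} on the other. This is the missing idea: do not conjugate $\mathrm{AS}$ through the $\varpi$'s, but transport it to a model where it is literally Atiyah--Singer's map and the comparison map is an inclusion, so no homotopy needs to be constructed.

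A minor point: your reduction to $q=0$ by iterating $\Fred^{(p,q)}_k\cong\Fred^{(p-1,q-1)}_k$ lands at $(p-q,0)$ and so requires $p\ge q$; for $q>p$ one must first apply the mod~$8$ periodicity from Lemma~\ref{lem:weak_periodicity_most_basic}. This is easily repaired but should be said.
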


\begin{proof}
The map $\mathrm{AS}$ is as given in \cite{A-Si}, up to the factor $\gamma_p$:
$$
\mathrm{AS}(A)(t) =
\gamma_p \cos \pi t + A \sin \pi t
= \gamma_p(\cos \pi t + A \gamma_p \sin \pi t)
= \gamma_p e^{\pi t A\gamma_p},
$$
which is continuous in the topology of $\Fred^{(p, q)}_k(E)$. This map is also compatible with the periodicity in Lemma \ref{lem:weak_periodicity_most_basic}. Thus, it suffices to consider the case of $q = 0$. To prove that $\mathrm{AS}$ is a homotopy equivalence, we tentatively define a space $F^p_k(E)$ to be $F^p_k(E) = \Fred^{(p, 0)}_k(E)$ as a set and topologize it by the operator norm topology. We then define a subspace $\mathbb{F}^p_k(E) \subset F^p_k(E)$, which is a model of the classifying space of $K$-theory \cite{A-Si}, as follows: For $A \in F^p_k(E)$, we put 
$$
w(A) = e_1 \cdots e_p A,
$$
and consider the restriction $w(A)|_{E^0}$, where $E^0$ is the degree $0$ part of a universal $Cl_{p, 0}$-module $E = E^0 \oplus E^1$. 
\begin{itemize}
\item
If $k = \C$ and $p = 1 \mod 4$, then $\mathbb{F}^p_k(E)$ consists of self-adjoint Fredholm operators $A$ such that $i^{-1}w(A)|_{E^0}$ are neither  essentially positive nor negative.

\item
If $k = \C$ and $p = 3 \mod 4$, then $\mathbb{F}^p_k(E)$ consists of self-adjoint Fredholm operators $A$ such that $w(A)|_{E^0}$ are neither  essentially positive nor negative.

\item
If $k = \R$ and $p = 3 \mod 4$, then $\mathbb{F}^p_k(E)$ consists of self-adjoint Fredholm operators $A$ such that $w(A)|_{E^0}$ are neither  essentially positive nor negative.

\item
Otherwise, $\mathbb{F}^p_k(E) = F^p_k(E)$.

\end{itemize}
A self-adjoint Fredholm operator is said to be essentially positive (resp.\ negative) if it is positive (resp.\ negative) on some invariant subspace of finite codimension. As shown in \cite{A-Si}(Proposition (3.3), Proposition (4.2)), if $p \ge 1$, then the space $\mathbb{F}^p_k = \mathbb{F}^p_k(E)$ is homotopy equivalent to the space $\overline{\Omega}_{p-1}$ considered in Lemma \ref{lem:the_key_to_periodicity}. The inclusion induces a continuous map $\mathbb{F}^p_k \to \Fred^{(p, 0)}_k$, and makes the following diagram commutative
$$
\begin{CD}
\mathbb{F}^p_k @>>> \Fred^{(p, 0)}_k \\
@VV{\simeq}V @V{\simeq}V{\varpi_p}V \\
\overline{\Omega}_{p-1} @= \overline{\Omega}_{p-1},
\end{CD}
$$
where the left and right vertical maps are the homotopy equivalences in \cite{A-Si}(Proposition (3.3), Proposition (4.2)) and Lemma \ref{lem:the_key_to_periodicity}, respectively. Consequently, the inclusion $\mathbb{F}^p_k \to \Fred^{(p, 0)}_k$ is a homotopy equivalence. Now, we also have the Atiyah-Singer map $\mathrm{AS} : \mathbb{F}^p_k \to \Omega \mathbb{F}^{p-1}_k$, which is a homotopy equivalence \cite{A-Si}(Lemma (2.6), (2.7), Proposition (2.8), Proposition (2.9), Proposition (4.2)). We clearly have the commutative diagram
$$
\begin{CD}
\mathbb{F}^p_k @>{\simeq}>> \Fred^{(p, 0)}_k \\
@V{\mathrm{AS}}V{\simeq}V @VV{\mathrm{AS}}V \\
\Omega\mathbb{F}^{p-1}_k @>{\simeq}>> \Omega\Fred^{(p-1, 0)}_k,
\end{CD}
$$
which implies that $\mathrm{AS} : \Fred^{(p, 0)}_k \to \Omega\Fred^{(p-1, 0)}_k$ is a homotopy equivalence.
\end{proof}

Slightly changing the notation in Definition \ref{dfn:gradation}, we introduce
$$
\Gr^{(p, q)}_k(E) 
= 
\left\{
\eta \in \mathrm{End}(E) 
\bigg|\
\begin{array}{l}
\eta^* = \eta, \ 
\eta^2 = \id, \
\eta - \epsilon \in \mathrm{K}(E) \\
\eta \gamma_i = - \gamma_i \eta \ \ (i = 1, \ldots, p+q)
\end{array}
\right\},
$$
where $E = (E, \epsilon)$ is a $\Z_2$-graded $k$-vector space which is a universal $Cl_{p, q}$-module. The set $\Gr^{(p, q)}_k(E)$ is topologized by the operator norm. Then, as shown in Lemma \ref{lem:key_continuity}, we have the continuous map
\begin{align*}
\vartheta &: \Fred^{(p, q)}_k(E) \to \Gr^{(q, p)}_k(\acute{E}), &
\vartheta(A) &= - e^{\pi A}\epsilon,
\end{align*}
where $\acute{E} = (E, \epsilon)$ as a $\Z_2$-graded $k$-vector space, whereas the $Cl_{q, p}$-action $\acute{\gamma}_i$ is defined as $\acute{\gamma}_i = \gamma_i\epsilon$ by using the original $Cl_{p, q}$-action $\gamma_i$ on $E$.

\begin{lem} \label{lem:Fredholm_vs_Karoubi_basic_case}
For any $p, q \ge 0$, $k = \R, \C$ and a universal $E$, the map 
$$
\vartheta : \Fred^{(p, q)}_k(E) \to \Gr^{(q, p)}_k(\acute{E})
$$
is a homotopy equivalence. 
\end{lem}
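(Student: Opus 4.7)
The plan is to prove the lemma in two stages: first reduce to the case $q=0$ via the periodicity of Lemma \ref{lem:weak_periodicity_most_basic}, and then, for $q=0$, compare $\vartheta$ with the homotopy equivalence $\varpi_p$ from Lemma \ref{lem:he_key_to_periodicity}. The periodicity isomorphisms $\Fred^{(p,q)}_k \cong \Fred^{(p+1,q+1)}_k$, induced by tensoring with the irreducible module $\Delta_{1,1}$, have exact analogues on the gradation side given by the same formula $\eta \mapsto \eta \otimes 1$. Because $e^{\pi(A\otimes 1)} = e^{\pi A}\otimes 1$ and the $\Z_2$-grading on $E\otimes\Delta_{1,1}$ factors as the tensor product of the gradings, $\vartheta$ commutes with these periodicities, so it suffices to treat $q=0$ and all $p\ge 0$.

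For $q=0$ and $p\ge 1$, I would work on the ``ungraded'' level $\hat E = E^1$ using Lemma \ref{lem:bijection_graded_vs_ungraded}. Anti-commutation of $\eta\in\Gr^{(0,p)}_k(\acute E)$ with $\acute\gamma_1 = \gamma_1\epsilon$ forces a block structure on $\eta$ relative to $E = \hat E\oplus\hat E$, from which I can extract an operator $\hat A_\eta\in\mathrm{End}(\hat E)$ that is unitary with $\hat A_\eta - \mathrm{id}$ compact; the remaining anti-commutations with $\acute\gamma_2,\ldots,\acute\gamma_p$ translate into compatibility identities involving $\hat\gamma_2,\ldots,\hat\gamma_p$. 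These identities allow me to define a continuous map $\Psi : \Gr^{(0,p)}_k(\acute E) \to \overline{\Omega}_{p-1}$ (by a formula of the shape $\Psi(\eta)=\hat\gamma_p \hat A_\eta$, with signs and ordering fixed so as to match $\varpi_p$) and to arrange the diagram
$$
\begin{CD}
\Fred^{(p,0)}_k(E) @>{\vartheta}>> \Gr^{(0,p)}_k(\acute E) \\
@V{\varpi_p}V{\simeq}V @VV{\Psi}V \\
\overline{\Omega}_{p-1} @= \overline{\Omega}_{p-1}
\end{CD}
$$
to commute, possibly up to the self-homeomorphism $A\mapsto -A$ of $\Fred^{(p,0)}_k(E)$. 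For the base case $p=q=0$, one unpacks $\Gr^{(0,0)}_k(E)$ directly as the Quillen-type Grassmannian of essentially unitary perturbations of $\mathrm{id}$, and $\vartheta$ becomes the classical homotopy equivalence $\hat A \mapsto -e^{-\pi\hat A}$ between the skew-adjoint Fredholm model and this Grassmannian.

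The main obstacle will be showing that $\Psi$ is a homotopy equivalence for $p\ge 2$. I would adapt the fiber-bundle argument from Lemma \ref{lem:he_key_to_periodicity}: stratify $\overline{\Omega}_{p-1}$ by suitable finite-rank conditions, verify that $\Psi$ restricts to a fiber bundle over each stratum, and identify each fiber with a contractible space of compatible gradations by means of Lemma \ref{lem:contractible_basic_case}. The delicate point is to choose $\Psi$ so that these fibers correspond to $\Fred^{(p,0)}_k(E')^\dagger$-type spaces for auxiliary universal modules $E'$; once this is in place, Lemma (3.7) of \cite{A-Si} applies, $\Psi$ becomes a homotopy equivalence, and commutativity of the diagram together with the known homotopy equivalence $\varpi_p$ shows that $\vartheta$ is a homotopy equivalence.
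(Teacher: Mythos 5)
Your proposal takes a genuinely different route from the paper's. The paper applies the fiber-bundle strategy of Lemma~\ref{lem:he_key_to_periodicity} \emph{directly to $\vartheta$}: it stratifies the target by $D(n) = \{\eta \mid \mathrm{rank}(\id - \eta\epsilon) = n\}$, checks that $\vartheta : \vartheta^{-1}(D(n)) \to D(n)$ is a fiber bundle whose fiber is a $\Fred^{(p,q)}_k(E')^\dagger$-type space (contractible by Lemma~\ref{lem:contractible_basic_case}), and invokes Lemma (3.7) of \cite{A-Si}. No reduction to $q = 0$ and no detour through $\varpi_p$ or $\overline{\Omega}_{p-1}$ is required; the argument treats all $p, q \ge 0$ uniformly, including the case $p = q = 0$ which your plan has to handle as a separate base case. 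Your route instead re-runs the same fiber-bundle analysis on the auxiliary map $\Psi$, after already having used it for $\varpi_p$ in Lemma~\ref{lem:he_key_to_periodicity} --- so you pay for the same machinery twice. What you buy is a closer alignment with the classical Atiyah--Singer spaces $\overline{\Omega}_{p-1}$, but the paper shows that alignment is not needed.

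Two technical points would need repair if you pursued your route. First, the periodicity on gradations is not $\eta \mapsto \eta \otimes 1$: that operator does not anticommute with the new Clifford generators on $\Delta_{1,1}$. The correct map is $\eta \mapsto \eta \otimes \epsilon_\Delta$, where $\epsilon_\Delta$ is the grading of $\Delta_{1,1}$; this is what makes $\vartheta$ compatible with $A \mapsto A \otimes 1$, since $\vartheta(A\otimes 1) = -e^{\pi A \otimes 1}(\epsilon\otimes\epsilon_\Delta) = \vartheta(A)\otimes\epsilon_\Delta$. Second, anticommutation with $\acute\gamma_1$ forces $\eta = \left(\begin{smallmatrix} a & b \\ -b & -a \end{smallmatrix}\right)$ with $a$ self-adjoint commuting with the $\hat\gamma_i$ and $b$ skew-adjoint anticommuting with them, and the unitary one must feed into $\Psi$ is $\hat A_\eta = a + \hat\gamma_p b$, not $a \pm b$; with that choice $\Psi(\eta) = \hat\gamma_p \hat A_\eta = \hat\gamma_p a - b$ lies in $\overline{\Omega}_{p-1}$ and satisfies $\Psi\circ\vartheta = \varpi_p$ on the nose. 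These are corrections of detail rather than of strategy, but as written the proposal's formulas do not yield a well-defined map into $\overline{\Omega}_{p-1}$.
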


\begin{proof}
For $n \ge 0$, we define
\begin{align*}
C(n) &= \{ \eta \in \Gr^{(p, q)}_k(\acute{E}) |\ 
\mathrm{rank}(\id - \eta \epsilon) \le n \}, 
\\
D(n) &= \{ \eta \in \Gr^{(p, q)}_k(\acute{E}) |\ 
\mathrm{rank}(\id - \eta \epsilon) = n \}.
\end{align*}
In the same way as in Lemma \ref{lem:the_key_to_periodicity}, we can see that $\vartheta : \vartheta^{-1}(D(n)) \to D(n)$ is a fiber bundle. The fiber of this bundle is contractible by Lemma \ref{lem:contractible_basic_case}. We would then like to apply Lemma (3.7) in \cite{A-Si}. For its application, we need to see that $C(n-1) \subset C(n)$ has a respectable open neighbourhood $U$. This can be shown in the same way as in \cite{A-Si}, since $\Gr^{(q, p)}_k(\acute{E})$ is topologized by the operator norm. 
\end{proof}

%\smallskip

\begin{rem}
Let $F^{p, q}_k$ denote the set $\Fred^{(p, q)}_k$ endowed with the operator norm topology. It is well-known \cite{A-Si} that $F^{p, q}_k$ admits contractible components when $p - q = 1 \mod 2$ for $k = \C$ and $p - q = 3 \mod 4$ for $k = \R$. Though is counter-intuitive from the viewpoint of the operator norm topology, Lemma \ref{lem:contractible_basic_case} implies that the space $\Fred^{(p, q)}_k$ is path connected. Hence we need not care about the ``contractible components'' in $\Fred^{(p, q)}_k$ to realize the classifying spaces of $K$-theories. 
\end{rem}

%%%%%%%%%%%%%%%%%%%%%%%%%%%%%%%%%%%%%%%%%%%%%%%%
\subsection{Postponed proof}

We summarize here the proof postponed from the main text in the reduction argument.

\begin{lem} \label{lem:locally_universal_bundle_point_case}
Let $G$ be a compact Lie group. For any homomorphisms $\phi : G \to \Z_2$ and $c : G \to \Z_2$, there exists a (locally) universal $(\phi, c)$-twisted vector bundle on $\pt//G$ with $Cl_{p, q}$-action.
\end{lem}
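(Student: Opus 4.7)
The plan is to reduce the existence of a locally universal $(\phi, c)$-twisted bundle with $Cl_{p,q}$-action on $\pt//G$ to the existence of a universal $Cl_{p', q'}$-module over $\R$ or $\C$ at a point (Lemma \ref{lem:universal_clifford_module}), by means of the Mackey decomposition already carried out in Theorem \ref{thm:mackey_decomposition} and the case-by-case corollaries Lemmas \ref{lem:mackey_decomposition_complex_case}, \ref{lem:mackey_decomposition_Z2_case}, \ref{lem:mackey_decomposition_D2_case}.

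First I would let $K \subset G$ denote the kernel of the pair $(\phi, c): G \to \Z_2 \times \Z_2$, which is a closed normal subgroup with $G/K$ finite of order $1$, $2$, or $4$. Depending on which of $\phi$ and $c$ is trivial and whether $\phi = c$, exactly one of Lemma \ref{lem:mackey_decomposition_complex_case}, Lemma \ref{lem:mackey_decomposition_Z2_case}, or Lemma \ref{lem:mackey_decomposition_D2_case} applies and supplies an equivalence of categories
$$
{}^\phi\Vect^{c + (p, q)}(\pt//G) \ \simeq\ \prod_{\lambda \in I}\Vect^{(p_\lambda, q_\lambda)}_{k_\lambda},
$$
where $I$ is an (at most countable) indexing set of $(G/K)$-orbits in $\widehat{K}$, each $k_\lambda \in \{\R, \C\}$, and each pair $(p_\lambda, q_\lambda)$ is a shifted Clifford bidegree read off from the explicit list in the cited lemma.

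Next, Lemma \ref{lem:universal_clifford_module} provides, in each factor category, a universal object $F_\lambda \in \Vect^{(p_\lambda, q_\lambda)}_{k_\lambda}$; concretely, a separable Hilbert $Cl_{p_\lambda, q_\lambda}$-module over $k_\lambda$ in which every finite-dimensional such module embeds with infinite multiplicity. I would then transport the collection $\{F_\lambda\}_\lambda$ across the equivalence via the inverse functor $\Psi$ constructed in the proof of Theorem \ref{thm:mackey_decomposition}, obtaining the twisted bundle
$$
E \ =\ \widehat{\bigoplus}_{\lambda \in I} V_\lambda \otimes F_\lambda
$$
on $\pt//G$, where the $V_\lambda$ are the finite-dimensional irreducible $K$-representations labelling the orbits and $\widehat{\bigoplus}$ is the $L^2$-completion; since each $V_\lambda$ is finite-dimensional and $I$ countable, $E$ is a separable Hilbert space, and the induced twisted $G$-action and $Cl_{p,q}$-action are as specified by $\Psi$. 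Local universality is then automatic: any $E' \in {}^\phi\Vect^{c + (p, q)}(\pt//G)$ corresponds under the equivalence to a family $\{F'_\lambda\}$ with each $F'_\lambda$ finite-dimensional, each $F'_\lambda$ admits an isometric embedding into $F_\lambda$ by universality, and these embeddings assemble, tensoring with $\id_{V_\lambda}$ and passing to the $L^2$-direct sum, into an isometric embedding $E' \hookrightarrow E$ that respects all the structure.

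The main obstacle I anticipate is checking that the topology involved behaves correctly: one must verify that the compact open topology on the structure group of $E$ (as required by Definition \ref{dfn:fredholm_family}'s underlying hypotheses on twisted bundles in the infinite-dimensional case) is compatible with the Mackey decomposition, so that the twisted $G$-action on $\widehat{\bigoplus}_\lambda V_\lambda \otimes F_\lambda$ is continuous and the embeddings $E' \hookrightarrow E$ produced by the universality of each $F_\lambda$ are continuous in the compact open topology. This should follow from the fact that each $V_\lambda$ is finite-dimensional (so $\mathrm{End}(V_\lambda \otimes F_\lambda)$ inherits a well-behaved compact open topology from $\mathrm{End}(F_\lambda)$) together with the fact that $G$ acts through the finite quotient $G/K$ on the decomposition, but verifying this carefully is the only substantive step beyond the formal application of the decomposition lemmas.
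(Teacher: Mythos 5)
Your proposal follows essentially the same route as the paper: reduce via the Mackey decomposition (Theorem~\ref{thm:mackey_decomposition} together with Lemmas~\ref{lem:mackey_decomposition_complex_case}, \ref{lem:mackey_decomposition_Z2_case}, \ref{lem:mackey_decomposition_D2_case}) to a countable product of categories $\Vect_{k_\lambda}^{(p_\lambda,q_\lambda)}$, invoke Lemma~\ref{lem:universal_clifford_module} for universal modules in each factor, and transport the $L^2$-direct sum back through $\Psi$, noting that countability of $\Lambda$ keeps the result separable. One small correction: when you verify local universality you write that the factors $F'_\lambda$ of an arbitrary $E' \in {}^\phi\Vect^{c+(p,q)}(\pt//G)$ are finite-dimensional, but they need not be (they are arbitrary separable Hilbert $Cl_{p_\lambda,q_\lambda}$-modules); since Lemma~\ref{lem:universal_clifford_module} provides embeddings for all such modules, not just finite-rank ones, the argument goes through unchanged.
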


\begin{proof}
By Lemma \ref{lem:mackey_decomposition_complex_case}, Lemma \ref{lem:mackey_decomposition_Z2_case} and Lemma \ref{lem:mackey_decomposition_D2_case}, the equivalence of categories in Theorem \ref{thm:mackey_decomposition} can be expressed as
$$
{}^\phi \Vect^{c + (p, q)}(\pt//G)
\simeq \prod_{\lambda \in \Lambda} 
\Vect_{k_\lambda}^{(p_{\lambda}, q_{\lambda})}
$$
where $k_\lambda = \R, \C$, and $\Lambda$ is a countable discrete set, since the set of inequivalent irreducible representations of a compact Lie group is at most countable. As in Lemma \ref{lem:universal_clifford_module}, we can realize a (locally) universal bundle $E^{(p_\lambda, q_\lambda)} \in \Vect^{(p_{\lambda}, q_{\lambda})}_{k_\lambda}$. Since $\Lambda$ is countable, the Hilbert space direct sum of $E^{(p_\lambda, q_\lambda)}$ is separable as well. The resulting Hilbert space produces a $(\phi, c)$-twisted universal bundle $E \in {}^\phi\Vect^{c + (p, q)}(\pt//G)$ through the equivalence of categories in Theorem \ref{thm:mackey_decomposition}, because this equivalence preserves the local universality.
\end{proof}

\begin{lem} \label{lem:contractible_point_case}
Let $G$ be a compact Lie group, $\phi : G \to \Z_2$ and $c : G \to \Z_2$ homomorphisms, and $E$ a $(\phi, c)$-twisted locally universal vector bundle on $\pt//G$ with $Cl_{p, q}$-action. Then $\Gamma(\X, \Fred(E)^\dagger)$ is contractible. 
\end{lem}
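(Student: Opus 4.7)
The plan is to adapt the contracting homotopy used in the proof of Lemma \ref{lem:contractible_basic_case} to the equivariant setting. Since $E$ is $(\phi,c)$-twisted locally universal on $\pt//G$ with $Cl_{p,q}$-action, the tensor product $L^2([0,1]) \otimes E$ (with trivial $G$-action and trivial Clifford action on the $L^2([0,1])$-factor) is again a locally universal twisted vector bundle on $\pt//G$ with $Cl_{p,q}$-action, and hence is unitarily isomorphic to $E$ in an equivariant way by the uniqueness of locally universal bundles up to unitary isomorphisms. Fix such an identification $E \cong L^2([0,1]) \otimes E$.

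Next I would pick a base point $\gamma_{*} \in \Gamma(\pt//G, \Fred(E)^{\dagger})$, whose existence is guaranteed by Lemma \ref{lem:extend_Clifford_action}. Letting $R_t$, $i_t$, $Q_t$ and $P_t = i_tR_t$ be the operators defined in the proof of Lemma \ref{lem:contractible_basic_case}, all acting only on the $L^2([0,1])$ factor, I would define
$$
h_t(A) = i_t Q_t^{-1} A Q_t R_t + (1 - P_t)\gamma_{*}
$$
for $A \in \Gamma(\pt//G,\Fred(E)^{\dagger})$ and $t \in (0,1]$, and $h_0(A) = \gamma_{*}$. Because $R_t$, $i_t$, $Q_t$ act trivially on $E$ itself, they automatically commute with the twisted $G$-action $\rho$ and with the $Cl_{p,q}$-action $\gamma_i$; so $h_t(A)$ remains $G$-equivariant and anti-commutes with $\epsilon$ and with each $\gamma_i$ whenever $A$ and $\gamma_{*}$ do.

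Then I would verify the remaining conditions defining $\Fred(E)^{\dagger}$: skew-adjointness is preserved because each term is individually skew-adjoint, and the key identity $h_t(A)^2 = -\id$ follows from the orthogonality $P_t(1-P_t)=0$, together with $A^2 = -\id$ on $L^2([0,t]) \otimes E$ after conjugation by the isometry $Q_t$ and $\gamma_{*}^2 = -\id$ on $(1-P_t)(L^2([0,1])\otimes E)$. Joint continuity of $(A,t) \mapsto h_t(A)$ in the compact-open topology is verified exactly as in \cite{A-Se}, Proposition A2.1. Since $h_1(A) = A$ and $h_0(A) = \gamma_{*}$, the map $h$ is a strong deformation retraction onto the point $\{\gamma_{*}\}$, proving contractibility.

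The main obstacle is verifying that $h_t(A)^2 = -\id$ and the continuity at $t=0$. These are exactly the subtleties handled in Lemma \ref{lem:contractible_basic_case}, and the only new input needed is the observation that conjugation by operators of the form $M \otimes \id_E$ (with $M$ acting on $L^2([0,1])$) preserves both the $G$-equivariance and the anti-commutation with the Clifford action, which is automatic since $M \otimes \id_E$ lies in the commutant of the twisted $G$-action and of the $Cl_{p,q}$-action.
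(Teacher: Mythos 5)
Your proof is correct, and it takes a genuinely different route from the paper's. The paper proves Lemma \ref{lem:contractible_point_case} by applying the Mackey decomposition (Theorem \ref{thm:mackey_decomposition} together with Lemmas \ref{lem:mackey_decomposition_complex_case}, \ref{lem:mackey_decomposition_Z2_case}, \ref{lem:mackey_decomposition_D2_case}) to split $\Gamma(\pt//G, \Fred(E))$ into a countable product $\prod_{\lambda} \Fred^{(p_\lambda, q_\lambda)}_{k_\lambda}(E^{(p_\lambda, q_\lambda)})$ of Fredholm spaces for universal $\R$- or $\C$-Clifford modules, and then applies Lemma \ref{lem:contractible_basic_case} factor by factor. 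You instead observe that the Atiyah--Segal contracting homotopy of Lemma \ref{lem:contractible_basic_case} is built entirely from operators of the form $M \otimes \id_E$ on $L^2([0,1]) \otimes E$, which lie in the commutant of the twisted $G$-action and of the Clifford action; hence the homotopy restricts to the equivariant, Clifford-anti-commuting subspace $\Gamma(\pt//G, \Fred(E)^\dagger)$. This is more direct and avoids the Mackey-decomposition machinery, at the cost of not reusing infrastructure that the paper needs anyway for Lemmas \ref{lem:locally_universal_bundle_point_case}, \ref{lem:atiyah_singer_map_point_case} and \ref{lem:Fredholm_vs_Karoubi_point_case}.

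Two small points to tighten. First, for $h_t(A)^2 = -\id$ you need $\gamma_*$ to commute with $P_t$ (so that the cross terms vanish and $((1-P_t)\gamma_*)^2 = -(1-P_t)$); as written, "pick a base point $\gamma_*$" does not guarantee this. You should choose $\gamma \in \Gamma(\pt//G, \Fred(E)^\dagger)$ and set $\gamma_* = 1 \otimes \gamma$ under the identification $E \cong L^2([0,1]) \otimes E$, exactly as the paper does in the base case. Second, when citing Lemma \ref{lem:extend_Clifford_action} for nonemptiness, note that only the first half of its proof (the construction of an element $1 \otimes \gamma_*$ on $E \otimes \C^2$) is available here; the weak-contractibility statement of that lemma depends on the present one, so invoking it wholesale would be circular.
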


\begin{proof}
As in the proof of Lemma \ref{lem:locally_universal_bundle_point_case}, we have the equivalence of categories
$$
{}^\phi \Vect^{c + (p, q)}(\pt//G)
\simeq \prod_{\lambda \in \Lambda} 
\Vect_{k_\lambda}^{(p_{\lambda}, q_{\lambda})},
$$
which preserves the (local) universality of (twisted) vector bundles. The equivalence of categories above induces the identification
$$
\Gamma(\pt//G, \Fred(E))
\cong
{\prod}'_{\lambda \in \Lambda} 
\Fred^{(p_\lambda, q_\lambda)}_{k_\lambda}(E^{(p_\lambda, q_\lambda)})
\subset 
\prod_{\lambda \in \Lambda} 
\Fred^{(p_\lambda, q_\lambda)}_{k_\lambda}(E^{(p_\lambda, q_\lambda)}),
$$
where $\prod'_{\lambda \in \Lambda} \Fred^{(p_\lambda, q_\lambda)}_{k_\lambda}(E^{(p_\lambda, q_\lambda)})$ is the subspace consisting of Fredholm operators $(A_\lambda)_{\lambda \in \Lambda}$ such that $\sum_\lambda \dim\mathrm{Ker}A_\lambda < +\infty$. Note that $E^{(p_\lambda, q_\lambda)}$ are universal if $E \in {}^\phi \Vect^{c + (p, q)}(\pt//G)$ is locally universal. The identification restricts to give
$$
\Gamma(\pt//G, \Fred(E))^\dagger
\cong
\prod_{\lambda \in \Lambda} 
\Fred^{(p_\lambda, q_\lambda)}_{k_\lambda}(E^{(p_\lambda, q_\lambda)})^\dagger.
$$
Now the proof is completed by Lemma \ref{lem:contractible_basic_case}.
\end{proof}

\begin{lem} \label{lem:atiyah_singer_map_point_case}
Let $G$ be a compact Lie group, $\phi : G \to \Z_2$ and $c : G \to \Z_2$ homomorphisms, and $E$ a $(\phi, c)$-twisted locally universal vector bundle on $\pt//G$ with $Cl_{p, q}$-action. If $p > 0$, then the Atiyah-Singer map
$$
\mathrm{AS} :
\Gamma(\pt//G, \Fred(E)) \to 
\Gamma([0, 1]//G, \{ 0, 1 \}//G, \Fred(E \times [0, 1])),
$$
which is given by $\mathrm{AS}(A)(t) = \gamma_1 \cos\pi t + A \sin\pi t$, is a homotopy equivalence.
\end{lem}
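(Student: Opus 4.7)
The plan is to reduce the statement to the basic case on a single point established in Lemma \ref{lem:atiyah_singer_map_basic_case}, by means of the Mackey decomposition of the category of twisted representations of $G$.

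First, I would invoke Theorem \ref{thm:mackey_decomposition} and its refinements in Lemmas \ref{lem:mackey_decomposition_complex_case}, \ref{lem:mackey_decomposition_Z2_case}, and \ref{lem:mackey_decomposition_D2_case} to obtain an equivalence of categories
$$
{}^\phi \Vect^{c + (p,q)}(\pt//G) \simeq \prod_{\lambda \in \Lambda} \Vect^{(p + a_\lambda,\, q + b_\lambda)}_{k_\lambda},
$$
where $\Lambda$ is at most countable, $k_\lambda \in \{\R, \C\}$, and the non-negative shifts $(a_\lambda, b_\lambda)$ depend only on the orbit and twist type at $\lambda$. Because this equivalence is defined by taking $\mathrm{Hom}$ with (projective) irreducible representations of subgroups of $G$, the original Clifford generator $\gamma_1$ translates to the first Clifford generator on each factor; in particular the equivalence is natural in the operation $(p,q) \mapsto (p-1, q)$ of forgetting $\gamma_1$. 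Since $E$ is locally universal, its factors $E^\lambda \in \Vect^{(p + a_\lambda, q + b_\lambda)}_{k_\lambda}$ are each universal, and the relevant spaces of sections split as topological products
\begin{align*}
\Gamma(\pt//G, \Fred(E))
&\cong \prod_{\lambda \in \Lambda} \Fred^{(p + a_\lambda,\, q + b_\lambda)}_{k_\lambda}(E^\lambda), \\
\Gamma([0,1]//G, \{0,1\}//G, \Fred(E \times [0,1]))
&\cong \prod_{\lambda \in \Lambda} \mathcal{P}^\lambda,
\end{align*}
where $\mathcal{P}^\lambda$ denotes the space of continuous paths in $\Fred^{(p-1 + a_\lambda,\, q + b_\lambda)}_{k_\lambda}(E^\lambda)$ with endpoints in the invertible subspace. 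The hypothesis $p > 0$ ensures the shift is well-defined.

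Second, the $Cl_{p,q}$-equivariance of the previous step implies that the formula $\mathrm{AS}(A)(t) = \gamma_1 \cos \pi t + A \sin \pi t$ decomposes as a product of maps $\mathrm{AS}_\lambda$, each given by the same formula on the corresponding factor. The image of $\mathrm{AS}_\lambda$ lies in the fixed-endpoint subspace $\Omega_{\gamma_1} \Fred^{(p-1 + a_\lambda,\, q + b_\lambda)}_{k_\lambda}(E^\lambda) \subset \mathcal{P}^\lambda$ of paths from $\gamma_1$ to $-\gamma_1$. This inclusion is a homotopy equivalence: endpoint evaluation exhibits $\mathcal{P}^\lambda$ as a Serre fibration over two copies of the invertible subspace $\Fred^{(p-1 + a_\lambda,\, q + b_\lambda)}_{k_\lambda}(E^\lambda)^*$, which deformation-retracts by functional calculus onto the contractible subspace $\Fred^{(p-1 + a_\lambda,\, q + b_\lambda)}_{k_\lambda}(E^\lambda)^\dagger$ (Lemma \ref{lem:contractible_basic_case}), so the fiber inclusion is a weak equivalence. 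Combined with Lemma \ref{lem:atiyah_singer_map_basic_case} (applied with $\gamma_1$ in place of $\gamma_p$, valid after a trivial relabeling of the Clifford generators), each $\mathrm{AS}_\lambda$ is a homotopy equivalence. A countable product of (weak) homotopy equivalences between spaces of the form arising here is a (weak) homotopy equivalence, yielding the desired conclusion.

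The main obstacle is the second step: one must verify carefully that the free-endpoint path space $\mathcal{P}^\lambda$ is indeed homotopy equivalent to the fixed-endpoint loop space $\Omega_{\gamma_1}\Fred^{(p-1+a_\lambda,\, q+b_\lambda)}_{k_\lambda}(E^\lambda)$, and that the Mackey decomposition carries the basepoint $\gamma_1$ uniformly as the first Clifford generator on each factor so that the product Atiyah--Singer map matches the one given in the lemma. Both issues reduce to the contractibility established in Lemma \ref{lem:contractible_basic_case}; once they are in place, the argument becomes essentially formal.
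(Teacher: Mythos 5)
Your proposal is correct and follows the same route the paper takes: the Mackey decomposition of Theorem~\ref{thm:mackey_decomposition} splits both section spaces into countable products of basic spaces $\Fred^{(p_\lambda, q_\lambda)}_{k_\lambda}$, the hypothesis $p>0$ guarantees each $p_\lambda>0$, contractibility of the invertible locus (from Lemma~\ref{lem:contractible_basic_case}) identifies the free-endpoint relative section space with the fixed-endpoint path space, and Lemma~\ref{lem:atiyah_singer_map_basic_case} finishes the factorwise argument. Your treatment is if anything slightly more explicit than the paper's (spelling out the Serre fibration argument for the free-vs-fixed endpoint reduction and the compatibility of $\gamma_1$ under the decomposition, both of which the paper treats as self-evident), but no new idea is introduced.
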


\begin{proof}
As in the proof of Lemma \ref{lem:contractible_point_case}, we have the identification
$$
\Gamma(\pt//G, \Fred(E))
\cong
{\prod}'_{\lambda \in \Lambda} 
\Fred^{(p_\lambda, q_\lambda)}_{k_\lambda}(E^{(p_\lambda, q_\lambda)}).
$$
Note that $p > 0$ implies $p_\lambda > 0$. We can also identify
$$
\Gamma([0, 1]//G, \{ 0, 1 \}//G, \Fred(E \times [0, 1]))
\cong
{\prod}'_{\lambda \in \Lambda} 
\Gamma([0, 1], \{ 0, 1 \}, \Fred^{(p_\lambda-1, q_\lambda)}_{k_\lambda}).
$$
The Atiyah-Singer map is clearly compatible with these identifications. The space of invertible operators in $\Fred^{(p_\lambda-1, q_\lambda)}_{k_\lambda}$ is contractible, as a result of Lemma \ref{lem:contractible_basic_case}. Hence we get a homotopy equivalence
$$
\Gamma([0, 1], \{ 0, 1 \}, \Fred^{(p_\lambda-1, q_\lambda)}_{k_\lambda})
\simeq
\Omega \Fred^{(p_\lambda, q_\lambda)}_{k_\lambda}.
$$
Now, the lemma follows from Lemma \ref{lem:atiyah_singer_map_basic_case}.
\end{proof}

\begin{lem} \label{lem:Fredholm_vs_Karoubi_point_case}
Let $G$ be a compact Lie group, $\phi : G \to \Z_2$ and $c : G \to \Z_2$ homomorphisms, and $E$ a $(\phi, c)$-twisted locally universal vector bundle on $\pt//G$ with $Cl_{p, q}$-action. For any $p, q \ge 0$, the map
$$
\vartheta : \
\Gamma(\pt//G, \Fred(E)) \to 
\Gamma(\pt//G, \Gr(\acute{E}))
$$
is a homotopy equivalence. 
\end{lem}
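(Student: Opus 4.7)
The plan is to repeat almost verbatim the reduction argument used in Lemma \ref{lem:contractible_point_case} and Lemma \ref{lem:atiyah_singer_map_point_case}, replacing the ``absolute'' input (Lemma \ref{lem:contractible_basic_case} or Lemma \ref{lem:atiyah_singer_map_basic_case}, respectively) by Lemma \ref{lem:Fredholm_vs_Karoubi_basic_case}. First I would invoke the Mackey decomposition (Theorem \ref{thm:mackey_decomposition}) combined with Lemmas \ref{lem:mackey_decomposition_complex_case}, \ref{lem:mackey_decomposition_Z2_case} and \ref{lem:mackey_decomposition_D2_case} to obtain an equivalence of categories
$$
{}^\phi\Vect^{c+(p,q)}(\pt//G)
\simeq
\prod_{\lambda \in \Lambda} \Vect_{k_\lambda}^{(p_\lambda, q_\lambda)},
$$
where $\Lambda$ is a countable discrete set and each $k_\lambda \in \{ \R, \C \}$. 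Moreover, this equivalence carries a $(\phi, c)$-twisted locally universal bundle $E$ with $Cl_{p,q}$-action to a family $\{ E^{(p_\lambda, q_\lambda)} \}_{\lambda}$ in which each $E^{(p_\lambda, q_\lambda)}$ is a universal $\Z_2$-graded $Cl_{p_\lambda, q_\lambda}$-module over $k_\lambda$.

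Second, I would observe that the construction $E \mapsto \acute{E}$ (Lemma \ref{lem:categorical_correspondence_by_epsilon}) is intrinsic to each fiber of $E$ and commutes with the Mackey decomposition; hence the image of $\acute{E}$ under the equivalence is the family $\{ \acute{E}^{(p_\lambda, q_\lambda)} \}_\lambda$, where each $\acute{E}^{(p_\lambda, q_\lambda)}$ is a universal $\Z_2$-graded $Cl_{q_\lambda, p_\lambda}$-module. Consequently one obtains product decompositions
\begin{align*}
\Gamma(\pt//G, \Fred(E))
&\cong
\prod_{\lambda \in \Lambda}
\Fred^{(p_\lambda, q_\lambda)}_{k_\lambda}(E^{(p_\lambda, q_\lambda)}), \\
\Gamma(\pt//G, \Gr(\acute{E}))
&\cong
\prod_{\lambda \in \Lambda}
\Gr^{(q_\lambda, p_\lambda)}_{k_\lambda}(\acute{E}^{(p_\lambda, q_\lambda)}).
\end{align*}
Because $\vartheta(A) = -e^{\pi A}\epsilon$ is defined entirely in terms of the fiberwise structure (the $\Z_2$-grading, the Clifford action and the operator itself), it is preserved under the equivalence. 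Therefore, under the two displays above, the map $\vartheta$ in question becomes the product of the basic maps $\vartheta : \Fred^{(p_\lambda, q_\lambda)}_{k_\lambda} \to \Gr^{(q_\lambda, p_\lambda)}_{k_\lambda}$.

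Finally, each factor $\vartheta$ is a homotopy equivalence by Lemma \ref{lem:Fredholm_vs_Karoubi_basic_case}. The conclusion then follows from the standard fact that a countable product of homotopy equivalences between spaces having the homotopy type of CW complexes is again a homotopy equivalence; in our situation this is unproblematic since the factor spaces are open subsets of separable metric spaces topologized by the operator norm (for $\Gr$) and by the compact-open/operator-norm hybrid (for $\Fred$). The main obstacle I expect is bookkeeping rather than substance: one must check carefully that the Mackey equivalence is compatible with both the passage $E \mapsto \acute{E}$ and the definition of $\vartheta$ (including in the cases of Lemma \ref{lem:mackey_decomposition_Z2_case} and Lemma \ref{lem:mackey_decomposition_D2_case} where the factors involve real Clifford modules arising from real/quaternionic structures, so that the Mackey functor intertwines the complex conjugation used in $\vartheta$ with the ambient real/quaternionic structure). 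Once this compatibility is verified, the lemma is immediate.
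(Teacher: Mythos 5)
Your proposal is essentially the same as the paper's proof: invoke the Mackey decomposition from the previous lemmas to split $\Gamma(\pt//G,\Fred(E))$ and $\Gamma(\pt//G,\Gr(\acute{E}))$ into countable products of the basic spaces $\Fred^{(p_\lambda,q_\lambda)}_{k_\lambda}$ and $\Gr^{(q_\lambda,p_\lambda)}_{k_\lambda}$, note that $\vartheta$ factors accordingly, and conclude from Lemma~\ref{lem:Fredholm_vs_Karoubi_basic_case}. The paper's write-up is terser, leaving implicit the compatibility checks you correctly flag (that $E\mapsto\acute E$ and the fiberwise formula $\vartheta(A)=-e^{\pi A}\epsilon$ both commute with the Mackey equivalence, even through the real/quaternionic structures), but the substance is identical.
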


\begin{proof}
Notice that $\acute{E}$ is a $(\phi, c)$-twisted locally universal vector bundle on $\pt//G$ with $Cl_{q, p}$-action. As in the proof of Lemma \ref{lem:contractible_point_case}, we have identifications
\begin{align*}
\Gamma(\pt//G, \Fred(E))
&\cong
{\prod}'_{\lambda \in \Lambda} 
\Fred^{(p_\lambda, q_\lambda)}_{k_\lambda}(E^{(p_\lambda, q_\lambda)}), \\
%%%%%
\Gamma(\pt//G, \Gr(\acute{E}))
&\cong
{\prod}'_{\lambda \in \Lambda} 
\Gr^{(q_\lambda, p_\lambda)}_{k_\lambda}(\acute{E}^{(p_\lambda, q_\lambda)}),
\end{align*}
where ${\prod}'_{\lambda \in \Lambda} \Gr^{(q_\lambda, p_\lambda)}_{k_\lambda}(\acute{E}^{(p_\lambda, q_\lambda)}) \subset \prod_{\lambda \in \Lambda} \Gr^{(q_\lambda, p_\lambda)}_{k_\lambda}(\acute{E}^{(p_\lambda, q_\lambda)})$ is the subspace corresponding to $\Gamma(\pt//G, \Gr(\acute{E}))$ under the decomposition of ${}^\phi \Vect^{c + (p, q)}(\pt//G)$. Under these identifications, the map $\vartheta$ in question is decomposed into the homotopy equivalences in Lemma \ref{lem:Fredholm_vs_Karoubi_basic_case}, and hence is a homotopy equivalence as well. 
\end{proof}

%%%%%%%%%%%%%%%%%%%%%%%%%%%%%%%%%%%%%%%%%%%%%%%%
%%%%%%%%%%%%%%%%%%%%%%%%%%%%%%%%%%%%%%%%%%%%%%%%

\section{Quotient of monoid}
\label{sec:quotient_monoid}

This appendix is about the construction of quotient monoid used in the finite-dimensional formulations of $K$-theories in Definition \ref{dfn:finite_rank_freed_moore_K} and Definition \ref{dfn:finite_rank_karoubi}. The construction may be standard, and can be found for example in \cite{W}.

\smallskip

Let $M = (M, +, 0)$ be an abelian monoid with zero (the additive unit), that is, a set $M$ equipped with a distributive and commutative binary operation $+ : M \times M \to M$ such that $x + 0 = 0 + x = x$ for any $x \in M$. Let $Z \subset M$ be a submonoid of $M$, that is, a subset $Z \subset M$ which is closed under the addition and contains $0 \in M$. Using $Z$, we can introduce an equivalence relation $\sim$ on $M$ by declaring $x \sim x'$ if and only if there are $z, z' \in Z$ such that $x + z = x' + z'$. We write the quotient set as $M/Z = M/\sim$. It is easy to see that $M/Z$ inherits an abelian monoid structure from $M$, in which zero is represented by elements in $Z$.

\begin{lem} \label{appendix:lem_quotient_monoid}
Let $M$ be an abelian monoid, and $Z \subset M$ its submonoid. Suppose that there is a monoid homomorphism $I : M \to M$ such that
\begin{itemize}
\item[(i)]
$I(Z) \subset Z$, 

\item[(ii)]
$I(x) + x \in Z$ for any $x \in M$,
\end{itemize}
Then the quotient monoid $M/Z$ gives rise to an abelian group.
\end{lem}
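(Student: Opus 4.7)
The plan is to take the obvious candidate for an inverse, namely $[I(x)]$, and verify it works. The verification will split into three short checks: (1) the quotient $M/Z$ is an abelian monoid in which the zero class is exactly $Z$; (2) the map $I$ descends to $M/Z$; (3) $[I(x)] + [x] = [0]$ for every $x \in M$.

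First I would unpack the definition of $\sim$. Since $Z$ contains $0$ and is closed under addition, $\sim$ is an equivalence relation, and the addition on $M$ descends to $M/Z$ making it an abelian monoid. The zero element $[0]$ is represented by any $z \in Z$: if $z \in Z$ then $z + 0 = 0 + z$ with $0, z \in Z$, so $z \sim 0$. Conversely, if $x \sim 0$ then $x + z = z'$ for some $z, z' \in Z$, but this alone doesn't force $x \in Z$ — what matters is only that $[x] = [0]$ in $M/Z$, and an element $x$ with $x + z = z'$ represents the zero class by definition. So the zero class is precisely $\{x \in M \mid x + z \in Z \text{ for some } z \in Z\}$, and in particular contains $Z$.

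Next I would check $I$ descends. If $x \sim x'$, say $x + z_1 = x' + z_2$ with $z_i \in Z$, apply $I$ (a monoid homomorphism) to get $I(x) + I(z_1) = I(x') + I(z_2)$; by hypothesis (i), $I(z_i) \in I(Z) = Z$, so $I(x) \sim I(x')$. Thus $I$ induces a well-defined monoid endomorphism $\bar I$ of $M/Z$.

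Finally, for any $x \in M$, hypothesis (ii) gives $I(x) + x \in Z$, so $[I(x)] + [x] = [I(x) + x] = [0]$ in $M/Z$. Hence $[I(x)]$ is an additive inverse of $[x]$, and $M/Z$ is an abelian group. There is no real obstacle here — the content of the lemma is entirely in the hypotheses on $I$, which have been tailored precisely so that $I$ supplies the inversion map after passage to the quotient; the proof is a straightforward verification with no technical step requiring additional argument.
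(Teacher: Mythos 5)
Your proof is correct and follows the same route as the paper: take $[I(x)]$ as the candidate inverse, use hypothesis (i) together with the fact that $I$ is a monoid homomorphism to check that $x \sim x'$ implies $I(x) \sim I(x')$, and use hypothesis (ii) to see $[I(x)] + [x] = [0]$. The extra remarks you make about the zero class are harmless but not needed.
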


We remark that $I$ needs not be an involution on $M$.

\begin{proof}
It is enough to verify the existence of inverse elements. We denote by $[x] \in M/Z$ the element represented by $x \in M$. We define the inverse of $[x] \in M/Z$ to be $-[x] = [I(x)]$. This is well-defined. Actually, if $x \sim x'$, then there are $z, z' \in Z$ such that $x + z = x' + z'$, and we have
$$
I(x) + I(z) = I(x + z) = I(x' + z') = I(x') + I(z').
$$
Since $I(z), I(z') \in Z$ by (i), it holds that $[I(x)] = [I(x')]$. Because of (ii), we see that $-[x] + [x] = [I(x) + x] = 0$.
\end{proof}

As an example, we let $N$ be an abelian monoid, and consider the product monoid $M = N \times N$. The diagonal set $Z = \Delta(N) \subset N \times N$ is a submonoid. If we define a homomorphism $I : N \times N \to N \times N$ by $I(x, y) = (y, x)$, then $I$ meets the assumptions in the lemma above. The resulting abelian group $(N \times N)/\Delta(N)$ is exactly the Grothendieck construction of $N$.

%%%%%%%%%%%%%%%%%%%%%%%%%%%%%%%%%%%%%%%%%%%%%%%%
%%%%%%%%%%%%%%%%%%%%%%%%%%%%%%%%%%%%%%%%%%%%%%%%

\end{document}